\documentclass[12pt]{amsart}
\usepackage{amsmath}	
\usepackage{amssymb}
\usepackage{array}

\usepackage{accents}
\usepackage{graphicx}
\usepackage{tikz}
\usetikzlibrary{calc,intersections,arrows,automata}

\newtheorem{theorem}{Theorem}[section]
\newtheorem{lemma}[theorem]{Lemma}
\newtheorem{proposition}[theorem]{Proposition}
\newtheorem{corollary}[theorem]{Corollary}

\theoremstyle{remark}
\newtheorem*{remark}{Remark}
\newtheorem*{acknowlegments}{Acknowlegments}

\theoremstyle{definition}
\newtheorem{definition}[theorem]{Definition}

\numberwithin{equation}{section}

\newcommand{\alphabar}{\overline{\alpha}}

\newcommand{\bN}{\mathbb{N}}
\newcommand{\bQ}{\mathbb{Q}}
\newcommand{\bR}{\mathbb{R}}
\newcommand{\bZ}{\mathbb{Z}}
\newcommand{\cA}{{\mathcal{A}}}
\newcommand{\cB}{{\mathcal{B}}}
\newcommand{\cC}{{\mathcal{C}}}
\newcommand{\cD}{{\mathcal{D}}}
\newcommand{\cE}{{\mathcal{E}}}
\newcommand{\cF}{{\mathcal{F}}}
\newcommand{\cFbar}{\overline{\cF}}
\newcommand{\cM}{{\mathcal{M}}}
\newcommand{\cN}{{\mathcal{N}}}
\newcommand{\cO}{{\mathcal{O}}}
\newcommand{\cR}{{\mathcal{R}}}
\newcommand{\cS}{{\mathcal{S}}}
\newcommand{\cT}{{\mathcal{T}}}
\newcommand{\cV}{\mathcal{V}}
\newcommand{\cVbar}{\overline{\cV}}
\newcommand{\disp}{\displaystyle}
\newcommand{\GL}{\mathrm{GL}}
\newcommand{\iotabar}{\overline{\iota}}
\newcommand{\lambdahat}{\widehat{\lambda}}
\newcommand{\matrice}[4]{\begin{pmatrix}#1&#2\\#3&#4\end{pmatrix}}
\newcommand{\Mat}{\mathrm{Mat}}
\newcommand{\norm}[1]{\|\hspace*{1pt}#1\hspace*{1pt}\|}

\newcommand{\phitop}{{\bar{\varphi}}}
\newcommand{\prefixes}{\left[\epsilon,w_\infty\right[}
\newcommand{\prefixesp}{\left]\epsilon,w_\infty\right[}
\newcommand{\SL}{\mathrm{SL}}

\newcommand{\trace}{\mathrm{trace}}
\newcommand{\trM}{{\hspace*{2pt}{}^t\hspace*{-2pt}M}}
\newcommand{\tux}{{ \ux^{\mathrm{alg}} }}
\newcommand{\tw}{\widetilde{w}}
\newcommand{\ue}{\mathbf{e}}
\newcommand{\uf}{\mathbf{f}}
\newcommand{\uL}{\mathbf{L}}
\newcommand{\um}{\mathbf{m}}
\newcommand{\up}{\mathbf{p}}
\newcommand{\uP}{{\mathbf{P}}}
\newcommand{\uq}{\mathbf{q}}
\newcommand{\ur}{{\mathbf{r}}}
\newcommand{\us}{{\mathbf{s}}}
\newcommand{\ux}{\mathbf{x}}
\newcommand{\uy}{\mathbf{y}}
\newcommand{\uz}{\mathbf{z}}
\newcommand{\udeux}{{\mathbf{2}}}
\newcommand{\uun}{{\mathbf{1}}}
\newcommand{\uxi}{{\boldsymbol{\xi}}}

\newcommand{\rien}[1]{}

\newcommand{\et}{\quad\mbox{and}\quad}
\newcommand{\ou}{\quad\mbox{or}\quad}

\newcommand{\trap}{\mathrm{Trap}}
\newcommand{\layer}{\mathrm{Layer}}
\newcommand{\cell}{\mathrm{Cell}}

\begin{document}

\baselineskip=14.7pt 

\title[Multi-parametric geometry of numbers]
{Extremal numbers and\\ multi-parametric geometry of numbers}
\author{Damien ROY}
\address{
   D\'epartement de Math\'ematiques\\
   Universit\'e d'Ottawa\\
   150 Louis Pasteur\\
   Ottawa, Ontario K1N 6N5, Canada}
\email{droy@uottawa.ca}

\subjclass[2020]{Primary 11J13; Secondary 11J82}
\keywords{badly approximable numbers, exponents of Diophantine approximation, 
extremal real numbers, $n$-systems, parametric geometry of numbers, 
simultaneous rational approximation, weighted approximation}

\dedicatory{In memory of Bertrand Russell, mathematician and philosopher,\\
for his commitment to peace.}


\begin{abstract}
We study weighted simultaneous rational approximation to points of the 
form $(1,\xi,\xi^2)$, for a class of extremal real numbers $\xi$, within the 
framework of multi-parametric geometry of numbers.
\end{abstract}

\maketitle


%
%

\section{Introduction}
 \label{sec:intro}
Fix an integer $n\ge 2$.  For each $A=(a_{i,j})\in\GL_n(\bR)$ and 
each $\uq=(q_1,\dots,q_n)\in\bR^n$, we denote by $\cC_A(\uq)$ 
the parallelepiped of $\bR^n$ made of the points
$\ux=(x_1,\dots,x_n)\in\bR^n$ satisfying
\begin{equation}
 \label{intro:eq:parallelepiped}
 |a_{i,1}x_1+a_{i,2}x_2+\cdots+a_{i,n}x_n|\ \le \exp(-q_i)
 \quad\text{for $i=1,\dots,n$.}
\end{equation} 
For each $j=1,\dots,n$, we also denote by $L_{A,j}(\uq)$ 
the logarithm of its $j$-th minimum with respect to $\bZ^n$, 
namely the smallest $t\in\bR$ such that the product $e^t\cC_A(\uq)$ 
defined by 
\[
 |a_{i,1}x_1+a_{i,2}x_2+\cdots+a_{i,n}x_n| \le \exp(t-q_i) 
 \quad (1\le i\le n)
\] 
contains at least $j$ linearly independent points $\ux=(x_1,\dots,x_n)$ of $\bZ^n$.
The map 
\[
  \begin{array}{rcl}
   \uL_A\,:\, \bR^n &\longrightarrow &\bR^n\\
   \uq &\longmapsto &(L_{A,1}(\uq),L_{A,2}(\uq),\dots, L_{A,n}(\uq)).
  \end{array}
\]
carries much information about Diophantine approximation 
to the matrix $A$ and little is lost in estimating $\uL_A$ up to 
bounded error on $\bR^n$.   

Geometry of numbers imposes constraints on the components of $\uL_A$,
as shown in \cite{S2020, SS2009, SS2013} in one parameter setting, and in
\cite{G2022} in the general case.
For example, since the logarithm of the volume of $\cC_A(\uq)$ is
$-(q_1+\cdots+q_n)+\cO_A(1)$, Minkowski's second convex body theorem
gives
\begin{equation}
 \label{intro:eq:sumLA}
 L_{A,1}(\uq)+\cdots+L_{A,n}(\uq) = q_1+\cdots+q_n +\cO_A(1),
\end{equation}
where $\cO_A(1)$ stands for a function of $\uq$ whose 
absolute value is bounded above by a constant depending only of $A$.
The main open problem is whether or not, for given $n\ge 3$, this and other 
such conditions 
suffice to characterize the set of all maps $\uL_A$ with $A\in\GL_n(\bR)$
modulo the additive group of bounded functions from $\bR^n$ to $\bR^n$
(cf.~\cite{R2015b}).   Even a characterization of the 
maps $\uL_A$ modulo the additive group of functions 
$\uf\colon\bR^n\to\bR^n$ with $\|\uf(\uq)\|/\|\uq\|\to 0$ for 
$\|\uq\|\to\infty$, using say the maximum norm,  would be useful 
in questions related to spectra of exponents of Diophantine approximation.
We do not address this problem here.  
Our goal instead is to estimate the map $\uL_A$ for 
a specific type of matrices $A\in\GL_3(\bR)$.

We first restrict to matrices of the form
 \[
  A = \begin{pmatrix} 1 &0 &0\\ \xi_1 &-1 &0\\ \xi_2 &0 &-1 \end{pmatrix}
     \in \GL_3(\bR)
\]
attached to points $\uxi=(1,\xi_1,\xi_2)\in\bR^3$.   For convenience, we adapt 
the notation as follows.   For each $\uq=(q_1,q_2)\in \bR^2$, we set
$\cC_\uxi(\uq)=\cC_A(0,q_1,q_2)$, and, for each $j=1,2,3$, we 
set $L_{\uxi,j}(\uq)=L_{A,j}(0,q_1,q_2)$, that is the smallest $t\in\bR$ for 
which the inequalities
\begin{equation}
 \label{intro:eq:t}
 |x_0|\le \exp(t), \quad 
 |x_0\xi_1-x_1| \le \exp(t-q_1), \quad  
 |x_0\xi_2-x_2| \le \exp(t-q_2)
\end{equation}
admit at least $j$ linearly independent solutions $\ux=(x_0,x_1,x_2)$
in $\bZ^3$.  Finally, we define 
\begin{equation}
 \label{intro:eq:Lxi}
  \begin{array}{rcl}
   \uL_\uxi\,:\, \bR^2 &\longrightarrow &\bR^3\\
   \uq &\longmapsto &(L_{\uxi,1}(\uq),L_{\uxi,2}(\uq),L_{\uxi,3}(\uq)).
  \end{array}
\end{equation}
No information is lost in the process as we can recover $\uL_A$ from
$\uL_\uxi$ using  
\[
 L_{A,j}(q_0,q_1,q_2) = q_0 +L_{\uxi,j}(q_1-q_0,q_2-q_0) \quad (1\le j\le 3) 
\]
for any $(q_0,q_1,q_2)\in\bR^3$.  Moreover, the estimate 
\eqref{intro:eq:sumLA} becomes
\begin{equation}
 \label{intro:eq:sumLxi}
 L_{\uxi,1}(\uq)+L_{\uxi,2}(\uq)+L_{\uxi,3}(\uq) = q_1+q_2 +\cO_\uxi(1)
\end{equation}
for each $\uq=(q_1,q_2)\in\bR^2$.
As for the maps $\uL_A$ with $A\in\GL_3(\bR)$, one may ask for a 
characterization of the set of all maps $\uL_\uxi\colon\bR^2\to\bR^3$
with $\uxi=(1,\xi_1,\xi_2)\in\bR^3$ modulo bounded functions on
$\bR^2$.  In the next section, we show that this can easily be done 
outside of the critical sector 
\begin{equation}
 \label{intro:eq:D}
 \cD=\{(q_1,q_2)\in\bR^2\,;\, 0\le q_1/2\le q_2\le 2q_1\}
\end{equation}
if we restrict to points $\uxi$ for which $\xi_1$ and $\xi_2$ are badly 
approximable.

In this paper, we further restrict to points $\uxi=(1,\xi,\xi^2)$ where $\xi$ 
belongs to a countably infinite set of extremal real numbers defined in
\cite[Theorem~3.1]{R2003} and shown there to have the strongest possible 
measure of approximation by cubic algebraic integers 
(cf.\ \cite[Theorem~1]{DS1969}).  For each such $\uxi$, 
we construct an explicit approximation of $\uL_\uxi$  
with bounded difference on the set of points $(q_1,q_2)$ of $\bR^2$ with
$q_1\ge 0$ and $q_2\le q_1$, an angular sector which 
covers the lower half of $\cD$.   An outline of the main result
and of its proof is given in section~\ref{sec:main}.  As an application,
we use this to compute exponents of weighted rational approximation
to these points in section~\ref{sec:exp}.  
Numerical experiments, which we do not include, suggest 
a relatively chaotic behaviour for the functions $\uL_\uxi$ on the upper half 
of  $\cD$.

We believe that these are the first examples of points $\uxi$ in $\bR^3$
with $\bQ$-linearly independent coordinates, for which 
$\uL_\uxi$ is estimated up to bounded difference in an angular 
sector of $\bR^2$ with positive angle contained in $\cD$.

%
%

\section{Basic tool and preliminary observations}\
\label{sec:tool}
For each integer $n\ge 1$ and each point $\ux$ of $\bR^n$, we denote by 
$\norm{\ux}$ the maximum norm of $\ux$.  As in \cite[Section~1]{R2015b}, 
we set
\begin{equation}
\label{tool:eq:Delta}
 \Delta_3=\{(p_1,p_2,p_3)\in\bR^3\,;\, p_1\le p_2\le p_3\},
\end{equation}
and denote by $\Phi\colon\bR^3\to\Delta_3$ the continuous map 
which lists the coordinates of a point in non-decreasing order.   Clearly,
the functions $\uL_\uxi$ defined by \eqref{intro:eq:Lxi} take values in
$\Delta_3$ and so $\Phi\circ\uL_\uxi=\uL_\uxi$.  We also note that 
$\Phi$ is $1$-Lipschitz, namely that 
\begin{equation}
 \label{tool:eq:growthPhi}
 \norm{\Phi(\up)-\Phi(\up')}\le \norm{\up-\up'}
 \quad \text{for any $\up, \up'\in\bR^3$.}
\end{equation}

Fix a point $\uxi=(1,\xi_1,\xi_2)\in\bR^3$.  We define the \emph{trajectory}
of a non-zero integer point $\ux=(x_0,x_1,x_2)\in \bZ^3$ as the map
$L_\ux\colon\bR^2\to\bR$ whose value at a point $\uq=(q_1,q_2)\in\bR^2$
is the smallest $t\in\bR$ for which $\ux \in e^t\cC_\uxi(\uq)$ or, equivalently,
for which \eqref{intro:eq:t} holds. Thus,
\begin{equation}
 \label{tool:eq:Lx}
 L_\ux(\uq)  
  =\max\{\log|x_0|, q_1+\log|x_0\xi_1-x_1|, q_2+\log|x_0\xi_2-x_2|\},
\end{equation}
with the convention that we omit a term in the maximum when it involves
$\log 0=-\infty$.  Aside when this happens, the trajectory $L_\ux$ of $\ux$ 
is the maximum of three affine maps 
and Figure~\ref{intro:fig1} shows the angular sectors of $\bR^2$ where 
they realize this maximum.  

\begin{figure}[h]
\begin{tikzpicture}[scale=0.8]
       \draw[->, thick] (-2,0)--(6,0) node[below]{$q_1$};
       \draw[->, thick] (-0.5,-1)--(-0.5,6) node[left]{$q_2$};
       \draw[-, thick] (2,3) -- (-2,3)
                   node[left]{\footnotesize $-\log|\xi_2-x_2/x_0|$}; 
       \draw[-, thick] (2,3) -- (2, -0.7) 
                    node[below]{\footnotesize $-\log|\xi_1-x_1/x_0|$};
       \draw[-, thick] (2,3) -- (5, 6);
       \draw[->,ultra thick] (3.3,2)--(4,2); 
       \draw node at (4.5,1.4) {$q_1+\log|x_0\xi_1-x_1|$};
       \draw[->,ultra thick] (1,4.3)--(1,5);
       \draw node at (1.8,5.5) {$q_2+\log|x_0\xi_2-x_2|$};
       \node[draw,circle,inner sep=1pt,fill] at (1,2) {};
       \draw node at (1,1.4) {$\log|x_0|$};            
\end{tikzpicture}
\caption{The map $L_\ux$ in the generic case.}
\label{intro:fig1}
\end{figure}

It follows from \eqref{tool:eq:Lx} that  
\begin{equation}
 \label{tool:eq:growthLx}
 \left| L_\ux(\uq)-L_\ux(\uq')\right| \le \norm{\uq-\uq'}
 \quad \text{for any $\uq, \uq'\in\bR^2$.}
\end{equation}
We now show that $\uL_\uxi$ is also $1$-Lipschitz.

\begin{lemma}
\label{tool:lemma1} 
For any $\uq,\uq'\in\bR^2$, we have 
$\norm{\uL_\uxi(\uq)-\uL_\uxi(\uq')} \le \norm{\uq-\uq'}$.
\end{lemma}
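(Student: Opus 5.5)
The argument rests on the fact that $L_{\uxi,j}(\uq)$ is the $j$-th smallest value of the trajectories $L_\ux(\uq)$, where ``$j$-th'' is taken in the sense of linear independence. We then transfer the $1$-Lipschitz estimate \eqref{tool:eq:growthLx} from individual trajectories to these successive minima, one index $j$ at a time. Since the norm on $\bR^3$ is the maximum norm, it suffices to prove, for each $j=1,2,3$, that $|L_{\uxi,j}(\uq)-L_{\uxi,j}(\uq')|\le\norm{\uq-\uq'}$.

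First we record the characterization. By definition of $L_\ux$, a non-zero point $\ux\in\bZ^3$ lies in $e^t\cC_\uxi(\uq)$ if and only if $L_\ux(\uq)\le t$. Hence $L_{\uxi,j}(\uq)$ is the smallest $t$ for which there exist $j$ linearly independent integer points $\ux$ with $L_\ux(\uq)\le t$. This minimum is attained, because the convex bodies are compact and $\bZ^3$ is discrete, so only finitely many integer points lie in a given bounded box.

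Now fix $\uq,\uq'$ and put $\delta=\norm{\uq-\uq'}$. Choose linearly independent $\ux_1,\dots,\ux_j\in\bZ^3$ with $L_{\ux_i}(\uq)\le L_{\uxi,j}(\uq)$ for $i=1,\dots,j$. By \eqref{tool:eq:growthLx}, each satisfies
\[
 L_{\ux_i}(\uq')\le L_{\ux_i}(\uq)+\delta\le L_{\uxi,j}(\uq)+\delta .
\]
So the body $e^{t}\cC_\uxi(\uq')$ with $t=L_{\uxi,j}(\uq)+\delta$ contains $j$ linearly independent integer points, which gives $L_{\uxi,j}(\uq')\le L_{\uxi,j}(\uq)+\delta$. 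Exchanging the roles of $\uq$ and $\uq'$ yields the reverse inequality, and the two together prove the lemma.

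The only point requiring care is the degenerate case where a term in \eqref{tool:eq:Lx} is omitted because it equals $\log 0$. In that case $L_\ux$ is a maximum of fewer affine functions of $\uq$, each with partial derivatives in $\{0,1\}$. It is still finite, since $\ux\neq 0$ leaves at least one term, and it is still $1$-Lipschitz for the maximum norm. Thus \eqref{tool:eq:growthLx} applies to every non-zero $\ux$, and there is no real obstacle.
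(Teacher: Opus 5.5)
Your proposal is correct and is essentially the paper's argument: fix linearly independent integer points realizing the $j$-th minimum at $\uq$, move them to $\uq'$ using the $1$-Lipschitz bound \eqref{tool:eq:growthLx} for each trajectory, and then exchange $\uq$ and $\uq'$. The only difference is cosmetic: the paper picks one basis $\ux_1,\ux_2,\ux_3$ realizing all three minima at once and uses $\ux_1,\dots,\ux_j$ for each $j$, whereas you choose a separate family for each $j$. Your handling of attainment of the minima and of the degenerate $\log 0$ case is fine.
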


\begin{proof}
Let $\uq,\uq'\in\bR^2$.  Choose linearly independent integer points
$\ux_1,\ux_2,\ux_3\in\bZ^3$ such that 
\[
 \uL_\uxi(\uq)=(L_{\ux_1}(\uq),L_{\ux_2}(\uq),L_{\ux_3}(\uq)).
\]
For each $j=1,2,3$, the points $\ux_1,\dots,\ux_j$ are linearly independent, 
and so, using \eqref{tool:eq:growthLx} for each of these points, we find that
\[
 L_{\uxi,j}(\uq')
   \le \max_{1\le i\le j} L_{\ux_i}(\uq')
   \le \max_{1\le i\le j} \big( L_{\ux_i}(\uq)+\|\uq-\uq'\| \big)
   = L_{\uxi,j}(\uq)+\|\uq-\uq'\|.
\]
The result follows as we may permute the roles of $\uq$ and $\uq'$ in the above estimates.
\end{proof}

The main tool for estimating $\uL_\uxi$ is the following
result, similar to \cite[Lemma~4.1]{R2015a}.

\begin{lemma}
\label{tool:lemma2}
Let $\ux_1,\ux_2,\ux_3\in\bZ^3$ be linearly independent integer points.  Let 
$\uq=(q_1,q_2)\in\bR^2$, $\up=(p_1,p_2,p_3)\in\bR^3$ and 
$\delta\ge 0$ such that 
\begin{equation}
\label{tool:lemma2:eq1}
 p_1+p_2+p_3=q_1+q_2 
 \et 
 L_{\ux_j}(\uq)\le p_j+\delta \quad \text{for $j=1,2,3$.}
\end{equation}
Then, we have $\big\|\uL_\uxi(\uq)-\Phi(\up)\big\| \le 5\delta +c_1$
for a constant $c_1$ depending only on $\uxi$.
\end{lemma}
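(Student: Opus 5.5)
We first reduce to a comparison between $\uL_\uxi(\uq)$ and the ordered vector $\Phi(\ur)$, where $\ur=(L_{\ux_1}(\uq),L_{\ux_2}(\uq),L_{\ux_3}(\uq))$. Two inequalities come from the definition of successive minima and from Minkowski's theorem in the form \eqref{intro:eq:sumLxi}. The hypothesis \eqref{tool:lemma2:eq1} then transfers everything to $\Phi(\up)$.

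The upper bound comes first. Write $\ur=(r_1,r_2,r_3)$ with $r_j=L_{\ux_j}(\uq)$, and $\Phi(\ur)=(r'_1,r'_2,r'_3)$. Since $\ux_1,\ux_2,\ux_3$ are linearly independent, for each $j$ the $j$ points among them realizing the $j$ smallest values of $r_i$ are linearly independent. This gives $L_{\uxi,j}(\uq)\le r'_j$ for $j=1,2,3$. Because $\Phi$ is monotone coordinatewise (the $j$-th smallest coordinate is nondecreasing in each coordinate) and $r_j\le p_j+\delta$, we get $r'_j\le \Phi(\up+\delta(1,1,1))_j=\Phi(\up)_j+\delta$. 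Writing $\Phi(\up)=(p'_1,p'_2,p'_3)$, this yields
\[
 L_{\uxi,j}(\uq)\le p'_j+\delta \quad (1\le j\le 3).
\]

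Next comes the lower bound via the sum. By \eqref{intro:eq:sumLxi} and $p'_1+p'_2+p'_3=p_1+p_2+p_3=q_1+q_2$, we have $\sum_j L_{\uxi,j}(\uq)=\sum_j p'_j+\cO_\uxi(1)$. Hence, for each $j$,
\[
 L_{\uxi,j}(\uq)=\sum_i p'_i-\sum_{i\ne j}L_{\uxi,i}(\uq)+\cO_\uxi(1)\ge p'_j-2\delta-c,
\]
using the upper bounds for the two other indices, where $c$ is the implied constant in \eqref{intro:eq:sumLxi}. Combining gives $\|\uL_\uxi(\uq)-\Phi(\up)\|\le 2\delta+c$, which is stronger than the required $5\delta+c_1$. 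So we can take $c_1=c$.

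The only point needing care is the first step, namely that $L_{\uxi,j}(\uq)\le r'_j$. This is where the linear independence of the $\ux_i$ and the definition of $L_{\uxi,j}$ as a $j$-th minimum are used. The points achieving the $j$ smallest trajectory values all lie in $e^{r'_j}\cC_\uxi(\uq)$, and any $j$ of the three independent vectors are independent. Minkowski's second theorem is invoked only through the already stated estimate \eqref{intro:eq:sumLxi}. Its constant depends only on $\uxi$ because the volume of $\cC_\uxi(\uq)$ is exactly $8e^{-q_1-q_2}$.
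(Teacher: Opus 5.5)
Your proof is correct and uses the same two ingredients as the paper: the bound $L_{\uxi,j}(\uq)\le$ the $j$-th coordinate of $\Phi(L_{\ux_1}(\uq),L_{\ux_2}(\uq),L_{\ux_3}(\uq))$, which comes from the definition of the minima, and the Minkowski estimate \eqref{intro:eq:sumLxi} combined with $p_1+p_2+p_3=q_1+q_2$ for the lower bound. The only difference is bookkeeping: the paper passes through $\up'=(L_{\ux_j}(\uq))_j$ and combines the triangle inequality with the $1$-Lipschitz property of $\Phi$, ending at $5\delta+2c$, whereas you use the coordinatewise monotonicity of $\Phi$ to compare directly with $\Phi(\up)$, which gives the sharper bound $2\delta+c$.
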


\begin{proof}
Set $\up'=(L_{\ux_1}(\uq),L_{\ux_2}(\uq),L_{\ux_3}(\uq))$.  By definition of
$\uL_\uxi(\uq)$, the difference $\Phi(\up')-\uL_\uxi(\uq)$ has non-negative 
coordinates.  Thus, its norm is bounded above by the sum of its coordinates.
Using \eqref{intro:eq:sumLxi} and the first part of \eqref{tool:lemma2:eq1}, 
this gives
\begin{align*}
 \norm{\Phi(\up')-\uL_\uxi(\uq)}
  &\le \sum_{j=1}^3 L_{\ux_j}(\uq) - \sum_{j=1}^3 L_{\uxi,j}(\uq) \\
  &\le \sum_{j=1}^3 L_{\ux_j}(\uq) - (q_1+q_2) + c
  = \sum_{j=1}^3 (L_{\ux_j}(\uq) - p_j) + c,
\end{align*}
for a constant $c=c(\uxi)\ge 0$.  By the second part of \eqref{tool:lemma2:eq1}, 
this implies that 
\[
 \norm{\Phi(\up')-\uL_\uxi(\uq)}\le 3\delta+c
 \et
 -2\delta-c \le L_{\ux_j}(\uq) - p_j \le \delta  \quad \text{for $j=1,2,3$.} 
\]
Thus, $\norm{\Phi(\up)-\Phi(\up')} \le \norm{\up-\up'} \le 2\delta+c$, and so 
$\big\|\uL_\uxi(\uq)-\Phi(\up)\big\| \le 5\delta +2c$. 
\end{proof}

The next statement provides an estimate for $\uL_\uxi$ outside the angular 
sector $\cD$ defined by \eqref{intro:eq:D} when $\xi_1$ and $\xi_2$ are badly 
approximable, as illustrated in Figure~\ref{tool:fig1}.  Recall that a real 
number $\xi$ is called \emph{badly approximable} when there exists a 
constant $c=c(\xi)>0$ such that $|x_0\xi-x_1|\ge c^{-1}|x_0|^{-1}$ for any 
$(x_0,x_1)\in\bZ^2$ with $x_0\neq 0$.

\begin{lemma}
\label{tool:lemma3}
Let $\uq=(q_1,q_2)\in\bR^2$.  
\begin{itemize}
\item[(i)] If $q_1\le 0$ and $q_2\le 0$, then \  
$\uL_\uxi(\uq)=\Phi(0,q_1,q_2)+ \cO_\uxi(1)$.
\medskip
\item[(ii)] Suppose that $\xi_1$ is badly approximable.  
   If $q_1\ge \max\{0,2q_2\}$, then
\[
\uL_\uxi(\uq)=(q_2,q_1/2,q_1/2)+ \cO_\uxi(1).
\]
\item[(iii)] Suppose that $\xi_2$ is badly approximable.  
   If $q_2\ge \max\{0,2q_1\}$, then
\[
\uL_\uxi(\uq)=(q_1,q_2/2,q_2/2)+ \cO_\uxi(1).
\]
\end{itemize} 
\end{lemma}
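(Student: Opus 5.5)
Each part follows from Lemma~\ref{tool:lemma2} once we choose three linearly independent integer points $\ux_1,\ux_2,\ux_3$ and a point $\up$ with $p_1+p_2+p_3=q_1+q_2$ satisfying $L_{\ux_j}(\uq)\le p_j+\delta$, with $\delta$ a constant depending only on $\uxi$. Since $\Phi(\up)$ is the claimed value, the lemma then gives the estimate with error $5\delta+c_1=\cO_\uxi(1)$.

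\emph{Part (i).} Take the standard basis $\ue_1=(1,0,0)$, $\ue_2=(0,1,0)$, $\ue_3=(0,0,1)$ and $\up=(0,q_1,q_2)$. By \eqref{tool:eq:Lx}, $L_{\ue_2}(\uq)=q_1$ and $L_{\ue_3}(\uq)=q_2$. Also
\[
L_{\ue_1}(\uq)=\max\{0,\;q_1+\log|\xi_1|,\;q_2+\log|\xi_2|\}.
\]
Because $q_1,q_2\le0$, this is at most $\max\{0,\log|\xi_1|,\log|\xi_2|\}$, which is a constant $\delta$ depending only on $\uxi$. Lemma~\ref{tool:lemma2} then gives the result.

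\emph{Part (ii).} Here $\xi_1$ is badly approximable and $q_1\ge\max\{0,2q_2\}$. Take $\up=(q_2,q_1/2,q_1/2)$, whose coordinates sum to $q_1+q_2$ as required.
\begin{itemize}
\item For the point with $p_1=q_2$, use $\ux_3=\ue_3$, so that $L_{\ue_3}(\uq)=q_2$.
\item For the other two points, apply Dirichlet's theorem with parameter $X=e^{q_1/2}\ge1$. It gives $(x_0,x_1)\in\bZ^2$ with $1\le x_0\le X$ and $|x_0\xi_1-x_1|\le X^{-1}$. Then choose $x_2$ with $|x_0\xi_2-x_2|\le1/2$.
\item For $\ux=(x_0,x_1,x_2)$ this gives $\log|x_0|\le q_1/2$ and $q_1+\log|x_0\xi_1-x_1|\le q_1/2$. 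Also $q_2+\log|x_0\xi_2-x_2|\le q_2\le q_1/2$. So $L_\ux(\uq)\le q_1/2$.
\end{itemize}

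\emph{Main obstacle: the third point.} We need a point $\uy$, independent of $\ux$ and $\ue_3$, that also satisfies $L_\uy(\uq)\le q_1/2+\delta$. Equivalently, the projections of $\ux$ and $\uy$ to the first two coordinates must be independent, so we need two independent good approximations to $(1,\xi_1)$ at scale $X$. I would use Minkowski's second theorem for the planar convex body
\[
\{(x_0,x_1)\,;\,|x_0|\le X,\ |x_0\xi_1-x_1|\le X^{-1}\},
\]
which has area of order $1$. Badly approximable means its first minimum $\lambda_1$ is bounded below by a constant $c(\xi_1)>0$. Indeed, a point in $\lambda$ times the body with $x_0\ne0$ satisfies $c^{-1}|x_0|^{-1}\le|x_0\xi_1-x_1|\le\lambda X^{-1}$ and $|x_0|\le\lambda X$, which forces $\lambda^2\ge c^{-1}$. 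If $x_0=0$, then $|x_1|\le\lambda X^{-1}$ with $x_1\neq0$, forcing $\lambda\ge X\ge1$. Since $\lambda_1\lambda_2\ll1$, we get $\lambda_2=\cO_{\xi_1}(1)$.

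This yields two independent $(y_0,y_1)$ with $|y_0|\ll X$ and $|y_0\xi_1-y_1|\ll X^{-1}$. Completing each with a nearest integer $y_2$ as before gives two points whose trajectories are at most $q_1/2+\cO(1)$, again using $q_2\le q_1/2$. Together with $\ue_3$ they are linearly independent, so Lemma~\ref{tool:lemma2} applies.

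\emph{Part (iii).} This is symmetric to (ii), exchanging the roles of the indices $1$ and $2$ and using $\ue_2$ in place of $\ue_3$.
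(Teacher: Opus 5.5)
Your proof is correct and is essentially the paper's argument. Part (i) uses the standard basis with Lemma~\ref{tool:lemma2} exactly as you do. In part (ii) the paper likewise combines Minkowski's second theorem for the planar problem (written there as $t_1+t_2=q_1+\cO_{\xi_1}(1)$ with $(t_1,t_2)=\uL_A(0,q_1)$) with bad approximability, which gives two independent points at level $q_1/2+\cO_{\xi_1}(1)$; it then lifts them with a nearest-integer third coordinate and adds $(0,0,1)$. The only difference is cosmetic: your preliminary Dirichlet point is superfluous, since the Minkowski step already supplies both points.
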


\begin{proof}
For the basis $\{\ue_1=(1,0,0),\, \ue_2=(0,1,0),\, \ue_3=(0,0,1)\}$ 
of $\bZ^3$, we find 
\[
 L_{\ue_1}(\uq) = \max\{ 0, q_1+\log|\xi_1|, q_2+\log|\xi_2| \}, \quad
 L_{\ue_2}(\uq) = q_1, \quad
 L_{\ue_3}(\uq) = q_2.
\]
If $q_1\le 0$ and $q_2\le 0$, we have $L_{\ue_1}(\uq)=\cO_\uxi(1)$, 
and Lemma~\ref{tool:lemma2} yields
\[
 \norm{\uL_\uxi(\uq)-\Phi(0,q_1,q_2)}\le 5|L_{\ue_1}(\uq)|+c_1 = \cO_\uxi(1),
\]
which proves (i).

Suppose that $\xi_1$ is badly approximable and that 
$q_1\ge \max\{0, 2q_2\}$.  Set
\begin{equation}
\label{tool:lemma3:eq5}
 (t_1,t_2)=\uL_A(0,q_1)
 \quad\text{where}\quad
 A=\matrice{1}{0}{\xi_1}{-1}.
\end{equation}
Then, choose linearly independent points $(x_{1,0},x_{1,1})$, 
$(x_{2,0},x_{2,1})$ in $\bZ^2$ such that, for $j=1,2$,
\[
 |x_{j,0}| \le \exp(t_j) \et |x_{j,0}\xi_1-x_{j,1}| \le \exp(t_j-q_1). 
\]
Since $\xi_1$ is badly approximable, these estimates imply that 
$t_j \ge q_1/2+\cO_{\xi_1}(1)$.    
As Minkowski's inequality \eqref{intro:eq:sumLA} applied to
\eqref{tool:lemma3:eq5} gives $t_1+t_2=q_1+\cO_{\xi_1}(1)$,
we deduce that
\[
 t_j=q_1/2+\cO_{\xi_1}(1) \quad\text{for $j=1,2$.}
\]
Finally, for $j=1,2$, set $\ux_j=(x_{j,0},x_{j,1},x_{j,2})$ 
for an integer $x_{j,2}$ such that 
\[
 |x_{j,0}\xi_2-x_{j,2}| \le 1\le \exp(q_1/2-q_2),
\]
and set $\ux_3=(0,0,1)$.  Then $\ux_1$, $\ux_2$ and $\ux_3$ 
are linearly independent points of $\bZ^3$ with
\[
 L_{\ux_j}(\uq) \le \max\{t_j, q_1/2\} \le q_1/2+\cO_{\xi_1}(1)
 \quad\text{for $j=1,2$.}
\]  
As $L_{\ux_3}(\uq)=q_2$, Lemma~\ref{tool:lemma2} yields
$\norm{\uL_\uxi(\uq)-\Phi(q_1/2,q_1/2,q_2)}=\cO_\uxi(1)$.  
This proves (ii).  The proof of (iii) is similar.
\end{proof}

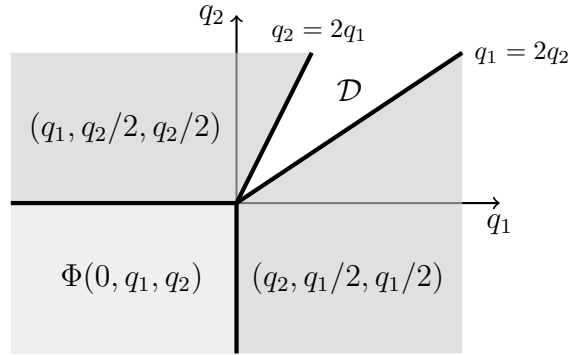
\begin{figure}[h]
\begin{tikzpicture}[scale=0.5]
       \draw[->, thick, black] (-6,0)--(7,0) node[below]{$q_1$};
       \draw[->, thick, black] (0,-4)--(0,5) node[left]{$q_2$};
       \fill[thick,fill=gray!30,opacity=0.4] (-6,0)--(0,0)--(0,-4)--(-6,-4);
       \fill[thick,fill=gray!40,opacity=0.6] (0,0)--(6,4)--(6,-4)--(0,-4)--(0,0);
       \fill[thick,fill=gray!40,opacity=0.6] (0,0)--(2,4)--(-6,4)--(-6,0)--(0,0);
       \draw[-, ultra thick] (-6,0)--(0,0)--(2,4);
       \draw[-, ultra thick, font=\footnotesize] (0,-4)--(0,0)--(6,4) 
              node[right]{$q_1=2q_2$};
       \node[above, font=\footnotesize] at (2.2,4){$q_2=2q_1$};
       \draw node[left] at (-0.6,-2) {$\Phi(0,q_1,q_2)$};
       \draw node[right] at (0.1,-2) {$(q_2,q_1/2,q_1/2)$};
       \draw node[left] at (-0.1,2) {$(q_1,q_2/2,q_2/2)$}; 
       \draw node at (3,3) {$\cD$};            
\end{tikzpicture}
\caption{The map $\uL_\uxi$ up to bounded difference outside of $\cD$, 
 when $\xi_1$ and $\xi_2$ are badly approximable}
\label{tool:fig1}
\end{figure}

Littlewood's conjecture states that, for any choice of $\xi_1,\xi_2\in\bR$ and 
any $\epsilon>0$, there exists $\ux=(x_0,x_1,x_2)\in\bZ^3$ with $x_0\neq 0$ 
such that 
\[
 |x_0|\,|x_0\xi_1-x_1|\,|x_0\xi_2-x_2| \le \epsilon.
\]
We leave the reader check that, in the present setting, an equivalent 
formulation of the conjecture is that there is no pair of badly approximable
numbers $\xi_1,\xi_2\in\bR$ such that the point $\uxi=(1,\xi_1,\xi_2)\in\bR^3$
satisfies
\[
 \uL_\uxi(\uq)=\frac{q_1+q_2}{3}(1,1,1)+\cO_\uxi(1)
 \quad\text{for each $\uq=(q_1,q_2)\in\cD$.}
\]
Although, the construction that we present in the next section is far from 
producing a counterexample, it could eventually inspire one that does so.   
In this respect, we stress that, for each integer $n\ge 2$, there are matrices 
$A\in\GL_n(\bR)$ such that 
\[
 \uL_A(\uq)=\frac{q_1+\cdots+q_n}{n}(1,\dots,1)+\cO_A(1)
 \quad\text{for each $\uq=(q_1,\dots,q_n)\in\bR^n$,}
\]
and so the problem disappears for matrices (this follows for example from 
\cite[Section~4.2, Theorem~1]{GL1987}).

%
%

\section{Notation and main results}
\label{sec:main}


\subsection{Monoid of words} 
\label{main:ssec:E*}
Let $E$ be a non-empty set.  We denote by $E^*$ the monoid of words 
on the alphabet $E$, namely the set of finite (possibly empty) sequences 
of elements of $E$, with product given by concatenation of words.  Its 
neutral element is the empty word $\epsilon$.  Given words $u,w\in E^*$,
we say that $u$ is a \emph{prefix} of $w$ and write $u\le w$ if $w=uv$ for some 
$v\in E^*$.  More generally, we say that $u$ is a \emph{factor} of $w$ 
if $w=vuv'$ for some $v,v'\in E^*$.
We denote by $[u,w]$ the set of words $v\in E^*$ with
$u\le v\le w$.   We write $u<w$ if $u\le w$ and $u\neq w$.  Using 
standard convention, we also denote by $]u,w]$, $[u,w[$ 
and $]u,w[$ the sets obtained by removing respectively $u$, $w$ and 
$\{u,w\}$ from $[u,w]$.  The reverse of a non-empty word $a_1a_2\cdots a_n$ 
of $E^*$ is defined as the word $a_n\cdots a_2a_1$ with letters written in 
reverse order.  The reverse of $\epsilon$ is itself.  Thus $\epsilon$ is an
example of palindrome in $E^*$, that is a word of $E^*$ which coincides 
with its reverse.


\subsection{Fibonacci sequences} 
\label{main:ssec:Fib}
We say that a sequence $(x_i)_{i\ge 1}$ in a monoid $\cM$ is a 
\emph{Fibonacci sequence} if it satisfies $x_{i+2}=x_{i+1}x_i$ for each 
$i\ge 1$.  Such a sequence is uniquely determined by its first two elements 
$x_1$ and $x_2$.  Moreover, if $\varphi\colon\cM\to\cN$ is a morphism 
of monoids, then a Fibonacci sequence $(x_i)_{i\ge 1}$ in $\cM$ yields
a Fibonacci sequence $(\varphi(x_i))_{i\ge 1}$ in $\cN$.

The set $\bN=\{0,1,2,\dots\}$ of non-negative integers is a monoid under 
addition.  We denote by $(F_i)_{i\ge -1}$, the usual Fibonacci sequence
starting with $F_{-1}=0$, $F_0=1$ and obeying $F_{i+2}=F_{i+1}+F_i$ for 
each $i\ge -1$.  It is given in closed form by Binet's formula
\begin{equation}
 \label{main:Binet}
 F_i=(\gamma^{i+1}-(-\gamma)^{-i-1})/\sqrt{5} 
 \quad\text{where}\quad
 \gamma=(1+\sqrt{5})/2.
\end{equation}

Let $E=\{a,b\}$ be an alphabet of two letters.  The Fibonacci sequence 
$(w_i)_{i\ge 1}$ in $E^*$ starting 
with $w_1=a$ and $w_2=ab$, has $w_3=aba$, $w_4=abaab$, etc.  Since 
any word in that sequence is a prefix of the next word, this sequence 
converges pointwise to an infinite word
\begin{equation}
 \label{main:fab}
 f_{a,b}:=w_\infty:=\lim_{i\to\infty} w_i = abaababaabaab\dots
\end{equation}
From now on, we reserve the notation $(w_i)_{i\ge 1}$ for the above 
Fibonacci sequence in $E^*$ and $w_\infty$ for its limit.  We say that
a word $u$ in $E^*$ is a prefix of $w_\infty$ and write $u<w_\infty$, 
if $u\le w_i$ for some $i\ge 1$.  Then we denote by $[u,w_\infty[$ 
(resp.~$]u,w_\infty[$\,) the set of words $v\in E^*$ 
with $u\le v < w_\infty$ (resp.~$u < v < w_\infty$).   We say that
$u$ is a factor of $w_\infty$ if it is a factor of $w_i$ for some $i\ge 1$.

We denote by $\theta\colon E^*\to E^*$ the morphism of monoids 
determined by the conditions $\theta(a)=ab$ and $\theta(b)=a$.  
It satisfies
\begin{equation}
 \label{main:eq:theta}
 \theta(w_i) = w_{i+1}\quad\text{for each $i\ge 1$,}
\end{equation}
and preserves the partial order on $E^*$.   Thus it sends to itself
the set $[\epsilon,w_\infty[$ of prefixes of $w_\infty$.
 
The map which sends a word $w$ to its length $|w|$ yields a 
morphism of monoids from $E^*$ to $\bN$ which restricts to an 
order preserving bijection from $[\epsilon,w_\infty[$ to $\bN$.  It satisfies
\begin{equation}
 \label{main:eq:length}
 |w_i| = F_i \quad\text{for each $i\ge 1$.}
\end{equation}

In general, given any words $u,v$ on any alphabet, we denote by 
$f_{u,v}=uvuuv\dots$ the limit of the Fibonacci sequence starting 
with $(u, uv, uvu, \dots)$, and we call it the \emph{infinite Fibonacci word 
on $(u, v)$}.  


\subsection{The sets $\cV_\ell$} 
\label{main:ssec:V}
Let 
\begin{equation}
 \label{main:F}
 \cF=\{\epsilon, w_1,w_2,w_3,\dots\}=\{\epsilon, a, ab, aba, abaab,\dots\}
\end{equation}
denote the set made of $\epsilon$ and all Fibonacci words $w_i$ with 
$i\ge 1$.  We define a map $\iota$ from $[\epsilon,w_\infty[$ to itself in
the following way.  If a prefix $v$ of $w_\infty$  belongs to $\cF$,
we set $\iota(v)=v$.  Otherwise, we have $v\in\left]w_i,w_{i+1}\right[$ 
for some integer $i\ge 3$ and we define $\iota(v)$ to be the prefix
of $w_\infty$ of length $|v|-2F_{i-2}$.  This makes sense since 
$|v|-2F_{i-2}>F_i-2F_{i-2}=F_{i-3}\ge 1$.  In all cases, we have 
$\iota(v)\le v$ and so, the sequence $(\iota^k(v))_{k\ge 1}$ of iterates 
of $\iota$ at $v$ is eventually constant, equal to some element 
of $\cF$.  We thus obtain a map 
$\alpha\, \colon \left[\epsilon,w_\infty\right[ \to\cF$ by sending $v$ 
to this word:
\begin{equation}
 \label{main:alpha}
 \alpha(v)=\lim_{k\to\infty}\iota^k(v).
\end{equation}
The following table provides the values of the maps $\iota$ and $\alpha$ on 
the first 8 prefixes of $w_\infty$.
\begin{equation}
 \label{main:table}
 \renewcommand{\arraystretch}{1.5}
 \begin{array}{c||c|c|c|c|c|c|c|c|c|c|c|c|c|c}
  v &\epsilon&w_1=a&w_2=ab&w_3=aba&abaa&w_4=abaab&abaaba&abaabab\\
  \hline
  \iota(v) &\epsilon&a&ab&aba&ab&abaab&ab&aba\\
  \hline
  \alpha(v) &\epsilon&a&ab&aba&ab&abaab&ab&aba
\end{array}
\end{equation}

For each integer $\ell\ge 1$, we define
\begin{equation}
 \label{main:eq:V}
 \cV_\ell=\{ v\in \prefixes \,;\, \alpha(v)\ge w_\ell \}.
\end{equation}
This is an infinite set as it contains $w_i$ for each $i\ge \ell$.  We say 
that elements $v_1<v_2<\cdots<v_k$ of $\cV_\ell$ are \emph{consecutive}
in $\cV_\ell$ if, for each index $i$ with $1\le i<k$, there is no element $v$ 
of $\cV_\ell$ with $v_i<v<v_{i+1}$.  Sections~\ref{sec:Vbar} and \ref{sec:V} 
are devoted to the combinatorial properties of the sets $\cV_\ell$.  In 
particular, Corollary~\ref{Vbar:prop2:cor1} shows that, for any consecutive 
elements $u<v$ of $\cV_\ell$ with $\ell\ge 3$, the difference $|v|-|u|$ is 
$F_{\ell-2}$ or $F_{\ell-1}$.  Proposition~\ref{V:prop2} shows that
$\cV_{\ell+1}=\theta(\cV_\ell)$ for each $\ell\ge 4$.


\subsection{The  functions $P_v$} 
\label{main:ssec:Pv}
For each $v\in [\epsilon,w_\infty[$, we define a function $P_v\colon\bR^2\to\bR$
by 
\begin{equation}
 \label{main:Pv}
 P_v(q_1,q_2)=\max\{q_1-|v|, q_2-|\alpha(v)|, |v|\},
\end{equation}
and we denote by $\cA(v)$, $\cB(v)$ and $\cC(v)$ the closed sets made 
of the points $\uq=(q_1,q_2)\in\bR^2$ where $P_v(\uq)$ is respectively 
equal to $q_1-|v|$, $q_2-|\alpha(v)|$ and $|v|$.  In their interior, $P_v$
is differentiable with constant gradient $(1,0)$, $(0,1)$ and $(0,0)$ 
respectively.  Explicitly, these sets are
\begin{itemize}
\item 
 $\cA(v)=\{(q_1,q_2)\in\bR^2\,;\, 
        q_1\ge 2|v| \text{ and } q_2\le q_1-|v|+|\alpha(v)|\}$,
\smallskip
\item 
 $\cB(v)=\{(q_1,q_2)\in\bR^2\,;\, 
    q_2\ge |v|+|\alpha(v)| \text{ and } q_2\ge q_1-|v|+|\alpha(v)|\}$,
\smallskip
\item 
 $\cC(v)=\{(q_1,q_2)\in\bR^2\,;\, 
   q_1\le 2|v| \text{ and } q_2\le |v|+|\alpha(v)|\}$.
\end{itemize} 
They are closed sectors of $\bR^2$ with disjoint interiors and their union is
$\bR^2$.  Figure \ref{main:fig:Pv} shows the three sets together with 
the value of $P_v$ on each of them.  It also shows by an arrow, the gradient 
of $P_v$ in their interior (reduced to a point for $\cC(v)$).   Note in particular 
that 
\begin{equation}
\label{main:eq:A0}
 \cA(\epsilon)=\{(q_1,q_2)\in\bR^2\,;\, q_1\ge 0 \text{ and } q_2\le q_1\}.
\end{equation}

\begin{figure}[h]
\begin{tikzpicture}[xscale=0.7, yscale=0.7] 
       \draw[->, thick, black] (-2,0)--(9,0) node[below]{$q_1$};
       \draw[->, thick, black] (-0.5,-1)--(-0.5,6) node[left]{$q_2$};
       \draw[-, ultra thick] (4,3) -- (-1.5,3) node[left]{$q_2=|v|+|\alpha(v)|$};
       \draw[-, ultra thick] (4,3) -- (4, -0.7) node[below]{$q_1=2|v|$};
       \draw[-, ultra thick] (4,3) -- (7, 6);
       \draw[->,ultra thick] (6,3)--(7,3); 
        \draw[->,ultra thick] (0.5,4.7)--(0.5,5.7);
       \draw node at (6.5,2.3) {$P_v=q_1-|v|$};
       \draw node at (1.9,0.8) {$P_v=|v|$}; 
       \draw node[right] at (0.8,5.2) {$P_v=q_2-|\alpha(v)|$};
       \draw node at (6.5,3.7) {$\cA(v)$};
       \draw node at (2.1,3.9) {$\cB(v)$};
       \draw node at (2.1,2.2) {$\cC(v)$};       
       \node[draw,circle,inner sep=1pt,fill] at (1.9,1.5) {};
\end{tikzpicture}
\caption{The function $P_v$ attached to a prefix $v$ of $w_\infty$.}
\label{main:fig:Pv}
\end{figure}
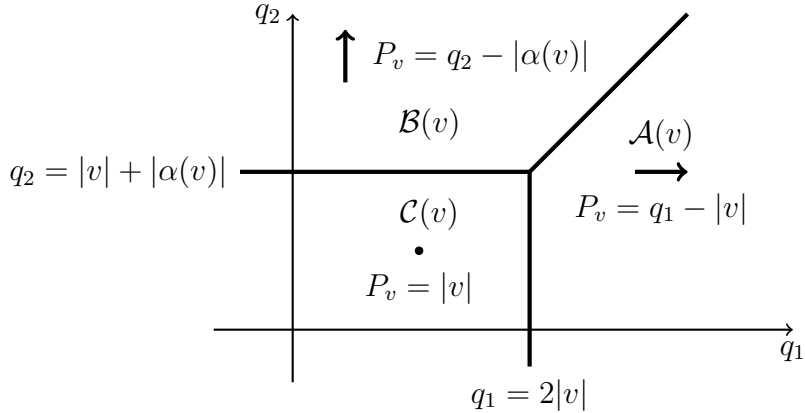


\subsection{The  map $\uP$} 
\label{main:ssec:P}
Using the function $\Phi\colon\bR^3\to\Delta_3$ from section~\ref{sec:tool},
we construct a map $\uP=(P_1,P_2,P_3)\colon\cA(\epsilon)\to\Delta_3$ as 
follows.  For each point $\uq=(q_1,q_2)$ in $\cA(\epsilon)$, we set
\begin{equation}
\label{main:P:eq1}
 \uP(\uq)=\Phi(q_1-|u|,q_2+|u|-|w|,|w|)
\end{equation}
where $u$ is the largest prefix of $w_\infty$ such that $\uq\in \cA(u)$,
and $w$ is the smallest prefix of $w_\infty$ with $u<w$ such that
$\uq\in\cC(w)$.  Such prefixes exist because the condition 
$\uq\in \cA(u)$ requires $2|u|\le q_1$, and if we choose $i\ge 1$
such that $q_2\le q_1\le 2F_i$ then $\uq\in\cC(w_i)=\left]\infty,2F_i\right]^2$.
By choice of $u$ and $w$, we have $\uq\in\cB(v)$ for any word $v$
with $u<v<w$.  In Section~\ref{sec:P}, we will show that $|w|-|u|=F_\ell$
for some integer $\ell\ge 1$ and that, if $\ell>1$, there exists a
unique word $v$ such that $u<v<w$ are consecutive elements of 
$\cV_\ell$ with $|\alpha(v)|=F_\ell=|w|-|u|$ (see Proposition~\ref{P:prop1}).  
In the latter case, we have
\begin{equation}
\label{main:P:eq2}
 \uq\in\cA(u)\cap\cB(v)\cap\cC(w)
 \et  
 \uP(\uq)=\Phi(P_u(\uq),P_v(\uq),P_w(\uq)).
\end{equation}

In Section~\ref{sec:properties}, we view $\uP$ as an example of what we call 
a \emph{integral $2$-parameter $3$-system}.  In particular, 
it is a continuous map whose sum of the components is 
\begin{equation}
\label{main:P:eq3}
 P_1(\uq)+P_2(\uq)+P_3(\uq)=q_1+q_2 \quad 
 \text{for each $\uq=(q_1,q_2)\in\cA(\epsilon)$.}
\end{equation}   
The following additional properties are established in 
sections~\ref{sec:proof1} and \ref{sec:proof2} respectively.

\begin{theorem}
\label{main:P:thm1}
For each integer $k\ge 4$, each 
$\uq=(q_1,q_2)\in\bR^2$ with
\begin{equation}
\label{main:P:thm1:eq1} 
  0\le q_1\le 2F_{k-1}+2F_{k-3} \et 0\le q_2\le \min\{q_1,2F_{k-1}\},
\end{equation}
and each $j=1,2,3$, we have
$P_j(q_1+4F_{k-2},q_2+2F_{k-2}) = P_j(q_1,q_2) + 2F_{k-2}$.
\end{theorem}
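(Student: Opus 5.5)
The plan is to exploit a self-similarity of the prefixes of $w_\infty$ under a length shift of $2F_{k-2}$, and to read off the formula for $\uP$ via \eqref{main:P:eq1}. Write $\uq'=(q_1+4F_{k-2},q_2+2F_{k-2})$. By \eqref{main:P:eq1}, $\uP(\uq)=\Phi(q_1-|u|,q_2+|u|-|w|,|w|)$, where $u$ is the largest prefix with $\uq\in\cA(u)$ and $w$ the smallest prefix beyond $u$ with $\uq\in\cC(w)$. It therefore suffices to show that the corresponding words $u',w'$ for $\uq'$ satisfy
\[
|u'|=|u|+2F_{k-2}\et |w'|=|w|+2F_{k-2}.
\]
Indeed, then each of the three entries increases by exactly $2F_{k-2}$: $(q_1+4F_{k-2})-|u|-2F_{k-2}$, $(q_2+2F_{k-2})+|u|-|w|$ (unchanged plus $2F_{k-2}$), and $|w|+2F_{k-2}$. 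Since $\Phi$ commutes with adding a constant to all coordinates, the result follows.

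The first step is a combinatorial shift lemma. Let $\sigma$ be the map on prefixes adding $2F_{k-2}$ to the length. We want $\alpha(\sigma(v))=\alpha(v)$ for all prefixes $v$ with $|v|$ in the relevant range, roughly $|v|\le F_{k-1}+F_{k-3}$ (half the bound on $q_1$), together with suitable control at the endpoints. The source of this is the definition of $\iota$: for $v\in\left]w_k,w_{k+1}\right[$ one has $\iota(v)$ of length $|v|-2F_{k-2}$. So for $F_k<|v|+2F_{k-2}<F_{k+1}$, that is $F_{k-3}<|v|<F_{k-1}$, we get $\iota(\sigma v)=v$ and hence $\alpha(\sigma v)=\alpha(v)$.

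The edge cases need separate checks:
\begin{itemize}
\item $|\sigma v|=F_k$, i.e.\ $|v|=F_{k-3}$;
\item $|\sigma v|=F_{k+1}$, i.e.\ $|v|=F_{k-1}$;
\item $|v|$ between $F_{k-1}$ and $F_{k-1}+F_{k-3}$, where $\sigma v$ lands in $\left]w_{k+1},w_{k+2}\right[$ and $\iota$ subtracts $2F_{k-1}$ instead.
\end{itemize}
I would handle these using the results of Sections~\ref{sec:Vbar}--\ref{sec:V} on the sets $\cV_\ell$, such as gaps between consecutive elements being $F_{\ell-2}$ or $F_{\ell-1}$, and $\cV_{\ell+1}=\theta(\cV_\ell)$. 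The likely outcome is that $\alpha(\sigma v)=\alpha(v)$ can fail there, but only for words $v$ that can never serve as $u$, $w$, or the intermediate $v$ for points in the range \eqref{main:P:thm1:eq1}.

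The second step transports the sectors. Using the explicit descriptions of $\cA,\cB,\cC$, whenever $\alpha(\sigma v)=\alpha(v)$ we have $\uq\in\cA(v)\iff\uq'\in\cA(\sigma v)$. The same equivalence holds for $\cB$: the condition $q_2\ge |v|+|\alpha(v)|$ shifts by $2F_{k-2}$, and $q_2\ge q_1-|v|+|\alpha(v)|$ is invariant. It also holds for $\cC$, since $q_1\le 2|v|$ shifts by $4F_{k-2}$ and the $q_2$-condition by $2F_{k-2}$. In addition, I must show that the prefixes of length less than $2F_{k-2}$ are irrelevant for $\uq'$. Since $q'_1\ge 4F_{k-2}$ and $q'_2\le q'_1-2F_{k-2}$, we get $\uq'\in\cA(\sigma\epsilon)$, so $u'$ has length at least $2F_{k-2}$. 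Hence $u'$ and $w'$ lie in the image of $\sigma$. Conversely, the bound $q_1\le 2F_{k-1}+2F_{k-3}$ forces $|w|\le F_{k-1}+F_{k-3}$ roughly, because $w_i$ with $2F_i\ge q_1$ already belongs to $\cC$. Combining this with maximality and minimality gives $u'=\sigma u$ and $w'=\sigma w$.

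The main obstacle is the first step at the boundary lengths, especially when $w$ is near length $F_{k-1}$ or beyond, where $\sigma w$ falls in the next Fibonacci block. There I would use the constraint $q_2\le 2F_{k-1}$ to show that the $\cB$ and $\cC$ membership decisions are unaffected even if $\alpha$ differs. For example, I would check that $|\alpha|$ only changes by amounts that keep $q_2$-thresholds above $2F_{k-1}$ in both configurations, possibly invoking Proposition~\ref{P:prop1} to pin down the structure $u<v<w$ in $\cV_\ell$.
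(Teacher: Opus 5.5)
Your reduction is sound. In fact one can check that the words $u',w'$ attached to $\uq'$ by \eqref{main:P:eq1} really have lengths $|u|+2F_{k-2}$ and $|w|+2F_{k-2}$. But the proposal does not prove this, and the plan you sketch for the hard part fails in two places.

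\textbf{First gap: the equality $\alpha(\sigma v)=\alpha(v)$.} It can fail when $|v|\in\cFbar$ or $|v|=F_{k-1}+F_{k-3}$; there Lemma~\ref{Vbar:lemma6} only gives $\alphabar(x)\le\alphabar(x+2F_{k-2})$. Contrary to your expectation, such words \emph{do} occur as the canonical $u$ or $w$. For $k=6$ and $\uq=(6,6)$ one gets $u=w_3$ with $\sigma(u)=w_6$, and for $\uq=(5,5)$ one gets $w=w_3$. So your two-sided equivalences for $\cA$ and $\cC$ are unavailable exactly where you need them.

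What does work is the following. The inequality yields $\uq\in\cA(u)\Rightarrow\uq'\in\cA(\sigma u)$ and $\uq\in\cC(w)\Rightarrow\uq'\in\cC(\sigma w)$. Maximality of $u'$ and minimality of $w'$ at the exceptional lengths must instead come from the domain. The condition $q_2\le q_1$ puts $\uq$ in $\cA(v)\cup\cC(v)$ for every $v\in\cF$, and $q_2\le 2F_{k-1}$ does the same when $|v|=F_{k-1}+F_{k-3}$. Hence an exceptional $v$ with $2|v|\le q_1$ satisfies $v\le u$, and an exceptional $v>u$ with $2|v|\ge q_1$ satisfies $v\ge w$.

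\textbf{Second gap: the bound $|w|\le F_{k-1}+F_{k-3}$.} This is false; the Fibonacci words only give $|w|\le F_k$. If $q_1=2F_{k-1}+2F_{k-3}$, then $|u|=F_{k-1}+F_{k-3}$ and $w$ lies outside the range of Lemma~\ref{Vbar:lemma6}. For $k=6$ and $\uq=(22,16)$ one finds $|u|=11$ and $|w|=13=F_6$, whereas $\alphabar(23)=3$. So $\alpha$ drops under the shift, and $\uq'=(42,26)\in\cC(\sigma w)$ holds with zero slack. This needs a separate computation, which your proposal only promises. For instance, one uses $\iotabar(y)=\iotabar(y+2F_{k-2})$ for $F_{k-1}+F_{k-3}<y<F_k$. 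At $y=F_k$ one combines $\alphabar(F_k+2F_{k-2})\ge F_{k-3}$ with $q_2\le 2F_{k-1}=F_k+F_{k-3}$.

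\textbf{How the paper avoids this.} It never uses the canonical words.
\begin{itemize}
\item It writes the domain as $\trap(\epsilon,w_{k-2})\cup\trap(w_{k-2},w_{k-1})\cup\trap(w_{k-1},w_{k-1}w_{k-3})$. It then picks $\cR\in S_\ell$ with $\ell\le k-3$ and $\uq\in\cR$ inside one of these trapezes (Propositions~\ref{P:prop4} and~\ref{P:prop3}). All words of $\cR$ then lie in $[\epsilon,w_{k-1}w_{k-3}]$.
\item Proposition~\ref{P:prop5} gives $\uP$ on the whole closed polygon in terms of the words $u,w$ with $\pi_1(\cR)=[q_1(u),q_1(w)]$.
\item With $\up=(4F_{k-2},2F_{k-2})$, the inequality of Lemma~\ref{Vbar:lemma6} gives the one-sided inclusions $\up+\cA(u)\subseteq\cA(u')$ and $\up+\cC(w)\subseteq\cC(w')$.
\item The equality case of that lemma applies to the middle word $v\notin\cF$ and gives $\up+\cB(v)=\cB(v')$.
\item Proposition~\ref{Vbar:prop1} shows that $u'<v'<w'$ are consecutive in $\cV_\ell$. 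Hence $\up+\cR\subseteq\cR'\in S_\ell$, and Proposition~\ref{P:prop5} applied to $\cR'$ finishes the proof.
\end{itemize}
For instance, $(22,16)$ is handled through the cell with word lengths $(9,10,11)$, so the troublesome pair $(11,13)$ never arises. Since the formula of Proposition~\ref{P:prop5} holds on closed polygons, one-sided inclusions suffice and no maximality or minimality has to be checked.
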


\begin{theorem}
\label{main:P:thm2}
For each $\uq\in\cA(\epsilon)$, we have
$\norm{\uP(\gamma\uq) - \gamma\uP(\uq)} \le 40$.
\end{theorem}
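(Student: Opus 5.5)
The plan is to compare $\uP(\gamma\uq)$ with $\gamma\uP(\uq)$ through the substitution $\theta$, which multiplies lengths of Fibonacci words by roughly $\gamma$. For a prefix $v$ of $w_\infty$ we have $|\theta(v)| = \gamma|v| + \cO(1)$. This holds because $|\theta(v)|$ is the number of letters of $\theta(v)$, namely $|v|$ plus the number of $a$'s in $v$. By the balance property of the Fibonacci word, the number of $a$'s in a prefix of length $n$ is $n/\gamma + \cO(1)$, with error strictly less than $1$. So $|\theta(v)| - \gamma|v|$ is bounded by $1$. In addition, $\alpha$ should commute with $\theta$ on the relevant words, i.e.\ $\alpha(\theta(v)) = \theta(\alpha(v))$, at least for $v$ outside a small initial range. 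This should follow from Proposition~\ref{V:prop2}, which says $\cV_{\ell+1}=\theta(\cV_\ell)$ for $\ell\ge 4$, together with $\theta(w_i)=w_{i+1}$. Since $\alpha(v)\in\cF$, we get $|\alpha(\theta(v))| = F_{i+1}$ when $|\alpha(v)|=F_i$, and $F_{i+1}-\gamma F_i$ is bounded, indeed tending to $0$. Consequently $P_{\theta(v)}(\gamma\uq) = \gamma P_v(\uq) + \cO(1)$ uniformly, and the sets $\cA,\cB,\cC$ are mapped approximately onto each other, each defining inequality being perturbed by a bounded amount.

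The key steps, in order, are as follows.

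First, take $\uq\in\cA(\epsilon)$ and let $u<v<w$ be the words attached to $\uq$ by Proposition~\ref{P:prop1}, so that \eqref{main:P:eq2} holds. The degenerate case $\ell=1$ is handled separately and directly.

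Second, consider the candidate triple $\theta(u)<\theta(v)<\theta(w)$. By Proposition~\ref{V:prop2} these are consecutive in $\cV_{\ell+1}$, and $|\theta(w)|-|\theta(u)| = F_{\ell+1} = |\alpha(\theta(v))|$. Up to bounded error, $\gamma\uq$ lies in $\cA(\theta(u))\cap\cB(\theta(v))\cap\cC(\theta(w))$.

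Third, rather than proving that these are exactly the words that define $\uP(\gamma\uq)$, which fails near boundaries, use a Lipschitz/robustness argument. Evaluate the formula \eqref{main:P:eq1} at a nearby point $\uq'$ with $\norm{\uq'-\gamma\uq}$ bounded, chosen so that $\theta(u),\theta(v),\theta(w)$ are the defining words for $\uq'$. Alternatively, show that whenever $\uq\in\cA(u)\cap\cB(v)\cap\cC(w)$ with $u<v<w$ consecutive in $\cV_\ell$ and $|w|-|u|=|\alpha(v)|$, we have $\uP(\uq)=\Phi(P_u(\uq),P_v(\uq),P_w(\uq))$ up to a bounded error. This is the kind of statement that the system property \eqref{main:P:eq3} and continuity of $\uP$ should supply.

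Fourth, conclude using that $\Phi$ is $1$-Lipschitz \eqref{tool:eq:growthPhi} and that $\uP$ is continuous with bounded slope, which gives
\[
\norm{\uP(\gamma\uq)-\gamma\uP(\uq)} \le \max\big\{|P_{\theta(x)}(\gamma\uq)-\gamma P_x(\uq)| \,;\, x\in\{u,v,w\}\big\} + \text{(boundary error)}.
\]
Then track the constants to reach $40$.

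I expect the main obstacle to be the third step. The words selected in the definition of $\uP(\gamma\uq)$ (the largest $u'$ with $\gamma\uq\in\cA(u')$, and so on) need not be exactly $\theta(u)$, $\theta(w)$, because the image inequalities hold only up to $\cO(1)$, and the small initial words, where Proposition~\ref{V:prop2} requires $\ell\ge4$, behave irregularly. I would first try to prove a local stability lemma: if $\uq$ is moved by at most a constant, the defining triple changes only to an adjacent triple in $\cV_\ell$, and $\uP$ changes by at most that constant. This is essentially the Lipschitz property of $\uP$, obtained from its piecewise-linear structure with gradients in $\{(1,0),(0,1),(0,0)\}$. Small values of $\ell$ and small $\norm{\uq}$ would be absorbed into the constant by direct inspection.
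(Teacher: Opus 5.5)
\textbf{The case $\ell\ge 4$.} Here your plan is the paper's own argument (the last case of its proof). Set $\cR=\cell(u,v,w)\in S_\ell$ and $\cR'=\cell(\theta(u),\theta(v),\theta(w))\in S_{\ell+1}$. One shows $\gamma\cR\subseteq[-8,8]^2+\cR'$ and picks $\uq'\in\cR'$ with $\norm{\gamma\uq-\uq'}\le 8$. Then the $1$-Lipschitz property of $\uP$ (Lemma~\ref{properties:lemma2}) is combined with the exact formula of Theorem~\ref{P:thm}. That formula holds on the whole closed cell $\cR'$, so there is no need to make $\theta(u),\theta(v),\theta(w)$ the defining words for $\uq'$.

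Your balance-property proof that $\big||\theta(v)|-\gamma|v|\big|<1$ is a fine substitute for Lemma~\ref{proof2:lemma1}.

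The proposed ``local stability lemma'' is not needed, and as stated it is doubtful: a bounded displacement can cross several cells of low level, since cells of $S_1$ have width $2$. The Lipschitz property alone suffices.

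What you do need to justify in step three is the inclusion $\gamma\cR\subseteq[-8,8]^2+\cR'$. Being within bounded distance of each of $\cA(\theta(u))$, $\cB(\theta(v))$, $\cC(\theta(w))$ separately does not by itself put $\gamma\uq$ near their intersection. It holds here for two reasons:
\begin{itemize}
\item $\theta$ preserves the order of $\alpha(u)$ and $\alpha(w)$, so $\gamma\cR$ and $\cR'$ have the same shape.
\item Corresponding vertices differ by a bounded amount, by Lemmas~\ref{proof2:lemma1} and~\ref{proof2:lemma2}.
\end{itemize}
Taking convex hulls then gives the inclusion.

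\textbf{The genuine gap: small $\ell$.} Cells of level $\ell\le 3$ are neither finitely many nor confined to a bounded region. By Proposition~\ref{P:prop3} they tile $\layer(4)$, which contains all of $\{q_2\le q_1/2\}\cap\cA(\epsilon)$ together with an unbounded strip above the line $q_2=q_1/2$. So ``direct inspection'' cannot absorb them into a constant. Moreover, the $\theta$-transport is unavailable there, since Proposition~\ref{V:prop2} fails for $\ell=2,3$ (see the remark following it). Your ``$\ell=1$ handled directly'' has the same problem: you would still have to say where $\gamma\uq$ lands and what $\uP$ is there.

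The missing idea is that on $\layer(4)$ the map $\uP$ is uniformly close to the \emph{homogeneous} map $(q_2,q_1/2,q_1/2)$, for which $\gamma$-equivariance is exact. The argument splits in two:
\begin{itemize}
\item If $q_2\le q_1/2$, then $\uq\in\trap(u,v)$ for consecutive $u<v$ with $|v|-|u|=1$. Theorem~\ref{P:thm} gives $\norm{\uP(\uq)-(q_2,q_1/2,q_1/2)}\le 1$ (Lemma~\ref{proof2:lemma3}). This applies to both $\uq$ and $\gamma\uq$ and yields the bound $1+\gamma$.
\item If $q_2>q_1/2$ but $\uq\in\layer(4)$, then $\uq\in\trap(u,v)$ with $u<v$ consecutive in $\{\epsilon\}\cup\cV_4$. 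Corollaries~\ref{Vbar:prop2:cor1} and~\ref{Vbar:prop2:cor2} give $|v|-|u|\le 3$ and $\min\{|\alpha(u)|,|\alpha(v)|\}\le 8$, hence $q_2-q_1/2\le 11$. Comparing $\uq$ and $\gamma\uq$ with $(q_1,q_1/2)$ and $\gamma(q_1,q_1/2)$ through the Lipschitz property gives $1+23\gamma\le 40$.
\end{itemize}
This second case, not the $\theta$-case (which only gives $20$), is what produces the constant $40$. Without it, your final ``track the constants'' step cannot be completed.
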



\subsection{Main result} 
\label{main:ssec:extremal}
For any matrix $A$ with real coefficients, we denote by $\|A\|$ the maximum 
of the absolute values of its entries.  This agrees with our convention for vectors 
in $\bR^n$, for any positive integer $n$.

We say that a real number $\xi$ is \emph{extremal of $\GL_2(\bZ)$-type} if there
exist an unbounded  Fibonacci sequence of matrices $(W_i)_{i\ge 1}$ 
in $\GL_2(\bZ)$, a matrix $M\in\GL_2(\bQ)$ with ${}^tM\neq \pm M$, and 
a constant $c\ge 1$ which, for each $i\ge 1$, satisfy the following properties:
\begin{itemize}
\item[(E1)] the product $\ux_i:=W_iM_i^{-1}$ is a symmetric matrix, where
  $\disp M_i=\begin{cases} M &\text{if $i$ is even,}\\ {}^tM &\text{if $i$ is odd;}
   \end{cases}$ \\[-5pt]
\item[(E2)] $c^{-1}\|W_{i+1}\|\,\|W_i\| \le \|W_{i+2}\| \le  c\|W_{i+1}\|\,\|W_i\|$; \\[-5pt] 
\item[(E3)] $c^{-1}\|W_i\|^{-1} \le \|(\xi,-1)W_i\| \le c\|W_i\|^{-1}$. \\[-5pt]
\end{itemize}
According to \cite[Theorem~2.2]{R2008}, these numbers $\xi$ are simply those
whose continued fraction expansion coincides, up to its first terms, with an 
infinite Fibonacci word $f_{u,v}$ on two non-commuting 
word $u$ and $v$ in $(\bN\setminus\{0\})^*$.   Thus, they are badly 
approximable and their set is stable under 
the action of $\GL_2(\bZ)$ on $\bR\setminus\bQ$ by fractional linear
transformations.  Note that we may always choose $M$ with relatively prime 
integer coefficients.  Then the products $\det(M)\ux_i$ also have relatively
prime integer coefficients.

For each integer $m\ge 1$, we denote by $\cE_m$ the set of real numbers 
$\xi$ such that the properties (E1)--(E3) hold for an unbounded Fibonacci 
sequence $(W_i)_{i\ge 1}$ in $\GL_2(\bZ)$ and the choice of 
\begin{equation}
\label{main:extremal:eq:M}
 M=\begin{pmatrix} m &1\\ -1 &0 \end{pmatrix} \in \GL_2(\bZ).
\end{equation}
Then the sequence $(\ux_i)_{i\ge 1}$ defined in (E1) is contained in $\GL_2(\bZ)$.
In \cite[Section~3]{R2003}, it is shown that $\cE_m$ is non-empty for each $m$.
We also denote by $\cE_m^+$ the set of elements of $\cE_m$ associated with a Fibonacci sequence $(W_i)_{i\ge 1}$ in $\SL_2(\bZ)$.  The set $\cE_3^+$ is studied 
in \cite{R2011} in connection with Markoff's theory in Diophantine approximation.
In section~\ref{sec:ex}, we show that it contains for example the number 
\begin{equation}
\label{main:extremal:eq:example}
 \xi=[0,1,1,2,2,1,1,2,2,2,2,1,1,\dots]=[0,\uun,f_{\udeux,\uun}]
\end{equation}
where $\uun=(1,1)$, $\udeux=(2,2)$, and $f_{\udeux,\uun}$ is the infinite
Fibonacci word on $\udeux$ and $\uun$ (as defined in section~\ref{main:ssec:Fib}).
However, by \cite[Lemma~3.4]{R2011}, the set $\cE_m^+$ is empty for $m\neq 3$.
The main result of the present paper is the following.

\begin{theorem}
\label{main:extremal:thm}
Let $\xi\in\cE_m$ for some integer $m\ge 1$, and let $\uxi=(1,\xi,\xi^2)$.  
Then there exist $\rho>0$ and $c>0$ such that
\[
 \norm{ \uL_\uxi(\uq)-\rho\uP(\rho^{-1}\uq) } \le c
\]
for each $\uq\in\cA(\epsilon)$.
\end{theorem}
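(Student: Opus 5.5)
The plan is to realize each piece of $\rho\uP(\rho^{-1}\uq)$ by the trajectory of an explicit integer point attached to a prefix of $w_\infty$. Lemma~\ref{tool:lemma2} then converts these upper bounds into the two-sided estimate.

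\emph{Integer points.} Let $\varphi\colon E^*\to\GL_2(\bZ)$ be the monoid morphism with $\varphi(a)=W_1$ and $\varphi(b)=W_1^{-1}W_2$. Then $\varphi(w_i)=W_i$ by the Fibonacci recurrence, up to the usual reordering convention $W_{i+2}=W_{i+1}W_i$, which I would check and fix.

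For each prefix $v<w_\infty$, set $W_v=\varphi(v)$. From (E2), (E3) and the prefix structure I expect $\log\|W_v\|=\rho|v|+\cO(1)$ and $\log\|(\xi,-1)W_v\|=-\rho|v|+\cO(1)$, with $\rho=\lim \log\|W_i\|/F_i$. This limit exists by (E2), which makes $\log\|W_i\|-\rho F_i$ bounded. Establishing this for arbitrary prefixes, not only for the $w_i$, would be the first lemma. It should come from a Zeckendorf-type factorization of $v$ into decreasing $w_i$'s, combined with the hyperbolic contraction estimates implicit in (E3).

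To each $v$ I would attach a symmetric integer matrix $\ux_v$, in the spirit of (E1): roughly $W_vM_v^{-1}$, corrected by a factor $\varphi(\cdot)$ built from $\alpha(v)$. I would identify $\ux_v=\matrice{x_0}{x_1}{x_1}{x_2}$ with the point $(x_0,x_1,x_2)\in\bZ^3$. For symmetric $X$ one has $|x_0\xi-x_1|\asymp\|(\xi,-1)X\|$ and $|x_0\xi^2-x_2|\lesssim\|X\|\,\|(\xi,-1)X\|$, with a sharper bound when $X=W M W'^{\,t}$ is a product of two contracted factors. The goal is
\[
L_{\ux_v}(\uq)\le \max\{q_1-\rho|v|,\ q_2-\rho|\alpha(v)|,\ \rho|v|\}+\cO(1)=\rho P_v(\rho^{-1}\uq)+\cO(1).
\]
Equivalently, I need $\log|x_0|\le\rho|v|$, $\log|x_0\xi-x_1|\le -\rho|v|$ and $\log|x_0\xi^2-x_2|\le-\rho|\alpha(v)|$, all up to $\cO(1)$.

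The third inequality is where the recursive definition of $\alpha$ through $\iota$, which subtracts $2F_{i-2}$, should enter. Cancellation in the second-order term is measured by the largest Fibonacci word $w$ such that $v$ is built from translates of $w$ compatibly with the symmetry $M\leftrightarrow{}^tM$. This is the step I expect to be the main obstacle. It requires translating the combinatorics of $\cV_\ell$ (Proposition~\ref{V:prop2}, Corollary~\ref{Vbar:prop2:cor1}) into matrix identities. Concretely, I would prove by induction on $\ell$, using $\cV_{\ell+1}=\theta(\cV_\ell)$ and $\varphi\circ\theta\approx$ conjugation by $W_1$, that $\ux_v$ can be chosen so that $|x_0\xi^2-x_2|\ll \exp(-\rho|\alpha(v)|)$. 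The base cases $\ell\le4$ would be checked by hand.

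\emph{Assembling.} Fix $\uq\in\cA(\epsilon)$ and put $\uq'=\rho^{-1}\uq$, which also lies in $\cA(\epsilon)$ since that set is a cone. Take $u,w$ as in \eqref{main:P:eq1} for $\uq'$.

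If $\ell>1$, Proposition~\ref{P:prop1} provides $v$ with $u<v<w$ consecutive in $\cV_\ell$. By \eqref{main:P:eq2}, $\rho\uP(\uq')=\Phi(\rho P_u(\uq'),\rho P_v(\uq'),\rho P_w(\uq'))$, and the three points $\ux_u,\ux_v,\ux_w$ satisfy $L_{\ux_\bullet}(\uq)\le\rho P_\bullet(\uq')+\cO(1)$. I would show these points are linearly independent. For three consecutive-type prefixes, the determinant of the corresponding symmetric matrices should be a nonzero integer by a trace or determinant computation using $\det W=\pm1$ and ${}^tM\ne\pm M$; this is where that hypothesis is used. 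The $p_j=\rho P_\bullet(\uq')$ sum to $q_1+q_2$ by \eqref{main:P:eq3}, so Lemma~\ref{tool:lemma2} gives the bound with $\delta=\cO(1)$.

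If $\ell=1$, then $|w|=|u|+1$. I would instead take $\ux_u$, $\ux_w$ and a third point, for example $\ux_{u'}$ for the predecessor of $u$ or a neighbouring prefix. I would check directly that it satisfies the required bound, which should reduce to the $\cC$/$\cA$ coordinates only.

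Finally, the constant in Lemma~\ref{tool:lemma2} and all the $\cO(1)$ terms depend only on $\xi$, which gives $c$.
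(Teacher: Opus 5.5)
Your overall architecture is the paper's: attach integer points to $u<v<w$, bound their trajectories by $\rho P_u,\rho P_v,\rho P_w$ up to $\cO(1)$, prove independence, and conclude with Lemma~\ref{tool:lemma2}. However, the two inputs that carry the difficulty are left unproved, and the mechanisms you suggest would not supply them.

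\textbf{The bound on the third coordinate.} You need $\log|x_0(v)\xi^2-x_2(v)|\le-\rho|\alpha(v)|+\cO(1)$ uniformly. This cannot come from ``$\varphi\circ\theta\approx$ conjugation by $W_1$''.
\begin{itemize}
\item The map $\varphi\circ\theta$ is the morphism $a\mapsto W_2$, $b\mapsto W_1$, and $\norm{\varphi(\theta(v))}\asymp\exp(\rho\gamma|v|)$. Conjugation by a fixed matrix changes norms only by a bounded factor.
\item $\cV_4$ is infinite, so there is no finite base case to check by hand.
\item An induction that loses a fixed factor per level gives no uniform constant.
\end{itemize}
No $\alpha$-dependent correction of the points is needed. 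The paper takes the first column of $\varphi(v)M(v)^{-1}$ and chooses $x_2$ by rounding $x_1\xi$. The quality in the third coordinate then comes from an exact linear recurrence. For consecutive $u<v<w$ in $\cV_\ell$ with $\ell\ge4$ and $v\in\cV_{\ell+1}$, one has $v=us$ and $w=vs'$ with $s,s'\in\{w_i,\tw_i\}$. Here $\varphi(s)$ is conjugate to $W_i$, so Cayley--Hamilton gives that $\ux(w)$ and $t_i\ux(v)\mp d_i\ux(u)$ have the same first column. The signs use $M^{-1}\equiv-{}^tM^{-1}$, which is where $M=\matrice{m}{1}{-1}{0}$, i.e.\ $\xi\in\cE_m$, enters; your sketch never uses this.

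Propagating $\norm{(\xi,-1)\ux}$ through the recurrence costs a factor $1+\cO(\gamma^{-\ell})$ per level. This is summable only because $\log|t_i|=\rho F_i+\cO(\gamma^{-i})$, which comes from the exact trace recurrence $t_{i+3}=t_{i+1}t_{i+2}-d_{i+1}t_i$. The bound $\cO(1)$ that you get from (E2) is too weak. For the same reason it does not give $\log\norm{\varphi(v)}=\rho|v|+\cO(1)$ through a Zeckendorf factorization, which has $\asymp\log|v|$ blocks.

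\textbf{Linear independence.} The recurrences become equalities only once the bounds force $\norm{(\xi,-1)\ux(v)}<1/2$, i.e.\ for $\ell$ beyond a threshold. Only then can $\det(\ux(u),\ux(v),\ux(w))$ be reduced to the base triple $\ux_\ell,\ux_{\ell+1},\ux_{\ell+2}$, where it is $\pm2$ by Proposition~\ref{prelim:prop}(iv). There is no direct trace or determinant computation for a general triple.

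\textbf{Small values of $\ell$.} This is why small $\ell$ must be handled differently, and your treatment of them fails.
\begin{itemize}
\item For $\ell=1$ the trapeze is unbounded below. When $q_2<q_1/2-C$, the smallest coordinate of $\rho\uP(\rho^{-1}\uq)$ is $q_2-\rho$. Since $\xi$ is badly approximable, every integer point with $(x_0,x_1)\neq(0,0)$ has $L_\ux(\uq)\ge q_1/2-\cO(1)$. So neither the predecessor of $u$ nor any other prefix point can serve as the third point.
\item For $2\le\ell$ below the threshold, nothing yields independence.
\end{itemize}
The missing observation is that bounded $\ell$ needs no special points at all. Here $|w|-|u|=F_\ell=\cO(1)$, and $\min\{|\alpha(u)|,|\alpha(w)|\}\le F_{\ell+2}=\cO(1)$ by Corollary~\ref{Vbar:prop2:cor2}. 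These force $q_2\le q_1/2+\cO(1)$ and $\rho\uP(\rho^{-1}\uq)=\Phi(q_2,q_1/2,q_1/2)+\cO(1)$. Lemma~\ref{tool:lemma3}(ii) together with Lemma~\ref{tool:lemma1} then gives the same estimate for $\uL_\uxi(\uq)$. With this split, the point-by-point analysis is needed only for $\ell\ge\ell_0$, where the recurrences are exact.
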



\subsection{A family of points} 
\label{main:ssec:points}
Fix a choice of $\xi\in\cE_m$ for an integer $m\ge 1$ and set 
$\uxi=(1,\xi,\xi^2)$.  Choose
also a corresponding unbounded Fibonacci sequence $(W_i)_{i\ge 1}$
in $\GL_2(\bZ)$ satisfying the conditions (E1)--(E3) of 
section~\ref{main:ssec:extremal} for the matrix $M$ given by 
\eqref{main:extremal:eq:M}.  Let $(\ux_i)_{i\ge 1}$ be the sequence 
of symmetric matrices in $\GL_2(\bZ)$ defined in (E1).
Following \cite{R2004, R2003}, we identify each point 
$\ux=(x_0,x_1,x_2)\in\bQ^3$ with the symmetric matrix 
\[
 \ux=\matrice{x_0}{x_1}{x_1}{x_2}.
\]

We fix an alphabet $E=\{a,b\}$ with two letters and set 
\[
 W_0 = W_1^{-1}W_2.
\]
We denote by $\varphi\colon E^*\to\GL_2(\bZ)$ the morphism of monoids 
determined by the conditions $\varphi(b)=W_0$ and $\varphi(a)=W_1$.  Then, in 
the notation of section~\ref{main:ssec:Fib}, we observe that
\[
 \varphi(w_i)=W_i \quad\text{for each $i\ge 1$.}
\]
For each non-empty word $v\in E^*$, we set
\begin{equation}
\label{main:points:Mv}
M(v) = \begin{cases}
 M &\text{if $v$ ends in $b$,}\\
 {}^tM &\text{if $v$ ends in $a$.}
 \end{cases}
\end{equation}
This yields $M(w_i)=M_i$ for each $i\ge 1$.  We also define
\begin{equation}
\label{main:points:xv}
\ux(v) = \matrice{x_0(v)}{x_1(v)}{x_1(v)}{x_2(v)}
\end{equation}
as the symmetric matrix with integer coefficients that has the same first 
column as the matrix $\varphi(v)M(v)^{-1}$ and satisfies $|x_1(v)\xi-x_2(v)|<1/2$.
The first condition determines $x_0(v)$ and $x_1(v)$, and the second then 
specifies uniquely $x_2(v)$ as $\xi\notin\bQ$.  We also view $\ux(v)$ as 
the point $(x_0(v),x_1(v),x_2(v))\in\bZ^3$ as stated at the beginning 
of the section.  Then, according to \eqref{tool:eq:Lx}, its trajectory 
relative to $\uxi$ is the function $L_{\ux(v)}\colon\bR^2\to\bR$ given by
\begin{equation}
\label{main:points:eq:Lxv}
 L_{\ux(v)}(\uq) 
  = \max\{\log|x_0(v)|,\, q_1+\log|x_0(v)\xi-x_1(v)|,\, q_2+\log|x_0(v)\xi^2-x_2(v)|\}
\end{equation}
for each $\uq=(q_1,q_2)\in\bR^2$.  

The tools of section~\ref{sec:rec} allows us to compute 
$\ux(v)$ for each $v\in\cV_\ell$ with $\ell$ large enough.  In particular,
we find that $\ux(w_i)=\ux_i$
for each large enough $i$.  The following crucial result is proved in 
section~\ref{sec:final}, using the functions $P_v$ of section~\ref{main:ssec:Pv}.

\begin{theorem}
\label{main:points:thm}
With the above notation, there exist an integer $\ell_0\ge 4$ and a constant 
$\rho>0$ such that, for each integer $\ell\ge \ell_0$, the following properties
hold.
\begin{itemize} 
 \item[(i)] For any $v\in\cV_\ell$ and any 
 $\uq\in\cA(\epsilon)$, we have
\[
 L_{\ux(v)}(\uq)=\rho P_v(\rho^{-1}\uq)+\cO_\xi(1),
\] 
where $\cO_\xi(1)$ denotes a function of $v$ and $\uq$ whose absolute
value is bounded above by a constant that depends only on $\xi$.
 \item[(ii)] For any triple of consecutive
 elements $u<v<w$ of $\cV_\ell$, the points $\ux(u)$, $\ux(v)$, $\ux(w)$ 
 are linearly independent if and only if $v\notin\cV_{\ell+1}$.  
 \end{itemize}
\end{theorem}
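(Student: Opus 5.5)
The plan is to reduce both parts to explicit size estimates for the three quantities in \eqref{main:points:eq:Lxv}, namely $|x_0(v)|$, $|x_0(v)\xi-x_1(v)|$ and $|x_0(v)\xi^2-x_2(v)|$, and to compare their logarithms with $|v|$, $-|v|$ and $-|\alpha(v)|$ scaled by $\rho$. Comparing \eqref{main:points:eq:Lxv} with \eqref{main:Pv}, part (i) holds as soon as
\[
 \log|x_0(v)|=\rho|v|+\cO(1),\quad
 \log|x_0(v)\xi-x_1(v)|=-\rho|v|+\cO(1),\quad
 \log|x_0(v)\xi^2-x_2(v)|\le -\rho|\alpha(v)|+\cO(1),
\]
with equality in the third estimate, up to $\cO(1)$, whenever the third term can dominate on $\cA(\epsilon)$. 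A constant shift in each affine piece changes a maximum of affine maps by at most that constant, so this suffices. Since $|x_0(v)|\ge1$ for the relevant words, the third term only matters in the region $q_2\ge q_1-|v|+|\alpha(v)|$ or $q_2\ge|v|+|\alpha(v)|$. On $\cA(\epsilon)$ this region is nonempty, because $|\alpha(v)|\le|v|$. So a two-sided estimate is needed there.

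\textbf{Step 1: the first two estimates.} Let $\rho=\log\gamma'$, where $\|W_i\|\asymp\exp(\rho F_i)$ by (E2). More precisely, define $\rho$ through $\log\|W_i\|=\rho F_i+\cO(1)$. This is possible because (E2) makes $\log\|W_i\|-\rho F_i$ satisfy a Fibonacci recursion with bounded error, and Binet's formula \eqref{main:Binet} then forces convergence up to a bounded term. Next extend this to arbitrary prefixes $v$ of $w_\infty$. Write $v$ through its Zeckendorf-type decomposition as a product $w_{i_1}w_{i_2}\cdots$ of decreasing non-consecutive Fibonacci words. The point is to show $\|\varphi(v)\|=\exp(\rho|v|+\cO(1))$ and $\|(\xi,-1)\varphi(v)\|=\exp(-\rho|v|+\cO(1))$.

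For arbitrary $v$ the error would accumulate over the factors, and this is where $v\in\cV_\ell$ with $\ell\ge\ell_0$ enters. I would use Proposition~\ref{V:prop2}, which gives $\cV_{\ell+1}=\theta(\cV_\ell)$ for $\ell\ge4$. Every $v\in\cV_\ell$ is then $\theta^{\ell-4}(v')$ with $v'\in\cV_4$. Together with Corollary~\ref{Vbar:prop2:cor1}, this means $\varphi(v)$ is a product of boundedly many blocks of the form $W_i$, $W_{i\pm1}$. The recursion formulas of section~\ref{sec:rec} then express $\ux(v)$ as an explicit combination of a bounded number of $\ux_i$, $\ux_{i\pm1}$ with bounded coefficients. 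The first-column data come from (E3) applied to those $W_i$, and $M(v)^{-1}$ is bounded, which yields the first two estimates.

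\textbf{Step 2: the third estimate, the main obstacle.} The quantity $|x_0\xi^2-x_2|$ is not controlled by (E3) alone. Following \cite{R2003}, I would use the symmetric-matrix identity: for symmetric $\ux$, the vector $(\xi,-1)\,\ux$ and the row $(1,\xi)$ combine to give $x_0\xi^2-x_2=\xi(x_0\xi-x_1)+(x_1\xi-x_2)$. So it suffices to estimate $|x_1(v)\xi-x_2(v)|$, which is the second entry of $(\xi,-1)\ux(v)$.

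For the words where $\ux(v)$ is genuinely symmetric, meaning $\ux(v)=\varphi(v)M(v)^{-1}$ exactly as in (E1), this second entry is again $\exp(-\rho|v|)$, which is too small. The value $-\rho|\alpha(v)|$ must therefore come from cancellation failures. The second column of $\varphi(v)M(v)^{-1}$ differs from the symmetrized one, and its defect equals, via $\det$ and the relation ${}^tM\ne\pm M$, a multiple of $\varphi$ evaluated at a shorter word. I expect this shorter word to be $\alpha(v)$ precisely, by the iteration $\iota$: removing $2F_{i-2}$ corresponds to conjugating by $W_{i-2}$, using that the $\ux_i$ satisfy $\ux_{i+2}=\ux_{i+1}M_{i}\ux_i$-type recurrences.

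So the work is an induction on the $\iota$-depth. One shows $x_1(v)\xi-x_2(v)=\pm(x_1(\iota v)\xi-x_2(\iota v))\cdot(\text{unit})+\cO(e^{-\rho|v|})$ with nonvanishing main term, and the base case $v\in\cF$ follows from (E3). The lower bound, that no further cancellation occurs, is the delicate point. I would get it from $\det\ux(v)=\pm\det M^{-1}\ne0$ combined with the already-known sizes of the other entries.

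\textbf{Step 3: part (ii).} The triple $u<v<w$ consists of consecutive elements of $\cV_\ell$, and by the structure results these are images under $\theta^{\ell-4}$ of a finite list of patterns in $\cV_4$. Through the recursion of section~\ref{sec:rec}, $\det(\ux(u),\ux(v),\ux(w))$ therefore equals, up to sign, a fixed determinant computed from a bounded set of $\ux_i$'s and $M$. When $v\in\cV_{\ell+1}$, the relation $\ux(w)=\ux(v)M\ux(u)$-style reduces to a linear dependence via the Cayley–Hamilton trace identity $\ux_{i+2}=\trace(\cdot)\ux_{i+1}\pm\ux_{i-1}$ from \cite{R2003}. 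Otherwise the determinant is a nonzero integer, computed in the base pattern and propagated along the recursion, using ${}^tM\neq\pm M$. Taking $\ell_0$ large enough makes all these recursions valid, since $\ux(w_i)=\ux_i$ holds only for large $i$.
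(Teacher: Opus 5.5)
Your proposal has genuine gaps in all three steps.

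\textbf{Uniformity in Step 1.} Your claim that each $v\in\cV_\ell$ gives a product of boundedly many blocks is false. $\cV_\ell$ contains words of every large length, with gaps $F_{\ell-2}$ or $F_{\ell-1}$. So $v=\theta^{\ell-4}(v')$ with $v'\in\cV_4$ arbitrarily long, and the number of Zeckendorf factors $w_{i_1}\cdots w_{i_s}$ of $v$ is unbounded on $\cV_\ell$. Likewise, the recursion of section~\ref{sec:rec} has unbounded depth, and its coefficients $t_i$ have size $\exp(\rho F_i)$, so they are not bounded. Condition (E2) only gives $\log\norm{W_i}=\rho F_i+\cO(1)$. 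With that precision the errors add up over the factors, and you cannot get $\log|x_0(v)|=\rho|v|+\cO(1)$ uniformly in $v$.

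What is actually needed is the sharp estimate $\log|t_i|=\rho F_i+\cO(\gamma^{-i})$, extracted from the exact trace recurrence $t_{i+3}=t_{i+1}t_{i+2}-d_{i+1}t_i$, together with $W_i=\theta_0^{-1}t_i\Xi M_i+\cO(|t_i|^{-1})$. Then $\exp(-\rho F_i)W_i=A_i+R_i$ with $A_i$ in a finite multiplicatively closed set and $\sum_i\norm{R_i}<\infty$. This gives $\norm{\varphi(v)}\asymp\exp(\rho|v|)$ for every prefix $v$. Also, (E3) only controls $(\xi,-1)W_i$, not $(\xi,-1)\varphi(v)$ for products. The estimate $|x_0(v)\xi-x_1(v)|\asymp\exp(-\rho|v|)$ needs a separate input (\cite[Theorem~2.3]{R2008}, via $|\det\varphi(v)|=1$) plus bad approximability.

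\textbf{The third coordinate and part (ii).} Your lower bound rests on $\det\ux(v)=\pm\det(M)^{-1}$. That identity holds only when $\ux(v)=\varphi(v)M(v)^{-1}$, essentially for $v=w_i$. In general $\ux(v)$ only shares its first column with that matrix, and $\det\ux(v)=-x_0\Delta_2+2\xi x_0\Delta_1-\Delta_1^2$ is not bounded for $v\notin\cF$. Even if it were bounded, it would force $|\Delta_2(v)|\ll\exp(-\rho|v|)$, which contradicts the bound you want.

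The missing idea is that the lower bound comes from part (ii). For consecutive $u<v<w$ in $\cV_\ell$ with $v\notin\cV_{\ell+1}$, one has $\det(\ux(u),\ux(v),\ux(w))=\pm2$. Expanding this determinant in the coordinates of $\Delta(u),\Delta(v),\Delta(w)$, every product except $\Delta_1(u)\Delta_2(v)\Delta_0(w)$ tends to $0$ as $\ell\to\infty$. Since $|w|-|u|=F_\ell=|\alpha(v)|$, this yields $|\Delta_2(v)|\gg\exp(-\rho|\alpha(v)|)$.

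The upper bound also does not go through $\iota$; your relation between $v$ and $\iota(v)$ is unsupported. Instead one uses the three-term relation $\tux(w)=t_i\tux(v)\pm d_i\tux(u)$, which comes from Cayley--Hamilton applied to $\varphi(s)$ with $s\in\{w_i,\tw_i\}$. A priori this relation holds only for first columns. One then proves $\norm{(\xi,-1)\tux(v)}\le c\exp(-\rho|\alpha(v)|)$ uniformly on $\cV_4$, by a descending induction on $\ell$ with loss factors $1+\cO(\gamma^{-\ell})$ at each level. Only once this bound is below $1/2$ do you get $\tux(v)=\ux(v)$, and hence the recurrences as genuine equalities.

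That upgrade is exactly what part (ii) needs, in both directions. The value $\pm2$ then follows by induction on $|v|$ along these equalities, down to $\det(\ux_\ell,\ux_{\ell+1},\ux_{\ell+2})=\pm\det(M)^{-2}\trace(MJ)$. It does not come from a finite list of base patterns, since there are infinitely many consecutive triples in $\cV_\ell$.
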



\subsection{Proof of the main result} 
\label{main:ssec:proof}

Using the results of section~\ref{sec:tool} and taking for granted the 
statements of the preceding sections, Theorem~\ref{main:extremal:thm} 
is proved as follows.

Choose $\rho>0$ and $\ell_0$ as in Theorem \ref{main:points:thm}, and 
fix a point $\uq=(q_1,q_2)\in\cA(\epsilon)$.
Then $\rho^{-1}\uq$ also belongs to $\cA(\epsilon)$, and so, according to 
section~\ref{main:ssec:P}, we have
\begin{equation}
\label{main:proof:eq1}
 \rho\uP(\rho^{-1}\uq) = \Phi(\up) 
 \quad\text{with}\quad
 \up=(q_1-\rho|u|, q_2+\rho|u|-\rho|w|, \rho|w|)
\end{equation}
where $u$ is the largest prefix of $w_\infty$ such that $\rho^{-1}\uq\in\cA(u)$,
and $w$ is the smallest prefix of $w_\infty$ with $u<w$ such that 
$\rho^{-1}\uq\in\cC(w)$.  Moreover, $|w|-|u|=F_\ell$ for some positive
integer $\ell$. Furthermore, if $\ell>1$, then there exists a unique word 
$v\in\left]u,w\right[$ such that $u<v<w$ are consecutive elements of 
$\cV_\ell$ with
\[
 |\alpha(v)| =F_\ell=|w|-|u| \et \rho^{-1}\uq\in\cA(u)\cap\cB(v)\cap\cC(w),
\]
thus, by definition of $P_u$, $P_v$ and $P_w$, we have
\[
 \up = \big( 
     \rho P_u(\rho^{-1}\uq), 
     \rho P_v(\rho^{-1}\uq), 
     \rho P_w(\rho^{-1}\uq) \big).  
\]

Suppose first that $\ell\ge \ell_0$.  Then, since $\ell_0\ge 4$, the above 
formula for $\up$ holds and, by Theorem~\ref{main:points:thm}(i), 
we have
\[
 \up = \big( L_{\ux(u)}(\uq), L_{\ux(v)}(\uq), L_{\ux(w)}(\uq) \big) +\cO_\xi(1).  
\]
Since $\Phi$ is $1$-Lipschitz, this yields
\[
 \rho\uP(\rho^{-1}\uq) 
 = \Phi(\up) 
 =\Phi \big( L_{\ux(u)}(\uq), L_{\ux(v)}(\uq), L_{\ux(w)}(\uq) \big) +\cO_\xi(1).
\]
Moreover, as $v\notin\cV_{\ell+1}$, the integer points $\ux(u), \ux(v), \ux(w)$ are linearly independent by Theorem~\ref{main:points:thm}(ii), and so we have
\[
 \uL_\uxi(\uq) \le \Phi \big( L_{\ux(u)}(\uq), L_{\ux(v)}(\uq), L_{\ux(w)}(\uq) \big)
\]
componentwise.  Altogether, this means that
\[
 L_{\uxi,j}(\uq) \le \rho P_j(\rho^{-1}\uq) + \cO_\xi(1)
 \quad (1\le j\le 3).
\]
We also observe from \eqref{main:proof:eq1} that the sum of the 
coordinates of $\up$ is $q_1+q_2$.  Since the sum of the coordinates of 
$\Phi(\up)=\rho\uP(\rho^{-1}\uq)$ is the same, we conclude from
Lemma~\ref{tool:lemma2} that
\begin{equation}
\label{main:proof:eq8}
 \norm{\uL_{\uxi}(\uq) - \rho \uP(\rho^{-1}\uq)} = \cO_\xi(1).
\end{equation}

Suppose now that $\ell<\ell_0$.   As $\rho^{-1}\uq\in\cA(u)\cap\cC(w)$, 
we have
\[
 2|u|\le \rho^{-1}q_1 \le 2|w|
 \et
 \rho^{-1}q_2\le \min\{\rho^{-1}q_1-|u|+|\alpha(u)|,\, |w|+|\alpha(w)|\}.
\]
Since $|w|-|u|=F_\ell$, we also have $\{u,w\}\not\subseteq\cV_{\ell+3}$
according to the remark at the end of section~\ref{main:ssec:V} (or see
Corollary~\ref{Vbar:prop2:cor1}). Thus,
\[
  |w|-|u|=F_\ell=\cO_\xi(1) 
  \et
 \min\{|\alpha(u)|,|\alpha(w)|\}\le F_{\ell+2}=\cO_\xi(1).
\]
So, the preceding estimates yield
\[
 q_2\le q_1/2+\cO_\xi(1) 
 \et 
 \up=(q_1/2,q_2,q_1/2)+\cO_\xi(1),
\]
using the formula for $\up$ given in \eqref{main:proof:eq1}.  Thus,
\[
 \rho\uP(\rho^{-1}\uq)=\Phi(q_2,q_1/2,q_1/2)+\cO_\xi(1).
\]
To estimate $\uL_\uxi(\uq)$, we use the fact that $\xi$ is badly 
approximable, as stated in section~\ref{main:ssec:extremal}.   Set 
$\uq'=(q_1,q'_2)$ where $q'_2=\min\{q_2,q_1/2\}$.  By the above, we have
$\|\uq-\uq'\|=|q_2-q'_2|=\cO_\xi(1)$.   Then, Lemmas~\ref{tool:lemma1}
and \ref{tool:lemma3}(ii) yield
\[
 \uL_\uxi(\uq)=\uL_\uxi(\uq')+\cO_\xi(1)
  =\Phi(q'_2,q_1/2,q_1/2)+\cO_\xi(1)
  =\Phi(q_2,q_1/2,q_1/2)+\cO_\xi(1).
\]
Thus, \eqref{main:proof:eq8} also holds in this case.  This completes the 
proof of Theorem~\ref{main:extremal:thm}.

%
%

\section{The maps $\iotabar$ and $\alphabar$, and the sets $\cVbar_\ell$}
\label{sec:Vbar}

In Section~\ref{main:ssec:V}, we defined two maps $\iota$ and $\alpha$ from 
$\prefixes$ to itself.  Using the natural bijection from $\prefixes$ to $\bN$ 
given by the length of a word, these yield maps $\iotabar$ and $\alphabar$ 
from $\bN$ to $\bN$.   Similarly, the subsets $\cV_\ell$ of $\prefixes$ 
correspond to subsets $\cVbar_\ell$ of $\bN$.  This section is devoted to the 
combinatorial properties of these simpler objects.

Here, the set $\cF$, given by \eqref{main:F}, is replaced by the set 
\begin{equation}
 \label{Vbar:Fbar}
 \cFbar=\{F_i\,;\,i\ge -1\}=\{0,1,2,3,5,8,13,\dots\}
\end{equation}
of all Fibonacci numbers, including $0$.  We define 
$\iotabar \colon \bN\to\bN$ by
\[
 \iotabar(x)
=\begin{cases}
   x &\text{if $x\in\cFbar$,}\\
   x-2F_{i-2} &\text{if $F_i<x<F_{i+1}$ for an integer $i\ge 3$.}
 \end{cases}
\]
We note that, for each $x\in\bN$, we have $\iotabar(x)\le x$ with equality if
and only if $x\in\cFbar$.  Thus, the sequence $(\iotabar^k(x))_{k\ge 1}$ of 
iterates of $\iotabar$ evaluated at $x$ is non-increasing and so, 
from some point on, it is constant, equal to some element of $\cFbar$.
We thus obtain a map $\alphabar\colon\bN\to\cFbar$ by sending $x$ 
to this value:
\begin{equation}
 \label{Vbar:alphabar}
 \alphabar(x)=\lim_{k\to\infty}\iotabar^k(x).
\end{equation}
Clearly, we have $\alphabar\circ\iotabar=\alphabar$.  We also note that, 
for each $x\in\bN$, we have $\alphabar(x)\le x$ with equality if and 
only if $x\in\cFbar$.  The next table gives the values of the maps 
$\iotabar$ and $\alphabar$ on 
$\bN\cap[0,13]$.
\begin{equation}
 \label{Vbar:table}
 \renewcommand{\arraystretch}{1.5}
 \begin{array}{c||c|c|c|c|c|c|c|c|c|c|c|c|c|c}
  x &0&1&2&3&4&5&6&7&8&9&10&11&12&13\\
  \hline
  \iotabar(x) &0&1&2&3&2&5&2&3&8&3&4&5&6&13\\
  \hline
  \alphabar(x) &0&1&2&3&2&5&2&3&8&3&2&5&2&13
\end{array}
\end{equation}
For each integer $\ell\ge 1$, we also define
\[
\cVbar_\ell=\{x\in\bN\,;\, \alphabar(x)\ge F_\ell\}.
\]
We say that elements $x_1<x_2<\cdots<x_k$ of $\cVbar_\ell$ are 
\emph{consecutive} in $\cVbar_\ell$ if $x_1,x_2,\dots,x_k$ are all
elements of $\cVbar_\ell\cap[x_1,x_k]$.  By construction, we have 
\begin{equation}
 \label{Vbar:eq:bar}
  \iotabar(|v|) = |\iota(v)|, \quad 
  \alphabar(|v|) = |\alpha(v)| \et
  \cVbar_\ell=\{|v|\,;\,v\in\cV_\ell\}
\end{equation}
for each $v\in\prefixes$ and each positive integer $\ell$.

As the table \eqref{Vbar:table} suggests, the map $\alphabar$ has the 
following property.

\begin{lemma}
\label{Vbar:lemma1}
Let $x\in\bN$ with $x\ge 2$.  Then, $\alphabar(x)\ge 2$.
\end{lemma}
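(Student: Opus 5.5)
We argue by strong induction on $x$, using that $\alphabar\circ\iotabar=\alphabar$. The property to propagate is that $\iotabar$ maps the set $\{x\in\bN\,;\,x\ge 2\}$ into itself. Once this is known, every iterate $\iotabar^k(x)$ with $x\ge 2$ is at least $2$. Since the sequence of iterates is eventually constant and equal to $\alphabar(x)$, we get $\alphabar(x)\ge 2$.

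So fix $x\ge 2$. If $x\in\cFbar$, then $\iotabar(x)=x\ge 2$ and there is nothing to show. Otherwise $x$ is not a Fibonacci number, so $x\ge 4$, since $2$ and $3$ are Fibonacci numbers. There is then a unique integer $i\ge 3$ with $F_i<x<F_{i+1}$; indeed $F_3=3$, and the integers above $3$ fall into the open gaps between consecutive Fibonacci numbers. By definition $\iotabar(x)=x-2F_{i-2}$. Since $x$ is an integer with $x>F_i$, we have $x\ge F_i+1$. Hence
\[
 \iotabar(x)\ge F_i+1-2F_{i-2} = F_{i-1}-F_{i-2}+1 = F_{i-3}+1 .
\]
This uses $F_i=F_{i-1}+F_{i-2}$ and $F_{i-1}=F_{i-2}+F_{i-3}$, both valid for $i\ge 3$ since the sequence starts at $F_{-1}=0$.

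The only delicate point is the case $i=3$. There $F_0=1$ and the bound reads $\iotabar(x)\ge 2$, which is exactly what we need, with no margin. For $i\ge 4$ we have $F_{i-3}\ge F_1=2$, so $\iotabar(x)\ge 3$. Thus in all cases $\iotabar(x)\ge 2$, which proves the invariance, and the lemma follows from the first paragraph.

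As a check, the table \eqref{Vbar:table} gives $\iotabar(4)=\iotabar(6)=2$, which matches the sharp case $i=3$.
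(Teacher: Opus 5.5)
Your argument is correct and is essentially the paper's proof: show $\iotabar(x)\ge 2$ whenever $x\ge 2$, using $\iotabar(x)=x-2F_{i-2}>F_i-2F_{i-2}=F_{i-3}\ge 1$, and then pass to the limit of the iterates. One harmless slip: with $F_{-1}=0$ and $F_0=1$ one has $F_1=1$, not $2$, so your side claim that $\iotabar(x)\ge 3$ for $i\ge 4$ is false (for example $x=6$ lies in $]F_4,F_5[$ and $\iotabar(6)=2$, so your final table check wrongly attributes $6$ to the case $i=3$); this does not affect the lemma, since $F_{i-3}\ge F_0=1$ for every $i\ge 3$ already gives $\iotabar(x)\ge 2$.
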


\begin{proof}
It suffices to show that $\iotabar(x)\ge 2$.  This is clear if $x\in\cFbar$ because
then $\iotabar(x)=x\ge 2$.  Otherwise, we have $F_i<x<F_{i+1}$ for some
$i\ge 3$, and then $\iotabar(x)=x-2F_{i-2}>F_i-2F_{i-2}=F_{i-3}\ge 1$, thus
$\iotabar(x)\ge 2$.
\end{proof}

\begin{lemma}
\label{Vbar:lemma2}
We have $\cVbar_1=\bN\setminus\{0\}$ and $\cVbar_2=\bN\setminus\{0,1\}$.
\end{lemma}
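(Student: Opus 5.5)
We have to show that $\alphabar(x)\ge 1$ for every $x\ge 1$ and that $\alphabar(x)\ge 2$ for every $x\ge 2$. We also have to show that $\alphabar(0)=0$ and $\alphabar(1)=1$, so that $0\notin\cVbar_1$ and $0,1\notin\cVbar_2$. Recall that $F_1=1$ and $F_2=2$, so that $\cVbar_1=\{x\,;\,\alphabar(x)\ge 1\}$ and $\cVbar_2=\{x\,;\,\alphabar(x)\ge 2\}$.

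First we treat the small values. Since $0=F_{-1}$ and $1=F_0=F_1$ belong to $\cFbar$, we have $\iotabar(0)=0$ and $\iotabar(1)=1$. Hence $\alphabar(0)=0$ and $\alphabar(1)=1$. It follows that $0\notin\cVbar_1$, that $1\in\cVbar_1$, and that $0,1\notin\cVbar_2$.

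Next, for $x\ge 2$, Lemma~\ref{Vbar:lemma1} gives $\alphabar(x)\ge 2=F_2$. Thus $x\in\cVbar_2$, and since $F_2>F_1$, also $x\in\cVbar_1$.

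Combining these two steps gives $\cVbar_1=\bN\setminus\{0\}$ and $\cVbar_2=\bN\setminus\{0,1\}$. There is no real obstacle here; the only point requiring care is the indexing convention for the Fibonacci numbers, namely $F_1=1$ and $F_2=2$.
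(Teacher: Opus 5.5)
Your proof is correct and follows essentially the same route as the paper: use Lemma~\ref{Vbar:lemma1} to get $\alphabar(x)\ge 2=F_2$ for all $x\ge 2$, and check directly that $\alphabar(0)=0$ and $\alphabar(1)=1$ because $0,1\in\cFbar$. Your attention to the indexing $F_1=1$, $F_2=2$ is exactly the point that matters.
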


\begin{proof}
By Lemma \ref{Vbar:lemma1}, we have 
$\bN\setminus\{0,1\} \subseteq \cVbar_2 \subseteq \cVbar_1 \subseteq \bN$.  
The result follows since 
$0\notin\cVbar_1$ while $1\in\cVbar_1\setminus\cVbar_2$.
\end{proof}

\begin{lemma}
\label{Vbar:lemma3}
Let $\ell\ge 2$ be an integer.  Then, the smallest element of $\cVbar_\ell$ is
$F_\ell$. Moreover, for each integer $i\ge \ell$, the map $\iotabar$ restricts to an 
order preserving bijection from $\cVbar_\ell \cap \left]F_i,F_{i+1}\right[$ to 
$\cVbar_\ell \cap \left]F_{i-3},F_{i-1}+F_{i-3}\right[$.
\end{lemma}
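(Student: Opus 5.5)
The plan is to derive both assertions directly from the definitions of $\iotabar$ and $\alphabar$, together with the two observations recorded after \eqref{Vbar:alphabar}: $\alphabar\circ\iotabar=\alphabar$, and $\alphabar(x)\le x$ for each $x\in\bN$ with equality if and only if $x\in\cFbar$. No earlier lemma beyond these observations should be needed.

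For the first assertion, since $F_\ell\in\cFbar$ we have $\alphabar(F_\ell)=F_\ell$, so $F_\ell\in\cVbar_\ell$. Conversely, if $x\in\bN$ with $x<F_\ell$, then $\alphabar(x)\le x<F_\ell$, so $x\notin\cVbar_\ell$. Hence $F_\ell$ is the smallest element of $\cVbar_\ell$. This argument does not use the hypothesis $\ell\ge 2$.

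For the second assertion, fix $i\ge\ell\ge 2$. I first dispose of $i=2$. Here $\left]F_2,F_3\right[=\left]2,3\right[$ contains no integer. The target interval $\left]F_{-1},F_1+F_{-1}\right[=\left]0,1\right[$ also contains no integer, so the statement holds trivially.

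Now let $i\ge 3$. By definition, $\iotabar(x)=x-2F_{i-2}$ for every integer $x$ with $F_i<x<F_{i+1}$. On this interval $\iotabar$ is therefore a translation, hence strictly increasing and injective. It maps the integers of $\left]F_i,F_{i+1}\right[$ bijectively onto the integers of
\[
 \left]F_i-2F_{i-2},\,F_{i+1}-2F_{i-2}\right[ .
\]
The endpoints simplify by the Fibonacci recurrence. On the left, $F_i-2F_{i-2}=F_{i-1}-F_{i-2}=F_{i-3}$. On the right, $F_{i+1}-2F_{i-2}=F_i+F_{i-1}-2F_{i-2}=2F_{i-1}-F_{i-2}=F_{i-1}+F_{i-3}$. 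So the image is exactly $\left]F_{i-3},F_{i-1}+F_{i-3}\right[$.

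It remains to see that this bijection respects membership in $\cVbar_\ell$. Because $\alphabar(\iotabar(x))=\alphabar(x)$, an integer $x\in\left]F_i,F_{i+1}\right[$ lies in $\cVbar_\ell$ if and only if $\iotabar(x)$ does. Thus the translation restricts to an order-preserving bijection from $\cVbar_\ell\cap\left]F_i,F_{i+1}\right[$ onto $\cVbar_\ell\cap\left]F_{i-3},F_{i-1}+F_{i-3}\right[$.

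There is no real obstacle in this argument. The only points needing care are the endpoint arithmetic with the indexing $F_{-1}=0$, $F_0=F_1=1$, and the separate check of the degenerate case $i=2$, where the defining formula for $\iotabar$ does not apply.
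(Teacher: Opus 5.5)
Your proof is correct and follows the paper's argument essentially step for step: $x\ge\alphabar(x)\ge F_\ell$ gives the minimum, the translation $x\mapsto x-2F_{i-2}$ is a bijection between the two integer intervals (both empty when $i=2$), and $\alphabar\circ\iotabar=\alphabar$ transfers membership in $\cVbar_\ell$. Your explicit endpoint arithmetic and your check that $F_\ell\in\cVbar_\ell$ only spell out details the paper leaves implicit.
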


\begin{proof}
For each $x\in\cVbar_\ell$, we have $x\ge \alphabar(x)\ge F_\ell$.  Thus, 
$F_\ell$ is the smallest element of $\cVbar_\ell$.   Let $i\ge \ell$ be an 
integer.  By construction, the map $\iotabar$ restricts to an order 
preserving bijection
\[
\begin{array}{rcl}
\bN \cap \left]F_i,F_{i+1}\right[ 
 &\longrightarrow  &\bN \cap \left]F_{i-3},F_{i-1}+F_{i-3}\right[ \\
 x &\longmapsto &x-2F_{i-2}
\end{array}
\]
(with empty domain and codomain if $i=\ell=2$).
This yields the last assertion because $\iotabar^{-1}(\cVbar_\ell)=\cVbar_\ell$. 
\end{proof}

\begin{lemma}
\label{Vbar:lemma4}
Let $\ell\ge 2$ be an integer.  Then, $\cVbar_\ell \cap \left[0,F_{\ell+3}\right]$ 
consists of the 7 numbers
\[
 F_\ell < F_{\ell+1} < F_{\ell+1}+F_{\ell-1} < F_{\ell+2} < F_{\ell+2}+F_{\ell-2}
  < F_{\ell+2}+F_{\ell} < F_{\ell+3}.
\]
\end{lemma}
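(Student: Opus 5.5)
We split $[0,F_{\ell+3}]$ into the Fibonacci blocks $[0,F_\ell]$, $]F_\ell,F_{\ell+1}[$, $]F_{\ell+1},F_{\ell+2}[$ and $]F_{\ell+2},F_{\ell+3}[$, together with the Fibonacci numbers $F_\ell,F_{\ell+1},F_{\ell+2},F_{\ell+3}$. By Lemma~\ref{Vbar:lemma3}, $F_\ell$ is the smallest element of $\cVbar_\ell$, so nothing below $F_\ell$ belongs to it. Each Fibonacci number $F_i$ with $i\ge\ell$ satisfies $\alphabar(F_i)=F_i\ge F_\ell$, so these four numbers are in $\cVbar_\ell$. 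It remains to find the elements of $\cVbar_\ell$ inside each open block. For this we use the bijection of Lemma~\ref{Vbar:lemma3}: for $i\ge\ell$, $\iotabar$ maps $\cVbar_\ell\cap\left]F_i,F_{i+1}\right[$ onto $\cVbar_\ell\cap\left]F_{i-3},F_{i-1}+F_{i-3}\right[$ by $x\mapsto x-2F_{i-2}$.

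\emph{Block $i=\ell$.} The target interval is $]F_{\ell-3},F_{\ell-1}+F_{\ell-3}[$. Its upper end satisfies $F_{\ell-1}+F_{\ell-3}\le F_\ell$, since $F_{\ell-3}\le F_{\ell-2}$ (with the convention $F_{-1}=0$ when $\ell=2$). Hence the target lies entirely below $F_\ell$ and meets $\cVbar_\ell$ trivially, and the block contains no element of $\cVbar_\ell$.

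\emph{Block $i=\ell+1$.} The target interval is $]F_{\ell-2},F_\ell+F_{\ell-2}[$. Its intersection with $\cVbar_\ell$ is contained in $[F_\ell,F_\ell+F_{\ell-2}[$. The only candidate here is $F_\ell$ itself, because $F_\ell+F_{\ell-2}<F_{\ell+1}$ and, by the previous block, there are no elements of $\cVbar_\ell$ in $]F_\ell,F_{\ell+1}[$. Pulling $F_\ell$ back gives the single element $F_\ell+2F_{\ell-1}=F_{\ell+1}+F_{\ell-1}$.

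\emph{Block $i=\ell+2$.} The target interval is $]F_{\ell-1},F_{\ell+1}+F_{\ell-1}[$, which meets $\cVbar_\ell$ in $\{F_\ell,F_{\ell+1}\}$. Indeed, its open upper end excludes $F_{\ell+1}+F_{\ell-1}$, and there is nothing else by the previous two blocks. Adding $2F_\ell$ gives $F_{\ell+2}+F_{\ell-2}$ and $F_{\ell+2}+F_\ell$, using $F_\ell+2F_\ell=F_{\ell+2}+F_{\ell-2}$ and $F_{\ell+1}+2F_\ell=F_{\ell+2}+F_\ell$.

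Assembling the four blocks and the four Fibonacci numbers yields exactly the seven listed numbers. Their ordering is immediate from $F_{\ell-1}<F_\ell$, $F_{\ell-2}<F_\ell$ and $F_{\ell+2}+F_\ell<F_{\ell+3}$.

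The only delicate point is the small cases $\ell=2,3$, where indices like $F_{\ell-3}$ equal $F_{-1}=0$ or $F_0=1$. There one must check the inequalities above (for instance $F_{\ell-2}<F_\ell$ and the strictness $F_{\ell-1}+F_{\ell-3}\le F_\ell$) directly, and confirm that Lemma~\ref{Vbar:lemma3} still applies, including its empty-domain case for $i=\ell=2$. I would verify these against the table \eqref{Vbar:table}. The main routine obstacle is keeping the open and closed endpoints straight, so that each target interval captures exactly the previously found elements.
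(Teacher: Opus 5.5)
Your proof is correct and is essentially the paper's own argument: both treat the blocks $]F_i,F_{i+1}[$ for $i=\ell,\ell+1,\ell+2$ in turn, applying the bijection of Lemma~\ref{Vbar:lemma3} and reading off each target intersection from the blocks already determined. One small slip: $F_\ell+F_{\ell-2}<F_{\ell+1}$ is an equality when $\ell=2$, but since that target interval is open at its upper end, $F_\ell+F_{\ell-2}\le F_{\ell+1}$ is all your argument needs.
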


\begin{proof}
According to Lemma \ref{Vbar:lemma3}, we have
\[
 \cVbar_\ell \cap \left[0,F_{\ell}\right] = \{F_\ell\},
\]
and $\iotabar$ maps 
$\cVbar_\ell \cap \left]F_{\ell}, F_{\ell+1}\right[$ bijectively to 
$\cVbar_\ell \cap \left]F_{\ell-3}, F_{\ell-1}+F_{\ell-3}\right[ 
=\emptyset$, thus  
\[
 \cVbar_\ell \cap \left]F_{\ell}, F_{\ell+1}\right] = \{F_{\ell+1}\}.
\]
Similarly, $\iotabar$ maps 
$\cVbar_\ell \cap \left]F_{\ell+1}, F_{\ell+2}\right[$ bijectively to 
$\cVbar_\ell \cap \left]F_{\ell-2}, F_{\ell}+F_{\ell-2}\right[ = \{F_\ell\}$, 
thus
\[
 \cVbar_\ell \cap \left]F_{\ell+1}, F_{\ell+2}\right]
   = \{2F_{\ell-1}+F_{\ell}, F_{\ell+2}\}
   = \{F_{\ell+1}+F_{\ell-1}, F_{\ell+2}\}.
\]
Finally, $\iotabar$ maps 
$\cVbar_\ell \cap \left]F_{\ell+2}, F_{\ell+3}\right[$ bijectively to 
$\cVbar_\ell \cap \left]F_{\ell-1}, F_{\ell+1}+F_{\ell-1}\right[ = \{F_\ell, F_{\ell+1}\}$, 
thus
\[
 \cVbar_\ell \cap \left]F_{\ell+2}, F_{\ell+3}\right]
   =\{3F_\ell, 2F_\ell+F_{\ell+1}, F_{\ell+3}\} 
   =\{F_{\ell+2}+F_{\ell-2}, F_{\ell+2}+F_\ell, F_{\ell+3}\}.
   \qedhere
\]
\end{proof}

The next lemma presents a central property of the map $\alphabar$.  
It is the key to the proof of Theorem~\ref{main:P:thm1} in 
section~\ref{sec:proof1}.  Below, we use it to derive several 
properties of the sets $\cVbar_\ell$. 

\begin{lemma}
\label{Vbar:lemma6}
Let $k\ge 3$ be an integer.  We have 
\[
 \alphabar(x)\le \alphabar(x+2F_{k-2}) \quad
 \text{for each $x\in \bN  \cap\left[0, F_{k-1}+F_{k-3}\right]$,}
\]
with equality if $x\notin\cFbar$ and $x\neq F_{k-1}+F_{k-3}$.
\end{lemma}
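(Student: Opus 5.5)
The plan is strong induction on $k$, driven by the definition of $\iotabar$ and the identity $\alphabar\circ\iotabar=\alphabar$. Throughout, write $y=x+2F_{k-2}$ and use two Fibonacci identities:
\[
 F_k-2F_{k-2}=F_{k-3}
 \et
 F_{k+1}-2F_{k-2}=F_{k-1}+F_{k-3}.
\]
They locate $y$ relative to the Fibonacci numbers according to where $x$ lies, which splits the argument into three ranges. We also use repeatedly that $\alphabar(z)\le z$, with equality exactly when $z\in\cFbar$.

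\emph{First range: $F_{k-3}<x<F_{k-1}+F_{k-3}$.}
\begin{itemize}
\item[(a)] By the identities above, $F_k<y<F_{k+1}$. Since $k\ge 3$, the definition of $\iotabar$ gives $\iotabar(y)=y-2F_{k-2}=x$.
\item[(b)] Hence $\alphabar(y)=\alphabar(\iotabar(y))=\alphabar(x)$, which gives equality in this whole range.
\end{itemize}

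\emph{Second range: the endpoint $x=F_{k-1}+F_{k-3}$.} Here $y=F_{k+1}\in\cFbar$, so $\alphabar(y)=F_{k+1}>x\ge\alphabar(x)$. Only the inequality is claimed at this endpoint, so this case is done.

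\emph{Third range: $0\le x\le F_{k-3}$.}
\begin{itemize}
\item[(a)] Then $y$ lies between $2F_{k-2}$ and $2F_{k-2}+F_{k-3}=F_k$. Since $2F_{k-2}\ge F_{k-1}$, we get $F_{k-1}\le y\le F_k$.
\item[(b)] If $y\in\{F_{k-1},F_k\}$, then $\alphabar(y)=y>x\ge\alphabar(x)$. In this sub-case $x$ is either $F_{k-3}$ or satisfies $y=F_{k-1}$; one checks that the latter forces $x\in\{0,1\}$. So $x$ is a Fibonacci number and no equality is required.
\item[(c)] Otherwise $F_{k-1}<y<F_k$. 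For $k\ge 4$ the definition of $\iotabar$ gives
\[
 \iotabar(y)=y-2F_{k-3}=x+2F_{k-4},
\]
so $\alphabar(y)=\alphabar(x+2F_{k-4})$.
\item[(d)] This reduces the problem to the statement for $k-2$. Indeed, $x\le F_{k-3}\le F_{k-3}+F_{k-5}$, so $x$ lies in the range required by the lemma for $k-2$, provided $k-2\ge 3$.
\item[(e)] The induction hypothesis then gives $\alphabar(x)\le\alphabar(x+2F_{k-4})=\alphabar(y)$.
\item[(f)] For the equality clause, suppose $x\notin\cFbar$. Then $x<F_{k-3}$, so in particular $x\ne F_{k-3}+F_{k-5}$. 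The equality part of the induction hypothesis then yields $\alphabar(x)=\alphabar(y)$.
\end{itemize}

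\emph{Base cases.} The induction step needs $k\ge 5$, so $k=3$ and $k=4$ are checked directly from the table \eqref{Vbar:table}.
\begin{itemize}
\item For $k=3$, compare $x\in[0,3]$ with $x+2$.
\item For $k=4$, compare $x\in[0,4]$ with $x+4$. The values of $\alphabar$ are $0,1,2,3,2$ against $2,5,2,3,8$. Equality holds at the only non-Fibonacci argument $x=4$ that is not the endpoint; here $x=4$ is itself the endpoint $F_3+F_1$, so it is excluded.
\end{itemize}

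The main obstacle is bookkeeping. In the third range one must confirm that every value of $y$ is covered, including the boundary values $y=F_{k-1}$ and $y=F_k$. One must also check that the equality hypotheses propagate correctly through the reduction from $k$ to $k-2$, and that the base cases cover both parities of $k$.
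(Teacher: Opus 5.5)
Your proof is correct and follows essentially the same route as the paper: induction on $k$, with the range $]F_{k-3},F_{k-1}+F_{k-3}[$ handled by $\iotabar(y)=x$, the endpoints giving $y\in\{F_k,F_{k+1}\}$, and the range $x<F_{k-3}$ reduced via $\iotabar(y)=x+2F_{k-4}$ to the case $k-2$. The paper disposes of $k=4$ inline by noting that then $x=0$, where you check it from the table, and two small points are only sloppy wording: $y=F_{k-1}$ actually forces $k=3$ and $x=0$, and your sentence on $x=4$ for $k=4$ is self-contradictory, though what you mean (the equality clause is vacuous there because $x=4$ is the endpoint) is right.
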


\begin{proof}
We proceed by induction on $k$.  If $k=3$, the range for $x$ is 
$\{0,1,2,3\}$, contained in $\cFbar$, and, using the table \eqref{Vbar:table}, 
we find, as claimed, that $\alphabar(x)\le\alphabar(x+2)$ for each $x$ 
in that range.  Now, suppose that $k\ge 4$.  Let 
$x\in \bN\cap[0,F_{k-1}+F_{k-3}]$, and set $y=x+2F_{k-2}$.   We note that
\begin{itemize}
\item[(i)] if $x=F_{k-3}$, then $y=F_k$, so
 $\alphabar(y)=F_k > F_{k-3}=\alphabar(x)$;
\smallskip
\item[(ii)] if $x=F_{k-1}+F_{k-3}$, then $y=F_{k+1}$, so
 $\alphabar(y)=F_{k+1} > F_{k-2}=\alphabar(x)$;
\smallskip
\item[(iii)] if $x \in \left]F_{k-3},F_{k-1}+F_{k-3}\right[$, then 
 $y\in \left]F_k, F_{k+1}\right[$\/, so $\iotabar(y)=x$,
 thus $\alphabar(y)=\alphabar(x)$.
\end{itemize} 
So, we may assume that $x\in \left[0, F_{k-3}\right[$.  Then,
$y\in \left]F_{k-1}, F_k\right[$ and thus
\[
 \iotabar(y)=y-2F_{k-3}=x+2F_{k-4}
 \quad \Longrightarrow \quad
 \alphabar(y)=\alphabar(x+2F_{k-4}).
\]
If $k=4$, we are done because $x=0\in\cF$ and 
$\alphabar(y)=2 > \alphabar(x)=0$.  Hence, we may also assume that 
$k\ge 5$.  Finally, as $x \in \left[0,F_{k-3}+F_{k-5}\right[$, 
we may further assume, by induction, that 
$\alphabar(x)\le \alphabar(x+2F_{k-4}) = \alphabar(y)$ 
with equality if $x\notin \cF$, and we are done.
\end{proof}

\begin{proposition}
\label{Vbar:prop1}
Let $k,\ell\in\bN$ with $k-2\ge\ell\ge 1$.  Then, we have a 
bijection
\[
\begin{array}{rcl}
 \cVbar_\ell\cap[F_\ell,F_{k-1}+F_{k-3}] &\longrightarrow 
                            &\cVbar_\ell\cap[F_\ell+2F_{k-2},F_{k+1}] \\
 x &\longmapsto &x+2F_{k-2}.
\end{array}
\]
\end{proposition}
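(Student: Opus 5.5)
The plan is to deduce the statement from Lemma~\ref{Vbar:lemma6}, applied with the same $k$ (note $k\ge \ell+2\ge 3$). Translation by $2F_{k-2}$ is an order-preserving injection of $\bN$, and it maps the interval $[F_\ell,F_{k-1}+F_{k-3}]$ onto $[F_\ell+2F_{k-2},F_{k+1}]$. This uses $F_{k-1}+F_{k-3}+2F_{k-2}=F_{k+1}$, which follows from $F_{k-1}+F_{k-2}=F_k$ and $F_{k-3}+F_{k-2}=F_{k-1}$. So I only need two inclusions: the map sends $\cVbar_\ell$ into $\cVbar_\ell$, and every element of $\cVbar_\ell$ in the target interval comes from an element of $\cVbar_\ell$ in the source interval.

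For the first inclusion, let $x\in\cVbar_\ell\cap[F_\ell,F_{k-1}+F_{k-3}]$. Lemma~\ref{Vbar:lemma6} gives $\alphabar(x+2F_{k-2})\ge\alphabar(x)\ge F_\ell$, so $x+2F_{k-2}\in\cVbar_\ell$.

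For surjectivity, let $y\in\cVbar_\ell$ with $F_\ell+2F_{k-2}\le y\le F_{k+1}$, and set $x=y-2F_{k-2}\in[F_\ell,F_{k-1}+F_{k-3}]$. I must show $\alphabar(x)\ge F_\ell$, splitting into cases.
\begin{itemize}
\item If $x\notin\cFbar$ and $x\ne F_{k-1}+F_{k-3}$, the equality clause of Lemma~\ref{Vbar:lemma6} gives $\alphabar(x)=\alphabar(y)\ge F_\ell$.
\item If $x\in\cFbar$, then $\alphabar(x)=x\ge F_\ell$.
\item If $x=F_{k-1}+F_{k-3}$ and this is not a Fibonacci number, then $x=F_{k-1}+F_{k-3}\in\left]F_{k-1},F_k\right[$ when $k\ge 5$. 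Here $\iotabar(x)=F_{k-1}+F_{k-3}-2F_{k-3}=F_{k-2}$, so $\alphabar(x)=F_{k-2}\ge F_\ell$ because $k-2\ge\ell$. (This agrees with item (ii) of the proof of Lemma~\ref{Vbar:lemma6}.)
\item The small cases $k=3,4$ are handled directly: then $F_{k-1}+F_{k-3}\in\{2,3\}\subset\cFbar$, which falls under the previous case.
\end{itemize}

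The only real obstacle is this exceptional endpoint $x=F_{k-1}+F_{k-3}$, where Lemma~\ref{Vbar:lemma6} gives only an inequality. The hypothesis $k-2\ge\ell$ is exactly what makes $\alphabar(x)=F_{k-2}$ large enough there. Injectivity and order preservation of the translation are immediate, which completes the bijection.
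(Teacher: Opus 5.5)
Your proof follows the paper's argument: the inequality in Lemma~\ref{Vbar:lemma6} shows the translation lands in $\cVbar_\ell$, and surjectivity is checked by the same three cases, with the endpoint handled via $\alphabar(F_{k-1}+F_{k-3})=F_{k-2}\ge F_\ell$.

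There is one small arithmetic slip in your last case. For $k=4$ the endpoint is $F_3+F_1=4\notin\cFbar$, so it is not in $\{2,3\}$. Your third case already covers it, since $4\in\left]F_3,F_4\right[$ gives $\iotabar(4)=4-2F_1=2=F_2$. So state that case for $k\ge 4$, as the paper does; only $k=3$, with endpoint $3$, falls under $x\in\cFbar$.
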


\begin{proof}
Set $A=[F_\ell,F_{k-1}+F_{k-3}]$ and $B=2F_{k-2}+A =[F_\ell+2F_{k-2},F_{k+1}]$.  
By Lemma \ref{Vbar:lemma6}, translation by $2F_{k-2}$ maps $\cVbar_\ell\cap A$
injectively into $\cVbar_\ell\cap B$.  To prove that it is surjective, choose 
$y\in\cVbar_\ell\cap B$ and let $x\in \bN\cap A$ such that $y=x+2F_{k-2}$.
If $x\in\cFbar$, then $\alphabar(x)=x\ge F_\ell$.  If $x=F_{k-1}+F_{k-3}$ and 
$x\notin\cFbar$, then $k\ge 4$ and $\alphabar(x)=F_{k-2}\ge F_\ell$.  
For any other value of $x$, Lemma~\ref{Vbar:lemma6} yields 
$\alphabar(x)=\alphabar(y)\ge F_\ell$.  Thus, $x\in\cVbar_\ell\cap A$
in all cases.   
\end{proof}

\begin{proposition}
\label{Vbar:prop2}
Let $\ell\ge 3$ be an integer and let $x_1=F_\ell<x_2<x_3<\cdots$ be the 
elements of $\cVbar_\ell$ listed in increasing order.  Then, the
sequence
\begin{equation}
\label{Vbar:prop2:eq}
(x_{i+1}-x_i)_{i\ge 1}
  = \left(F_{\ell-1},F_{\ell-1},F_{\ell-2},F_{\ell-2},F_{\ell-1},F_{\ell-1},\dots\right)
\end{equation}
is the infinite Fibonacci word constructed on the words 
$(F_{\ell-1},F_{\ell-1})$ and $(F_{\ell-2},F_{\ell-2})$.
\end{proposition}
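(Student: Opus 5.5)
We prove the statement by induction on prefixes of the gap sequence, using the translation bijection of Proposition~\ref{Vbar:prop1}. Write $g=(g_i)_{i\ge1}=(x_{i+1}-x_i)_{i\ge 1}$. Set $A=(F_{\ell-1},F_{\ell-1})$ and $B=(F_{\ell-2},F_{\ell-2})$, and let $u_1=A$, $u_2=AB$, $u_{j+2}=u_{j+1}u_j$ be the Fibonacci sequence of words whose limit is $f_{A,B}$. The sum of the entries of $A$ is $2F_{\ell-1}$ and that of $B$ is $2F_{\ell-2}$. Hence the entries of $u_j$ sum to $2F_{\ell+j-2}$, which can be checked on $u_1,u_2$ and then propagated by the recurrence.

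The claim to prove, for each $j\ge1$, is the following: the gaps between consecutive elements of $\cVbar_\ell\cap[F_\ell,F_\ell+2F_{\ell+j-2}]$ form exactly the word $u_j$, and $F_\ell+2F_{\ell+j-2}$ belongs to $\cVbar_\ell$. Since the word $u_j$ is a prefix of $u_{j+1}$ and has unbounded length, this gives the proposition.

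\emph{Base cases.} These come directly from Lemma~\ref{Vbar:lemma4}. The first elements of $\cVbar_\ell$ are $F_\ell,\ F_{\ell+1},\ F_{\ell+1}+F_{\ell-1},\ F_{\ell+2},\ F_{\ell+2}+F_{\ell-2},\ F_{\ell+2}+F_\ell,\ F_{\ell+3}$, with gaps $F_{\ell-1},F_{\ell-1},F_{\ell-2},F_{\ell-2},F_{\ell-1},F_{\ell-1}$. This is $ABA=u_3$, and it contains $u_1=A$ and $u_2=AB$ as prefixes. The endpoints $F_\ell+2F_{\ell-1}=F_{\ell+1}+F_{\ell-1}$, $F_\ell+2F_\ell=F_{\ell+2}+F_{\ell-2}$ and $F_\ell+2F_{\ell+1}=F_{\ell+3}$ all appear in the list. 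Here I use $3F_\ell=F_{\ell+2}+F_{\ell-2}$ and $F_\ell+2F_{\ell+1}=F_{\ell+3}$.

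\emph{Inductive step.} Assume the claim holds for $j$ and $j+1$, and put $k=\ell+j+1$. Then $F_\ell+2F_{\ell+j-1}=F_\ell+2F_{k-2}$, and the segment $[F_\ell,F_\ell+2F_{k-2}]$ has gap word $u_{j+1}$. Next I apply Proposition~\ref{Vbar:prop1} with this $k$; the hypothesis $k-2\ge\ell$ holds since $j\ge1$. It says that translation by $2F_{k-2}$ maps $\cVbar_\ell\cap[F_\ell,F_{k-1}+F_{k-3}]$ bijectively onto $\cVbar_\ell\cap[F_\ell+2F_{k-2},F_{k+1}]$. Being a translation, it preserves gaps.

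Two identities are needed at this point:
\[
F_\ell+2F_{\ell+j-2}\le F_{k-1}+F_{k-3}
\quad\text{and}\quad
F_\ell+2F_{\ell+j-2}+2F_{k-2}=F_\ell+2F_{\ell+j}.
\]
The second is immediate from the Fibonacci recurrence. The first requires $F_\ell\le F_{k-1}+F_{k-3}-2F_{k-3}=F_{k-1}-F_{k-3}=F_{k-2}$, which holds.

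So the translate of $\cVbar_\ell\cap[F_\ell,F_\ell+2F_{\ell+j-2}]$ is exactly $\cVbar_\ell\cap[F_\ell+2F_{k-2},F_\ell+2F_{\ell+j}]$. It has gap word $u_j$, and its right endpoint $F_\ell+2F_{\ell+j}$ lies in $\cVbar_\ell$. Since $F_\ell+2F_{k-2}$ also lies in $\cVbar_\ell$, the two pieces join at this common point with no missing elements in between. The gap word on $[F_\ell,F_\ell+2F_{\ell+j}]$ is therefore $u_{j+1}u_j=u_{j+2}$, which completes the induction.

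The main delicate point is the bookkeeping of interval endpoints. The translated interval must lie inside the domain of Proposition~\ref{Vbar:prop1}, and the image of its right endpoint must be exactly the endpoint claimed for $u_{j+2}$. The surjectivity in that proposition is what guarantees that no extra elements of $\cVbar_\ell$ appear in the translated range. The hypothesis $\ell\ge3$ ensures $F_{\ell-2}\ge1$, so that $A\ne B$ and the gaps are genuinely positive; for $\ell=2$ one gets $F_0=F_{-1}+1$, and the formula would still need checking separately.
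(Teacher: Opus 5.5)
Your proof is correct and follows essentially the same route as the paper: induction on the prefixes $\cVbar_\ell\cap[F_\ell,F_\ell+2F_{\ell+j-2}]$ with base cases from Lemma~4.4, then Proposition~\ref{Vbar:prop1} applied with $k=\ell+j+1$ and splicing at $F_\ell+2F_{\ell+j-1}$ to get $u_{j+2}=u_{j+1}u_j$. The only differences are minor: the paper obtains $F_\ell+2F_{\ell+j-1}\in\cVbar_\ell$ directly as the image of $F_\ell$ where you carry it in the induction hypothesis, and your closing remark about $\ell=2$ is slightly off ($F_{\ell-2}=F_0=1$ already there, so $A=B=(1,1)$ and the same argument goes through), which does not affect the proof.
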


This is also true for $\ell=2$ but not interesting since, 
by Lemma~\ref{Vbar:lemma2}, we have $\cVbar_2=\bN\setminus\{0,1\}$
and so \eqref{Vbar:prop2:eq} is the constant sequence $(1,1,1,\dots)$ 
when $\ell=2$.

\begin{proof}[Proof of Proposition~\ref{Vbar:prop2}]
For each integer $j\ge 1$, let $\lambda(j)$ 
denote the cardinality of the set 
$\cVbar_\ell\cap\left[F_\ell, F_\ell+2F_{\ell+j-2}\right]$, and let
\[
 m_j=(x_{i+1}-x_i)_{1\le i<\lambda(j)}.
\]
Using Lemma \ref{Vbar:lemma4}, we find that $x_1=F_\ell$, 
$\lambda(1)=3$, $\lambda(2)=5$,
\begin{equation}
\label{Vbar:prop2:eq1}
 m_1=(F_{\ell-1}, F_{\ell-1}) 
 \et
 m_2=(F_{\ell-1},F_{\ell-1},F_{\ell-2}, F_{\ell-2}).
\end{equation}
For $j\ge 1$, Proposition~\ref{Vbar:prop1} applied with $k=\ell+j+1$ 
yields a bijection
\begin{equation}
\label{Vbar:prop2:eq2}
\begin{array}{rcl}
 \cVbar_\ell \cap \left[F_\ell, F_{\ell}+2F_{\ell+j-2}\right] 
            &\longrightarrow 
             &\cVbar_\ell\cap\left[F_\ell+2F_{\ell+j-1}, F_\ell+2F_{\ell+j}\right] \\
 x &\longmapsto &x+2F_{\ell+j-1}.
\end{array}
\end{equation}
Since $F_\ell\in\cVbar_\ell$, this implies that $F_\ell+2F_{\ell+j-1}\in\cVbar_\ell$
for each $j\ge 1$.  This also extends to $j=0$ since $F_\ell+2F_{\ell-1}
=F_{\ell+1}+F_{\ell-1}\in\cVbar_\ell$.  Thus, for each $j\ge 1$, we have  $F_\ell+2F_{\ell+j-2}\in\cVbar_\ell$, so  
\[
 x_{\lambda(j)}=F_\ell+2F_{\ell+j-2},
\]
and the bijection \eqref{Vbar:prop2:eq2} amounts to
\[
(x_i+2F_{\ell+j-1})_{1\le i\le \lambda(j)}
 = (x_i)_{\lambda(j+1)\le i\le \lambda(j+2)}.
\]
In turn, this implies that
\[
(x_{i+1}-x_i)_{1\le i< \lambda(j)}
 = (x_{i+1}-x_i)_{\lambda(j+1)\le i< \lambda(j+2)},
\]
which translates into the relation
\[ 
 m_{j+2}=m_{j+1}m_j  \quad (j\ge 1).
\] 
The conclusion follows from this recurrence relation together with
\eqref{Vbar:prop2:eq1}.
\end{proof}

\begin{corollary}
\label{Vbar:prop2:cor1}
Let $\ell\ge 3$ be an integer.  For any pair of consecutive numbers 
$x<y$ in $\cVbar_\ell$, we have $y-x\in\{F_{\ell-2},F_{\ell-1}\}$.  There are 
infinitely many arithmetic progressions of length $3$ with difference $F_{\ell-2}$
made of consecutive numbers in $\cVbar_\ell$, but no longer ones.  
There are infinitely many arithmetic progressions of length $5$ with 
difference $F_{\ell-1}$ made of consecutive numbers in $\cVbar_\ell$, 
but no longer ones. 
\end{corollary}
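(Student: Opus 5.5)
The corollary follows from Proposition~\ref{Vbar:prop2} once we understand the combinatorics of the infinite Fibonacci word $f_{A,B}$ on two distinct letters. Write $A=(F_{\ell-1},F_{\ell-1})$ and $B=(F_{\ell-2},F_{\ell-2})$, and let $p=F_{\ell-1}$ and $s=F_{\ell-2}$. For $\ell\ge3$ we have $F_{\ell-1}\ne F_{\ell-2}$, since $F_1=2\neq F_0=1$ and the sequence is strictly increasing from index $0$. The plan is to translate each assertion into a statement about runs of equal letters in the gap sequence $(x_{i+1}-x_i)_{i\ge1}$.

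The first assertion is immediate. Every term of the gap sequence \eqref{Vbar:prop2:eq} is a letter of $A$ or $B$, so every gap lies in $\{F_{\ell-2},F_{\ell-1}\}$.

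For the other two assertions, note that an arithmetic progression of length $r$ with difference $d$, made of consecutive elements of $\cVbar_\ell$, corresponds exactly to a factor $d^{\,r-1}$ of the gap sequence. So we need the maximal runs of each letter. The key fact is the classical one: in the Fibonacci word $f_{a,b}=w_\infty$ the letter $b$ is isolated, the factor $bb$ never occurs, and $aaa$ never occurs either. Both follow because $w_\infty$ is a concatenation of blocks $\theta(a)=ab$ and $\theta(b)=a$, since $\theta(w_\infty)=w_\infty$ by \eqref{main:eq:theta}. From this, every $b$ is preceded by $a$. Moreover, $aaa$ would require $\theta$-preimage $bb$, and $bb$ is impossible since each $b$ follows $a$.

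Substituting $a\mapsto A=pp$ and $b\mapsto B=ss$ then gives the structure of the gap sequence. Each maximal run of $s$ is exactly one block $B$, of length $2$, because $B$ is always flanked by $A$'s (the first letter is $A$). Each maximal run of $p$ comes from a run of $A$'s of length $1$ or $2$, so it has length $2$ or $4$.

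Runs of length $2$ of $s$ give progressions of $3$ terms with difference $F_{\ell-2}$, and no longer ones exist. Runs of length $4$ of $p$ give $5$ terms with difference $F_{\ell-1}$, and there are no longer ones.

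The main remaining point is infinitude. We need to show that $b$ occurs infinitely often, which is clear since $w_\infty$ is not eventually constant. We also need $aa$ to occur infinitely often. This follows from the fact that $w_{i+2}=w_{i+1}w_i$ with $w_i$ starting with $a$, $w_{i+1}$ ending in $a$ for $i$ odd, and $w_{i+1}$ ending in $b$ for $i$ even. This produces occurrences of $aa$ at the junction positions $F_{i+1}$ for infinitely many $i$.

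The only step needing real care is the verification that no $aaa$ and no $bb$ occur. We will do this cleanly through the block decomposition by $\theta$, checking in passing the boundary at the first letter.
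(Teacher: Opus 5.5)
Your proposal is correct and follows the paper's proof: you read every assertion off the gap sequence of Proposition~\ref{Vbar:prop2}, using that $f_{a,b}$ has infinitely many $b$'s and infinitely many factors $aa$ but no factor $bb$ or $aaa$. The paper simply quotes these facts, while you also derive them from $\theta(w_\infty)=w_\infty$. There are two harmless index slips: $F_0=F_1=1$, so the relevant comparison for $\ell=3$ is $F_2=2\neq F_1=1$ and $(F_i)$ is strictly increasing only from $i=1$; and $w_{i+1}$ ends in $a$ when $i$ is even, not odd, so the junction factor $aa$ in $w_{i+2}=w_{i+1}w_i$ occurs for even $i$.
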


\begin{proof}
The first assertion is a direct consequence of the proposition.  The others
follow from the proposition and the fact that the infinite Fibonacci word 
$f_{a,b}=abaababa\cdots$ on two letters $a$ and $b$ contains infinitely 
many letters $b$ but no factor of the form $bb$, and that it contains 
infinitely factors $aa$ but no factor $aaa$.  Thus, the infinite Fibonacci word 
on $(F_{\ell-1},F_{\ell-1})$ and $(F_{\ell-2},F_{\ell-2})$ contains infinitely many
subsequences of two consecutive terms equal to $F_{\ell-2}$ but no longer 
ones, and infinitely many subsequences of four consecutive terms equal to
$F_{\ell-1}$ but no longer ones.
\end{proof}

\begin{corollary}
\label{Vbar:prop2:cor2}
Let $x<y$ be consecutive elements of $\cVbar_\ell$ for some integer 
$\ell\ge 2$.  Then, 
\[
 \{x,y\}\not\subseteq\cVbar_{\ell+2}, \quad
 \{x,y\}\cap\cVbar_{\ell+1}\neq\emptyset \et
 |\alphabar(y)-\alphabar(x)| \ge y-x.
\]
\end{corollary}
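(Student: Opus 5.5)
The plan is to derive everything from Proposition~\ref{Vbar:prop2} (for $\ell\ge 3$), together with the explicit description of small elements in Lemma~\ref{Vbar:lemma4}. The case $\ell=2$ is handled directly via Lemma~\ref{Vbar:lemma2}. There $\cVbar_2=\bN\setminus\{0,1\}$, so consecutive elements are $x,x+1$ with $x\ge 2$. We have $\cVbar_3=\cVbar_2\setminus\{x:\alphabar(x)=2\}$ and $\cVbar_4=\{x:\alphabar(x)\ge 3\}$. The three claims for $\ell=2$ then say: no two consecutive integers $\ge 2$ both have $\alphabar\ge 3$; no two consecutive ones both have $\alphabar=2$; and $\alphabar(x+1)\ne\alphabar(x)$. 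These are equivalent to the corresponding statements for $\ell=3$ once we know the structure of the $\cVbar_3$ gaps (differences $1$ or $2$). So I would treat $\ell=2$ as a by-product of the general description below.

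\textbf{Key step: a nesting description.} For $\ell\ge 2$, I would describe $\cVbar_{\ell+1}$ inside $\cVbar_\ell$. Write the elements of $\cVbar_\ell$ as $x_1=F_\ell<x_2<\cdots$. By Proposition~\ref{Vbar:prop2} the gap sequence is $f_{A,B}$ with $A=(F_{\ell-1},F_{\ell-1})$ and $B=(F_{\ell-2},F_{\ell-2})$, i.e.\ blocks of two equal gaps. I claim $\cVbar_{\ell+1}$ consists of $x_1$ and every second element thereafter, namely the elements at the block boundaries $x_1,x_3,x_5,\dots$. Indeed, merging the two gaps of each block gives gaps $2F_{\ell-1}$ and $2F_{\ell-2}$ arranged as $f_{a,b}$. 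This does not yet match the gaps of $\cVbar_{\ell+1}$, which are $f_{(F_\ell,F_\ell),(F_{\ell-1},F_{\ell-1})}$. So the correct picture must be finer, and I would instead compare the two sequences through Lemma~\ref{Vbar:lemma4} and the self-similarity bijection of Proposition~\ref{Vbar:prop1}.

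Concretely, Lemma~\ref{Vbar:lemma4} gives $\cVbar_\ell\cap[0,F_{\ell+3}]=\{F_\ell,F_{\ell+1},F_{\ell+1}+F_{\ell-1},F_{\ell+2},F_{\ell+2}+F_{\ell-2},F_{\ell+2}+F_\ell,F_{\ell+3}\}$, and applying it with $\ell+1,\ell+2$ marks which of these lie in $\cVbar_{\ell+1}$ and $\cVbar_{\ell+2}$. The marked pattern is
$F_\ell(1),F_{\ell+1}(2),\ F_{\ell+1}+F_{\ell-1}(0),\ F_{\ell+2}(2),\ F_{\ell+2}+F_{\ell-2}(0),\ F_{\ell+2}+F_\ell(1),\ F_{\ell+3}(2)$,
where the number indicates the maximal $j\le 2$ with membership in $\cVbar_{\ell+j}$. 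Then I would propagate this pattern along $\cVbar_\ell$ by induction using the bijection $x\mapsto x+2F_{k-2}$ of Proposition~\ref{Vbar:prop1}, applied simultaneously for indices $\ell,\ell+1,\ell+2$ (all valid when $k-2\ge\ell+2$). The domains are nested intervals covering $\bN$, so every consecutive pair in $\cVbar_\ell$ is a translate of a consecutive pair in the initial window. The translation preserves the levels, except possibly at the endpoints, which are Fibonacci-type numbers and are checked directly. In every such pair the labels are never both $\ge 2$ and never both $0$, which gives the first two assertions.

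\textbf{Third assertion.} Let $x<y$ be consecutive with gap $d\in\{F_{\ell-2},F_{\ell-1}\}$ (or $d=1$ for $\ell=2$). By the above, one of them, say $z$, lies in $\cVbar_{\ell+1}$ while the other lies outside $\cVbar_{\ell+2}$; more precisely the labels differ. If the labels are $0$ and $\ge1$, then $\alphabar$ of the former is exactly $F_\ell$, while the latter has $\alphabar\ge F_{\ell+1}$, so the difference is $\ge F_{\ell-1}\ge d$. If the labels are $1$ and $2$, the difference is $\ge F_{\ell+2}-F_{\ell+1}=F_\ell\ge d$. This requires knowing that a label-$j$ element with $j<2$ has $\alphabar=F_{\ell+j}$ exactly, which holds since $\alphabar$ takes Fibonacci values.

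\textbf{Main obstacle.} The main obstacle is verifying that the translations of Proposition~\ref{Vbar:prop1} preserve membership in $\cVbar_{\ell+1},\cVbar_{\ell+2}$ at the window endpoints and cover all consecutive pairs. I would handle this by checking the overlapping boundary pairs with Lemma~\ref{Vbar:lemma4} at level $k$.
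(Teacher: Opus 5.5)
There is a genuine gap, and some of your concrete data are wrong. In your labelled window, $F_\ell$ and $F_{\ell+1}$ belong to $\cFbar$, so $\alphabar(F_\ell)=F_\ell$ and $\alphabar(F_{\ell+1})=F_{\ell+1}$. Their labels are therefore $0$ and $1$, not $1$ and $2$, and your third-assertion argument relies precisely on ``label $j<2$ means $\alphabar=F_{\ell+j}$''. In the case $\ell=2$ you take $\cVbar_4=\{x\,;\,\alphabar(x)\ge 3\}$, but $F_4=5$. The resulting claim that no two consecutive integers $\ge 2$ both have $\alphabar\ge 3$ is false, since $\alphabar(7)=3$ and $\alphabar(8)=8$.

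More seriously, the propagation step carries the whole argument, yet you leave it as ``the main obstacle'', and the tool you propose for it does not work. The window pattern is not translation invariant. For $k\ge\ell+4$, the map $x\mapsto x+2F_{k-2}$ raises $F_\ell$ to a point with $\alphabar=F_{\ell+3}$ when $k-\ell$ is odd, and raises $F_{\ell+1}$ to a point with $\alphabar=F_{\ell+4}$ when $k-\ell$ is even. For example, with $\ell=3$ and $k=7$ it sends $5=F_4$ (label $1$) to $21=F_7$ (label $2$). Finding these values requires iterating the descent $\alphabar(x+2F_{k-2})=\alphabar(x+2F_{k-4})$ for $x<F_{k-3}$, from the proof of Lemma~\ref{Vbar:lemma6}. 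Lemma~\ref{Vbar:lemma4} at level $k$ only describes elements of $\cVbar_k$. For $k\ge\ell+5$, both $F_\ell+2F_{k-2}$ and $F_{\ell+1}+2F_{k-2}$ lie below $F_k$, so that lemma cannot decide whether they lie in $\cVbar_{\ell+1}$ or $\cVbar_{\ell+2}$. Hence you cannot read off the labels of an arbitrary pair from the window. You would need an induction on an invariant, with an explicit check of the pairs through the raised point, and this is not in the proposal. Finally, even granting the propagation, the invariant ``never both labels $\ge 2$, never both $0$'' does not exclude two labels equal to $1$. So ``the labels differ'' does not follow as written; you would need the second assertion again at level $\ell+1$.

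All of this can be avoided, which is how the paper argues. Corollary~\ref{Vbar:prop2:cor1} (with Lemma~\ref{Vbar:lemma2} at level $\ell$ when $\ell=2$) already controls the gaps at levels $\ell$, $\ell+1$ and $\ell+2$.

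The first assertion follows because $y-x\le F_{\ell-1}<F_\ell$, while gaps in $\cVbar_{\ell+2}$ are at least $F_\ell$.

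For the second, suppose neither $x$ nor $y$ lies in $\cVbar_{\ell+1}$. Then $x>F_{\ell+1}$, so there are consecutive $x'<y'$ in $\cVbar_{\ell+1}$ with $x'<x<y<y'$. The gaps $x-x'$, $y-x$, $y'-y$ are all $\ge F_{\ell-2}$. At least one of them is $\ge F_{\ell-1}$: this is trivial if $\ell=2$, and for $\ell\ge 3$ it holds because four consecutive elements of $\cVbar_\ell$ cannot form an arithmetic progression of difference $F_{\ell-2}$. Hence $y'-x'\ge 2F_{\ell-2}+F_{\ell-1}>F_\ell$, contradicting the gap bound in $\cVbar_{\ell+1}$.

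For the last assertion, let $m$ be maximal with $\{x,y\}\subseteq\cVbar_m$, so $m\in\{\ell,\ell+1\}$. The second assertion at level $m$ shows that $\alphabar(x)$ and $\alphabar(y)$ are distinct Fibonacci numbers, the smaller equal to $F_m$. Their difference is therefore at least $F_{m-1}\ge F_{\ell-1}\ge y-x$.
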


\begin{proof}
If $\ell\ge 3$, Corollary~\ref{Vbar:prop2:cor1} yields $y-x\in\{F_{\ell-2}, F_{\ell-1}\}$.
This also holds if $\ell=2$ because then $y-x=1$ by Lemma~\ref{Vbar:lemma2}.  
Since  $y-x\le F_{\ell-1}< F_{\ell}$, it follows from 
Corollary~\ref{Vbar:prop2:cor1} that $x<y$ are not consecutive elements 
of $\cVbar_{\ell+2}$, thus $\{x,y\}\not\subseteq\cVbar_{\ell+2}$.  

Suppose that $\{x,y\}\cap\cVbar_{\ell+1}=\emptyset$.  As the
first two elements of $\cVbar_\ell$ are $F_\ell$ and $F_{\ell+1}$, we must have
that $x>F_{\ell+1}$ and so there are consecutive $x'<y'$ in 
$\cVbar_{\ell+1}$ such that $x'<x<y<y'$.  By the above, the differences 
$x-x'$, $y-x$ and $y'-y$ are all bounded below by $F_{\ell-2}$.  Moreover,
at least one of them is bounded below by $F_{\ell-1}$ because, otherwise,
we would have $F_{\ell-1}>F_{\ell-2}$, thus $\ell\ge 3$, and $x'<x<y<y'$
would be $4$ consecutive elements of $\cVbar_\ell$ in arithmetic 
progression with difference $F_{\ell-2}$, against 
Corollary~\ref{Vbar:prop2:cor1}.  We conclude that 
$y'-x'\ge 2F_{\ell-2}+F_{\ell-1}>F_\ell$.  This is impossible since the same 
corollary yields $y'-x'\le F_\ell$.  Thus, $\{x,y\}\cap\cVbar_{\ell+1}\neq\emptyset$. 

Let $m$ be the largest integer such that $\{x,y\}\subseteq\cVbar_m$.
By the above, we have $m\in\{\ell,\ell+1\}$,  and the preceeding reasoning 
yields $\{x,y\}\cap\cVbar_{m+1}\neq\emptyset$.  Thus $\alphabar(x)$
and $\alphabar(y)$ are distinct Fibonacci numbers, the smallest of 
which is equal to $F_\ell$ or $F_{\ell+1}$. Consequently, we have 
$| \alphabar(y)-\alphabar(x)|\ge F_{\ell-1}\ge y-x$.
\end{proof}

The next result characterizes the elements of $\cVbar_{\ell+1}$ within 
$\cVbar_\ell$, for any integer $\ell\ge 3$.

\begin{proposition}
\label{Vbar:prop3}
Let $x<y<z$ be consecutive numbers in $\cVbar_\ell$ for some integer 
$\ell\ge 3$.  Then, we have
\begin{equation}
\label{Vbar:prop3:eq0}
 y\notin\cVbar_{\ell+1} 
 \quad\Longleftrightarrow\quad
 y-x\neq z-y.
\end{equation}
When these equivalent conditions hold, we further have $z-x=\alphabar(y)=F_\ell$.
\end{proposition}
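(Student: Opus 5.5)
The plan is to derive everything from the gap structure of Proposition~\ref{Vbar:prop2}, applied to both $\cVbar_\ell$ and $\cVbar_{\ell+1}$, together with Corollaries~\ref{Vbar:prop2:cor1} and~\ref{Vbar:prop2:cor2}. Write $p=F_{\ell-1}$ and $q=F_{\ell-2}$. The gaps of $\cVbar_\ell$ form the infinite Fibonacci word on the blocks $pp$ and $qq$. Since $\ell+1\ge 3$, the gaps of $\cVbar_{\ell+1}$ all lie in $\{F_{\ell-1},F_\ell\}=\{p,p+q\}$. Since $\ell\ge 3$, we have $2q<p+q<2p$, so a sum of two gaps of $\cVbar_\ell$ equals $F_\ell$ if and only if the two gaps are distinct.

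\emph{Forward direction.} Suppose $y\notin\cVbar_{\ell+1}$. Corollary~\ref{Vbar:prop2:cor2}, applied to the consecutive pairs $x<y$ and $y<z$, gives $x,z\in\cVbar_{\ell+1}$. Since $y$ lies strictly between them and is not in $\cVbar_{\ell+1}$, the elements $x<z$ are consecutive in $\cVbar_{\ell+1}$, so $z-x\in\{p,\,p+q\}$. Now $z-x$ is a sum of two gaps, each at least $q$.
\begin{itemize}
\item If the gaps are $p,p$, then $z-x=2p>F_\ell$, which is impossible.
\item If the gaps are $q,q$, then $z-x=2q$, and this equals $p$ only when $2F_{\ell-2}=F_{\ell-1}$, that is, only when $\ell=3$.
\end{itemize}
So for $\ell\ge4$ the two gaps differ, $z-x=F_\ell$, and $\alphabar(y)=F_\ell$ because $y\in\cVbar_\ell\setminus\cVbar_{\ell+1}$ and $\alphabar(y)$ is a Fibonacci number. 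The case $\ell=3$ needs a separate check that the middle point of a $qq$ block (gaps $1,1$) always lies in $\cVbar_4$. This is the step I expect to be the main obstacle. I would first try to settle it via the self-similarity of Proposition~\ref{Vbar:prop1}: verify it on the initial segment given by Lemma~\ref{Vbar:lemma4} (e.g.\ $7<8<9$ with $\alphabar(8)=8$), then propagate it by the translations $x\mapsto x+2F_{k-2}$. If that fails, I would use Lemma~\ref{Vbar:lemma6} directly.

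\emph{Converse direction.} By the forward direction in contrapositive form, equal neighbouring gaps force the middle point into $\cVbar_{\ell+1}$. Now suppose $y\in\cVbar_{\ell+1}$ with $y-x\ne z-y$; one of the two gaps is $q$, say $y-x=q$ and $z-y=p$ (the other case is symmetric). Because the $q$'s occur only in blocks $qq$, and a block $qq$ ends here, the gap immediately to the left of $x$ is also $q$. The left neighbour $w<x$ in $\cVbar_\ell$ exists because $x>F_\ell$: the first gaps are $pp$. So $x$ has two equal neighbouring gaps, hence $x\in\cVbar_{\ell+1}$. But then $x<y$ are consecutive in $\cVbar_{\ell+1}$ with difference $q=F_{\ell-2}<F_{\ell-1}$, contradicting Corollary~\ref{Vbar:prop2:cor1} for $\ell+1$. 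This proves the equivalence, and the final assertion $z-x=\alphabar(y)=F_\ell$ was already obtained in the forward direction.
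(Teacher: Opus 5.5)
Your argument is correct for $\ell\ge 4$, and it takes a more local route than the paper. You only combine the gap sets of Corollary~\ref{Vbar:prop2:cor1} at levels $\ell$ and $\ell+1$ with the dichotomy of Corollary~\ref{Vbar:prop2:cor2}. The paper instead deletes from $(x_i)$ the first term and every term with unequal neighbouring gaps. It shows that the resulting gap word is the Fibonacci word on $F_\ell F_\ell$ and $F_{\ell-1}F_{\ell-1}$, and concludes from Proposition~\ref{Vbar:prop2} at level $\ell+1$ that the surviving terms are exactly $\cVbar_{\ell+1}$. That global identification treats all $\ell\ge 3$ at once.

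The genuine gap is the case $\ell=3$, which is part of the statement and is not proved. There $F_{\ell-2}=1$, $F_{\ell-1}=2$ and $2F_1=F_2$. So a consecutive triple $x<y<z$ in $\cVbar_3$ with gaps $1,1$ and $y\notin\cVbar_4$ passes every test you apply: $x,z\in\cVbar_4$, and $z-x=2$ is an admissible gap of $\cVbar_4$. You flag this case but only sketch a tentative strategy for it. Your converse inherits the gap, since for $\ell=3$ it applies the forward direction to the triple $w<x<y$ with gaps $1,1$.

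The missing step can be closed inside your own local framework by looking one block further.
\begin{itemize}
\item The gap word of $\cVbar_3$ is the Fibonacci word on $(2,2)$ and $(1,1)$. It starts with $(2,2)$ and has no factor $(1,1)(1,1)$.
\item Hence the two gaps $1,1$ of your triple form a single block $(1,1)$, preceded and followed by a block $(2,2)$. So $x-4<x-2<x<y<z<z+2<z+4$ are consecutive in $\cVbar_3$.
\item Suppose $y\notin\cVbar_4$, so that $x,z\in\cVbar_4$. If $x-2\notin\cVbar_4$, then Corollary~\ref{Vbar:prop2:cor2} gives $x-4\in\cVbar_4$. Then $x-4<x$ would be consecutive in $\cVbar_4$ with difference $4\notin\{F_2,F_3\}$, which is impossible. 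Hence $x-2\in\cVbar_4$, and symmetrically $z+2\in\cVbar_4$.
\item Then $x-2<x<z<z+2$ are four consecutive elements of $\cVbar_4$ in arithmetic progression with difference $F_2$. Corollary~\ref{Vbar:prop2:cor1} forbids this.
\end{itemize}
With this, both of your implications go through for $\ell=3$.

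Your self-similarity plan can also be made to work. One would use induction on $y$ via the translation by $-2F_{k-2}$ from Lemma~\ref{Vbar:lemma3}, extended to the endpoints $F_k\mapsto F_{k-3}$ and $F_{k+1}\mapsto F_{k-1}+F_{k-3}$ for $k\ge 6$, with Lemma~\ref{Vbar:lemma4} as the base case. It needs that boundary bookkeeping, which you have not done.
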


Note that \eqref{Vbar:prop3:eq0} is false for $\ell=2$.  A counterexample 
is provided by any triple of consecutive integers $x<y<z$ with $x>1$ and 
$\alphabar(y)=2$.  For then, $x<y<z$ are consecutive elements of $\cVbar_2$
with $y\notin\cVbar_3$ and $y-x=z-y=1$.

\begin{proof}
Let $(x_i)_{i\ge 1}$ denote the sequence of elements of $\cVbar_\ell$ listed 
in increasing order, and let  $(y_i)_{i\ge 1}$ denote the subsequence obtained 
by removing its first term $x_1$ and each $x_i$ with $i\ge 2$ such 
that $x_i-x_{i-1}\neq x_{i+1}-x_i$.  Since $x_1=F_\ell\notin\cVbar_{\ell+1}$, 
proving the first part of the proposition amounts to showing that 
$(y_i)_{i\ge 1}$ lists the elements of $\cVbar_{\ell+1}$ in increasing order.  

By Proposition~\ref{Vbar:prop2}, we have $x_2-x_1=x_3-x_2=F_{\ell-1}$, thus
$y_1=x_2=F_{\ell+1}$ is indeed the smallest element of $\cVbar_{\ell+1}$. 
To go further, set 
\[
 X=(x_{i+1}-x_i)_{i\ge 1}, \quad 
 X'=(x_{i+1}-x_i)_{i\ge 2} \et
 Y=(y_{i+1}-y_i)_{i\ge 1}.
\]
By Proposition~\ref{Vbar:prop2}, we have $X=\lim_{i\to\infty}u_i$ where 
$(u_i)_{i\ge 1}$ is the Fibonacci sequence in $\{F_{\ell-2},F_{\ell-1}\}^*$ 
starting with
\[
 u_1=F_{\ell-1}F_{\ell-1} \et u_2=F_{\ell-1}F_{\ell-1}F_{\ell-2}F_{\ell-2}.
\]
Since each $u_i$ starts with $F_{\ell-1}$, we can write
\[
 u_iF_{\ell-1}=F_{\ell-1}v_i
\]
for some $v_i\in\{F_{\ell-2},F_{\ell-1}\}^*$.  Then, $(v_i)_{i\ge 1}$ is 
the Fibonacci sequence in $\{F_{\ell-2},F_{\ell-1}\}^*$ with
\[
 v_1=F_{\ell-1}F_{\ell-1} \et v_2=F_{\ell-1}F_{\ell-2}F_{\ell-2}F_{\ell-1}.
\] 
We deduce that $X'=\lim_{i\to\infty}v_i = v_2v_1v_2v_2v_1\cdots$ 
is the infinite Fibonacci word on $v_2$ and $v_1$.  Hence, the factors 
$F_{\ell-1}F_{\ell-2}$ and $F_{\ell-2}F_{\ell-1}$ do not overlap in 
$X'$, and so $Y$ is the infinite word on $\{F_{\ell-1},F_\ell\}$ 
obtained from $X'$ by replacing each of these factors by $F_\ell$.  
Thus, $Y$ is the infinite Fibonacci word on $F_\ell F_\ell$ 
and $F_{\ell-1}F_{\ell-1}$.  As $y_1=F_{\ell+1}$, we conclude from 
Proposition~\ref{Vbar:prop2} that $(y_i)_{i\ge 1}$ lists the
elements of $\cVbar_{\ell+1}$ in increasing order. 

The second part of the proposition follows from this.  Indeed, when 
both conditions hold in \eqref{Vbar:prop3:eq0}, we have $\alphabar(y)=F_\ell$
because $y\in\cVbar_\ell\setminus\cVbar_{\ell+1}$, and $z-x=F_\ell$ since
$(y-x,z-y)$ is a permutation of $(F_{\ell-2},F_{\ell-1})$.
\end{proof}

We conclude this section with two additional observations.

\begin{proposition}
\label{Vbar:prop4}
Let $\ell\ge 2$ be an integer and let $x<z$ be positive integers.  Then, the 
following conditions are equivalent.
\begin{itemize}
\item[(i)] $x<z$ are consecutive elements of\/ $\cVbar_{\ell+1}$ but not
 consecutive elements of\/ $\cVbar_{\ell}$;
 \smallskip
 \item[(ii)] there exists $y\in\cVbar_\ell\setminus\cVbar_{\ell+1}$ such that
 $x<y<z$ are consecutive elements of\/ $\cVbar_{\ell}$;
 \smallskip
 \item[(iii)] $x<z$ are consecutive elements of\/ $\cVbar_{\ell+1}$ and 
 $z-x=F_\ell$.
\end{itemize}
\end{proposition}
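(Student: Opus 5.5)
We prove (i)$\Rightarrow$(ii)$\Rightarrow$(iii)$\Rightarrow$(i), using that $\cVbar_{\ell+1}\subseteq\cVbar_\ell$ together with Corollaries~\ref{Vbar:prop2:cor1}, \ref{Vbar:prop2:cor2} and Proposition~\ref{Vbar:prop3}. The case $\ell=2$, where $\cVbar_2=\bN\setminus\{0,1\}$, is handled separately by direct inspection.

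\emph{(i)$\Rightarrow$(ii).} Assume (i). Since $x<z$ lie in $\cVbar_\ell$ but are not consecutive there, there exist elements of $\cVbar_\ell$ strictly between them. None of these belongs to $\cVbar_{\ell+1}$, because $x<z$ are consecutive in $\cVbar_{\ell+1}$. By Corollary~\ref{Vbar:prop2:cor2}, any two consecutive elements of $\cVbar_\ell$ include at least one element of $\cVbar_{\ell+1}$. So there cannot be two such intermediate elements, since they would be consecutive in $\cVbar_\ell$ and both outside $\cVbar_{\ell+1}$. Hence there is exactly one intermediate element $y$, which gives (ii).

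\emph{(ii)$\Rightarrow$(iii).} Assume (ii). Since $y\notin\cVbar_{\ell+1}$ and $y$ is the only element of $\cVbar_\ell$ strictly between $x$ and $z$, the points $x<z$ are consecutive in $\cVbar_{\ell+1}$. This requires $x,z\in\cVbar_{\ell+1}$, which holds by Corollary~\ref{Vbar:prop2:cor2} applied to the pairs $x<y$ and $y<z$. For $\ell\ge3$, Proposition~\ref{Vbar:prop3} then gives $z-x=F_\ell$. For $\ell=2$, the numbers $x,y,z$ are consecutive integers, so $z-x=2=F_2$.

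\emph{(iii)$\Rightarrow$(i).} Assume (iii). It suffices to show that $x<z$ are not consecutive in $\cVbar_\ell$. For $\ell\ge3$, Corollary~\ref{Vbar:prop2:cor1} says consecutive elements of $\cVbar_\ell$ differ by $F_{\ell-2}$ or $F_{\ell-1}$, both less than $F_\ell$. For $\ell=2$, consecutive elements of $\cVbar_2$ differ by $1<2=F_2$. In either case $z-x=F_\ell$ rules out consecutiveness in $\cVbar_\ell$.

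The only delicate point is the base case $\ell=2$, because Proposition~\ref{Vbar:prop3} fails there. It is covered by the explicit description $\cVbar_2=\bN\setminus\{0,1\}$ from Lemma~\ref{Vbar:lemma2}, and Corollary~\ref{Vbar:prop2:cor2} is stated for all $\ell\ge2$.
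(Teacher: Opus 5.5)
Your proof is correct and follows essentially the same route as the paper: the cycle (i)$\Rightarrow$(ii)$\Rightarrow$(iii)$\Rightarrow$(i) via Corollary~\ref{Vbar:prop2:cor2}, Proposition~\ref{Vbar:prop3} and Corollary~\ref{Vbar:prop2:cor1}, with $\ell=2$ handled through $\cVbar_2=\bN\setminus\{0,1\}$. The only cosmetic difference is in (i)$\Rightarrow$(ii). The paper takes $y$ to be the successor of $x$ in $\cVbar_\ell$ and shows that the successor of $y$ must be $z$, whereas you rule out having two or more intermediate elements; both rest on the same fact from Corollary~\ref{Vbar:prop2:cor2}.
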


\begin{proof}
(i) $\Rightarrow$ (ii): Suppose that (i) holds, and let $y$ be the successor 
of $x$ in $\cVbar_\ell$.  Since $x<y<z$, this number $y$ does not belong 
to $\cVbar_{\ell+1}$.  So, by Corollary~\ref{Vbar:prop2:cor2},
its successor in $\cVbar_\ell$ belongs to $\cVbar_{\ell+1}$, hence it must 
be $z$.  So (ii) holds. 

(ii) $\Rightarrow$ (iii): Under the condition (ii), Corollary~\ref{Vbar:prop2:cor2}
implies that $\{x,z\}\subseteq\cVbar_{\ell+1}$.  Thus, $x<z$ are 
consecutive in $\cVbar_{\ell+1}$.  Moreover, if $\ell\ge 3$,
Proposition~\ref{Vbar:prop3} yields $z-x=F_{\ell}$.  This still holds when 
$\ell=2$ for then $x<y<z$ are consecutive integers and so $z-x=2=F_\ell$.

(iii) $\Rightarrow$ (i): If (iii) holds, then $z-x=F_\ell\notin\{F_{\ell-2},F_{\ell-1}\}$.
So, $x<z$ cannot be consecutive in $\cVbar_\ell$ by 
Corollary~\ref{Vbar:prop2:cor1} if $\ell\ge 3$, and by 
Lemma~\ref{Vbar:lemma2} if $\ell=2$.
\end{proof}

\begin{proposition}
\label{Vbar:prop5}
Let $\ell\ge 2$ be an integer, and let $x<y<z$ be consecutive numbers 
in $\cVbar_\ell$, not all contained in $\cVbar_{\ell+1}$.  Then, exactly
one of the following situations holds:\\[5pt]
\quad
$\disp 
 \begin{array}{rlllll}
 \mathrm{(i)}  &\alphabar(x)\ge F_{\ell+1}, &\alphabar(y)=F_{\ell}, 
  &\alphabar(z)\ge F_{\ell+1} &\text{and} &z-x=F_\ell;\\[5pt]
 \mathrm{(ii)} &\alphabar(x)=F_{\ell}, &\alphabar(y)\ge F_{\ell+1}, 
  &\alphabar(z)=F_{\ell} &\text{and} &y-x=z-y;\\[5pt]
 \mathrm{(iii)} &\alphabar(x)=F_{\ell}, &\alphabar(y)=F_{\ell+1}, 
  &\alphabar(z)\ge F_{\ell+2} &\text{and} &y-x=z-y=F_{\ell-1};\\[5pt]
 \mathrm{(iv)} &\alphabar(x)\ge F_{\ell+2}, &\alphabar(y)=F_{\ell+1}, 
  &\alphabar(z)=F_{\ell} &\text{and} &y-x=z-y=F_{\ell-1}.
\end{array}$
\end{proposition}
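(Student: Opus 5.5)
Since $x,y,z\in\cVbar_\ell$ are not all in $\cVbar_{\ell+1}$, I would first locate which of them lie in $\cVbar_{\ell+1}$ and $\cVbar_{\ell+2}$, using only Corollary~\ref{Vbar:prop2:cor2}. Applied to the consecutive pairs $x<y$ and $y<z$ of $\cVbar_\ell$, it gives $\{x,y\}\cap\cVbar_{\ell+1}\neq\emptyset$, $\{y,z\}\cap\cVbar_{\ell+1}\neq\emptyset$, and that neither pair lies in $\cVbar_{\ell+2}$. So exactly one or exactly two of $x,y,z$ fail to be in $\cVbar_{\ell+1}$. If $y\notin\cVbar_{\ell+1}$, then $x,z\in\cVbar_{\ell+1}$. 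If $y\in\cVbar_{\ell+1}$, then $x$ or $z$ is not in $\cVbar_{\ell+1}$. This splits the argument into these two basic cases.

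\emph{Case $y\notin\cVbar_{\ell+1}$.} Here $\alphabar(y)=F_\ell$ and $\alphabar(x),\alphabar(z)\ge F_{\ell+1}$. By Proposition~\ref{Vbar:prop4}, (ii)$\Rightarrow$(iii), we get $z-x=F_\ell$, which is situation (i). Moreover $y-x\neq z-y$: for $\ell\ge3$ this is Proposition~\ref{Vbar:prop3}, and for $\ell=2$ it follows because $z-x=2$ and $y-x$, $z-y$ are positive integers. Then (ii) fails because $\alphabar(y)=F_\ell$, and (iii), (iv) fail because they require $\alphabar(y)=F_{\ell+1}$.

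\emph{Case $y\in\cVbar_{\ell+1}$.} First suppose both $x,z\notin\cVbar_{\ell+1}$. Then $\alphabar(x)=\alphabar(z)=F_\ell$. For $\ell\ge3$, Proposition~\ref{Vbar:prop3} gives $y-x=z-y$; for $\ell=2$, $\cVbar_2$ consists of consecutive integers, so both differences are $1$. This is situation (ii). Next suppose exactly one of $x,z$ lies in $\cVbar_{\ell+1}$, say $z$ (the case of $x$ is symmetric). Then $y<z$ are consecutive in $\cVbar_{\ell+1}$, so Corollary~\ref{Vbar:prop2:cor2} at level $\ell+1$ gives $\{y,z\}\not\subseteq\cVbar_{\ell+3}$ and $\{y,z\}\cap\cVbar_{\ell+2}\neq\emptyset$. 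Since $y$ and $x$ are consecutive in $\cVbar_\ell$, $\{x,y\}\not\subseteq\cVbar_{\ell+2}$; as $x\notin\cVbar_{\ell+1}$, this gives no information on $y$.

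To pin down $\alphabar(y)=F_{\ell+1}$, I would argue as follows. Suppose instead $y\in\cVbar_{\ell+2}$. Then $x\notin\cVbar_{\ell+1}$ is the predecessor of $y$ in $\cVbar_\ell$, and $y$ has a predecessor $x'$ in $\cVbar_{\ell+1}$ with $x'<x$. By Proposition~\ref{Vbar:prop4} at level $\ell$, $x'<x<y$ is then a triple of case (i), so $y-x'=F_\ell$. Then I would apply Proposition~\ref{Vbar:prop4} at level $\ell+1$ to the pair $x'<y$, or use Proposition~\ref{Vbar:prop3} at level $\ell+1$ to the triple $x'<y<z$ in $\cVbar_{\ell+1}$ (valid since $\ell+1\ge3$), together with the gap data of Corollary~\ref{Vbar:prop2:cor1}, to reach a contradiction. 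Hence $\alphabar(y)=F_{\ell+1}$, and then $z\in\cVbar_{\ell+2}$. The equal gaps $y-x=z-y=F_{\ell-1}$ I would derive from Proposition~\ref{Vbar:prop3}: the $\cVbar_\ell$-successor of $z$, or the predecessor $x$, gives a triple whose unequal gaps sum to $F_\ell$, and with the gaps in $\{F_{\ell-2},F_{\ell-1}\}$ this forces the value $F_{\ell-1}$. This is situation (iii), and the mirror case gives (iv).

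Exclusivity is immediate from the values of $\alphabar(y)$ and $\alphabar(x)$. I expect the main obstacle to be exactly this last subcase: showing both $\alphabar(y)=F_{\ell+1}$ and that the gaps equal $F_{\ell-1}$ rather than $F_{\ell-2}$. If the local argument gets messy, my fallback is the explicit Fibonacci-word description of the gap sequence in Proposition~\ref{Vbar:prop2}, reading off which positions belong to $\cVbar_{\ell+1}$ and $\cVbar_{\ell+2}$ via Proposition~\ref{Vbar:prop3}. For $\ell=2$ the gaps are all $1=F_1$, so this step is automatic there.
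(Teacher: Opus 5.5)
Your case split and tools are the paper's: Proposition~\ref{Vbar:prop4} for (i), Proposition~\ref{Vbar:prop3} at level $\ell$ for (ii), and for (iii) Proposition~\ref{Vbar:prop3} at level $\ell+1$ on $x'<y<z$ to get $y\notin\cVbar_{\ell+2}$, then Corollary~\ref{Vbar:prop2:cor2} to get $z\in\cVbar_{\ell+2}$. Your way of obtaining $x'$ is a valid substitute for the paper's appeal to Lemma~\ref{Vbar:lemma4}: the hypothesis $y\in\cVbar_{\ell+2}$ forces $y\ge F_{\ell+2}>F_{\ell+1}=\min\cVbar_{\ell+1}$. The contradiction does go through with Proposition~\ref{Vbar:prop3}, because $y-x'=F_\ell$ while $z-y\le F_{\ell-1}$. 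The other route you mention, Proposition~\ref{Vbar:prop4} at level $\ell+1$, gives nothing, since $x'<y$ are already consecutive in $\cVbar_{\ell+1}$.

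The step that fails as written is the determination of the common gap in (iii), and likewise in (iv). The triple $x'<x<y$ with $x\notin\cVbar_{\ell+1}$ only yields $\{x-x',\,y-x\}=\{F_{\ell-2},F_{\ell-1}\}$ in some order, so it cannot force $y-x=F_{\ell-1}$. For example, with $\ell=3$, the consecutive elements $5<7<8<9$ of $\cVbar_3$ have gaps $2,1,1$, with $7\notin\cVbar_4$. There $y-x=1=F_{\ell-2}$, and the triple $7<8<9$ is of type (ii).

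What excludes this in case (iii) is the hypothesis $z\in\cVbar_{\ell+1}$, which your gap argument never uses. Since $y<z$ are consecutive in both $\cVbar_\ell$ and $\cVbar_{\ell+1}$, Corollary~\ref{Vbar:prop2:cor1} gives $z-y\in\{F_{\ell-2},F_{\ell-1}\}\cap\{F_{\ell-1},F_\ell\}=\{F_{\ell-1}\}$ for $\ell\ge 3$; for $\ell=2$ all gaps are $1=F_1$. Proposition~\ref{Vbar:prop3} then gives $y-x=z-y$, because $y\in\cVbar_{\ell+1}$. This is exactly the paper's argument. For (iv), use instead that $x<y$ are consecutive in $\cVbar_{\ell+1}$. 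With this replacement your proof is complete.

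Two smaller slips:
\begin{itemize}
\item In the case $y\notin\cVbar_{\ell+1}$, your claim that $y-x\neq z-y$ for $\ell=2$ is false: both gaps equal $1$, as the paper notes right after Proposition~\ref{Vbar:prop3}. The claim is not needed for (i), so drop it.
\item Exclusivity of (ii) and (iii) is not decided by $\alphabar(x)$ and $\alphabar(y)$ alone. You need $\alphabar(z)=F_\ell$ versus $\alphabar(z)\ge F_{\ell+2}$.
\end{itemize}
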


\begin{proof}
We first note that the four conditions (i) to (iv) are mutually exclusive.
So, we only have to show that they exhaust all possibilities.
We also note that, if $\ell=2$, then $x<y<z$ are consecutive integers,
thus $y-x=z-y=1=F_1$ and $z-x=2=F_2$.

If $y\notin\cVbar_{\ell+1}$, then case (i) applies by 
Proposition~\ref{Vbar:prop4}.

Suppose from now on that $y\in\cVbar_{\ell+1}$.  If $\ell\ge 3$, 
we have
\[
 y-x=z-y\in\{F_{\ell-2},F_{\ell-1}\}
\]
by Corollary~\ref{Vbar:prop2:cor1} and Proposition~\ref{Vbar:prop3}.
This is also true for $\ell=2$ as noted above.  Thus, if 
$x\notin\cVbar_{\ell+1}$ and $z\notin\cVbar_{\ell+1}$, then
case (ii) applies.  

Suppose that $x\notin\cVbar_{\ell+1}$ and $z\in\cVbar_{\ell+1}$. 
Then $y<z$ are consecutive both in $\cVbar_\ell$ and in $\cVbar_{\ell+1}$.
So, Corollary~\ref{Vbar:prop2:cor1} yields $z-y=F_{\ell-1}$.  
We also note that $x\neq F_\ell$ because 
otherwise, by Lemma~\ref{Vbar:lemma4}, we would have $y=F_{\ell+1}$ and 
$z=F_{\ell+1}+F_{\ell-1}$, thus $\alphabar(z)=F_\ell$ against the 
hypothesis that $z\in\cVbar_{\ell+1}$.  Hence, $x$ admits a predecessor 
$x'$ in $\cVbar_\ell$.  Since $x\notin\cVbar_{\ell+1}$, 
Corollary~\ref{Vbar:prop2:cor2} shows that $x'\in\cVbar_{\ell+1}$, 
thus $x'<y<z$ are consecutive elements of $\cVbar_{\ell+1}$ 
with $y-x'>y-x=z-y$.  By Proposition~\ref{Vbar:prop3}, this implies that
$y\notin\cV_{\ell+2}$ and so $z\in\cV_{\ell+2}$ by 
Corollary~\ref{Vbar:prop2:cor2}.  Thus, case (iii) applies.

Finally, suppose that $x\in\cVbar_{\ell+1}$.  Then the hypothesis implies that
$z\notin\cVbar_{\ell+1}$, and we argue similarly as above. 
As $x<y$ are consecutive in $\cVbar_{\ell+1}$, we have $y-x=F_{\ell-1}$.  
Let $z'$ be the successor of $z$ in $\cVbar_\ell$.  Since 
$z\notin\cVbar_{\ell+1}$, we have $z'\in\cVbar_{\ell+1}$, thus $x<y<z'$ are 
consecutive in $\cVbar_{\ell+1}$ with $y-x=z-y<z'-y$.  
This implies that $y\notin\cVbar_{\ell+2}$ and so $x\in\cVbar_{\ell+2}$.  
Thus, case (iv) applies.
\end{proof}

%
%

\section{The maps $\iota$ and $\alpha$, and the sets $\cV_\ell$}
\label{sec:V}

Let the notation be as in section~\ref{main:ssec:V}.   In this 
section, we give an explicit description of the map 
$\iota\colon\prefixes\to\prefixes$.  Then, we use it to refine some 
of the results of the previous section, and we prove the last assertion 
of section~\ref{main:ssec:V}.

Before doing this, we note that, by Lemma~\ref{Vbar:lemma2}, we have 
$\cV_1=\left[w_1,w_\infty\right[$ and $\cV_2=\left[w_2,w_\infty\right[$.
We also note the following direct consequence of 
Lemma~\ref{Vbar:lemma4}.

\begin{lemma}
\label{V:lemma1}
Let $\ell\ge 3$ be an integer.  Then, $\cV_\ell \cap \left[\epsilon,w_{\ell+3}\right]$ 
consists of the 7 words
\[
 w_\ell < w_{\ell+1} < w_{\ell+1}w_{\ell-1} < w_{\ell+2} < w_{\ell+2}w_{\ell-2}
  < w_{\ell+2}w_{\ell} < w_{\ell+3}.
\]
\end{lemma}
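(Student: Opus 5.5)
The plan is to transport Lemma~\ref{Vbar:lemma4} back to words through the length bijection. By \eqref{Vbar:eq:bar}, the map $v\mapsto|v|$ restricts to an order-preserving bijection from $\cV_\ell$ onto $\cVbar_\ell$. Since $|w_{\ell+3}|=F_{\ell+3}$ by \eqref{main:eq:length}, this bijection sends $\cV_\ell\cap[\epsilon,w_{\ell+3}]$ onto $\cVbar_\ell\cap[0,F_{\ell+3}]$. Lemma~\ref{Vbar:lemma4} lists the latter set as seven explicit numbers. So the whole task is to identify, for each of these seven lengths, the unique prefix of $w_\infty$ having that length, and check that it equals the word displayed in the statement.

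For each displayed word, I will first check that it is a prefix of $w_\infty$. This is clear for $w_\ell$, $w_{\ell+1}$, $w_{\ell+2}$ and $w_{\ell+3}$. For the others I will use the Fibonacci recursion $w_{i+2}=w_{i+1}w_i$:
\begin{itemize}
\item $w_{\ell+1}w_{\ell-1}$ is a prefix of $w_{\ell+1}w_\ell=w_{\ell+2}$, because $w_{\ell-1}\le w_\ell$;
\item $w_{\ell+2}w_{\ell}$ is a prefix of $w_{\ell+2}w_{\ell+1}=w_{\ell+3}$, because $w_\ell\le w_{\ell+1}$;
\item $w_{\ell+2}w_{\ell-2}$ is a prefix of $w_{\ell+2}w_\ell$, because $w_{\ell-2}\le w_\ell$.
\end{itemize}
The last step needs $\ell-2\ge1$, which holds since $\ell\ge3$. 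This is where the hypothesis $\ell\ge 3$ is used: it makes $w_{\ell-2}$ a genuine Fibonacci word in our indexing.

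Next, lengths are additive and $|w_i|=F_i$. So the seven words have lengths $F_\ell$, $F_{\ell+1}$, $F_{\ell+1}+F_{\ell-1}$, $F_{\ell+2}$, $F_{\ell+2}+F_{\ell-2}$, $F_{\ell+2}+F_\ell$ and $F_{\ell+3}$. These are exactly the numbers in Lemma~\ref{Vbar:lemma4}. The length map is injective on prefixes, so these words are precisely the elements of $\cV_\ell\cap[\epsilon,w_{\ell+3}]$, listed in increasing order.

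There is no real obstacle here. The only point needing care is the prefix verification, in particular making sure the index $\ell-2$ stays at least $1$.
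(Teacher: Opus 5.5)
Your proposal is correct and is the argument the paper intends: the paper states this lemma without proof, as a direct consequence of Lemma~\ref{Vbar:lemma4} transported through the length correspondence \eqref{Vbar:eq:bar}. Your explicit checks fill in the routine details the paper leaves implicit, namely that $w_{\ell+1}w_{\ell-1}$, $w_{\ell+2}w_{\ell-2}$ and $w_{\ell+2}w_\ell$ are prefixes of $w_\infty$, and that $\ell\ge 3$ is needed so that $w_{\ell-2}$ is defined.
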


For each non-empty word $v$ in $E^*$,
we denote by $v^*$ the word $v$ deprived of its last letter, that is
the prefix of $v$ of length $|v|-1$.  Induction based on the recurrence 
formula $w_{i+2}=w_{i+1}w_i$ ($i\ge 1$) shows that
\begin{equation}
\label{V:eq:f}
 w_i=w_i^{**}f_i
 \quad\text{where}\quad
 f_i=\begin{cases}  ab &\text{if $i\ge 2$ is even,}\\ 
                              ba &\text{if $i\ge 2$ is odd.}     \end{cases}
\end{equation}
For each $i\ge 2$, we define $\tw_i$ to be the word obtained from $w_i$
by permuting its last two letters.  In view of the above, this means that
\begin{equation}
\label{V:eq:tw}
 \tw_i=w_i^{**}f_{i+1} \quad (i\ge 2).
\end{equation}
It is a well known fact, attributed to J.~Berstel, that $w_i^{**}$ is a palindrome 
for each $i\ge 2$.  This has the following useful consequence. 

\begin{lemma}
\label{V:lemma2}
For each integer $i\ge 2$, the word $w_i^{**}$ is a palindrome and we 
have 
\begin{equation}
\label{V:lemma2:eq}
 w_{i+1}=w_{i-1}\tw_{i}.
\end{equation}
\end{lemma}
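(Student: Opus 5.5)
We prove both assertions together by induction on $i$, using only $w_{i+2}=w_{i+1}w_i$ and the description \eqref{V:eq:f} of the last two letters of $w_i$. The palindrome property is the key input, and \eqref{V:lemma2:eq} will follow from it.

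\emph{Palindromes.} The strengthened claim we carry through the induction is that, for each $i\ge 3$,
\[
 w_i^{**}=w_{i-1}^{**}f_{i-1}w_{i-2}^{**}=w_{i-2}^{**}f_{i}w_{i-1}^{**}.
\]
The first equality is immediate, since $w_i=w_{i-1}w_{i-2}=w_{i-1}^{**}f_{i-1}w_{i-2}^{**}f_{i-2}$ and $f_{i-2}=f_i$ gives the last two letters. The second equality expresses $w_{i-1}w_{i-2}$ and $w_{i-2}w_{i-1}$ as agreeing except in their last two letters, which are swapped. This is the classical "almost commutation" of consecutive Fibonacci words, and we also prove it by induction:
\[
 w_iw_{i-1}=w_{i-1}w_{i-2}w_{i-1}
 \quad\text{and}\quad
 w_{i-1}w_i=w_{i-1}w_{i-1}w_{i-2}.
\]
Applying the induction hypothesis to $w_{i-2}w_{i-1}$ versus $w_{i-1}w_{i-2}$ shows that these agree up to the final two letters.

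Once the second equality is established, palindromicity follows. If $w_{i-1}^{**}$ and $w_{i-2}^{**}$ are palindromes, then the reverse of $w_{i-1}^{**}f_{i-1}w_{i-2}^{**}$ is $w_{i-2}^{**}\,\overline{f_{i-1}}\,w_{i-1}^{**}$, where $\overline{f_{i-1}}=f_i$ is the reversal of $f_{i-1}$, since $ab$ and $ba$ are reverses of each other. This is exactly the second expression for $w_i^{**}$. The base cases are direct: $w_2^{**}=\epsilon$, $w_3^{**}=a$, and $w_4^{**}=aba$.

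\emph{The identity.} For \eqref{V:lemma2:eq}, we write $w_{i+1}=w_iw_{i-1}$. Using the almost-commutation $w_iw_{i-1}=w_{i-1}w_i$ with the last two letters swapped, we get
\[
 w_{i+1}=(w_{i-1}w_i)^{**}f_{i+1}=w_{i-1}w_i^{**}f_{i+1}=w_{i-1}\tw_i,
\]
by \eqref{V:eq:tw}. Here we must check the order of the last two letters. The last two letters of $w_{i+1}$ are $f_{i+1}$, while those of $w_{i-1}w_i$ are $f_i$, so the swap is consistent. The small case $i=2$ is checked directly: $w_3=aba=a\cdot ba=w_1\tw_2$.

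The main obstacle is keeping the parity bookkeeping of $f_i$ straight within the almost-commutation induction. We handle this by proving the almost-commutation first as a standalone claim: for $i\ge 2$, the words $w_iw_{i-1}$ and $w_{i-1}w_i$ differ exactly by swapping their last two letters.
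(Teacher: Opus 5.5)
Your proof is correct and is essentially the paper's argument. The identity $w_{i+1}=w_{i-1}\tw_i$ is exactly your almost-commutation of $w_iw_{i-1}$ and $w_{i-1}w_i$; the paper carries it through a single joint induction (using $\tw_{i+1}=w_{i-1}w_i$ to get $w_{i+2}=w_iw_{i-1}w_i=w_i\tw_{i+1}$) and derives the palindromes by the same reversal computation, whereas you prove it first as a standalone lemma.

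Two small points: your formula $w_i^{**}=w_{i-1}^{**}f_{i-1}w_{i-2}^{**}$ only makes sense for $i\ge 4$ because $w_1^{**}$ is undefined, which your base cases already cover. Also, contrary to your opening sentence, the identity follows from the almost-commutation and not from the palindrome property.
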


\begin{proof}
We proceed by induction on $i$.  We first note that $w_2^{**}=\epsilon$ and
$w_3^{**}=a$ are palindromes, that $w_3=aba=w_1\tw_2$,  and that 
$w_4=abaab=w_2\tw_3$.  Suppose that, for some $i\ge 3$, 
the words $w_{i-1}^{**}$ and $w_i^{**}$ are palindromes
and that \eqref{V:lemma2:eq} holds.  This yields 
$w_{i+1}^{**}=w_{i-1}^{**}f_{i-1}w_{i}^{**}$.  
Thus, the reverse of $w_{i+1}^{**}$ is 
$w_{i}^{**}f_{i}w_{i-1}^{**}=w_{i}w_{i-1}^{**}=w_{i+1}^{**}$, showing that 
$w_{i+1}^{**}$ is a palindrome.  Moreover, since \eqref{V:lemma2:eq}  
yields $\tw_{i+1}=w_{i-1}w_{i}$, we also find that 
$w_{i+2}=w_{i+1}w_{i}=w_{i}w_{i-1}w_{i}=w_{i}\tw_{i+1}$, which 
completes the induction step.
\end{proof}

It follows from Lemma~\ref{V:lemma2} that, for 
each $i\ge 2$, we have $w_{i+1}^{**}=w_{i-1}w_{i}^{**}$, hence 
$w_i^{**}$ is the largest prefix $v$ of $w_\infty$ such that 
$w_{i-1}v$ is a prefix of $w_\infty$.

\begin{lemma}
 \label{V:lemma3}
Let $k\ge 4$ be an integer.  Then,  
\begin{equation}
\label{V:lemma3:eq}
 \begin{array}{rcl}
  \left]w_k,w_{k+1}\right[ &\longrightarrow &\left]w_{k-3},w_{k-1}w_{k-3}\right[\\
  w &\longmapsto &\iota(w).
 \end{array}
\end{equation}
is an order preserving bijection. 
For $w\in\left]w_k,w_{k+1}\right[$, we can compute $\iota(w)$ as follows.
\begin{itemize}
\item[(i)] If $w < w_kw^*_{k-2}$, then $w=w_ku$ for some
  $u\in \left] \epsilon, w^{**}_{k-2} \right]$ and $\iota(w)=w_{k-3}u$.
  \smallskip
\item[(ii)] If $w = w_kw^*_{k-2}$, then $\iota(w)=w^*_{k-1}$.
  \smallskip
\item[(iii)] If $w > w_kw^*_{k-2}$, then $w=w_kw_{k-2}u$
  for some $u\in \left[ \epsilon, w_{k-3} \right[$ and $\iota(w)=w_{k-1}u$.
\end{itemize}
\end{lemma}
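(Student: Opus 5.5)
The plan is to reduce everything to lengths, using the correspondence \eqref{Vbar:eq:bar}. Since $|w_k|=F_k$ and $|w_{k+1}|=F_{k+1}$, the words of $\left]w_k,w_{k+1}\right[$ are exactly the prefixes of $w_\infty$ with lengths in $\left]F_k,F_{k+1}\right[$. By definition, $\iota$ maps such a word $w$ to the prefix of length $|w|-2F_{k-2}$. This length lies in $\left]F_{k-3},F_{k-1}+F_{k-3}\right[$. On the other side, $|w_{k-1}w_{k-3}|=F_{k-1}+F_{k-3}$, and $w_{k-1}w_{k-3}$ is a prefix of $w_\infty$, for instance by Lemma~\ref{V:lemma1}, or directly because $w_{k+1}=w_kw_{k-1}=w_{k-1}w_{k-2}w_{k-1}$ begins with $w_{k-1}w_{k-3}$. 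So the target set consists of the prefixes with lengths in that open interval. Since length is an order-preserving bijection from $\prefixes$ to $\bN$, and $x\mapsto x-2F_{k-2}$ is an order-preserving bijection between the two integer intervals, the map \eqref{V:lemma3:eq} is an order-preserving bijection. This is the same argument as in Lemma~\ref{Vbar:lemma3}.

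For the explicit formulas, I would use the factorisation
\[
w_{k+1}=w_kw_{k-1}=w_kw_{k-2}w_{k-3}
\]
together with $w_{k-2}=w_{k-2}^{**}f_{k-2}$. A word $w$ in the interval is therefore $w_ku$ with $u$ a proper non-empty prefix of $w_{k-1}$.

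In case (i), $u\le w_{k-2}^{**}$, so $|u|\le F_{k-2}-2$. Then $\iota(w)$ has length $F_{k-3}+|u|$. I need the prefix of $w_\infty$ of this length to be $w_{k-3}u$. By the remark after Lemma~\ref{V:lemma2}, $w_{k-3}w_{k-2}^{**}=w_{k-1}^{**}$ is a prefix of $w_\infty$. Hence $w_{k-3}u$ is a prefix of $w_\infty$ whenever $u\le w_{k-2}^{**}$, and the length count settles (i).

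In case (ii), $|w|=F_k+F_{k-2}-1$, so $\iota(w)$ has length $F_{k-1}-1$, which is $w_{k-1}^*$.

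In case (iii), $w=w_kw_{k-2}u$ with $u<w_{k-3}$; the inequality is strict since $w<w_{k+1}$. Then $\iota(w)$ has length $F_k+F_{k-2}+|u|-2F_{k-2}=F_{k-1}+|u|$. Because $w_{k-1}w_{k-3}$ is a prefix of $w_\infty$, as shown above, the prefix of that length is $w_{k-1}u$.

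There is one trap in case (iii). The words $w$ with $w>w_kw_{k-2}^*$ all have length at least $F_k+F_{k-2}$, so they are indeed of the form $w_kw_{k-2}u$ with $u\in\left[\epsilon,w_{k-3}\right[$. The only real care needed is the prefix facts from Lemma~\ref{V:lemma2} in case (i), with the boundary $|u|=F_{k-2}-2$ included. I expect that step to be the main obstacle, and it is resolved by $w_{k-1}^{**}=w_{k-3}w_{k-2}^{**}$.
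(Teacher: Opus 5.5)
Your proof is correct and follows essentially the same route as the paper: you reduce the bijection to the length shift $x\mapsto x-2F_{k-2}$ (as in Lemma~\ref{Vbar:lemma3}), then in each case check that the proposed word is a prefix of $w_\infty$ of the right length, handling case (i) via $w_{k-1}^{**}=w_{k-3}w_{k-2}^{**}$ from the remark after Lemma~\ref{V:lemma2}. One minor point: citing Lemma~\ref{V:lemma1} for the fact that $w_{k-1}w_{k-3}$ is a prefix of $w_\infty$ only covers $k\ge 5$, but your direct argument through $w_{k+1}=w_{k-1}w_{k-2}w_{k-1}$ works for all $k\ge 4$.
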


\begin{proof}
For each $w\in\left]w_k,w_{k+1}\right[$, we have 
$|\iota(w)|=\iotabar(|w|)=|w|-2F_{k-2}$.  So, \eqref{V:lemma3:eq}
is an order preserving bijection.  To justify the explicit formulas, it 
suffices to note that, in each case, the value given for $\iota(w)$ 
is a prefix of $w_\infty$ of length $|w|-2F_{k-2}$.  This is a direct
computation for the length.  That it is a prefix is immediate
in cases (ii) and (iii).  In case (i), it follows from the observation made
right before the lemma.
\end{proof}

\begin{lemma}
 \label{V:lemma4}
We have
$\{\alpha(w^*_k), \alpha(w^*_{k+1}), \alpha(w_kw^*_{k-2}) \} 
\subseteq \{w_2, w_3\}$ for each integer $k\ge 3$.
\end{lemma}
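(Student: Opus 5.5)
Since $\alpha(v)$ is determined by $|v|$ via $|\alpha(v)|=\alphabar(|v|)$, the plan is to work entirely with lengths. The three words have lengths $F_k-1$, $F_{k+1}-1$ and $F_k+F_{k-2}-1$, and the claim becomes
\[
 \alphabar(F_k-1),\ \alphabar(F_{k+1}-1),\ \alphabar(F_k+F_{k-2}-1)\in\{2,3\}
 \quad (k\ge 3).
\]
It suffices to prove the first and third statements for all $k\ge 3$, and to note that the second follows from the first applied at $k+1$. For small $k$ the table \eqref{Vbar:table} settles things: $F_3-1=2$, $F_4-1=4$, $F_3+F_1-1=3$ and $F_4+F_2-1=6$, whose images under $\alphabar$ are $2,2,3,2$.

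Next compute one step of $\iotabar$ for large $k$. For $k\ge 5$ we have $F_{k-1}<F_k-1<F_k$, so with $i=k-1\ge 3$ we get
\[
 \iotabar(F_k-1)=F_k-1-2F_{k-3}=F_{k-1}+F_{k-2}-2F_{k-3}-1=F_{k-2}+F_{k-4}-1 .
\]
This is the third type of number at index $k-2$, that is, $F_{k'}+F_{k'-2}-1$ with $k'=k-2$.

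For the third type, take $k\ge 4$; then $F_k<F_k+F_{k-2}-1<F_{k+1}$, and $\iotabar$ subtracts $2F_{k-2}$:
\[
 \iotabar(F_k+F_{k-2}-1)=F_k-F_{k-2}-1=F_{k-1}-1 ,
\]
which is the first type at index $k-1$. This agrees with Lemma~\ref{V:lemma3}(ii), namely $\iota(w_kw_{k-2}^*)=w_{k-1}^*$, and in fact this case could be quoted directly from that lemma.

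Since $\alphabar\circ\iotabar=\alphabar$, the two families map into each other with strictly smaller index. A strong induction on $k$ then reduces everything to the base cases above: the first type at $k=3,4$ and the third type at $k=3$. The first type at $k=4$ also matches the third type at $k=2$, since $F_2+F_0-1=2$, but it is simpler just to use the table. Where the index bounds are tight, namely $k=5$ for the first type (which lands on the third type at $k=3$) and $k=4$ for the third type (which lands on the first type at $k=3$), everything stays within the tabulated range.

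The only delicate point is checking the strict inequalities that place each length in the right interval $]F_i,F_{i+1}[$, so that the correct formula for $\iotabar$ is applied, and confirming that these hold from the stated thresholds on. Once that is done, the conclusion follows from $|\alpha(v)|\in\{2,3\}$ and \eqref{Vbar:eq:bar}, which force $\alpha(v)\in\{w_2,w_3\}$.
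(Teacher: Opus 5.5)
Your proof is correct and follows essentially the paper's route. You use the same base cases, $\alpha(w_3^*)$, $\alpha(w_4^*)$ and $\alpha(w_3w_1^*)$, and the same induction, based on the one-step identities $\iota(w^*_{k+1})=w_{k-1}w^*_{k-3}$ and $\iota(w_kw^*_{k-2})=w^*_{k-1}$; the only difference is that you check these directly on lengths via $\iotabar$ rather than quoting Lemma~\ref{V:lemma3}.
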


\begin{proof}
We proceed by induction.
Table \eqref{main:table} gives $\alpha(w^*_3) =\alpha(w^*_4)=w_2$
and $\alpha(w_3w^*_1)=w_3$.  Suppose that $\alpha(w^*_{k-1})$, 
$\alpha(w^*_k)$ and $\alpha(w_{k-1}w^*_{k-3})$ lie in $\{w_2, w_3\}$ 
for some $k\ge 4$.  Lemma \ref{V:lemma3} gives
\[
 \iota(w^*_{k+1})=w_{k-1}w_{k-3}^*
 \et
 \iota(w_kw^*_{k-2})=w^*_{k-1}.
\]
Thus, $\alpha(w^*_{k+1})=\alpha(w_{k-1}w_{k-3}^*)$ and 
$\alpha(w_kw^*_{k-2})=\alpha(w^*_{k-1})$ also belong to $\{w_2, w_3\}$.
\end{proof}

\begin{proposition}
\label{V:prop1}
Let $\ell\ge 4$ be an integer, and let $u<v$ be consecutive elements 
of $\cV_\ell$.  Then, we have $v=us$ for some suffix 
$s$ in the set $\cS_\ell:=\{w_{\ell-2},\, \tw_{\ell-2},\, w_{\ell-1},\, \tw_{\ell-1}\}$.
\end{proposition}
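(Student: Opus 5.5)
Since $u$ and $v$ are both prefixes of $w_\infty$ and $u<v$, we can write $v=us$, where $s$ is the factor of $w_\infty$ occupying positions $|u|+1,\dots,|v|$. By Corollary~\ref{Vbar:prop2:cor1} and \eqref{Vbar:eq:bar}, $|s|\in\{F_{\ell-2},F_{\ell-1}\}$. Every word in $\cS_\ell$ has one of these two lengths, so it remains to show that $s$ equals $w_j$ or $\tw_j$ for the appropriate $j\in\{\ell-2,\ell-1\}$. I would prove this by strong induction on $|v|$, using the recursive structure of $\iota$ given in Lemma~\ref{V:lemma3} together with the translation invariance of Proposition~\ref{Vbar:prop1}.

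\emph{Base case.} For small $v$, namely $v\le w_{\ell+3}$, Lemma~\ref{V:lemma1} lists the elements explicitly, and the successive suffixes are read off from $w_{\ell+2}=w_{\ell+1}w_\ell$, $w_{\ell+1}=w_\ell w_{\ell-1}$ and similar identities. Going from $w_\ell$ to $w_{\ell+1}$ appends $w_{\ell-1}$. Going from $w_{\ell+1}$ to $w_{\ell+1}w_{\ell-1}$ appends $w_{\ell-1}$. Going from $w_{\ell+1}w_{\ell-1}$ to $w_{\ell+2}=w_{\ell+1}w_{\ell-1}w_{\ell-2}$ appends $w_{\ell-2}$. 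Going from $w_{\ell+2}$ to $w_{\ell+2}w_{\ell-2}$ appends $w_{\ell-2}$. For the remaining steps inside $w_{\ell+3}=w_{\ell+2}w_{\ell+1}$, one uses $w_{\ell+1}=w_{\ell-2}\tw_{\ell-1}$ from Lemma~\ref{V:lemma2}, and then $w_{\ell+1}w_{\ell-1}=w_{\ell}\,\cdots$, to identify the suffixes $\tw_{\ell-1}$ and $w_{\ell-1}$.

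\emph{Inductive step.} Take $k$ with $w_k<v$ and $u$ in the range treated by Lemma~\ref{V:lemma3}, so that $u$ and $v$ lie in $[w_k,w_{k+1}]$ and $k\ge\ell+3$. Since $\cV_\ell$ is $\iota$-stable by Lemma~\ref{Vbar:lemma3}, the map $\iota$ carries consecutive elements of $\cV_\ell$ in $]w_k,w_{k+1}[$ to consecutive elements of $\cV_\ell$ in $]w_{k-3},w_{k-1}w_{k-3}[$. By induction, these images differ by a suffix in $\cS_\ell$. We then transport this back through the three cases of Lemma~\ref{V:lemma3}:
\begin{itemize}
\item In case (i), $w=w_ku'$ and $\iota(w)=w_{k-3}u'$, so the suffix between two such words is unchanged.
\item In case (iii), $w=w_kw_{k-2}u'$ and $\iota(w)=w_{k-1}u'$, so again the suffix is unchanged.
\end{itemize}
The endpoints $u=w_k$ or $v=w_{k+1}$ need separate treatment, because $\iota$ fixes them. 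For these, I would use Proposition~\ref{Vbar:prop1}, which shows that the elements of $\cV_\ell$ near $w_{k+1}$ are translates by $2F_{k-2}$ of those near $w_{k-1}+w_{k-3}$. Combined with the identity $w_{k+1}=w_kw_{k-2}w_{k-3}$, this reduces the endpoint cases to the corresponding words $w_{k-1}w_{k-3}$, which were handled at an earlier stage.

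\emph{Main obstacle.} The hard step is the pair straddling the exceptional point $w_kw^*_{k-2}$ of case (ii), where $\iota$ does not act as a simple prefix replacement. The map sends $w_kw^*_{k-2}$ to $w^*_{k-1}$, and by Lemma~\ref{V:lemma4} $\alpha$ of this word is $w_2$ or $w_3$. Hence $w_kw^*_{k-2}\notin\cV_\ell$ for $\ell\ge4$, and this is where the hypothesis $\ell\ge4$ enters. Two consecutive elements $u<v$ of $\cV_\ell$ with $u<w_kw^*_{k-2}<v$ still remain possible. For such a pair, $u=w_ku'$ with $u'\le w^{**}_{k-2}$ and $v=w_kw_{k-2}u''$, so $s=u'^{-1}w_{k-2}u''$. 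Their images are $w_{k-3}u'<w_{k-1}u''$, and these are consecutive in $\cV_\ell$ with suffix $s'=u'^{-1}w_{k-4}f\,u''$, using $w_{k-1}=w_{k-3}w_{k-4}\cdots$. The two suffixes $s$ and $s'$ agree except possibly for a transposition of the last two letters, which comes from the palindromic structure of $w^{**}_{k-2}$ in Lemma~\ref{V:lemma2}. The proof then has to show that this transposition exactly exchanges $w_j$ and $\tw_j$, which is why both forms must appear in $\cS_\ell$. I would verify this by comparing the relevant factor of $w_\infty$ at the two locations.
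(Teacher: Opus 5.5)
Your outline follows the paper's proof: induction on $|v|$, transporting a consecutive pair in $[w_k,w_{k+1}]$ through $\iota$ by Lemma~\ref{V:lemma3}, with Lemma~\ref{V:lemma4} excluding $w_kw^*_{k-2}$ from $\cV_\ell$ and Lemma~\ref{V:lemma2} producing the $\tw$'s. The endpoints are simpler than you make them. The paper just extends $\iota$ by $w_k\mapsto w_{k-3}$ and $w_{k+1}\mapsto w_{k-1}w_{k-3}$. These are the prefix-replacement rules $w_ku'\mapsto w_{k-3}u'$ and $w_kw_{k-2}u'\mapsto w_{k-1}u'$ of Lemma~\ref{V:lemma3}, evaluated at $u'=\epsilon$ and $u'=w_{k-3}$. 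This gives an order-preserving bijection $\cV_\ell\cap[w_k,w_{k+1}]\to\cV_\ell\cap[w_{k-3},w_{k-1}w_{k-3}]$, so Proposition~\ref{Vbar:prop1} is not needed. However, the step you single out as the main obstacle is left unfinished, and it is set up incorrectly.

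The missing observation is that $w_kw_{k-2}\in\cV_\ell$. Indeed, Lemma~\ref{V:lemma3}(iii) gives $\iota(w_kw_{k-2})=w_{k-1}$, and $k-1>\ell$. So if $u<w_kw^*_{k-2}<v$ are consecutive in $\cV_\ell$, then necessarily $v=w_kw_{k-2}$, which means your $u''$ is empty. For non-empty $u''$, your claim that $s$ and $s'$ differ by swapping their last two letters would actually be false, because the swapped letters would sit inside the suffix.

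With $u=w_ku'$ and $u'\le w^{**}_{k-2}$, you have $\iota(u)=w_{k-3}u'$ and $\iota(v)=w_{k-1}=w_{k-3}\tw_{k-2}$ by Lemma~\ref{V:lemma2}. The inductive hypothesis then gives $\tw_{k-2}=u's'$ with $s'\in\cS_\ell$; this, not the expression involving $w_{k-4}$, is the correct $s'$. Since $|u'|\le F_{k-2}-2$, the last two letters of $\tw_{k-2}$ are those of $s'$. Hence $w_{k-2}=u's$, where $s$ is $s'$ with its last two letters swapped, and $v=us$. Because $\tw_j$ is \emph{by definition} $w_j$ with its last two letters swapped, $s'\in\{w_j,\tw_j\}$ immediately gives $s\in\{\tw_j,w_j\}$. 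No comparison of factors of $w_\infty$ is required; this is exactly the paper's case $v=w_kw_{k-2}$.

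There is also a slip in your base case. The identity you need from Lemma~\ref{V:lemma2} is $w_\ell=w_{\ell-2}\tw_{\ell-1}$; the one you wrote, $w_{\ell+1}=w_{\ell-2}\tw_{\ell-1}$, has the wrong length. The correct identity gives the suffix $\tw_{\ell-1}$ from $w_{\ell+2}w_{\ell-2}$ to $w_{\ell+2}w_\ell$. Then $w_{\ell+3}=w_{\ell+2}w_\ell w_{\ell-1}$ gives the last suffix, $w_{\ell-1}$.
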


\begin{proof}
By Lemma~\ref{V:lemma1}, the smallest element of $\cV_\ell$ is $w_\ell$
and the next six elements of $\cV_\ell$ are obtained  by multiplying $w_\ell$
on the right successively by $w_{\ell-1}$, $w_{\ell-1}$, $w_{\ell-2}$, 
$w_{\ell-2}$, $\tw_{\ell-1}$ and $w_{\ell-1}$.  Thus, we may 
assume that $v>w_{\ell+3}$ and so $v\in\left]w_k,w_{k+1}\right]$ for 
some integer $k\ge \ell+3$.  Since $w_k\in\cV_\ell$, we deduce that
$\{u,v\}\subseteq \left[w_k,w_{k+1}\right]$.  Since $w_{k-3}$ and 
$w_{k-1}w_{k-3}$ belong to $\cV_\ell$, the map 
\[
\begin{array}{rcl}
 \pi\colon \cV_\ell \cap \left[w_k,w_{k+1}\right] 
   &\longrightarrow &\cV_\ell \cap \left[w_{k-3},w_{k-1}w_{k-3}\right]\\
  w &\longmapsto
 &\pi(w) 
  = \begin{cases} 
    w_{k-3} &\text{if $w=w_k$,}\\
    \iota(w) &\text{if $w_k<w<w_{k+1}$,}\\
    w_{k-1}w_{k-3} &\text{if $w=w_{k+1}$.}
      \end{cases}
\end{array}
\]
is an order preserving bijection.  In particular, $\pi(u)<\pi(v)$ are 
consecutive elements of $\cV_\ell$ and so we may assume, by induction
on $|v|$, that 
\begin{equation}
\label{V:prop1:eq}
 \pi(v)=\pi(u)s 
 \quad\text{for some $s \in \cS_\ell$.}  
\end{equation}
 We consider three cases.

(i) Suppose that $v<w_kw_{k-2}$.  Since $\ell\ge 4$, Lemma~\ref{V:lemma4}
shows that $w_kw^*_{k-2}\notin\cV_\ell$.  So, we have 
$u=w_ku'$ and $v=w_kv'$ for some $u',v'$ in $[\epsilon, w^{**}_{k-2}]$.
Then Lemma~\ref{V:lemma3} gives $\pi(u)=w_{k-3}u'$ and $\pi(v)=w_{k-3}v'$.
By \eqref{V:prop1:eq}, this yields $v'=u's$, thus $v=us$.

(ii) Suppose that $v>w_kw_{k-2}$.  Then, we have $u\ge w_kw_{k-2}$. So, 
$u=w_kw_{k-2}u'$ and $v=w_kw_{k-2}v'$ for some  
$u',v'$ in $[\epsilon, w_{k-3}]$.  Applying Lemma~\ref{V:lemma3} gives 
$\pi(u)=w_{k-1}u'$ and $\pi(v)=w_{k-1}v'$.  By \eqref{V:prop1:eq}, this implies 
that $v'=u's$, thus $v=us$.  

(iii) Finally, suppose that $v=w_kw_{k-2}$.  Since 
$w_kw^*_{k-2}\notin\cV_\ell$, we have $u=w_ku'$ for some word 
$u' \in [\epsilon, w^{**}_{k-2}]$.  Then, Lemma~\ref{V:lemma3} gives 
$\pi(u)=w_{k-3}u'$ and $\pi(v)=w_{k-1}$.  Thus, $\pi(v)=w_{k-3}\tw_{k-2}$ 
by Lemma~\ref{V:lemma2}.   By \eqref{V:prop1:eq}, we deduce that
$\tw_{k-2}=u's$, thus $w_{k-2}=u's'$ where $s'$ is the reverse of $s$.
We conclude that $v=w_ku's'=us'$ with $s' \in \cS_\ell$.
\end{proof}

In Section~\ref{main:ssec:Fib}, we defined $\theta$ to be the endomorphism
of $E^*$ determined by $\theta(a)=ab$ and $\theta(b)=a$.  It has the 
following property.

\begin{lemma}
 \label{V:lemma:theta}
For each integer $k\ge 2$, we have $\theta(w_k)=w_{k+1}$ and 
$\theta(\tw_k)=\tw_{k+1}$.  Moreover, $\theta$ restricts to an order 
preserving map from $\left[\epsilon,w_\infty\right[$ to itself.
\end{lemma}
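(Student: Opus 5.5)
The first claim, $\theta(w_k)=w_{k+1}$, is already \eqref{main:eq:theta}, so I start from there.

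For the claim about $\tw_k$, I use $\tw_k=w_k^{**}f_{k+1}$ and $w_k=w_k^{**}f_k$ from \eqref{V:eq:tw} and \eqref{V:eq:f}. Since $\theta$ is a morphism,
\[
\theta(w_k)=\theta(w_k^{**})\theta(f_k)\quad\text{and}\quad \theta(\tw_k)=\theta(w_k^{**})\theta(f_{k+1}).
\]
Now $\theta(ab)=aba$ and $\theta(ba)=aab$. So $\theta(f_k)$ and $\theta(f_{k+1})$ are the two words $aba$ and $aab$. They share the first letter $a$ and differ by swapping their last two letters: $ba$ versus $ab$.

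Concretely, if $k$ is even then $\theta(f_k)=a\,ab=a f_{k+2}$ and $\theta(f_{k+1})=a\,ba=af_{k+3}$. The parity of $k+2$ is that of $k$, and the parity of $k+3$ is that of $k+1$; the odd case is symmetric. Hence
\[
w_{k+1}=\theta(w_k)=\theta(w_k^{**})\,a\,f_{k+1}',
\]
where $f'_{k+1}$ is the two-letter tail, and this tail must equal $f_{k+1}$ by \eqref{V:eq:f}. Since $w_{k+1}=w_{k+1}^{**}f_{k+1}$, we get $w_{k+1}^{**}=\theta(w_k^{**})a$. Then
\[
\theta(\tw_k)=\theta(w_k^{**})\,a\,(\text{swapped tail})=w_{k+1}^{**}f_{k+2}=\tw_{k+1}.
\]
The only care needed is matching parities, which is just the bookkeeping above. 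I check the base case directly: $\theta(\tw_2)=\theta(ba)=aab=\tw_3$.

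For the last assertion, $\theta$ preserves the prefix order: if $u\le w$, say $w=us$, then $\theta(w)=\theta(u)\theta(s)$. Any prefix $v$ of $w_\infty$ satisfies $v\le w_i$ for some $i$, so $\theta(v)\le\theta(w_i)=w_{i+1}$. Thus $\theta(v)$ is again a prefix of $w_\infty$, and $\theta$ maps $[\epsilon,w_\infty[$ to itself in an order-preserving way. No step here is a real obstacle; the mild point is the parity bookkeeping in the $\tw_k$ identity.
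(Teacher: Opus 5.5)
Your strategy is the paper's: write $w_k=w_k^{**}f_k$ and $\tw_k=w_k^{**}f_{k+1}$, apply $\theta$, get $w_{k+1}^{**}=\theta(w_k^{**})a$, and read off $\theta(\tw_k)$. But the ``parity bookkeeping'' sentence, which you single out as the only delicate point, is wrong.

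For $k$ even, $f_k=ab$ and
\[
\theta(f_k)=\theta(ab)=aba=a\,ba=af_{k+1},
\]
not $a\,ab=af_{k+2}$. Likewise $\theta(f_{k+1})=\theta(ba)=aab=af_{k+2}$, not $af_{k+3}$, and symmetrically for $k$ odd. Your own base case $\theta(ba)=aab$ already contradicts the value you wrote. So $\theta$ swaps the two-letter tail rather than preserving it. The uniform identity $\theta(f_k)=af_{k+1}$ is exactly what the paper's proof uses.

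Taken literally, your formulas would make $\theta(w_k)$ end in $f_{k+2}=f_k$, contradicting $\theta(w_k)=w_{k+1}=w_{k+1}^{**}f_{k+1}$. They would also make $\theta(\tw_k)$ end in $f_{k+3}=f_{k+1}$, the tail of $w_{k+1}$ rather than of $\tw_{k+1}$. Your next line then overrides the computation by fiat, invoking \eqref{V:eq:f} to force the tail to be $f_{k+1}$.

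The repair is easy, and your abstract observation already suffices with no parity case split:
\begin{itemize}
\item $\theta(ab)$ and $\theta(ba)$ both begin with $a$, and their tails $ba$ and $ab$ are swaps of each other.
\item Since $\theta(w_k)=w_{k+1}$ ends in $f_{k+1}$ by \eqref{V:eq:f}, the tail of $\theta(f_k)$ is $f_{k+1}$, so $w_{k+1}^{**}=\theta(w_k^{**})a$.
\item The tail of $\theta(f_{k+1})$ is the swap of $f_{k+1}$, namely $f_{k+2}$, so $\theta(\tw_k)=w_{k+1}^{**}f_{k+2}=\tw_{k+1}$.
\end{itemize}
So delete or correct the ``Concretely'' sentence.

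Two smaller points. First, \eqref{main:eq:theta} is only asserted in Section~3.2, and this lemma is where the paper justifies it. You should add that $(\theta(w_i))_{i\ge 1}$ is a Fibonacci sequence, since $\theta$ is a morphism, with $\theta(w_1)=ab=w_2$ and $\theta(w_2)=aba=w_3$. Second, your argument for the last assertion, that $\theta$ preserves prefixes and hence maps $[\epsilon,w_i]$ into $[\epsilon,w_{i+1}]$, matches the paper's.
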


\begin{proof}
The first formula holds for each $k\ge 1$, as stated in \eqref{main:eq:theta}.  
This follows from the fact that $(\theta(w_i))_{i\ge 1}$ is a Fibonacci 
sequence in $E^*$ with 
$\theta(w_1)=\theta(a)=ab=w_2$ and $\theta(w_2)=\theta(ab)=aba=w_3$.
Since $\theta$ preserves the relation of prefix, we deduce that it maps
$[\epsilon,w_k]$ to $[\epsilon,w_{k+1}]$ for each $k\ge 1$.  Thus, it maps
$\left[\epsilon,w_\infty\right[$ to itself.

To prove the second formula, we note that $\theta(ab)=aba$ and 
$\theta(ba)=aab$.  This implies that, for each $k\ge 2$, we have 
$\theta(f_k)=af_{k+1}$, thus 
$w_{k+1}=\theta(w_k)=\theta(w^{**}_k)af_{k+1}$,
and so 
\[
 \tw_{k+1}=\theta(w^{**}_k)af_k=\theta(w^{**}_kf_{k+1})=\theta(\tw_k).
 \qedhere
\]
\end{proof}

In particular, the above lemma implies that, with the notation of 
Proposition~\ref{V:prop1}, we have $\theta(\cS_\ell)=\cS_{\ell+1}$
for each integer $\ell\ge 4$.  We can now prove the last 
assertion of Section~\ref{main:ssec:V}.

\begin{proposition}
\label{V:prop2}
Let $\ell\ge 4$ be an integer.  Then, $\theta$ restricts to an order 
preserving bijection from $\cV_\ell$ to $\cV_{\ell+1}$.
\end{proposition}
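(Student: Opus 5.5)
Both $\cV_\ell$ and $\cV_{\ell+1}$ are infinite chains for the prefix order, and $\theta$ is order preserving and injective on $\prefixes$ because it strictly increases lengths along prefixes. So the plan is to compare the two sets through their enumerations. List the elements of $\cV_\ell$ as $v_1<v_2<\cdots$. By Lemma~\ref{V:lemma1}, $v_1=w_\ell$, and by Proposition~\ref{V:prop1}, $v_{i+1}=v_is_i$ with $s_i\in\cS_\ell$. It then suffices to show that $\theta(v_1)<\theta(v_2)<\cdots$ are exactly the elements of $\cV_{\ell+1}$ in increasing order. We have $\theta(v_1)=w_{\ell+1}$, which is the smallest element of $\cV_{\ell+1}$ by Lemma~\ref{V:lemma1}. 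Moreover $\theta(v_{i+1})=\theta(v_i)\theta(s_i)$ with $\theta(s_i)\in\cS_{\ell+1}$ by Lemma~\ref{V:lemma:theta}. Thus everything reduces to comparing the length increments.

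The key observation is that the lengths of the suffixes in $\cS_\ell$ are $F_{\ell-2}$ (for $w_{\ell-2},\tw_{\ell-2}$) and $F_{\ell-1}$ (for $w_{\ell-1},\tw_{\ell-1}$). Applying $\theta$ sends these to the corresponding words of lengths $F_{\ell-1}$ and $F_\ell$. By Proposition~\ref{Vbar:prop2}, the gap sequence $(|v_{i+1}|-|v_i|)_{i\ge1}$ is the infinite Fibonacci word on $(F_{\ell-1},F_{\ell-1})$ and $(F_{\ell-2},F_{\ell-2})$. Replacing each $F_{\ell-2}$ by $F_{\ell-1}$ and each $F_{\ell-1}$ by $F_\ell$ (a letter-to-letter substitution, which commutes with the Fibonacci construction) gives the infinite Fibonacci word on $(F_\ell,F_\ell)$ and $(F_{\ell-1},F_{\ell-1})$. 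By Proposition~\ref{Vbar:prop2} again, this is exactly the gap sequence of $\cVbar_{\ell+1}$. Since also $|\theta(v_1)|=F_{\ell+1}=\min\cVbar_{\ell+1}$, we get $\{|\theta(v_i)|\}=\cVbar_{\ell+1}$ as sets, with matching order.

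Finally, since each $\theta(v_i)$ is a prefix of $w_\infty$ (Lemma~\ref{V:lemma:theta}) and prefixes of $w_\infty$ are determined by their length, \eqref{Vbar:eq:bar} gives $\theta(v_i)\in\cV_{\ell+1}$, and every element of $\cV_{\ell+1}$ arises this way. This yields the order preserving bijection $\theta\colon\cV_\ell\to\cV_{\ell+1}$.

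The main point to verify carefully is that $|\theta(s)|$ depends only on $|s|$ for $s\in\cS_\ell$. This holds because $\theta(w_k)=w_{k+1}$ and $\theta(\tw_k)=\tw_{k+1}$ have length $F_{k+1}$, and the hypothesis $\ell\ge4$ guarantees $\ell-2\ge2$, so $\tw_{\ell-2}$ is defined and Lemma~\ref{V:lemma:theta} applies. One also needs $F_{\ell-2}\ne F_{\ell-1}$, which holds since $\ell\ge4$; this ensures the substitution on gap letters is well defined.
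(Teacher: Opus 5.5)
Your proof is correct and is essentially the paper's argument: you enumerate $\cV_\ell$ and $\cV_{\ell+1}$, use Proposition~\ref{V:prop1} and Lemma~\ref{V:lemma:theta} to see that $\theta$ sends each suffix of length $F_k$ to one of length $F_{k+1}$, and use Proposition~\ref{Vbar:prop2} to match the gap sequences starting from $\theta(w_\ell)=w_{\ell+1}$. Phrasing the length comparison as a letter-to-letter substitution on the gap word is only a repackaging of the paper's termwise comparison $|s_i|=F_k$, $|s'_i|=F_{k+1}$.
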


\begin{proof}
Let $(v_i)_{i\ge 1}$ be the increasing sequence of elements of $\cV_\ell$, 
and let $(v'_i)_{i\ge 1}$ be that of $\cV_{\ell+1}$.
We need to show that $\theta(v_i)=v'_i$ for each $i\ge 1$.
As $v_i$ and $v'_i$ are prefixes of $w_\infty$, both
$\theta(v_i)$ and $v'_i$ are prefixes of $w_\infty$ by 
Lemma~\ref{V:lemma:theta}.
So, it suffices to show that $|\theta(v_i)|=|v'_i|$.  

For each $i\ge 1$, write $v_{i+1}=v_is_i$ and $v'_{i+1}=v'_is'_i$ for words 
$s_i$ and $s'_i$.   By Proposition~\ref{Vbar:prop2}, the sequence 
$(|s_i|)_{i\ge 1}$ is the infinite Fibonacci word on  $(F_{\ell-1},F_{\ell-1})$ 
and $(F_{\ell-2},F_{\ell-2})$, while $(|s'_i|)_{i\ge 1}$ is the infinite 
Fibonacci word on $(F_{\ell},F_{\ell})$ and $(F_{\ell-1},F_{\ell-1})$.   
Thus, for each $i\ge 1$, there is an integer $k\in\{\ell-2,\ell-1\}$ 
such that $|s_i|=F_k$ and $|s'_i|=F_{k+1}$.  By Proposition~\ref{V:prop1}, 
we have $s_i\in\{w_k,\tw_k\}$ since $|s_i|=F_k$.  Then, 
Lemma~\ref{V:lemma:theta} gives $\theta(s_i)\in\{w_{k+1},\tw_{k+1}\}$,
thus $|\theta(s_i)|=F_{k+1}=|s'_i|$. 

Since $v_1=w_\ell$ and $v'_1=w_{\ell+1}$, we have $\theta(v_1)=v'_1$,
thus $|\theta(v_i)|=|v'_i|$ for $i=1$.  We deduce by induction 
that $|\theta(v_i)|=|v'_i|$ for each $i\ge 1$ because, if this holds for
some $i\ge 1$, then
\[
 |\theta(v_{i+1})|=|\theta(v_i)|+|\theta(s_i)|=|v'_i|+|s'_i|=|v'_{i+1}|. 
 \qedhere
\] 
\end{proof}

\begin{remark}
The statement of the proposition fails for $\ell=2$ and for $\ell=3$.
For example, for $v=w_6w^*_4=w_6w_3a$, we have 
$\theta(v)=w_7w_4ab=w_7w^*_5$.  Using Lemma~\ref{V:lemma3}, 
we find that $\alpha(v)=\iota^2(v)=w_3$ and
$\alpha(\theta(v))=\iota^3(\theta(v))=w_2$.  So, $v\in\cV_3$
while $\theta(v)\notin\cV_3$.  This shows that 
$\theta(\cV_3)\not\subseteq\cV_3$.  A fortiori, we have 
$\theta(\cV_2)\not\subseteq\cV_3$ and  
$\theta(\cV_3)\not\subseteq \cV_4$.
\end{remark}

\begin{corollary}
\label{V:prop2:cor}
For each $v\in\cV_4$, we have $\alpha(\theta(v))=\theta(\alpha(v))$. 
\end{corollary}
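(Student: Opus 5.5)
Since $\alpha(v)\in\cF$ and $\theta(\cF)\subseteq\cF$ (because $\theta(w_i)=w_{i+1}$), both sides of the identity lie in $\cF$. Words of $\cF$ are determined by their length, so we only have to show that $\alpha(\theta(v))$ and $\theta(\alpha(v))$ have the same index in $\cF$. The plan is to read off $\alpha$ of a word from the largest $\ell$ with $v\in\cV_\ell$, and then use Proposition~\ref{V:prop2} to move between levels.

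Fix $v\in\cV_4$ and write $\alpha(v)=w_m$. Then $m\ge 4$, since $v\in\cV_4$ means $\alpha(v)\ge w_4$. By definition of the sets $\cV_\ell$, we have $v\in\cV_m$ and $v\notin\cV_{m+1}$, because $\alpha(v)=w_m<w_{m+1}$. In general, for any prefix $v'$ of $w_\infty$ with $\alpha(v')\ge w_2$, the word $\alpha(v')$ equals $w_k$ where $k$ is the largest index such that $v'\in\cV_k$; this holds because the $\cV_\ell$ are decreasing in $\ell$ and $\alpha(v')\in\cF$.

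Now apply Proposition~\ref{V:prop2}. It gives a bijection $\theta\colon\cV_\ell\to\cV_{\ell+1}$ for every $\ell\ge 4$. Since $v\in\cV_m$ with $m\ge 4$, we get $\theta(v)\in\cV_{m+1}$. Suppose, to the contrary, that $\theta(v)\in\cV_{m+2}$. Since $m+1\ge 4$, the bijection $\theta\colon\cV_{m+1}\to\cV_{m+2}$ provides some $v'\in\cV_{m+1}$ with $\theta(v')=\theta(v)$. But $\theta$ is injective on all of $E^*$: it maps $\{a,b\}^*$ into $\{a,ab\}^*$, and $\{a,ab\}$ is a code because every $b$ in the image is preceded by an $a$. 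So $v'=v$, which contradicts $v\notin\cV_{m+1}$. If one prefers to avoid the code argument, injectivity on prefixes of $w_\infty$ also follows from the fact that $\theta$ is order preserving and strictly increases length on nonempty words. Hence $\theta(v)\in\cV_{m+1}\setminus\cV_{m+2}$.

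It follows that $\alpha(\theta(v))=w_{m+1}=\theta(w_m)=\theta(\alpha(v))$, as claimed. The only delicate point is the injectivity used in the last step, which is elementary.
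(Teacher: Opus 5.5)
Your proof is correct and matches the paper's: both write $\alpha(v)=w_m$, so $v\in\cV_m\setminus\cV_{m+1}$, and then apply Proposition~\ref{V:prop2} at levels $m$ and $m+1$ to get $\theta(v)\in\cV_{m+1}\setminus\cV_{m+2}$. The injectivity you check separately is already part of that proposition, since $\theta$ is a bijection on $\cV_m\supseteq\cV_{m+1}$, so the code argument is unnecessary; also, your side remark that $\theta$ strictly increases length is inaccurate because $|\theta(b)|=|b|$, and all you actually need there is that $\theta$ sends nonempty words to nonempty words.
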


\begin{proof}
Let $v\in\cV_4$.  We have $\alpha(v)=w_\ell$ for some integer 
$\ell\ge 4$.  Then $v\in\cV_\ell\setminus\cV_{\ell+1}$, and the
proposition gives $\theta(v)\in\cV_{\ell+1}\setminus\cV_{\ell+2}$.  So,
$\alpha(\theta(v))=w_{\ell+1}=\theta(\alpha(v))$.
\end{proof}


%
%

\section{A partition of the domain of $\uP$}
\label{sec:P} 

In this section, we study the map $\uP$ of section \ref{main:ssec:P}
by decomposing its domain $\cA(\epsilon)$ as a union of closed polygons with
disjoint interiors, and by giving explicit formulas for $\uP$ on each of these
polygons.  To this end, we introduce several definitions.  

We define an \emph{admissible line segment} to be a closed 
line segment in $\bR^2$ with non-empty interior that is parallel to 
$(1,0)$, $(1,1)$ or $(0,1)$,  with end points in $\bZ^2$, i.e.~a set of the form 
\[
 \{(i+t,j)\,;\,t\in I\} \ou \{(i+t,j+t)\,;\,t\in I\} \ou \{(i,j+t)\,;\,t\in I\}
\]
where $(i,j)\in\bZ^2$ and where $I$ is a closed subinterval of $\bR$ 
with non-empty interior, possibly unbounded. 

We define an \emph{admissible polygon} to be a non-empty 
subset of $\bR^2$ which is the closure of its interior, whose
boundary (possibly empty) is a polygonal line made of admissible line 
segments intersecting at most in their end-points.  These line segments 
are the \emph{sides} of the polygon and their end-points are 
its \emph{vertices}. 

We say that two distinct admissible polygons are \emph{compatible} if 
their intersection is either empty, or a common vertex of each, 
or a common side of each.   We define a \emph{polygonal partition}
of an admissible polygon $\cA$ to be a set $S$ of admissible 
pairwise compatible polygons whose union is $\cA$.  This implies
that each edge of $\cA$ is an union of edges of polygons of $S$, however
not necessarily the edge of a single polygon of $S$.
 
For example, the sector $\cA(\epsilon)$ is an admissible polygon
with a single vertex $(0,0)$, a vertical side
$\{(0,t)\,;\,-\infty<t\le 0\}$, and a side $\{(t,t)\,;\,0\le t<\infty\}$
of slope $1$.  The main result of this section is the following.

\begin{theorem}
\label{P:thm}
For each pair of consecutive words $u<v$ in $[\epsilon,w_\infty[$,
the set
\begin{equation}
\label{P:thm:eq1}
 \trap(u,v):=\cA(u)\cap\cC(v) 
\end{equation}
is an unbounded admissible trapeze with $2$ vertices.  For each point
$\uq=(q_1,q_2)$ in it, we have 
\begin{equation}
\label{P:thm:eq2}
 \uP(\uq):=\Phi(P_u(\uq),q_2-1,P_v(\uq))=\Phi(q_1-|u|,q_2-1,|v|).
\end{equation}
For each integer $\ell\ge 2$ and each triple of consecutive words 
$u<v<w$ in $\cV_\ell$ with $v\notin\cV_{\ell+1}$, the set
\begin{equation}
\label{P:thm:eq3}
 \cell(u,v,w):=\cA(u)\cap\cB(v)\cap\cC(w) 
\end{equation}
is a bounded admissible polygon with $4$ or $5$ vertices. 
For each $\uq=(q_1,q_2)$ in it, we have 
\begin{equation}
\label{P:thm:eq4}
 \uP(\uq):=\Phi(P_u(\uq),P_v(\uq),P_w(\uq))=\Phi(q_1-|u|,q_2-|\alpha(v)|,|w|).
\end{equation}
The polygons in \eqref{P:thm:eq1} and \eqref{P:thm:eq3} are all distinct 
and form a partition of $\cA(\epsilon)$.
\end{theorem}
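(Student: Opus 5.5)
We will work with the explicit descriptions of $\cA(v)$, $\cB(v)$, $\cC(v)$ from section~\ref{main:ssec:Pv}, and with the combinatorics of the sets $\cVbar_\ell$ from section~\ref{sec:Vbar}.

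\emph{Trapezes.} For consecutive prefixes $u<v$ we have $|v|=|u|+1$. Hence $\trap(u,v)$ is the set of points with $2|u|\le q_1\le 2|u|+2$, $q_2\le q_1-|u|+|\alpha(u)|$ and $q_2\le |u|+1+|\alpha(v)|$. This is a vertical strip cut from above by one line of slope $1$ and one horizontal line. We check that exactly two vertices occur, one on each vertical side. It then suffices to show that the largest prefix $u'$ with $\uq\in\cA(u')$ is $u$ and that the smallest $w'>u'$ with $\uq\in\cC(w')$ is $v$. The second fact is immediate. For the first, note that $\uq\notin\cA(u'')$ for $u''>u$ unless $q_1=2|u|+2$, where the boundary case must be handled by checking that both formulas for $\uP$ agree. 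The formula \eqref{main:P:eq1} then gives $\Phi(q_1-|u|,q_2+|u|-|v|,|v|)$, which is \eqref{P:thm:eq2}. Here we use $P_u=q_1-|u|$ on $\cA(u)$ and $P_v=|v|$ on $\cC(v)$.

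\emph{Cells.} For consecutive $u<v<w$ in $\cV_\ell$ with $v\notin\cV_{\ell+1}$, Proposition~\ref{Vbar:prop4} (with Proposition~\ref{Vbar:prop3} for $\ell\ge3$) gives $|w|-|u|=F_\ell=|\alpha(v)|$. We also know that $\alpha(u),\alpha(w)\ge w_{\ell+1}$, by case (i) of Proposition~\ref{Vbar:prop5}. Intersecting the three sectors gives the inequalities $2|u|\le q_1\le 2|w|$, $q_2\le q_1-|u|+|\alpha(u)|$, $q_2\le |w|+|\alpha(w)|$, together with $q_2\ge |v|+F_\ell$ and $q_2\ge q_1-|v|+F_\ell$. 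The lower boundary lines meet at $q_1=2|v|$. Since $2|u|<2|v|<2|w|$, the lower boundary is a broken line with a corner. The upper constraints lie strictly above it, because $|\alpha(u)|,|\alpha(w)|\ge F_{\ell+1}$ and $|w|-|v|,|v|-|u|\le F_{\ell-1}$. Whether the slope-$1$ upper line or the horizontal upper line cuts the corner $(2|w|,\cdot)$ decides between $4$ and $5$ vertices. To show that $\uP$ is given by \eqref{P:thm:eq4} on the cell, we must show that $u$ is the largest prefix with $\uq\in\cA(u)$ and $w$ the smallest beyond it with $\uq\in\cC(w)$. This amounts to checking that, for every prefix $x$ with $u<x<w$, we have $\uq\notin\cC(x)$; this follows from $\uq\in\cB(v)$ together with $|\alpha(x)|\le F_{\ell}$ for $x\notin\cV_\ell$, and similarly for $\cA$. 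Then $q_2+|u|-|w|=q_2-F_\ell=P_v(\uq)$, which gives the formula.

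\emph{Partition.} This is the main obstacle. Let $\uq\in\cA(\epsilon)$, and let $u,w$ be as in the definition of $\uP$. The plan is to show that $|w|-|u|$ is a Fibonacci number $F_\ell$ and that $(u,w)$ is either a consecutive pair or a pair of consecutive elements of $\cV_{\ell+1}$ that are not consecutive in $\cV_\ell$. The middle element $v$ is then supplied by Proposition~\ref{Vbar:prop4}. The argument goes by minimality. Any $x$ with $u<x<w$ has $\uq\in\cB(x)$, so $q_2\ge |x|+|\alpha(x)|$ and $q_2\ge q_1-|x|+|\alpha(x)|$. Meanwhile $q_2\le q_1-|u|+|\alpha(u)|$ and $q_2\le |w|+|\alpha(w)|$. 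Combining these yields $|\alpha(x)|<|\alpha(u)|$ and $|\alpha(x)|<|\alpha(w)|$ for all intermediate $x$. Corollary~\ref{Vbar:prop2:cor2} then forces the structure: take $\ell$ minimal with $\{u,w\}\subseteq\cV_{\ell+1}$, so that all intermediate $x$ have smaller $\alpha$. Distinctness of the polygons follows from distinct index data, since a polygon determines $u,w$. Compatibility of the polygons follows because $\uP$ selects a unique pair $(u,w)$ at each interior point and the boundaries are the common admissible segments, verified from the explicit inequalities above.
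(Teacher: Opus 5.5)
Your covering step is essentially sound, and a little more direct than Proposition~\ref{P:prop1}. From $\uq\in\cB(x)$ for $u<x<w$ you get $|\alpha(x)|+|x|-|u|\le|\alpha(u)|$ and $|\alpha(x)|+|w|-|x|\le|\alpha(w)|$. So (for $u\neq\epsilon$) $u<w$ are consecutive in $\cV_m$ with $F_m=\min\{|\alpha(u)|,|\alpha(w)|\}$, and Proposition~\ref{Vbar:prop4} supplies $v$.

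But this only places each $\uq$ in the polygon attached to \emph{its own} selected pair. The theorem asserts the formulas at \emph{every} point of each $\trap(u,v)$ and $\cell(u,v,w)$, and here there is a genuine gap.

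Your reduction ``$u$ is the largest prefix with $\uq\in\cA(u)$ and $w$ the smallest beyond it with $\uq\in\cC(w)$'' is false on parts of the cell's boundary:
the two bottom sides lie in $\cC(v)$ and in $\cA(v)$, and part of the right side lies in $\cA(w)$. There the selected pair is different, and it must be identified before the two $\Phi$-expressions can be compared.

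Even at interior points, $\uq\in\cB(v)$ together with $|\alpha(x)|<F_\ell$ only excludes $\cC(x)$ for $u<x<v$ and $\cA(x)$ for $v<x<w$. Excluding $\cC(x)$ for $v<x<w$ and $\cA(x)$ for $u<x<v$ amounts to $|x|-|v|+|\alpha(x)|\le F_\ell$, resp.\ $|v|-|x|+|\alpha(x)|\le F_\ell$. The bounds you invoke give only $2F_{\ell-1}-1$, which exceeds $F_\ell$ for $\ell\ge 5$. The true inequalities are sharp: for $\ell=4$, the triple of lengths $13<15<18$ with the prefix of length $17$ gives $2+3=5=F_4$, and likewise the triple $8<11<13$ with the prefix of length $9$. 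These inequalities express the nesting of the sets $\cV_\ell$, which the paper exploits through the layers $\layer(\ell)$ and the inductions in Propositions~\ref{P:prop3}--\ref{P:prop5}.

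The second gap is compatibility. A partition here requires any two polygons to meet in the empty set, a common vertex, or a \emph{full} common side, so partial overlaps of sides must be ruled out. Your remark that $\uP$ selects a unique pair at each interior point presupposes the claim just discussed, and says nothing about how sides match. The paper needs three ingredients:
Proposition~\ref{P:prop2}, that the right vertex of $\trap(u,v)$ equals the left vertex of $\trap(v,w)$ for consecutive $u<v<w$ in $\{\epsilon\}\cup\cV_\ell$, proved from the four cases of Proposition~\ref{Vbar:prop5};
Lemma~\ref{P:lemma:top}, that each bottom side of a cell of level $\ell$ is exactly the top side of a single polygon of lower level;
and the induction on layers in Proposition~\ref{P:prop3}.

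That vertex matching is also what your trapeze boundary case lacks. The whole right side of $\trap(u,v)$ lies in $\cA(v)$, so the selected pair there is $(v,w')$. Then $w'$ is the successor $v^+$ of $v$ precisely when $\min\{1+|\alpha(u)|,|\alpha(v)|\}\le 1+|\alpha(v^+)|$, and only then do the two formulas agree.

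Two smaller points. ``Two vertices'' for $\trap(u,v)$ needs $\alpha(u)\ne\alpha(v)$, and a single top side for $\trap(u,w)$ needs $\alpha(u)\ne\alpha(w)$; both come from Corollary~\ref{Vbar:prop2:cor2}. Also, the upper constraints of the cell do not lie strictly above its lower boundary: equality there is exactly what produces the cells with $4$ vertices.
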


Note that the sets $\trap(u,v)$ in \eqref{P:thm:eq1} and $\cell(u,v,w)$ in
\eqref{P:thm:eq3} are uniquely determined by $v$ which, 
as a prefix of $w_\infty$, is in turn determined by its length.  So, it makes 
sense to denote them respectively as $\cT_i$ and $\cR_i$ where $i=|v|$.   
For $\cT_i$, the range of $i$ is $\bN\setminus\{0\}$, while for $\cR_i$, it is 
$\bN\setminus\cFbar$ (for each $\ell\ge 1$, we cannot have $v=w_\ell$ 
because $w_\ell\in\cV_\ell\setminus\cV_{\ell+1}$ has no predecessor in
$\cV_\ell$).    This shorter notation is used in Figure~\ref{P:fig1} below
to illustrate the partition of $\cA(\epsilon)$ given by the theorem in the
range $0\le q_1\le 26$.

\begin{figure}[ht]
\begin{tikzpicture}[xscale=0.5, yscale=0.4]
       \draw[->, thick] (0,0)--(28,0) node[below]{$q_1$};
       \draw[->, thick] (0,-1.5)--(0,28) node[left]{$q_2$};
       \foreach \x in {2,4,...,26}
       {\draw[-,line width=0.5mm] (\x,0.1)--(\x,-0.1) %
              node[below left]{$\scriptstyle \x$}; }  
       \foreach \y in {0,2,...,26}
         {\draw[-,line width=0.5mm] (0.1,\y)--(-0.1,\y) 
              node[left]{$\scriptstyle \y$};
          \draw[-,dashed] (0,\y)--(\y,\y);}
       \foreach \x in {0,1,3,5,8,13}
         \draw[-,thick] (2*\x,-1.5)--(2*\x,2*\x);
       \foreach \x in {2,4,6,10,12}
         \draw[-,thick] (2*\x,-1.5)--(2*\x,\x+2);
       \foreach \x in {3,7,9}
         \draw[-,thick] (2*\x,-1.5)--(2*\x,\x+3);
      \draw[-,thick] (0,0)--(6,6)--(8,6)--(10,8)--(12,8)--(16,12)--(20,12)
             --(22,14)--(24,14)--(26,16);
      \draw[-,thick] (6,6)--(26,26);
      \draw[-,thick] (22,-1.5)--(22,16);
      \draw[-,thick] (10,10)--(14,10);
      \draw[-,thick] (18,12)--(22,16)--(26,20);
      \draw[-,thick] (16,16)--(22,16);
       \foreach \x in {2,4,6,10,12}
        \node[draw,circle,inner sep=1.4pt, fill] at (2*\x,\x+2){};
       \foreach \x in {3,7,9}
        \node[draw,circle,inner sep=1.4pt, fill] at (2*\x,\x+3){};
       \foreach \x in {5,11}
        \node[draw,circle,inner sep=1.4pt, fill] at (2*\x,\x+5){};
       \foreach \x in {0,1,8,13}
        \node[draw,circle,inner sep=1.4pt, fill] at (2*\x,2*\x){};
       \node at (8,7){$\cR_4$};  
       \node at (20,13){$\cR_{10}$};  
       \node at (12,9){$\cR_6$}; 
       \node at (24,16){$\cR_{12}$};   
       \node at (14,12){$\cR_7$};  
       \node at (18,14){$\cR_9$};
       \node at (22,19){$\cR_{11}$};
      \foreach \x in {2,3,...,13}
       \node at (2*\x-1,1.5){$\cT_{\x}$};
      \node at (1.4,0.6){$\cT_1$};
      \draw[very nearly transparent, fill] (0,-1.5)--(0,0)--(6,6)--(8,6)--(10,8)--%
        (12,8)--(16,12)--(20,12)--(22,14)--(24,14)--(26,16)--(26,-1.5)--(0,-1.5);
\end{tikzpicture}
\caption{Partition of $\cA(\epsilon)$ into admissible polygons for $q_1\le 26$.}
\label{P:fig1}
\end{figure}
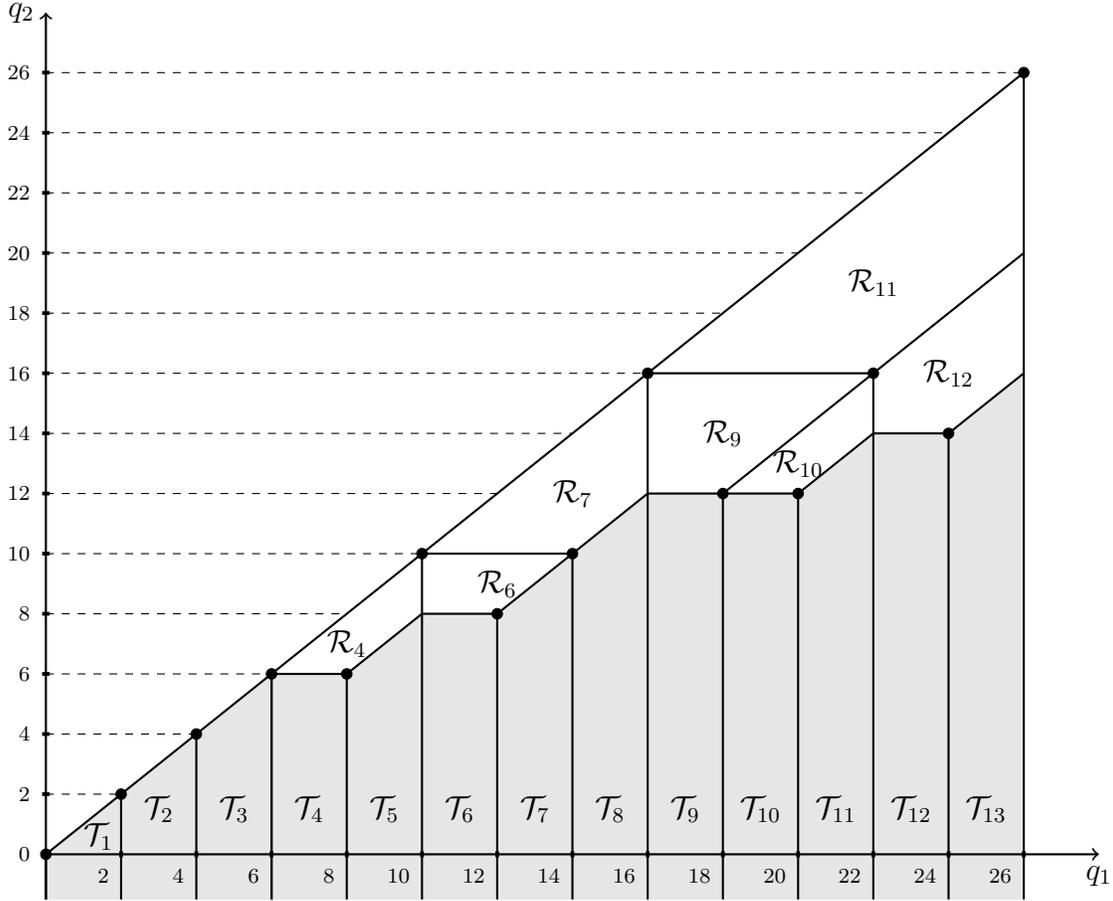

The proof of Theorem~\ref{P:thm} requires several steps, leaving the formulas 
\eqref{P:thm:eq2} and \eqref{P:thm:eq4} for the end.  In the process, we also 
prove, as Proposition~\ref{P:prop1}, the assertion made at the beginning
of section~\ref{main:ssec:P}.  We start by introducing some additional notation.

\begin{definition}
\label{P:def:pi1}
We denote by $\pi_1\colon\bR^2\to\bR$ the projection on the first coordinate:
\[
 \pi_1(\uq)=q_1 \quad \text{for each $\uq=(q_1,q_2)\in\bR^2$.}
\]
\end{definition}

\begin{definition}
\label{P:def:q}
For each $v\in\left[\epsilon,w_\infty\right[$, we denote by
\[
 \uq(v) = (q_1(v),q_2(v)) = (2|v|, |v|+|\alpha(v)|)
\]
the common vertex of $\cA(v)$, $\cB(v)$ and $\cC(v)$.
\end{definition}

The $14$ points $\uq(v)$ with $v\in\left[\epsilon, w_6\right]$ are displayed 
as large dots in Figure~\ref{P:fig1}. 

\begin{definition}
\label{P:def:trap}
Let $\ell\ge 1$ be an integer.  For consecutive $u<v$ in 
$\{\epsilon\}\cup\cV_\ell$, we define
\begin{equation}
\label{P:def:trap:eq1}
 \trap(u,v):=\cA(u)\cap\cC(v). 
\end{equation}
We denote by $T_\ell$ the collection of these sets, and denote their union by
\begin{equation}
\label{P:def:trap:eq2}
 \layer(\ell)=\trap(\epsilon,w_\ell)\cup\trap(w_\ell,w_{\ell+1})\cup\cdots
\end{equation} 
\end{definition}

When $\ell=1$, we have $\{\epsilon\}\cup\cV_\ell=[\epsilon,w_\infty[$, 
and so the sets $\trap(u,v)$ defined above are the same as those in 
Theorem~\ref{P:thm}.  The next lemma justifies this notation by showing 
that the latter are trapezes for any choice of $\ell$.  For $\ell=1$, it proves 
the first assertion of Theorem~\ref{P:thm}.

\begin{lemma}
\label{P:lemma:T}
Let $\ell\ge 1$ be an integer, let $u<v$ be consecutive elements 
of $\{\epsilon\}\cup\cV_\ell$, and let $\cT=\trap(u,v)$.  Then, we 
have $\alpha(u)\neq \alpha(v)$ and $\cT$ consists of the points 
$\uq=(q_1,q_2)\in\bR^2$ satisfying
\begin{equation}
\label{P:lemma:T:eq1}
 q_1(u)\le q_1\le q_1(v) \et 
 q_2\le 
  \begin{cases} 
   q_1-q_1(u)+q_2(u) &\text{if $\alpha(u)<\alpha(v)$,}\\
   q_2(v) &\text{if $\alpha(u)>\alpha(v)$.}  
  \end{cases} 
\end{equation}
In both cases, $\cT$ is a closed unbounded admissible trapeze 
with three sides and two vertices: two vertical sides, unbounded 
from below, each one ending in a vertex, and a line segment of slope 
$0$ or $1$ joining the two vertices.    Moreover, 
$\pi_1(\cT)=\left[q_1(u),q_1(v)\right]$.
\end{lemma}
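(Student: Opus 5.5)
We first show $\alpha(u)\neq\alpha(v)$. If $u=\epsilon$, then $v=w_\ell$ is the smallest element of $\cV_\ell$ (Lemma~\ref{Vbar:lemma3} for $\ell\ge 2$, and $\cV_1=[w_1,w_\infty[$), so $\alpha(u)=\epsilon\neq w_\ell=\alpha(v)$. Otherwise $u<v$ are consecutive in $\cV_\ell$. Passing to lengths via \eqref{Vbar:eq:bar}, $|u|<|v|$ are consecutive in $\cVbar_\ell$, and Corollary~\ref{Vbar:prop2:cor2} gives $|\alphabar(|v|)-\alphabar(|u|)|\ge |v|-|u|>0$; for $\ell=1$ we use $\cVbar_1=\bN\setminus\{0\}$ and the case $\ell=2$ of that corollary, or check directly. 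Hence $\alpha(u)\neq\alpha(v)$.

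Next we describe $\cT=\cA(u)\cap\cC(v)$ from the explicit forms of $\cA(u)$ and $\cC(v)$. It is the set of points with
\[
2|u|\le q_1\le 2|v|, \quad q_2\le q_1-|u|+|\alpha(u)|, \quad q_2\le |v|+|\alpha(v)| .
\]
With $q_1(\cdot),q_2(\cdot)$ as in Definition~\ref{P:def:q}, the bound $q_2\le q_1-|u|+|\alpha(u)|$ reads $q_2\le q_1-q_1(u)+q_2(u)$, and the last bound is $q_2\le q_2(v)$. For $q_1\in[q_1(u),q_1(v)]$ the line $q_1-q_1(u)+q_2(u)$ increases from $q_2(u)$ to $2|v|-|u|+|\alpha(u)|$. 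So it suffices to compare it with $q_2(v)$ at the two endpoints.

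If $\alpha(u)<\alpha(v)$, then $|\alpha(v)|-|\alpha(u)|\ge |v|-|u|$, which is the key inequality from Corollary~\ref{Vbar:prop2:cor2}, or trivial when $u=\epsilon$ since $|\alpha(v)|=|v|$. This gives $2|v|-|u|+|\alpha(u)|\le |v|+|\alpha(v)|$, so the slope-one bound is the binding one on the whole interval. If instead $\alpha(u)>\alpha(v)$, the same inequality gives $q_2(u)=|u|+|\alpha(u)|\ge |v|+|\alpha(v)|=q_2(v)$, so the horizontal bound binds throughout. This proves \eqref{P:lemma:T:eq1}.

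The geometric description follows directly. The region lies under a segment of slope $1$ or $0$ joining integer points above $q_1=q_1(u)$ and $q_1=q_1(v)$, and is bounded by two vertical rays going down. Since $q_1(u)<q_1(v)$, the interior is non-empty, and the projection is $[q_1(u),q_1(v)]$. The main obstacle is the inequality $|\alphabar(y)-\alphabar(x)|\ge y-x$, together with handling the edge cases $u=\epsilon$ and $\ell=1$; these are covered by Corollary~\ref{Vbar:prop2:cor2} and Lemma~\ref{Vbar:lemma2}.
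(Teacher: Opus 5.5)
Your proof is correct and follows the paper's argument. You rewrite $\cA(u)\cap\cC(v)$ as explicit inequalities, then use $\bigl||\alpha(v)|-|\alpha(u)|\bigr|\ge |v|-|u|$ from Corollary~\ref{Vbar:prop2:cor2} (trivial when $u=\epsilon$) to decide, by comparing at the endpoints $q_1(u)$ and $q_1(v)$, which upper bound on $q_2$ is binding. Your separate treatment of $\ell=1$ is slightly more careful than the paper: you reduce to $\ell=2$ or check $1<2$ in $\cVbar_1$ directly, whereas the paper invokes that corollary, stated only for $\ell\ge 2$, without comment.
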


We define the \emph{left vertex} (resp.~\emph{right vertex}) of $\cT=\trap(u,v)$ 
to be the vertex of its left (resp.~right) vertical side, and we define its 
\emph{top side} to be the side of $\cT$ joining these two vertices.

\begin{proof}[Proof of Lemma~\ref{P:lemma:T}]
It suffices to prove the first assertion.  By definition, $\cT$ consists 
of the points $(q_1,q_2)\in\bR^2$ satisfying
\begin{equation}
\label{P:lemma:T:eq2}
 q_1(u)\le q_1\le q_1(v) \et 
 q_2\le   \min\{ q_1-q_1(u)+q_2(u),  q_2(v) \}.
\end{equation}
Let $q_1 \in \left[ q_1(u), q_1(v) \right]$.  We need to show that the upper 
bounds for $q_2$ are the same in \eqref{P:lemma:T:eq1} and 
\eqref{P:lemma:T:eq2}.  To this end, we note that, if $u>\epsilon$, the numbers 
$|u|<|v|$ are consecutive elements of $\cVbar_\ell$ and 
Corollary~\ref{Vbar:prop2:cor2} gives 
\begin{equation}
\label{P:lemma:T:eq3}
 \big| |\alpha(v)|-|\alpha(u)| \big| \ge |v|-|u|.
\end{equation}
This also holds if $u=\epsilon$ for then $v=w_\ell$, so $\alpha(u)=u$ 
and $\alpha(v)=v$.  Thus, we have $\alpha(u) \neq \alpha(v)$.  
If $\alpha(u)<\alpha(v)$, we deduce from \eqref{P:lemma:T:eq3} that 
\[
q_1-q_1(u)+q_2(u) \le 2|v|-|u|+|\alpha(u)| \le |v|+|\alpha(v)| = q_2(v).
\]
Otherwise, we have $\alpha(u)>\alpha(v)$, and so
\[
q_1-q_1(u)+q_2(u) \ge |u|+|\alpha(u)| \ge |v|+|\alpha(v)| = q_2(v).
\qedhere
\]
\end{proof}

The next lemma proves the third assertion of Theorem~\ref{P:thm}.

\begin{lemma}
\label{P:lemma:C}
Let $\ell\ge 2$ be an integer, let $u<v<w$ be consecutive elements 
of $\cV_\ell$ with $v\notin\cV_{\ell+1}$, and let $\cR=\cell(u,v,w)$.  Then, 
$u<w$ are consecutive elements of $\cV_{\ell+1}$ with 
$|w|-|u|=F_\ell=|\alpha(v)|$, 
the point $\uq(v)$ lies in the interior $\trap(u,w)$, and we have
\begin{equation}
\label{P:lemma:C:eq}
 \begin{aligned}
  &\trap(u,w)\cap\cA(v)=\trap(v,w),\\
  &\trap(u,w)\cap\cB(v)=\cell(u,v,w),\\
  & \trap(u,w)\cap\cC(v)=\trap(u,v).
 \end{aligned}
\end{equation}
Moreover, these three sets form a partition of $\trap(u,w)$ into admissible 
polygons.  In particular, $\cR$ is a bounded convex admissible 
polygon with $4$ or $5$ sides: the top sides of $\trap(u,v)$, $\trap(v,w)$ 
and $\trap(u,w)$, the vertical line segment joining the left 
vertices of $\trap(u,v)$ and $\trap(u,w)$ when distinct, and the vertical
line segment joining the right  vertices of $\trap(v,w)$ and $\trap(u,w)$ 
when distinct.  Moreover, $\pi_1(\cR)=\left[q_1(u),q_1(w)\right]$.
\end{lemma}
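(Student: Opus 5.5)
First I establish the combinatorial facts, working through lengths. By Proposition~\ref{Vbar:prop4} ((ii)$\Rightarrow$(iii)), applied to the consecutive numbers $|u|<|v|<|w|$ in $\cVbar_\ell$ with $|v|\notin\cVbar_{\ell+1}$, the pair $|u|<|w|$ is consecutive in $\cVbar_{\ell+1}$ and $|w|-|u|=F_\ell$. Since $|v|\in\cVbar_\ell\setminus\cVbar_{\ell+1}$, we get $|\alpha(v)|=F_\ell$. Case (i) of Proposition~\ref{Vbar:prop5} gives $\alpha(u)\ge w_{\ell+1}$ and $\alpha(w)\ge w_{\ell+1}$, so both exceed $\alpha(v)$. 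Lemma~\ref{P:lemma:T} then applies to $\trap(u,w)$ (with index $\ell+1$), to $\trap(u,v)$ and to $\trap(v,w)$ (with index $\ell$).

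Next I place $\uq(v)=(2|v|,|v|+F_\ell)$ strictly inside $\trap(u,w)$. Since $2|u|<2|v|<2|w|$, it suffices to show that $|v|+F_\ell$ lies strictly below the top side at abscissa $2|v|$. If $\alpha(u)<\alpha(w)$, the top side is $q_2=q_1-2|u|+|u|+|\alpha(u)|$. At $q_1=2|v|$ this equals $2|v|-|u|+|\alpha(u)|$, which exceeds $|v|+F_\ell$ because $|\alpha(u)|\ge F_{\ell+1}>F_\ell$ and $|v|>|u|$. If $\alpha(u)>\alpha(w)$, the top side is $q_2=|w|+|\alpha(w)|$, and $|w|+|\alpha(w)|>|v|+F_\ell$ holds likewise.

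The key step is then the three identities \eqref{P:lemma:C:eq}. The sets $\cA(v),\cB(v),\cC(v)$ are three closed sectors with apex $\uq(v)$ covering $\bR^2$ with disjoint interiors. Intersecting them with $\trap(u,w)$, which contains $\uq(v)$ in its interior, therefore yields a partition of $\trap(u,w)$ into three pieces. For the first identity, $\trap(u,w)\cap\cA(v)=\cA(u)\cap\cC(w)\cap\cA(v)$, and I must show this equals $\cA(v)\cap\cC(w)$, that is, $\cA(v)\cap\cC(w)\subseteq\cA(u)$. A point of $\cA(v)$ has $q_1\ge2|v|\ge2|u|$ and $q_2\le q_1-|v|+F_\ell$. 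We have $q_1-|v|+F_\ell\le q_1-|u|+|\alpha(u)|$, because $|v|-|u|\le F_{\ell-1}$ (Corollary~\ref{Vbar:prop2:cor1}) and $|\alpha(u)|\ge F_{\ell+1}$, so $|\alpha(u)|-F_\ell\ge F_{\ell-1}\ge|v|-|u|$. The third identity is symmetric: a point of $\cC(v)$ has $q_1\le2|v|\le2|w|$ and $q_2\le|v|+F_\ell\le|w|+|\alpha(w)|$, so $\cC(v)\subseteq\cC(w)$. The second identity is just the definition of $\cell(u,v,w)$. The cases $\ell=2,3$, where Corollary~\ref{Vbar:prop2:cor1} is unavailable or needs care, should be handled using Lemma~\ref{Vbar:lemma2}: for $\ell=2$ the differences are $1=F_1$.

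Finally, the shape of $\cR$ follows by geometry. Its boundary within $\trap(u,w)$ consists of the two rays from $\uq(v)$ separating $\cB(v)$ from $\cA(v)$ and from $\cC(v)$; these are the top sides of $\trap(v,w)$ (slope $1$) and $\trap(u,v)$ (horizontal), by Lemma~\ref{P:lemma:T}, since $\alpha(v)<\alpha(w)$ and $\alpha(u)>\alpha(v)$ respectively. Cutting by the top side of $\trap(u,w)$ and by the vertical lines $q_1=2|u|$ and $q_1=2|w|$ gives a convex polygon bounded by those top sides, plus vertical segments whenever the corresponding vertices differ. This polygon is bounded and convex (an intersection of half-planes), so it has $4$ or $5$ sides, and $\pi_1(\cR)=[q_1(u),q_1(w)]$. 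I expect the main obstacle to be the inclusion inequalities in the key step, together with checking which vertices coincide, i.e.\ the side count.
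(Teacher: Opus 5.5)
Your proposal is correct and follows essentially the paper's route: Proposition~\ref{Vbar:prop4} and Lemma~\ref{P:lemma:T}, then the interiority of $\uq(v)$ in $\trap(u,w)$, then the inclusions $\cA(v)\subseteq\cA(u)$ and $\cC(v)\subseteq\cC(w)$ combined with the partition of $\bR^2$ by the three sectors at $\uq(v)$. The only difference is that the paper reads these inclusions off from $\uq(v)\in\trap(u,w)\subseteq\cA(u)\cap\cC(w)$, each sector being a translate of a fixed convex cone, whereas your direct check of $\cA(v)\subseteq\cA(u)$ only needs $F_\ell-|\alpha(u)|\le |v|-|u|$; this is immediate because the left side is negative and the right side positive, so Corollary~\ref{Vbar:prop2:cor1} and the separate treatment of $\ell=2,3$ are unnecessary.
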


Thus, $\cR=\cell(u,v,w)$ has exactly three non-vertical sides.  One of them is 
the top side of $\trap(u,w)$.  We call it the \emph{top side} of $\cR$.
The other two are the top sides of $\trap(u,v)$ and $\trap(v,w)$.  We 
call them the \emph{bottom sides} of $\cR$.  The point
$\uq(v)$ is their common vertex.
This is illustrated in Figure~\ref{P:fig2} when the top side of $\trap(u,w)$ 
has slope $0$.  The configuration is similar when it has slope $1$.

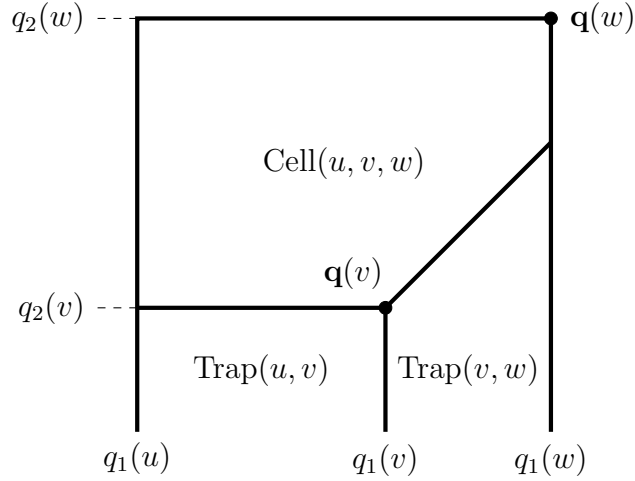
\begin{figure}[h]
\begin{tikzpicture}[scale=1.1]
     \draw[-,ultra thick] (0,0.5)--(0,5.5)--(5,5.5)--(5,0.5) node[below]{$q_1(w)$};
     \draw[-,ultra thick] (5,4)--(3,2)--(3,0.5) node[below]{$q_1(v)$};
     \draw[-,ultra thick] (3,2)--(0,2);
     \draw[-,dashed] (0,2)--(-0.5,2) node[left]{$q_2(v)$};
      \node[below] at (0,0.5){$q_1(u)$};
     \draw[-,dashed] (0,5.5)--(-0.5,5.5) node[left]{$q_2(w)$};
     \node[draw,circle,inner sep=1.7pt,fill] at (5,5.5){};
     \node[right] at (5.1,5.5){$\uq(w)$};
     \node[draw,circle,inner sep=1.7pt,fill] at (3,2){};
     \node[above left] at (3.1,2.1) {$\uq(v)$};
     \node at (1.5,1.25) {$\trap(u,v)$};
     \node at (4,1.25) {$\trap(v,w)$};
     \node at (2.5,3.75) {$\cell(u,v,w)$};
\end{tikzpicture}
\caption{Partition of $\trap(u,w)$ when $\alpha(w)<\alpha(u)$}
\label{P:fig2} 
\end{figure}

\begin{proof}[Proof of Lemma~\ref{P:lemma:C}]
By Proposition~\ref{Vbar:prop4}, $u<w$ are consecutive words in 
$\cV_{\ell+1}$, so we may form the trapeze $\cT=\trap(u,w)$.  We also have 
$|w|-|u|=F_\ell$.  Since $\alpha(u)>\alpha(v)$ and $\alpha(w)>\alpha(v)$,
Lemma~\ref{P:lemma:T} shows that the top side of $\trap(u,v)$ 
is horizontal, that the top side of $\trap(v,w)$ has slope $1$,
and that they share $\uq(v)$ as a vertex.  

If $\alpha(u)>\alpha(w)$, the top 
side of $\cT$ is horizontal with right vertex $\uq(w)$ above the right 
vertex of $\trap(v,w)$, on the same vertical line.  Thus, $\uq(v)$ is an 
interior point of $\cT$. This case is illustrated in Figure~\ref{P:fig2}.  
Otherwise, we have $\alpha(u)<\alpha(w)$, so the top 
side of $\cT$ has slope $1$, with left vertex $\uq(u)$ above the left 
vertex of $\trap(u,v)$, on the same vertical line.  Again, $\uq(v)$ is 
an interior point of $\cT$.

Since $\uq(v)\in\cT\subseteq\cA(u)$, we have $\cA(v)\subseteq\cA(u)$ 
which yields the first formula in \eqref{P:lemma:C:eq}.  Similarly, since 
$\uq(v)\in\cT\subseteq\cC(w)$, we have $\cC(v)\subseteq\cC(w)$ 
which yields the 
third formula in \eqref{P:lemma:C:eq}.  The middle formula follows 
directly from the definition of $\cell(u,v,w)$.  

Since $\cA(v)$, $\cB(v)$ and $\cC(v)$ form a partition of $\bR^2$ into 
admissible sectors with common vertex $\uq(v)$ in the interior of $\cT$, it follows
that the three sets in \eqref{P:lemma:C:eq} form a partition of $\cT$ 
into admissible polygons, and the remaining assertions follow.
\end{proof}

We can now prove the assertion made at the beginning of section~\ref{main:ssec:P}.

\begin{proposition}
\label{P:prop1}
Let $\uq\in\cA(\epsilon)$, let $u$ be the largest element of $[\epsilon,w_\infty[$
for which $\uq\in\cA(u)$, and let $w$ be the smallest element of $]u,w_\infty[$
for which $\uq\in\cC(w)$.  Then, exactly one of the following holds.
\begin{itemize}
 \item[{\rm (i)}]  The words $u<w$ are consecutive elements of 
 $[\epsilon,w_\infty[$.  We have $|w|-|u|=F_1=1$, $\uq\in\trap(u,w)$ and 
 $\uq\notin\cA(w)$.
\smallskip
 \item[{\rm (ii)}] There is an integer $\ell\ge 2$ such that $u<w$ are consecutive 
 elements of $\cV_{\ell+1}$ with $|w|-|u|=F_\ell$, and there exists 
 $v\in\cV_\ell\setminus\cV_{\ell+1}$ such that $u<v<w$ are consecutive  
 in $\cV_\ell$.  The point $\uq$ belongs to $\cell(u,v,w)$ but not to 
 its bottom sides, nor to $\cA(w)$. 
\end{itemize}
\end{proposition}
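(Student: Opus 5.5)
Given $\uq\in\cA(\epsilon)$, let $u$ and $w$ be as in the statement; the case $w=u'$, where $u'$ is the successor of $u$ in $[\epsilon,w_\infty[$, will be case (i). The plan is a descending argument on $\ell$ using the trapezes of Lemma~\ref{P:lemma:T} and the partitions of Lemma~\ref{P:lemma:C}.

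\emph{Step 1 (locating $\uq$ in a large trapeze).} Since $u$ is the largest word with $\uq\in\cA(u)$, we have $\uq\notin\cA(u')$, where $u'$ is the successor of $u$. Choose $\ell$ large with $u\in\{\epsilon\}\cup\cV_\ell$. This is possible only when $u\in\cF$; otherwise, take the largest $m$ with $u\in\{\epsilon\}\cup\cV_m$ and work inside $\layer(m)$. The aim is a layer $\layer(m)$ such that $\uq\in\trap(u,z)$ with $u<z$ consecutive in $\{\epsilon\}\cup\cV_m$. The trapezes in $T_m$ tile the half-plane region $\{q_1\ge0,\ q_2\le\text{top sides}\}$, by Lemma~\ref{P:lemma:T}, since their projections $[q_1(u),q_1(z)]$ are consecutive intervals. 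I will show that $\cA(\epsilon)$ is covered by the union of $\layer(1)$, the cells $\cell(\cdot)$ of all levels, and nested refinements. Concretely, I apply Lemma~\ref{P:lemma:C} repeatedly: $\trap(x,z)$ with $x<z$ consecutive in $\cV_{\ell+1}$ but not in $\cV_\ell$ splits into $\trap(x,y)$, $\cell(x,y,z)$ and $\trap(y,z)$. Trapezes of $T_{\ell+1}$ that are also consecutive in $\cV_\ell$ remain in $T_\ell$, by Proposition~\ref{Vbar:prop4}. Hence $\layer(\ell+1)$ equals $\layer(\ell)$ together with the cells $\cell(x,y,z)$ with $y\in\cV_\ell\setminus\cV_{\ell+1}$.

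\emph{Step 2 (choosing $\ell$).} Given $\uq$, take the largest $L$ with $q_1\le 2F_L$; then $\uq\in\trap(\epsilon,w_L)$, because $q_2\le q_1\le 2F_L$. Descend from level $L$ to level $1$. At each level, $\uq$ lies in a unique trapeze of that level, up to boundaries, and either this trapeze persists or $\uq$ falls into a cell, or into one of the two sub-trapezes. If $\uq$ never falls into a cell, it ends in $\trap(x,x')$ with $x<x'$ consecutive words, and this is case (i). Otherwise, at some level $\ell\ge2$, $\uq\in\cell(x,y,z)$ and not in its bottom sides. I then identify $x=u$ and $z=w$. Since $\uq\in\cell(x,y,z)\subseteq\cA(x)\cap\cC(z)$ and $\uq\in\cB(y)$ off the bottom sides, we get $\uq\notin\cA(y)\cup\cC(y)$.

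\emph{Step 3 (maximality and minimality).} This is the main obstacle: showing that no word $t>x$ has $\uq\in\cA(t)$, and no word $t\in]x,z[$ has $\uq\in\cC(t)$. For $t\in]x,z[$ other than $y$, $t$ lies in $]x,y[$ or $]y,z[$. Those words are refined inside $\trap(x,y)$ or $\trap(y,z)$, whose vertices $\uq(t)$ lie in those trapezes by Lemma~\ref{P:lemma:C} applied recursively. Therefore $\cA(t)\cap\cell(x,y,z)$ and $\cC(t)\cap\cell(x,y,z)$ lie on the bottom boundary. I will argue this with the ordering of the vertices $q_1(t)=2|t|$ together with the inclusions $\cA(t)\subseteq\cA(y)$ for $t>y$ and $\cC(t)\subseteq\cC(y)$ for $t<y$, both derived as in the proof of Lemma~\ref{P:lemma:C}. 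For $t\ge z$, I use $\cA(t)\subseteq\cA(z)$ and show $\uq\notin\cA(z)$: indeed $\cA(z)\cap\trap(x,z)$ is the right vertical side, which $\uq$ may touch only at points excluded by the choice of the largest $u$. The same argument handles case (i). The assertions $|w|-|u|=F_\ell$ and the consecutiveness in $\cV_{\ell+1}$ then come from Proposition~\ref{Vbar:prop4}. Exclusivity of (i) and (ii) is immediate because $F_\ell\ge2>1$ when $\ell\ge2$.
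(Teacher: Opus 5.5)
\textbf{There is a genuine gap.} Your proposal runs in the opposite direction from the paper: you first locate $\uq$ in the tiling, then try to prove that the end words of the polygon you found are the extremal $u$ and $w$. That second half does not go through.

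The identification $x=u$ is actually false for some points your descent produces. Take $\uq=(10,9)\in\cA(\epsilon)$ and $L=4$, the smallest $L$ with $q_1\le 2F_L$, which is what you must mean.
\begin{itemize}
\item At level $3$ the descent puts $\uq$ in $\trap(aba,abaab)$.
\item At level $2$ it puts $\uq$ in $\cell(aba,abaa,abaab)$, off its bottom sides, which are the segments from $(6,6)$ to $(8,6)$ and from $(8,6)$ to $(10,8)$.
\item But $\uq$ lies on the right vertical side of that cell, and $\uq\in\cA(abaab)$.
\item So the true $u$ is $abaab=w_4$ and the true $w$ is $abaabab$. The proposition places $\uq$ in $\cell(abaab,abaaba,abaabab)$, and the statement explicitly asserts $\uq\notin\cA(w)$.
\end{itemize}
Your remark that such points are \emph{excluded by the choice of the largest $u$} is circular. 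The descent determines $x$ without reference to $u$, and here the maximality of $u$ gives $u\ge z$, the opposite of what you need. The same problem occurs in case (i) on the right vertical side of $\trap(x,x')$. A repair would need a tie-breaking rule (always pass to the polygon on the right) and a proof that the right-hand neighbour really contains these boundary points. That is the matching of vertical sides in Proposition~\ref{P:prop2}, which you do not establish.

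Several inclusions used in Step~3 are also false or insufficient as stated.
\begin{itemize}
\item It is not true that $\cA(t)\subseteq\cA(y)$ for all $t>y$. Since $y\notin\cF$, for $t=w_k$ with $k$ large the vertex $\uq(w_k)=(2F_k,2F_k)$ lies above the top line $q_2=q_1-|y|+|\alpha(y)|$ of $\cA(y)$.
\item The same example shows $\cA(t)\subseteq\cA(z)$ fails for $t\ge z$ whenever $z\notin\cF$. For $t>z$ you should simply use $q_1\le 2|z|<2|t|$.
\item Knowing only that $\uq(t)$ lies in $\trap(x,y)$ or $\trap(y,z)$ does not control $\cA(t)$ for $t\in]x,y[$, which extends to the right. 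Nor does it control $\cC(t)$ for $t\in]y,z[$, which extends to the left. You would need $q_2(t)-q_1(t)\le q_2(y)-q_1(y)$, respectively $q_2(t)\le q_2(y)$. These are true, but they require extra arithmetic in $\cVbar_\ell$.
\end{itemize}

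The paper avoids all of this by arguing from the extremal words themselves. Take $m$ maximal with $\{u,w\}\subseteq\cV_m$.
\begin{itemize}
\item If $u\notin\cV_{m+1}$, let $v$ be its successor in $\cV_m$. The top side of $\trap(u,v)$ has slope $1$, so $\uq\in\cA(u)\setminus\cA(v)\subseteq\cC(v)$, which forces $w=v$.
\item If $u\in\cV_{m+1}$, let $v$ be the predecessor of $w$ in $\cV_m$. The top side of $\trap(v,w)$ is horizontal, and $\cC(w)\setminus\cC(v)\subseteq\cA(v)$ forces $v=u$.
\end{itemize}
Taking the minimal $\ell$ with $u<w$ consecutive in $\cV_{\ell+1}$ and applying Proposition~\ref{Vbar:prop4} gives the combinatorial structure. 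The facts $\uq\notin\cA(v)\cup\cC(v)\cup\cA(w)$ then follow directly from the definitions of $u$ and $w$, so no boundary issues ever arise.
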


\begin{proof}
As $(F_\ell)_{\ell\ge 1}$ is strictly increasing, the two assertions are mutually 
exclusive. So, it suffices to show that one of them applies.

Suppose first that $|u|\ge 2$.  Then $\{u,w\} \subseteq \cV_2 = 
\left[w_2,w_\infty\right[$, and so there is a largest integer $m\ge 2$ 
for which $\{u,w\} \subseteq \cV_m$.  We claim that $u$ and $w$ are
consecutive elements of $\cV_m$.

To prove this claim, suppose first that
$u\notin\cV_{m+1}$ and let $v$ be the successor of $u$ in 
$\cV_m$.  By Lemma~\ref{P:lemma:T}, we have $\alpha(v) >
\alpha(u)=w_m$ and the top side of $\trap(u,v)$ has slope $1$.
Moreover, by the choice of $u$, we have $\uq\notin\cA(v)$, thus
\begin{equation}
 \label{P:prop1:eq1}
 \uq\in\cA(u)\setminus\cA(v)=\trap(u,v)\setminus\cA(v) \subseteq \cC(v).
\end{equation}
Since $v\le w$, this implies that $w=v$, so $u<w$ are consecutive in
$\cV_m$, as claimed.   Suppose now that $u\in\cV_{m+1}$ and 
let $v\ge u$ be the predecessor of $w$ in $\cV_m$.  By the choice 
of $m$, we have $w\notin\cV_{m+1}$.  So, Lemma~\ref{P:lemma:T} 
yields $\alpha(v) > \alpha(w)=w_m$ and shows that the top side 
of $\trap(v,w)$ is horizontal.  If $v>u$, we also have $\uq\notin\cC(v)$ 
by the choice of $w$, and so
\[
 \uq\in\cC(w)\setminus\cC(v)=\trap(v,w)\setminus\cC(v) \subseteq \cA(v)
\]
against the choice of $u$.  Thus $v=u$, and the claim holds once 
again.

Let $\ell$ be the smallest integer with $1\le \ell \le m-1$ such that
$u$ and $w$ are consecutive elements of $\cV_{\ell+1}$.  If $\ell=1$, 
then assertion (i) holds.  If $\ell\ge 2$, then $u<w$ are not consecutive 
in $\cV_\ell$.  So, by Proposition~\ref{Vbar:prop4}, we have $|w|-|u|=F_\ell$
and there exists $v\in\cV_\ell\setminus\cV_{\ell+1}$ such that $u<v<w$ 
are consecutive in $\cV_\ell$.  Then (ii) holds because $\uq\notin
\cA(v)\cup\cC(v)\cup\cA(w)$.

Finally, suppose that $|u|<2$.  If $u=\epsilon$, then \eqref{P:prop1:eq1}
holds with $v=w_1$ since $\alpha(\epsilon)<\alpha(w_1)$,
and so $w=w_1$.  Otherwise, we have $u=w_1$ and \eqref{P:prop1:eq1}
holds with $v=w_2$ since $\alpha(w_1)<\alpha(w_2)$, hence
$w=w_2$.  Thus, (i) applies.
\end{proof}

\begin{definition}
\label{P:def:S}
We denote by $S_1$ the set $T_1$ of all trapezes $\trap(u,v)$ 
where $u<v$ are consecutive words in $[\epsilon,w_\infty[$.  
For each integer $\ell\ge 2$, we denote by $S_\ell$ the set of all 
polygons $\cell(u,v,w)$ where $u<v<w$ 
are consecutive words in $\cV_\ell$ with $v\notin\cV_{\ell+1}$.   
We also set  $S = \bigcup_{\ell\ge 1} S_\ell$.
\end{definition}

Our next goal is to show that $S$ is a partition of $\cA(\epsilon)$,
as asserted in Theorem~\ref{P:thm}.  By Proposition~\ref{P:prop1},
$\cA(\epsilon)$ is the union of the polygons of $S$.  So, it remains to 
prove that any pair of distinct polygons of $S$ are compatible.  
The next result shows that the writing of an element of $S$,
as in the above definition, is unique.  It also shows that distinct 
polygons of $S$ have distinct projections on the first coordinate axis.

\begin{lemma}
\label{P:lemma:unicity}
Let $\cR\in S$.  Then, there is a unique integer $\ell\ge 1$ such 
that $\cR\in S_\ell$.  
\begin{itemize} 
  \item[{\rm (i)}] If $\ell=1$, there is a unique pair of consecutive elements
  $u<w$ of $[\epsilon,w_\infty[$ such that $\cR=\trap(u,w)$.
  \smallskip
  \item[{\rm (ii)}] If $\ell\ge 2$, there is a unique triple of consecutive 
  elements $u<v<w$ of $\cV_\ell$ with $v\notin\cV_{\ell+1}$ such that
  $\cR=\cell(u,v,w)$.
\end{itemize}  
In both cases, we have 
\begin{equation}
\label{P:lemma:unicity:eq1}
 |w|-|u|=F_\ell \et \pi_1(\cR)=[q_1(u),q_2(w)].
\end{equation}
Distinct polygons of $S$ have distinct projections under $\pi_1$.
\end{lemma}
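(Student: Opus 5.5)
The approach is to show that a polygon $\cR\in S$ determines its projection $\pi_1(\cR)$, that this projection determines the words $u<w$, and that $u,w$ in turn determine $\ell$ and, when $\ell\ge 2$, the middle word $v$. All assertions of the lemma then follow at once. (In \eqref{P:lemma:unicity:eq1} the right end point should read $q_1(w)$; that is what I prove.)

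\emph{Step 1: length and projection for a given writing.} Fix any writing of $\cR$ as in Definition~\ref{P:def:S}.
\begin{itemize}
\item If $\cR=\trap(u,w)\in S_1$ with $u<w$ consecutive in $[\epsilon,w_\infty[$, then $|w|-|u|=1=F_1$, because the length map is an order preserving bijection from $[\epsilon,w_\infty[$ onto $\bN$. Lemma~\ref{P:lemma:T} gives $\pi_1(\cR)=[q_1(u),q_1(w)]=[2|u|,2|w|]$.
\item If $\cR=\cell(u,v,w)\in S_\ell$ with $\ell\ge 2$, Lemma~\ref{P:lemma:C} gives $|w|-|u|=F_\ell$ and $\pi_1(\cR)=[q_1(u),q_1(w)]$.
\end{itemize}
So in every case \eqref{P:lemma:unicity:eq1} holds, with $\pi_1(\cR)=[2|u|,2|w|]$.

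\emph{Step 2: recovering the data from $\cR$.} Suppose $\cR$ has two writings, with data $(\ell,u,w)$ and $(\ell',u',w')$, and possibly middle words $v,v'$. The two projections coincide, so $2|u|=2|u'|$ and $2|w|=2|w'|$. A prefix of $w_\infty$ is determined by its length, hence $u=u'$ and $w=w'$. Then $F_\ell=|w|-|u|=F_{\ell'}$, and since $(F_\ell)_{\ell\ge1}$ is strictly increasing ($F_1=1<F_2=2<\cdots$), we get $\ell=\ell'$.

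\emph{Step 3: the middle word.} If $\ell\ge 2$, the word $v$ is the successor of $u$ in $\cV_\ell$, because $u<v<w$ are consecutive there. The same holds for $v'$, so $v=v'$. This proves the uniqueness of $\ell$ together with (i) and (ii).

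\emph{Step 4: distinct projections.} Let $\cR,\cR'\in S$ with $\pi_1(\cR)=\pi_1(\cR')$. Steps 2 and 3 apply verbatim to two writings of possibly different polygons, since they only used the projections. They show that both polygons share the same $u$, $w$, $\ell$, and the same $v$ when $\ell\ge 2$. Hence $\cR=\trap(u,w)=\cR'$ or $\cR=\cell(u,v,w)=\cR'$.

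Every step is a direct application of Lemmas~\ref{P:lemma:T} and \ref{P:lemma:C}. The only point needing care is to observe that Step 2 never uses the equality $\cR=\cR'$, only the equality of projections, so the same argument serves for both the uniqueness claims and the last assertion.
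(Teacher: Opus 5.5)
Your proposal is correct and follows essentially the same route as the paper. You obtain $|w|-|u|=F_\ell$ and $\pi_1(\cR)=[q_1(u),q_1(w)]$ from Lemmas~\ref{P:lemma:T} and \ref{P:lemma:C}, where the paper instead cites Proposition~\ref{Vbar:prop3} for the length. Then the projection determines $u$ and $w$, hence $\ell$, and for $\ell\ge 2$ it determines $v$ as the successor of $u$ in $\cV_\ell$. You are also right that the right end point in \eqref{P:lemma:unicity:eq1} should be $q_1(w)$ rather than $q_2(w)$, as the paper's own example $\pi_1(\cR_9)=[16,22]$ confirms.
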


For example, the polygon $\cR_9$ in Figure~\ref{P:fig1} has 
projection $\pi_1(\cR_9)=[16,22]$.  Thus, it is $\cell(u,v,w)$ where
$|u|=8$, $|v|=9$ and $|w|=11$.  Since $|w|-|u|=3=F_3$, it belongs 
to $S_3$.  Indeed, $8<9<11$ are consecutive elements of $\cVbar_3$
with $\alpha(9)=3$, as table \eqref{Vbar:table} shows.

\begin{proof}[Proof of Lemma~\ref{P:lemma:unicity}]
Suppose first that $\cR\in S_1$.  Then, $\cR=\trap(u,w)$ for consecutive 
elements $u<w$ of $[\epsilon,w_\infty[$.  We have $|w|-|u|=1=F_1$ and 
Lemma~\ref{P:lemma:T} yields $\pi_1(\cR)=[q_1(u),q_2(w)]$.  

Suppose now that $\cR\in S_\ell$ for some $\ell\ge 2$.  Then, 
$\cR=\cell(u,v,w)$ for consecutive 
elements $u<v<w$ of $\cV_\ell$ with $v\notin\cV_{\ell+1}$.  By
Proposition~\ref{Vbar:prop3}, we have $|w|-|u|=F_\ell$.  
By Lemma~\ref{P:lemma:C}, the top side of $\cR$ is the top side
of $\trap(u,w)$.  Hence, $\cR$ has the same projection as $\trap(u,w)$ under
$\pi_1$, which is $[q_1(u),q_2(w)]$ by Lemma~\ref{P:lemma:T}.

Thus, \eqref{P:lemma:unicity:eq1} holds in all cases.  Hence,
$u$ and $w$ are uniquely determined by $\pi_1(\cR)$, which in 
turn determine $\ell$.  If $\ell\ge 2$, then $v$ is also 
determined as the successor of $u$ in $\cV_\ell$.  
\end{proof}

\begin{proposition}
\label{P:prop2}
Let $\ell\ge 1$ be an integer, and let $u<v<w$ be consecutive elements 
of $\{\epsilon\}\cup\cV_\ell$.  Then $\trap(u,v)$ and $\trap(v,w)$ are 
compatible trapezes: their intersection is both  the right vertical side of 
$\trap(u,v)$ and the left vertical side of $\trap(v,w)$.  
\end{proposition}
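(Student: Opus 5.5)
We use the explicit description of the trapezes in Lemma~\ref{P:lemma:T}. Since $u<v$ and $v<w$ are pairs of consecutive elements of $\{\epsilon\}\cup\cV_\ell$, that lemma gives
\[
 \pi_1(\trap(u,v))=[q_1(u),q_1(v)] \et \pi_1(\trap(v,w))=[q_1(v),q_1(w)].
\]
As $|u|<|v|<|w|$, these intervals meet only at $q_1=q_1(v)=2|v|$. So the intersection $\trap(u,v)\cap\trap(v,w)$ lies on the vertical line $q_1=2|v|$. It is the intersection of the right vertical side of $\trap(u,v)$ with the left vertical side of $\trap(v,w)$. The task is then to show that these two sides coincide, that is, that they have the same top vertex.

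Both sides are vertical half-lines unbounded from below, so it suffices to compute the two upper endpoints at $q_1=2|v|$.

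For $\trap(v,w)$, the left vertex is the point of $\trap(v,w)$ on the line $q_1=q_1(v)$ with largest $q_2$. By \eqref{P:lemma:T:eq1} applied to the pair $v<w$, the bound on $q_2$ at $q_1=q_1(v)$ is either $q_1(v)-q_1(v)+q_2(v)=q_2(v)$ or $q_2(w)$.
\begin{itemize}
 \item In the first case, where $\alpha(v)<\alpha(w)$, the bound is exactly $q_2(v)$.
 \item In the second case, where $\alpha(v)>\alpha(w)$, we use \eqref{P:lemma:T:eq2}. There the upper bound is $\min\{q_2(v),q_2(w)\}$, and the computation at the end of the proof of Lemma~\ref{P:lemma:T} shows that this minimum equals $q_2(w)$. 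We need this to equal $q_2(v)$.
\end{itemize}
This is the main obstacle. It is cleaner to avoid the case split and argue directly from the definitions: $\trap(v,w)=\cA(v)\cap\cC(w)$, and on the line $q_1=2|v|$ the set $\cA(v)$ reduces to $q_2\le q_2(v)$. So the left side of $\trap(v,w)$ is $\{(2|v|,t)\,;\,t\le\min\{q_2(v),q_2(w)\}\}$, and one must check that $q_2(v)\le q_2(w)$ in the relevant range. Indeed $\uq(v)=(2|v|,q_2(v))\in\cC(w)$ whenever $q_2(v)\le q_2(w)$, and $\cC(w)$ is cut out by $q_1\le 2|w|$ together with $q_2\le q_2(w)$.

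To see that $q_2(v)\le q_2(w)$, write this as $|v|+|\alpha(v)|\le|w|+|\alpha(w)|$. If $\alpha(v)\le\alpha(w)$, this is immediate. Otherwise we use \eqref{P:lemma:T:eq3}, $\bigl||\alpha(w)|-|\alpha(v)|\bigr|\ge|w|-|v|$, which gives only $|\alpha(v)|-|\alpha(w)|\ge |w|-|v|$. This is the wrong direction, so the argument as stated does not work. Instead, the correct claim to check is that the two computed tops agree. Precisely, with the right vertex of $\trap(u,v)$ being the point $(2|v|,\min\{q_2(v),\,2|v|-q_1(u)+q_2(u)\})$ taken from $\cA(u)\cap\cC(v)$, I will verify that both tops equal $\uq(v)$ when the slopes permit, and that in general they are the same point. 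I would do this by splitting into the four sign cases of $\alpha(u)$ versus $\alpha(v)$ and $\alpha(v)$ versus $\alpha(w)$, using \eqref{P:lemma:T:eq1} and \eqref{P:lemma:T:eq3}, together with Corollary~\ref{Vbar:prop2:cor2}, which says that the $\alpha$-values of consecutive elements cannot both be small.

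In each case I expect the common top to be $\uq(v)$ itself. This is consistent with Figure~\ref{P:fig1}, where every vertical line $q_1=2|v|$ rises exactly to the dot $\uq(v)$. The case analysis, and in particular excluding a jump between the two sides, is where the real work lies.
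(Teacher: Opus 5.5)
\textbf{Verdict: genuine gap.} Your reduction is right, and so is your case split. Both trapezes meet only on the line $q_1=2|v|$. By Lemma~\ref{P:lemma:T} one must show that the right vertex $(2|v|,r)$ of $\trap(u,v)$ equals the left vertex $(2|v|,s)$ of $\trap(v,w)$, where
$r=\min\{2|v|-|u|+|\alpha(u)|,\,q_2(v)\}$ and $s=\min\{q_2(v),q_2(w)\}$.
Splitting according to the relative order of $\alpha(u),\alpha(v),\alpha(w)$ is also the structure of the paper's proof. But the proposal stops where the proof begins, and the target you announce is false: the common vertex need not be $\uq(v)$.

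Take $|u|=4$, $|v|=5$, $|w|=6$, which are consecutive in $\cV_1$ and in $\cV_2$, with $|\alpha(u)|=|\alpha(w)|=2$ and $|\alpha(v)|=5$. Then $r=\min\{8,10\}=8$ and $s=\min\{10,8\}=8$. So the common vertex is $(10,8)$, whereas $\uq(v)=(10,10)$. In Figure~\ref{P:fig1}, the segment from $(10,8)$ to $(10,10)$ is the common side of $\cR_4$ and $\cR_6$, not a side of $\cT_5$ or $\cT_6$; the grey region stops at height $8$ there. So the figure does not support your expectation.

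More seriously, the tools you plan to use cannot give the equalities that are needed. Inequality \eqref{P:lemma:T:eq3}, which comes from Corollary~\ref{Vbar:prop2:cor2}, only says that $\big||\alpha(v)|-|\alpha(u)|\big|\ge|v|-|u|$.
\begin{itemize}
\item If $\alpha(u)<\alpha(v)<\alpha(w)$, then $r=2|v|-|u|+|\alpha(u)|$ and $s=q_2(v)$. So $r=s$ is the \emph{equality} $|\alpha(v)|-|\alpha(u)|=|v|-|u|$, while the inequality only gives $r\le s$.
\item If $\alpha(u)<\alpha(v)>\alpha(w)$, you need $|\alpha(u)|-|\alpha(w)|=(|w|-|v|)-(|v|-|u|)$, and nothing you cite controls this.
\end{itemize}

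The missing ingredient is Proposition~\ref{Vbar:prop5}. First replace $\ell$ by the largest $m$ with $\{u,v,w\}\subseteq\cV_m$; the triple remains consecutive there. A consecutive triple not contained in $\cV_{\ell+1}$ then falls into one of four configurations, with exact values of $\alphabar$ and exact gaps. This rests on Proposition~\ref{Vbar:prop3} (the middle term lies in $\cVbar_{\ell+1}$ if and only if the two gaps are equal), not merely on Corollary~\ref{Vbar:prop2:cor2}. Writing $x=|u|$, $y=|v|$, $z=|w|$, the common value $r=s$ is then $y+F_\ell$, $z+F_\ell$, $y+F_{\ell+1}$, and $z+F_\ell=y+F_{\ell+1}$ in the four cases respectively.

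Finally, the cases $u=\epsilon$ (where $r=s=2F_\ell$) and $u=w_1$ (where $r=s=4$) are not covered by that proposition and must be checked directly.
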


\begin{proof}
By Lemma~\ref{P:lemma:T}, the right vertex of 
$\trap(u,v)$ is $(q_1(v),r)$ for some $r\in\bR$ and the left vertex of
$\trap(v,w)$ is $(q_1(v),s)$ for some $s\in\bR$.  The lemma also allows us
to compute $r$ and $s$ in terms of the numbers $x=|u|$, $y=|v|$, $z=|w|$,
$\alphabar(x)=|\alpha(u)|$, $\alphabar(y)=|\alpha(v)|$ and 
$\alphabar(z)=|\alpha(w)|$.  We simply need to show that $r=s$.
Without loss of generality, we may assume that $u<v<w$ are not 
consecutive elements of $\{\epsilon\}\cup\cV_{\ell+1}$.

If $|u|\ge 2$, we have $\ell\ge 2$, and $x<y<z$ are consecutive numbers 
in $\cVbar_{\ell}$, not all contained in $\cVbar_{\ell+1}$.  So,
Proposition~\ref{Vbar:prop5} applies and yields four cases (i)--(iv)
to consider.  Figure~\ref{P:fig3} shows the trapezes $\trap(u,v)$ and 
$\trap(v,w)$ in each of these cases.  Using Lemma~\ref{P:lemma:T}, 
we find in all cases that $r=s$:
\begin{center}
\begin{tabular}{rl}
Case (i): &$r=y+F_\ell=s$;\\[3pt]
Case (ii): &$r=(x+F_\ell)+2(y-x)=z+F_\ell=s$;\\[3pt]
Case (iii): &$r=(x+F_\ell)+2(y-x)=y+F_{\ell+1}=s$;\\[3pt]
Case (iv): &$r=y+F_{\ell+1}=z+F_{\ell}=s$.
\end{tabular}
\end{center}
If $u=\epsilon$, then $v=w_\ell$, $w=w_{\ell+1}$, and we find $r=2F_\ell=s$.
Finally, if $u=w_1$, then $\ell=1$, $v=w_2$, $w=w_3$, and $r=s=4$.
\end{proof}

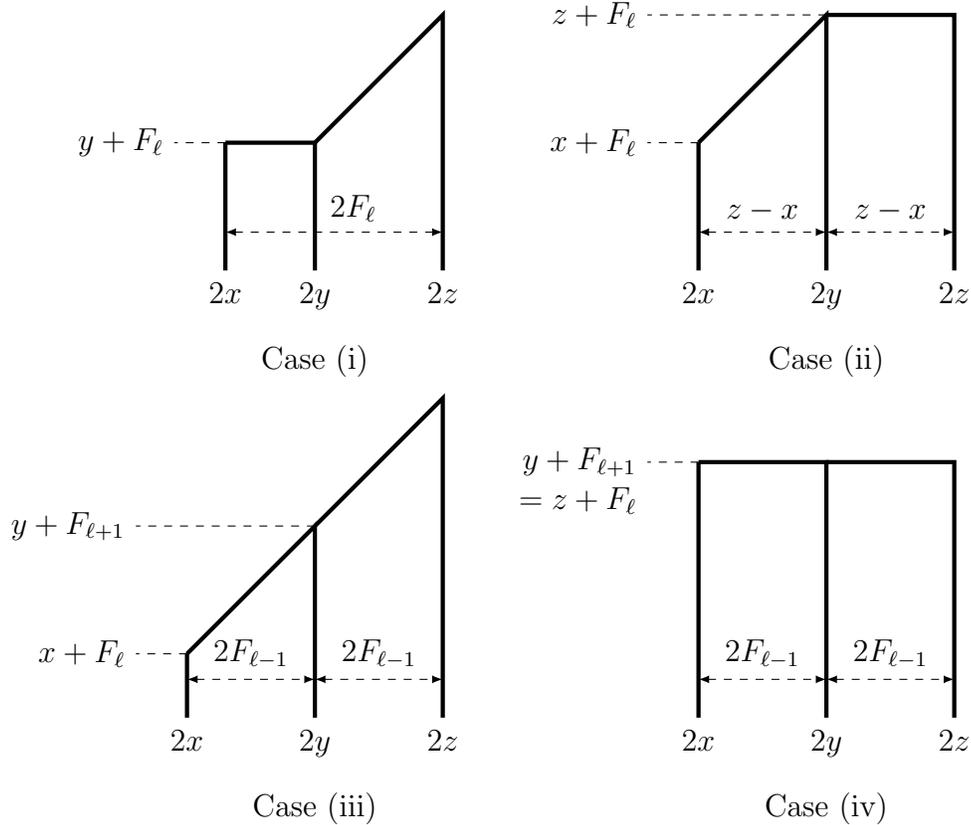
\begin{figure}[h]
\hspace*{8pt}
\begin{tikzpicture}[scale=1.7]
       \draw[-, ultra thick] (0.3,4.5)--(0.3,3.5) node[below]{$2x$};
       \draw[-, ultra thick] (0.3,4.5) -- (1,4.5)--(1,3.5) node[below]{$2y$};
       \draw[-, ultra thick] (1,4.5) -- (2,5.5) -- (2,3.5) node[below]{$2z$};
       \draw[-, dashed] (0.3,4.5) -- (-0.1,4.5) node[left] {$y+F_\ell$};
       \draw[latex-latex, dashed] (0.3,3.8) -- (2,3.8);
       \draw node[above] at (1.3,3.8) {$2F_\ell$};
       \draw node[below] at (1,3) {Case (i)};
       \draw[-, ultra thick] (4,4.5)--(4,3.5) node[below]{$2x$};
       \draw[-, ultra thick] (4,4.5) -- (5,5.5)--(5,3.5) node[below]{$2y$};
       \draw[-, ultra thick] (5,5.5) -- (6,5.5) -- (6,3.5) node[below]{$2z$};
       \draw[-, dashed] (4,4.5) -- (3.6,4.5) node[left] {$x+F_\ell$};
       \draw[-, dashed] (5,5.5) -- (3.6,5.5) node[left] {$z+F_{\ell}$};
       \draw[latex-latex, dashed] (4,3.8) -- (5,3.8);
       \draw node[above] at (4.5,3.8) {$z-x$};
       \draw[latex-latex, dashed] (5,3.8) -- (6,3.8);
       \draw node[above] at (5.5,3.8) {$z-x$};
       \draw node[below] at (5,3) {Case (ii)};
       \draw[-, ultra thick] (0,0.5)--(0,0) node[below]{$2x$};
       \draw[-, ultra thick] (0,0.5) -- (1,1.5)--(1,0) node[below]{$2y$};
       \draw[-, ultra thick] (1,1.5) -- (2,2.5) -- (2,0) node[below]{$2z$};
       \draw[-, dashed] (0,0.5) -- (-0.4,0.5) node[left] {$x+F_\ell$};
       \draw[-, dashed] (1,1.5) -- (-0.4,1.5) node[left] {$y+F_{\ell+1}$};
       \draw[latex-latex, dashed] (0,0.3) -- (1,0.3);
       \draw node[above] at (0.5,0.3) {$2F_{\ell-1}$};
       \draw[latex-latex, dashed] (1,0.3) -- (2,0.3);
       \draw node[above] at (1.5,0.3) {$2F_{\ell-1}$};
       \draw node[below] at (1,-0.5) {Case (iii)};
      \draw[-, ultra thick] (4,2)--(4,0) node[below]{$2x$};
       \draw[-, ultra thick] (4,2) -- (5,2)--(5,0) node[below]{$2y$};
       \draw[-, ultra thick] (5,2) -- (6,2) -- (6,0) node[below]{$2z$};
       \draw[-, dashed] (4,2) -- (3.6,2);
       \draw node[left] at (3.6,2) {$y+F_{\ell+1}$};
       \draw node[left] at (3.6,1.7) {$=z+F_\ell$};
       \draw[latex-latex, dashed] (4,0.3) -- (5,0.3);
       \draw node[above] at (4.5,0.3) {$2F_{\ell-1}$};
       \draw[latex-latex, dashed] (5,0.3) -- (6,0.3);
       \draw node[above] at (5.5,0.3) {$2F_{\ell-1}$};
       \draw node[below] at (5,-0.5) {Case (iv)};
\end{tikzpicture} 
\caption{Four possible configurations for $\trap(u,v)$ and $\trap(v,w)$
for consecutive words $u<v<w$ in $\cV_\ell$ with $\ell\ge 1$.}
\label{P:fig3}
\end{figure}

\begin{corollary}
\label{P:prop2:cor1}
Let $\ell\ge 1$ be an integer.  Any pair of distinct trapezes of $T_\ell$ are
compatible.  Any pair of distinct polygons of $S_\ell$ are
compatible.
\end{corollary}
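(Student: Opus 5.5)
The plan is to prove the statement for trapezes first, and then deduce the statement for the polygons of $S_\ell$ from it.

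\emph{Trapezes of $T_\ell$.} Let $\trap(u,v)\neq\trap(u',v')$ in $T_\ell$, where $u<v$ and $u'<v'$ are consecutive in $\{\epsilon\}\cup\cV_\ell$. Assume $v\le u'$; this is possible after relabelling, because distinct consecutive pairs in a totally ordered set either share an endpoint in the chain sense or are disjoint. By Lemma~\ref{P:lemma:T}, the projections satisfy
\[
\pi_1(\trap(u,v))=[q_1(u),q_1(v)] \et \pi_1(\trap(u',v'))=[q_1(u'),q_1(v')].
\]
Since $q_1=2|\cdot|$ is strictly increasing along prefixes, these intervals meet in at most one point, and only when $v=u'$.
\begin{itemize}
\item If $v<u'$, the intervals are disjoint, so the trapezes are disjoint and hence compatible.
\item If $v=u'$, then $u<v<v'$ are three consecutive elements of $\{\epsilon\}\cup\cV_\ell$. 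Proposition~\ref{P:prop2} then says that the intersection is the common vertical side, which is a side of each trapeze. So they are compatible.
\end{itemize}

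\emph{Polygons of $S_\ell$ with $\ell\ge2$.} Take $\cR=\cell(u,v,w)$ and $\cR'=\cell(u',v',w')$ in $S_\ell$. By Lemma~\ref{P:lemma:C}, $u<w$ and $u'<w'$ are consecutive in $\cV_{\ell+1}$, and each polygon sits inside its trapeze: $\cR\subseteq\trap(u,w)$ and $\cR'\subseteq\trap(u',w')$, both trapezes lying in $T_{\ell+1}$. By Lemma~\ref{P:lemma:unicity}, distinct polygons come from distinct pairs, so $(u,w)\neq(u',w')$. Using the first part for $T_{\ell+1}$, two cases remain.
\begin{itemize}
\item The trapezes are disjoint. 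Then $\cR$ and $\cR'$ are disjoint as well.
\item The trapezes share only a common vertical side, say $w=u'$. Then $\cR\cap\cR'$ lies on the line $q_1=q_1(w)$. On that line, $\cR$ is the vertical segment from the right vertex of $\trap(v,w)$ up to the right vertex of $\trap(u,w)$ (this segment may degenerate to a point). Likewise, $\cR'$ is the segment from the left vertex of $\trap(w,v')$ up to the left vertex of $\trap(w,w')$.
\end{itemize}

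The main obstacle is the second case: we must show that these two vertical segments meet in a common vertex or coincide as a common side. Their top endpoints agree, since Proposition~\ref{P:prop2} applied in $\cV_{\ell+1}$ to $u<w<w'$ shows that the right vertex of $\trap(u,w)$ equals the left vertex of $\trap(w,w')$. I would therefore compare only the bottom endpoints. Both bottom endpoints lie on the vertical line through $\uq(w)$, namely:
\begin{itemize}
\item the bottom of $\cR$ is the right vertex of $\trap(v,w)$, which is $\uq(w)$ when the top side of $\trap(v,w)$ is horizontal;
\item the bottom of $\cR'$ is the left vertex of $\trap(w,v')$, which is $\uq(w)$ when that trapeze has slope $1$.
\end{itemize}
The slopes are controlled by $\alpha$: since $v,v'\notin\cV_{\ell+1}$ and $w\in\cV_{\ell+1}$, we have $\alpha(v)<\alpha(w)$ and $\alpha(v')<\alpha(w)$. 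By Lemma~\ref{P:lemma:T}, the top side of $\trap(v,w)$ then has slope $1$ and the top side of $\trap(w,v')$ is horizontal. So neither bottom endpoint is automatically $\uq(w)$, and I would compute both explicitly from the formulas of Lemma~\ref{P:lemma:T}:
\begin{itemize}
\item the right vertex of $\trap(v,w)$ has ordinate $q_1(w)-q_1(v)+q_2(v)=2|w|-|v|+|\alpha(v)|$;
\item the left vertex of $\trap(w,v')$ has ordinate $q_2(v')=|v'|+|\alpha(v')|$.
\end{itemize}
Proposition~\ref{Vbar:prop3} gives $|\alpha(v)|=|\alpha(v')|=F_\ell$, $|v'|=|w|+d'$ and $|v|=|w|-d$, where $d,d'\in\{F_{\ell-2},F_{\ell-1}\}$; for $\ell=2$ one has $d=d'=1$ instead. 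Hence the two ordinates are $|w|+F_\ell+d$ and $|w|+F_\ell+d'$.

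Whichever is larger, both segments contain the interval between the lower of these points and the common top endpoint. So the intersection is a segment, and it has to be checked that this segment is a side of each polygon, with the lower endpoint being a vertex of both. This is where compatibility could fail. To settle it, I would use Proposition~\ref{Vbar:prop5}: it pins down $d$ and $d'$ from the combinatorics around $w$ in $\cV_\ell$, and it might force $d=d'$. If instead $d\neq d'$ really occurs, the intersection is a common segment and I would check directly against the definition of compatibility, using the explicit vertex list from Lemma~\ref{P:lemma:C}, whether that segment counts as a common side.
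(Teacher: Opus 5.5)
Your treatment of $T_\ell$ matches the paper. So does your reduction for $S_\ell$ to the case $w=u'$, with the top endpoints matched by Proposition~\ref{P:prop2} applied to $u<w<w'$ in $\cV_{\ell+1}$.

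The gap is at the bottom endpoints, which you correctly identify as the crux but do not resolve. You say Proposition~\ref{Vbar:prop5} ``might force $d=d'$'', and otherwise you propose to check directly whether the common segment counts as a common side. That fallback cannot work. Suppose $d\neq d'$ and $\cR\cap\cR'$ has positive length. Then $\cR\cap\cR'$ is the full vertical side of the polygon with the higher bottom endpoint. For the other polygon it is only a proper subsegment of its vertical side, whose relative interior contains that endpoint. This is exactly what the definition of compatibility excludes. So proving $d=d'$ is the statement itself, and the proposal does not prove it.

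The missing observation is short. Since $u<v<w$ and $w=u'<v'<w'$ are consecutive in $\cV_\ell$, the triple $v<w<v'$ is also consecutive in $\cV_\ell$. Apply Proposition~\ref{P:prop2} to this triple, with the same $\ell$. It gives that the right vertex of $\trap(v,w)$ equals the left vertex of $\trap(w,v')$, just as you used it in $\cV_{\ell+1}$ for the top endpoints. This is the paper's argument.

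In terms of your explicit computation: $|w|\in\cVbar_{\ell+1}$ is the middle term of the consecutive triple $|v|<|w|<|v'|$ in $\cVbar_\ell$. Proposition~\ref{Vbar:prop3} then gives $|w|-|v|=|v'|-|w|$ for $\ell\ge 3$, and $d=d'=1$ holds trivially for $\ell=2$. This is case~(ii) of Proposition~\ref{Vbar:prop5}. Your two ordinates $|w|+F_\ell+d$ and $|w|+F_\ell+d'$ are exactly the case~(ii) computation in the proof of Proposition~\ref{P:prop2}.

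With $d=d'$, $\cR\cap\cR'$ is a common side of both polygons, or a common vertex when the vertical segment degenerates to a point, and the proof is complete.
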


\begin{proof}
The first assertion is clear since any pair of distinct 
trapezes of $T_\ell$ with non-empty intersection are as in the 
proposition.

For the second assertion, we may assume that $\ell\ge 2$ since $S_1=T_1$.
Let $\cR$ and $\cR'$ be distinct elements of $S_\ell$.  Write
$\cR=\cell(u,v,w)$ and $\cR'=\cell(u',v',w')$ where $u<v<w$ and
$u'<v'<w'$ are triples of consecutive elements of $\cV_\ell$ with
$v\notin\cV_{\ell+1}$ and $v'\notin\cV_{\ell+1}$.  Without loss 
of generality, we may assume that $v<v'$ and so $w\le u'$.  
Then $\cR\cap \cR'$ is empty if $w<u'$, and is contained 
in the vertical line $L$ of abscissa $q_1(w)=q_1(u')$ if $w=u'$.  Suppose 
that $w=u'$.   By Lemma~\ref{P:lemma:C}, 
$\cR\cap L$ is the line segment joining the right vertex $\ur$ of $\trap(v,w)$ 
and the right vertex $\us$ of $\trap(u,w)$, while $\cR'\cap L$ is the line 
segment joining the left vertex $\ur'$ of $\trap(u',v')$ and the left vertex
$\us'$ of $\trap(u',w')$.  These two line segments are the same because, by
the proposition, we have $\ur=\ur'$ since $v<w=u'<v'$ are consecutive 
elements of $\cV_\ell$, and $\us=\us'$ since $u<w=u'<w'$ are consecutive 
elements of $\cV_{\ell+1}$.   Thus, $\cR\cap \cR'=\cR\cap L=\cR'\cap L$
is a common side of $\cR$ and $\cR'$ if $\ur\neq\us$, and a common vertex if
$\ur=\us$.  In all cases, $\cR$ and $\cR'$ are compatible.
\end{proof}

\begin{corollary}
\label{P:prop2:cor2}
Let $\ell\ge 1$ be an integer.  
Then, $\layer(\ell)$ is an admissible polygon and $T_\ell$ is a partition of it.  
The boundary of $\layer(\ell)$ is a polygonal line made of the vertical side 
of $\cA(\epsilon)$ and the top sides of the trapezes in $T_\ell$.
\end{corollary}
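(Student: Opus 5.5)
The plan is to glue the trapezes of $T_\ell$ along their vertical sides and to use Lemma~\ref{P:lemma:T} for the shape of each piece. Write the elements of $\{\epsilon\}\cup\cV_\ell$ in increasing order as $u_0=\epsilon<u_1=w_\ell<u_2<\cdots$, and set $\cT_i=\trap(u_{i-1},u_i)$ for $i\ge1$. By Lemma~\ref{P:lemma:T}, each $\cT_i$ is a closed unbounded admissible trapeze with $\pi_1(\cT_i)=[q_1(u_{i-1}),q_1(u_i)]=[2|u_{i-1}|,2|u_i|]$. Its boundary consists of two vertical sides, unbounded below, joined by a top side of slope $0$ or $1$. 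Since $|u_i|\to\infty$, these projections are consecutive intervals covering $[0,\infty[$ and overlapping only at endpoints.

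First I check that $T_\ell$ is a partition of $\layer(\ell)$. Corollary~\ref{P:prop2:cor1} gives pairwise compatibility. Every element of $T_\ell$ is admissible, and their union is $\layer(\ell)$ by definition.

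Next I show that $\layer(\ell)$ is itself an admissible polygon with the stated boundary. By Proposition~\ref{P:prop2}, $\cT_i\cap\cT_{i+1}$ is exactly the right vertical side of $\cT_i$, and this is also the left vertical side of $\cT_{i+1}$. Hence the right vertex of $\cT_i$ equals the left vertex of $\cT_{i+1}$. It follows that the union is the region
\[
\{(q_1,q_2)\,;\,q_1\ge0,\ q_2\le h(q_1)\},
\]
where $h\colon[0,\infty[\to\bR$ is the continuous piecewise affine function whose graph over $[2|u_{i-1}|,2|u_i|]$ is the top side of $\cT_i$. Such a region is the closure of its interior. Its boundary is the vertical half-line $\{(0,t)\,;\,t\le h(0)\}$ together with the graph of $h$. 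Since the left vertex of $\cT_1=\trap(\epsilon,w_\ell)$ is $\uq(\epsilon)=(0,0)$, we get $h(0)=0$, so the half-line is exactly the vertical side of $\cA(\epsilon)$. The graph of $h$ is the concatenation of the top sides, which are admissible segments with integer endpoints that meet only at shared vertices.

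The interior vertical sides are shared by two adjacent trapezes, so they are interior to the union except at their top vertex. This needs a small argument: a point on the common vertical side strictly below the common vertex has neighbourhoods split between $\cT_i$ and $\cT_{i+1}$, so it is interior to $\layer(\ell)$. This gives the description of the boundary. Admissibility then only requires that the boundary be a polygonal line of admissible segments meeting only at endpoints, which holds because $h$ is a function (single-valued graph).

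I expect the only delicate point to be this boundary identification, in particular the fact that the vertical sides disappear in the union. That fact rests entirely on the matching of vertices established in Proposition~\ref{P:prop2}.
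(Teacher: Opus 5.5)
Your proof is correct and follows the paper's route. The paper simply says the corollary follows immediately from the pairwise compatibility in Corollary~\ref{P:prop2:cor1}, which rests on the vertex matching of Proposition~\ref{P:prop2}, together with the definition of $\layer(\ell)$; you make the gluing explicit by writing $\layer(\ell)$ as the region below a continuous piecewise affine function $h$ with $h(0)=0$, which is a sound way to check admissibility and identify the boundary.
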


This follows immediately from the preceding corollary and the definition 
of $\layer(\ell)$.  For $\ell=1$, this is illustrated in 
Figure~\ref{P:fig1} where $\layer(1)$ is displayed in grey.   Note that, if
$u<v<w$ are consecutive words in $\{\epsilon\}\cup\cV_\ell$ and if the top
sides of $\trap(u,v)$ and $\trap(v,w)$ have the same slope, both of them
are contained in the same side of $\layer(\ell)$.  The next result provides 
an example of this.

\begin{corollary}
\label{P:prop2:cor3}
Let $\ell\ge 1$ be an integer. Then, $\trap(\epsilon,w_\ell)$ and 
$\trap(w_\ell,w_{\ell+1})$ form a partition of $\trap(\epsilon,w_{\ell+1})$, 
and so $\trap(\epsilon,w_{\ell+1})\subseteq\layer(\ell)$.  We also
have $\layer(1)=\layer(2)$, and $S_1$ is a partition of $\layer(2)$.
\end{corollary}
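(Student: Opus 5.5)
The first assertion follows from Proposition~\ref{P:prop2} applied to the consecutive elements $\epsilon<w_\ell<w_{\ell+1}$ of $\{\epsilon\}\cup\cV_\ell$. These are consecutive because $w_\ell$ is the smallest element of $\cV_\ell$ and $w_{\ell+1}$ is the next one; by Lemma~\ref{Vbar:lemma4} this holds for $\ell\ge 2$, and for $\ell=1$ the words $w_1=a$ and $w_2=ab$ are consecutive prefixes. Proposition~\ref{P:prop2} tells us that $\trap(\epsilon,w_\ell)$ and $\trap(w_\ell,w_{\ell+1})$ meet exactly along a common vertical side at abscissa $q_1(w_\ell)=2F_\ell$. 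By Lemma~\ref{P:lemma:T}, both trapezes have top sides of slope $1$, since $\alpha(\epsilon)<\alpha(w_\ell)<\alpha(w_{\ell+1})$. They are therefore the regions below the lines $q_2=q_1$ and $q_2=q_1-2F_\ell+2F_\ell=q_1$ over the ranges $[0,2F_\ell]$ and $[2F_\ell,2F_{\ell+1}]$ respectively, because $q_1(w_\ell)$ and $q_2(w_\ell)$ both equal $2F_\ell$. Their union is thus $\{0\le q_1\le 2F_{\ell+1},\ q_2\le q_1\}$, and applying Lemma~\ref{P:lemma:T} to $\epsilon<w_{\ell+1}$ (consecutive in $\{\epsilon\}\cup\cV_{\ell+1}$) shows that this set is $\trap(\epsilon,w_{\ell+1})$. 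Since both pieces belong to $T_\ell$, we get $\trap(\epsilon,w_{\ell+1})\subseteq\layer(\ell)$.

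For $\layer(1)=\layer(2)$, compare the trapezes of $T_1$ and $T_2$. Since $\cV_1=[w_1,w_\infty[$ and $\cV_2=[w_2,w_\infty[$, the two families differ only in their first elements: $T_1$ contains $\trap(\epsilon,w_1)$ and $\trap(w_1,w_2)$ where $T_2$ contains $\trap(\epsilon,w_2)$. All other trapezes $\trap(u,v)$, with $u<v$ consecutive and $u\ge w_2$, coincide. By the first part with $\ell=1$, the union $\trap(\epsilon,w_1)\cup\trap(w_1,w_2)$ equals $\trap(\epsilon,w_2)$, so the two unions agree.

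Finally, Corollary~\ref{P:prop2:cor2} shows that $T_1=S_1$ is a partition of $\layer(1)=\layer(2)$. The only point requiring care is checking that the hypotheses of Lemma~\ref{P:lemma:T} and Proposition~\ref{P:prop2} hold in the small case $\ell=1$, where we use $\cV_1=[w_1,w_\infty[$ from Lemma~\ref{Vbar:lemma2} directly instead of Lemma~\ref{Vbar:lemma4}.
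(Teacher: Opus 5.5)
Your proposal is correct and follows essentially the same route as the paper. You apply Proposition~\ref{P:prop2} to $\epsilon<w_\ell<w_{\ell+1}$, use the slope-$1$ top sides from Lemma~\ref{P:lemma:T} to get the partition of $\trap(\epsilon,w_{\ell+1})$, then use $\cV_2=[w_2,w_\infty[$ for $\layer(1)=\layer(2)$ and conclude via $S_1=T_1$. The only difference is that you write out the common upper boundary $q_2\le q_1$ explicitly, where the paper just notes that the slopes agree.
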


\begin{proof}
Since $\epsilon<w_\ell<w_{\ell+1}$ are consecutive elements of 
$\{\epsilon\}\cup\cV_\ell$, the proposition shows that $\trap(\epsilon,w_\ell)$ 
and $\trap(w_\ell,w_{\ell+1})$ are compatible.  Since 
$\alpha(\epsilon)<\alpha(w_\ell)<\alpha(w_{\ell+1})$, their top sides have 
slope $1$, like the top side of $\trap(\epsilon,w_{\ell+1})$.  So, the former 
trapezes form a partition of the latter.  This proves the first assertion.
For $\ell=1$, this gives $\trap(\epsilon,w_2)\subseteq\layer(1)$.  As
$\cV_2=[w_2,w_\infty[$, we conclude that $\layer(2)=\layer(1)$.  Since
$S_1=T_1$ is a partition of $\layer(1)$, it is therefore a partition of $\layer(2)$.
\end{proof}

\begin{lemma}
\label{P:lemma:bottom}
Let $\ell\ge 2$ be an integer and let $u<v<w$ be consecutive elements of 
$\cV_\ell$ with $v\notin\cV_{\ell+1}$.  Then $\cell(u,v,w)\cap\layer(\ell)$ is the 
union of the bottom sides of $\cell(u,v,w)$.  
\end{lemma}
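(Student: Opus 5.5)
Write $\cR=\cell(u,v,w)$. By Lemma~\ref{P:lemma:C}, $u<w$ are consecutive in $\cV_{\ell+1}$, $\pi_1(\cR)=[q_1(u),q_1(w)]$, and $\trap(u,w)$ is partitioned into $\trap(u,v)$, $\cR$ and $\trap(v,w)$. The bottom sides of $\cR$ are the top sides of $\trap(u,v)$ and $\trap(v,w)$. Since $u<v<w$ are consecutive in $\cV_\ell$, both trapezes lie in $T_\ell$, hence in $\layer(\ell)$. So the union of the bottom sides is contained in $\cR\cap\layer(\ell)$, and only the reverse inclusion needs proof.

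For the reverse inclusion, take $\uq\in\cR\cap\layer(\ell)$. Then $q_1\in[q_1(u),q_1(w)]$ and $\uq\in\trap(u',v')$ for some consecutive $u'<v'$ in $\{\epsilon\}\cup\cV_\ell$. Every element of $\cV_\ell$ strictly between $u$ and $w$ equals $v$, so the trapezes of $T_\ell$ meeting the strip $q_1(u)\le q_1\le q_1(w)$ in more than a vertical line are exactly $\trap(u,v)$ and $\trap(v,w)$. Their union with the other trapezes of $T_\ell$ covers $\layer(\ell)$, and those others meet the strip only along the vertical lines $q_1=q_1(u)$ or $q_1=q_1(w)$.

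\emph{Points in the open strip or in the two main trapezes.} If $q_1(u)<q_1<q_1(w)$, then $\uq\in\trap(u,v)\cup\trap(v,w)$. Since $\trap(u,v)$, $\cR$ and $\trap(v,w)$ have disjoint interiors and are compatible polygons partitioning $\trap(u,w)$, we get $\uq\in\cR\cap\trap(u,v)$ or $\uq\in\cR\cap\trap(v,w)$. These intersections are the respective top sides, i.e.\ the bottom sides of $\cR$. The same argument covers any point of the vertical boundary lines that lies in $\trap(u,v)\cup\trap(v,w)$.

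\emph{Points on the line $q_1=q_1(u)$ coming from other trapezes.} The only other trapezes of $T_\ell$ meeting this line are $\trap(u'',u)$, where $u''$ is the predecessor of $u$ in $\{\epsilon\}\cup\cV_\ell$. By Proposition~\ref{P:prop2}, $\trap(u'',u)$ meets the line exactly in its right vertical side. That side coincides with the left vertical side of $\trap(u,v)$, which ends at the left vertex of $\trap(u,v)$. On the other hand, $\cR$ meets this line in the segment from the left vertex of $\trap(u,v)$ up to the left vertex of $\trap(u,w)$ (Lemma~\ref{P:lemma:C}). So $\cR\cap\trap(u'',u)$ is the single left vertex of $\trap(u,v)$, which lies on the bottom side. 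The line $q_1=q_1(w)$ is handled symmetrically with the successor of $w$.

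The main obstacle is this boundary bookkeeping on the two vertical lines: identifying precisely which trapezes of $T_\ell$ touch them and where their vertices sit. Proposition~\ref{P:prop2} and the vertex description in Lemma~\ref{P:lemma:C} are exactly what settle it.
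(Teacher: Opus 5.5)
Your proposal is correct and follows the paper's argument. The paper's proof also combines the partition of $\trap(u,w)$ from Lemma~\ref{P:lemma:C} with the fact that the part of $\layer(\ell)$ lying over $[q_1(u),q_1(w)]$ is exactly $\trap(u,v)\cup\trap(v,w)$. Your treatment of the two vertical lines via Proposition~\ref{P:prop2} simply spells out why that fact holds at the boundary.
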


\begin{proof}
This follows immediately from Lemma~\ref{P:lemma:C} since the portion
of $\layer(\ell)$ between the vertical line segments of abscissa $q_1(u)$ 
and $q_1(w)$ is $\trap(u,v)\cup\trap(v,w)$.
\end{proof}

\begin{lemma}
\label{P:lemma:top}
Let $u<w$ be consecutive elements of $\cV_{\ell+1}$ for some integer 
$\ell\ge 1$, and let $\cE$ be the top side of $\cT=\trap(u,w)$.  Then $\cE$ is a
side of a unique polygon $\cR \in S_1\cup\cdots\cup S_\ell$, and it is the top 
side of $\cR$.
\end{lemma}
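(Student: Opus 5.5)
Fix consecutive elements $u<w$ of $\cV_{\ell+1}$ and go down the hierarchy of layers. The natural approach is induction on $\ell$, or equivalently a search for the smallest $m\in\{1,\dots,\ell\}$ such that $u<w$ are consecutive in $\cV_{m+1}$ but not consecutive in $\cV_m$ (with the convention that "consecutive in $\cV_1$" means consecutive in $[\epsilon,w_\infty[$).

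\emph{Existence.} Such an $m$ exists because $u<w$ are consecutive in $\cV_{\ell+1}$. We split into two cases.
\begin{itemize}
\item If $u<w$ are consecutive in $[\epsilon,w_\infty[$, then $\cT=\trap(u,w)\in S_1$. Its top side $\cE$ is the top side of $\cR=\cT$ in the sense of Lemma~\ref{P:lemma:T}.
\item Otherwise, let $m\ge 2$ be the smallest integer with $u<w$ consecutive in $\cV_{m+1}$. Then $u<w$ are consecutive in $\cV_{m+1}$ but not in $\cV_m$, and Proposition~\ref{Vbar:prop4} (via the length correspondence \eqref{Vbar:eq:bar}) gives $v\in\cV_m\setminus\cV_{m+1}$ with $u<v<w$ consecutive in $\cV_m$. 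Then $\cR=\cell(u,v,w)\in S_m$. By Lemma~\ref{P:lemma:C}, the top side of $\cR$ is the top side of $\trap(u,w)$, which is $\cE$.
\end{itemize}
For $u=\epsilon$, the words $u<w$ are consecutive in $\{\epsilon\}\cup\cV_{\ell+1}$ only, with $w=w_{\ell+1}$. In this case, Corollary~\ref{P:prop2:cor3} applied repeatedly shows that $\trap(\epsilon,w_{\ell+1})$ is partitioned into $\trap(\epsilon,w_1),\trap(w_1,w_2),\dots$. The top side is then a union of top sides of elements of $S_1$. So I expect the statement to be intended for $u\neq\epsilon$, or to need this case treated separately; I would check that the lemma is only applied to pairs in $\cV_{\ell+1}$ proper.

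\emph{Uniqueness.} Suppose $\cE$ is a side of some $\cR'\in S_1\cup\cdots\cup S_\ell$. Since $\cE$ is non-vertical, it is one of the three non-vertical sides of $\cR'$: either the top side or a bottom side (for $\cR'\in S_1$, only the top side). Since $\pi_1(\cE)=[q_1(u),q_1(w)]$, we would compare projections using Lemma~\ref{P:lemma:unicity}.
\begin{itemize}
\item If $\cE$ is the top side of $\cR'$, then $\pi_1(\cR')=\pi_1(\cE)=\pi_1(\cR)$, so $\cR'=\cR$ by Lemma~\ref{P:lemma:unicity}.
\item If $\cE$ is a bottom side of $\cR'=\cell(u',v',w')\in S_{m'}$, then $\cE$ is the top side of $\trap(u',v')$ or of $\trap(v',w')$. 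These are trapezes of $T_{m'}$, so $u<w$ would be consecutive in $\cV_{m'}$ with $m'\le\ell$. On the other hand, $\cE$ is the top side of $\trap(u,w)$ with $u<w$ consecutive in $\cV_{\ell+1}$, so $w$ would have distance $F_{m'-2}$ or $F_{m'-1}$ from $u$ in $\cV_{m'}$ (Corollary~\ref{Vbar:prop2:cor1}). Meanwhile, $u<w$ consecutive in $\cV_{\ell+1}$ with $u<v'<w$ or similar forces a contradiction.
\end{itemize}

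A cleaner route for the bottom-side case is Lemma~\ref{P:lemma:bottom}. The bottom sides of $\cR'$ lie in $\layer(m')$. By Lemma~\ref{P:lemma:C}, the interior point $\uq(v)$ arrangement shows that the top side of $\trap(u,w)$ lies strictly above $\layer(m)\supseteq\layer(m')$ away from its endpoints, whenever $m'\le m$. Here I would use that the layers are nested, $\layer(1)\subseteq\layer(2)\subseteq\cdots$, as follows from Lemma~\ref{P:lemma:C}, since each $\cell$ sits above $\layer(m)$ inside $\layer(m+1)$. If instead $m'>m$, then $\cR'$ lies above $\layer(m'-1)\supseteq\layer(m+1)\supseteq\trap(u,w)\ni\cE$, so $\cE$ could only meet $\cR'$ at its boundary in $\layer(m')$. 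This again forces $\pi_1$ to match a bottom trapeze of level $m'$, which is impossible since $u<w$ are not consecutive in $\cV_{m'}$.

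I expect this uniqueness part, in particular establishing the nesting of layers and excluding the bottom-side case, to be the main obstacle. Existence is immediate from Proposition~\ref{Vbar:prop4} and Lemma~\ref{P:lemma:C}.
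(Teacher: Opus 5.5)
Your existence argument is the paper's (minimal level, Proposition~\ref{Vbar:prop4}, then Lemma~\ref{P:lemma:C}). Your handling of a competitor $\cR'$ whose top side is $\cE$, via Lemma~\ref{P:lemma:unicity}, is also the paper's. Your worry about $u=\epsilon$ is moot, since $\epsilon\notin\cV_{\ell+1}$.

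The gap is in excluding that $\cE$ is a bottom side of some $\cR'=\cell(u',v',w')\in S_{m'}$ with $2\le m'\le\ell$.
\begin{itemize}
\item Your first route stops at ``forces a contradiction'' without giving one. Length considerations via Corollary~\ref{Vbar:prop2:cor1} cannot supply it: $w_{\ell+2}w_\ell<w_{\ell+3}$ are consecutive both in $\cV_\ell$ and in $\cV_{\ell+1}$.
\item In your second route, the case $m'\le m$ is fine.
\item The case $m<m'\le\ell$ ends with ``impossible since $u<w$ are not consecutive in $\cV_{m'}$'', and this is false. We have $u,w\in\cV_{\ell+1}\subseteq\cV_{m'}$, and no element of $\cV_{m+1}\supseteq\cV_{m'}$ lies strictly between them. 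So $u<w$ are consecutive in $\cV_{m'}$ for every $m'$ with $m+1\le m'\le \ell+1$.
\item Also, the inclusion $\layer(m'-1)\supseteq\layer(m+1)$ you invoke fails when $m'=m+1$.
\end{itemize}

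The missing observation concerns the middle word $v'$. The bottom sides of $\cR'$ are the top sides of $\trap(u',v')$ and $\trap(v',w')$, with projections $[q_1(u'),q_1(v')]$ and $[q_1(v'),q_1(w')]$. If one of them equals $\cE$, comparing with $\pi_1(\cE)=[q_1(u),q_1(w)]$ forces $v'\in\{u,w\}$. But $v'\notin\cV_{m'+1}$, whereas $u,w\in\cV_{\ell+1}\subseteq\cV_{m'+1}$ because $m'\le\ell$. This closes the case without any nesting of layers.

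The paper argues differently. Every polygon of $S_1\cup\cdots\cup S_\ell$ lies in $\layer(\ell+1)$, while $\cE$ is part of the boundary of $\layer(\ell+1)$ by Corollary~\ref{P:prop2:cor2}. So any such polygon having $\cE$ as a side lies below it, and $\cE$ must be its top side; Lemma~\ref{P:lemma:unicity} then gives uniqueness.
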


\begin{proof}
If $\cE$ is a side of some $\cR \in S_1\cup\cdots\cup S_\ell$ then it is its 
top side because $\cR$ is contained in $\layer(\ell+1)$ and $\cE$ lies on 
the boundary of $\layer(\ell+1)$.  Thus, $\pi_1(\cR)=\pi_1(\cT)$
and so, by Lemma~\ref{P:lemma:unicity}, there is at most one such $\cR$.

To show the existence of $\cR$, we may assume
without loss of generality that $\ell$ is the smallest positive integer such 
that $u<w$ are consecutive in $\cV_{\ell+1}$.  If $\ell=1$, we may take 
$\cR=\cT$.  If $\ell\ge 2$, then $u<w$ are not consecutive in
$\cV_\ell$ and so, by Proposition~\ref{Vbar:prop4}, there exists 
$v\in\cV_\ell\setminus\cV_{\ell+1}$ such that $u<v<w$ are consecutive 
in $\cV_\ell$.  Then Lemma~\ref{P:lemma:C} shows that the top side of
$\cR=\cell(u,v,w)$ is $\cE$.
\end{proof}

The next result proves the last assertion of Theorem~\ref{P:thm}.

\begin{proposition}
\label{P:prop3}
The set $S_1\cup\cdots\cup S_{\ell-1}$ is a partition of $\layer(\ell)$ for each 
integer $\ell\ge 2$.  Moreover, $S=\bigcup_{\ell=1}^\infty S_\ell$ is a partition 
of $\cA(\epsilon)$.
\end{proposition}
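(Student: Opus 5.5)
The plan is to prove the first assertion by induction on $\ell\ge 2$, and to deduce the second from it by exhausting $\cA(\epsilon)$ with the layers.

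\emph{Base case.} For $\ell=2$, Corollary~\ref{P:prop2:cor3} already states that $S_1$ is a partition of $\layer(2)$.

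\emph{Induction step.} Assume that $S_1\cup\cdots\cup S_{\ell-1}$ is a partition of $\layer(\ell)$ for some $\ell\ge 2$.

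First, I would show that $\layer(\ell+1)=\layer(\ell)\cup\bigcup_{\cR\in S_\ell}\cR$. By Lemma~\ref{P:lemma:C}, for consecutive $u<v<w$ in $\cV_\ell$ with $v\notin\cV_{\ell+1}$, the trapeze $\trap(u,w)$ of $T_{\ell+1}$ splits as $\trap(u,v)\cup\trap(v,w)\cup\cell(u,v,w)$. By Proposition~\ref{Vbar:prop4}, every pair of consecutive elements of $\cV_{\ell+1}$ is either consecutive in $\cV_\ell$ or arises in this way. The trapeze $\trap(\epsilon,w_{\ell+1})$ lies in $\layer(\ell)$ by Corollary~\ref{P:prop2:cor3}. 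Hence every trapeze of $T_{\ell+1}$ is either in $T_\ell$, or contained in $\trap(\epsilon,w_{\ell+1})\subseteq\layer(\ell)$, or equals a union of two trapezes of $T_\ell$ and one cell of $S_\ell$. Conversely, every trapeze of $T_\ell$ lies in some trapeze of $T_{\ell+1}$. This gives the claimed equality.

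Second, I would check compatibility of all pairs in $S_1\cup\cdots\cup S_\ell$. Pairs within $S_\ell$ are compatible by Corollary~\ref{P:prop2:cor1}, and pairs within $S_1\cup\cdots\cup S_{\ell-1}$ are compatible by the induction hypothesis. The remaining case is $\cR\in S_\ell$ and $\cR'\in S_j$ with $j<\ell$. By Lemma~\ref{P:lemma:bottom}, $\cR\cap\cR'$ is contained in $\cR\cap\layer(\ell)$, which is the union of the two bottom sides of $\cR$. Each bottom side is the top side of a trapeze $\trap(u,v)$ or $\trap(v,w)$ of $T_\ell$. By Lemma~\ref{P:lemma:top} applied with $\ell-1$, such a side is the top side of a unique polygon $\cR''\in S_1\cup\cdots\cup S_{\ell-1}$, namely the one with $\pi_1(\cR'')$ equal to the projection of that trapeze.

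\emph{Main obstacle.} The delicate point is to show that $\cR'$ meets $\cR$ only in a common side or a common vertex. My first attempt: since $S_1\cup\cdots\cup S_{\ell-1}$ is a partition of $\layer(\ell)$ with pairwise compatible members, the intersection of $\cR'$ with the boundary side $\cE$ of $\layer(\ell)$ is either $\cE$ itself (when $\cR'=\cR''$) or a subset of a vertex of $\cR''$ lying on $\cE$. Such a vertex must then be an endpoint of $\cE$, because $\cE$ is a full side of $\cR''$ and the polygons in the partition do not overlap. Therefore $\cR\cap\cR'$ is either a bottom side of $\cR$, or one of the points $\uq(v)$ and the outer vertices. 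I still need to check that, when this intersection is a single point, it is a vertex of both $\cR$ and $\cR'$. I would handle this through the projections: distinct polygons have distinct $\pi_1$ images by Lemma~\ref{P:lemma:unicity}, and the endpoints of these images are of the form $q_1(\cdot)$.

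\emph{Second assertion.} I would note that $\layer(\ell)$ contains $\trap(\epsilon,w_\ell)$, which contains every point of $\cA(\epsilon)$ with $q_1\le 2F_\ell$ and $q_2\le q_1$. So the layers exhaust $\cA(\epsilon)$, and the partitions of the successive layers are nested. Hence $S$ is a partition of $\cA(\epsilon)$, consistent with Proposition~\ref{P:prop1}.
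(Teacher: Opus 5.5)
Your architecture is the paper's: induction on $\ell$ with the base case from Corollary~\ref{P:prop2:cor3}, the covering $\layer(\ell+1)=\layer(\ell)\cup\bigcup_{\cR\in S_\ell}\cR$ via Lemma~\ref{P:lemma:C} and Proposition~\ref{Vbar:prop4}, and compatibility inside $S_\ell$ from Corollary~\ref{P:prop2:cor1}. In the covering step you are more careful than the paper about $\trap(\epsilon,w_{\ell+1})$. You also handle the mixed case through Lemmas~\ref{P:lemma:bottom} and \ref{P:lemma:top}, and the exhaustion argument for the second assertion is fine.

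The gap is in the mixed case $\cR\in S_\ell$, $\cR'\in S_1\cup\cdots\cup S_{\ell-1}$, which is the heart of the induction step. You analyse $\cR'\cap\cE$ separately for each bottom side $\cE$ of $\cR$, then assert that $\cR\cap\cR'$ is a bottom side or a single point. That does not follow from a side-by-side analysis. It still allows, for instance, one full bottom side together with the far endpoint of the other, or the two outer endpoints. The missing ingredient is convexity. Both $\cR$ and $\cR'$ are intersections of half-planes, so $\cR\cap\cR'$ is a convex subset of the union of the two bottom sides. These sides have distinct slopes and meet at $\uq(v)$, so the intersection lies in a single bottom side $\cE$.

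The vertex check you leave open is also not settled by the projection route you propose. Knowing that the endpoints of $\pi_1(\cR')$ have the form $q_1(\cdot)$ does not show that a point of $\cE\cap\cR'$ is a vertex of $\cR'$. It could, as far as that information goes, be an interior point of a vertical side of $\cR'$.

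The check follows instead from the compatibility you already have. Let $\cR''$ be the unique polygon of $S_1\cup\cdots\cup S_{\ell-1}$ having $\cE$ as a side. If $\cR'=\cR''$, then $\cR\cap\cR'=\cE$ is a common side. Otherwise, $\cE$ is not a side of $\cR'$ by the uniqueness in Lemma~\ref{P:lemma:top}, and $\cR\cap\cR'=\cE\cap\cR'\subseteq\cR''\cap\cR'$. By the induction hypothesis, the latter is either a common vertex of $\cR'$ and $\cR''$, or a common side $\cE'\neq\cE$. In the second case, $\cE\cap\cE'$ is at most a common endpoint of these two sides of $\cR''$. Either way, a non-empty $\cR\cap\cR'$ is a single point that is a vertex of $\cR'$ and an endpoint of $\cE$, hence a vertex of $\cR$. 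With these two additions your proof coincides with the paper's.
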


\begin{proof}
Since the sets $\layer(\ell)$ with $\ell\ge 2$ form an increasing sequence 
whose union is $\cA(\epsilon)$, it suffices to show the first assertion.  
We do this by induction on $\ell$.

For $\ell=2$, the statement follows from Corollary~\ref{P:prop2:cor3}.

Now, suppose that $S_1\cup\cdots\cup S_{\ell-1}$ is a partition of 
$\layer(\ell)$ for some $\ell\ge 2$.  By Lemma~\ref{P:lemma:C},
each trapeze in $T_{\ell+1}$ either belongs to $T_\ell$ or decomposes 
as the union of two trapezes of $T_\ell$ and a polygon of $S_\ell$.
Thus $\layer(\ell+1)$ is the union of $\layer(\ell)$ and of the polygons
of $S_\ell$.  Hence, it is the union of the polygons in 
$S_1\cup\cdots\cup S_\ell$. 

To complete the induction step, it remains to show that any pair of
polygons $\cR\neq \cR'$ in $S_1\cup\cdots\cup S_\ell$ are compatible.
By hypothesis, this is true if they both belong to $S_1\cup\cdots\cup S_{\ell-1}$.
By Corollary~\ref{P:prop2:cor1}, this is also true if they both belong to $S_\ell$.
So, we may assume that $\cR\in S_\ell$ and that $\cR'\in S_1\cup\cdots\cup S_{\ell-1}$.  We may further assume that $\cR$ and $\cR'$ intersect.

Since $\cR'\subseteq\layer(\ell)$, the set  $\cR \cap \cR'$ is contained in
$\cR\cap\layer(\ell)$ which, by Lemma~\ref{P:lemma:bottom}, is the 
union of the two bottom sides of $\cR$.  As these have distinct slopes 
and as $\cR \cap \cR'$ is convex, that intersection is contained in a 
single bottom side $\cE$ of $\cR$.  By Lemma~\ref{P:lemma:top}, 
$\cE$ is a side of a unique $\cR''\in S_1\cup\cdots\cup S_{\ell-1}$.  
If $\cR'=\cR''$, then  $\cR \cap \cR'=\cE$ is a common side of $\cR$ 
and $\cR'$, and we are done.  Otherwise, $\cE$ is 
not a side of $\cR'$.  As $\cR'$ and $\cR''$ are 
compatible polygons by our induction hypothesis, it follows that 
$\cR\cap\cR'=\cE\cap\cR'$ is a common vertex of $\cR'$ and $\cE$,
thus also a vertex of $\cR$, and again we are done.
\end{proof}

We also have a similar result for trapezes.

\begin{proposition}
\label{P:prop4}
Let $\cT\in \bigcup_{\ell=1}^\infty T_\ell$.
Then, $\{\cR\in S\,;\, \cR\subseteq\cT\}$ is a partition of $\cT$.
\end{proposition}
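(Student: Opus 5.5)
Let $\cT=\trap(u,w)$ with $u<w$ consecutive in $\{\epsilon\}\cup\cV_{m}$ for some $m\ge1$. I would argue by induction on $m$, or rather on the smallest $m$ for which $u<w$ are consecutive in $\{\epsilon\}\cup\cV_m$. Since the polygons of $S$ are pairwise compatible by Proposition~\ref{P:prop3}, any subfamily of $S$ is still pairwise compatible. So the whole task is to show that $\cT$ is the union of the polygons $\cR\in S$ with $\cR\subseteq\cT$.

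\textbf{Base case $m=1$.} Here $u<w$ are consecutive in $[\epsilon,w_\infty[$, so $\cT\in S_1$ and the claim is trivial.

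\textbf{Case $u=\epsilon$.} Then $w=w_m$. By Corollary~\ref{P:prop2:cor3}, $\trap(\epsilon,w_m)$ is the union of $\trap(\epsilon,w_{m-1})$ and $\trap(w_{m-1},w_m)$. Both are trapezes of $T_{m-1}$ with a smaller minimal level, so I would handle this case by induction as well.

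\textbf{General step, $u\neq\epsilon$.} Assume $m\ge2$ is minimal, so $u<w$ are not consecutive in $\cV_{m-1}$. Proposition~\ref{Vbar:prop4}, applied with $\ell=m-1$, gives a $v\in\cV_{m-1}\setminus\cV_m$ such that $u<v<w$ are consecutive in $\cV_{m-1}$. Lemma~\ref{P:lemma:C} then gives
\[
 \trap(u,w)=\trap(u,v)\cup\cell(u,v,w)\cup\trap(v,w).
\]
Here $\cell(u,v,w)\in S_{m-1}\subseteq S$ is contained in $\cT$. The two trapezes belong to $T_{m-1}$, and their minimal levels are at most $m-1$. By the induction hypothesis each of them is the union of the polygons of $S$ it contains, and these polygons also lie in $\cT$. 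Hence $\cT$ is covered by $\{\cR\in S\,;\,\cR\subseteq\cT\}$.

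The main obstacle is organizing the induction correctly. It must run over trapezes of every $T_\ell$, ordered by the minimal $\ell$, rather than inside a fixed $\ell$, and the case $u=\epsilon$ (including $w=w_1$, which lies in $T_1$) must be treated separately, as above, since Proposition~\ref{Vbar:prop4} concerns positive integers.
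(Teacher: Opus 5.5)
Your proof is correct and follows the paper's argument. Compatibility comes from Proposition~\ref{P:prop3}, and covering is proved by induction on the level: $\trap(\epsilon,w_m)$ is split via Corollary~\ref{P:prop2:cor3}, and the other trapezes via Proposition~\ref{Vbar:prop4} and Lemma~\ref{P:lemma:C}. One detail is worth adding: Proposition~\ref{Vbar:prop4} requires $m-1\ge 2$. This holds automatically, because when $m=2$ and $u\neq\epsilon$, the words $u<w$ are already consecutive in $\cV_1$ (as $\cV_2=[w_2,w_\infty[$), which contradicts the minimality of $m$.
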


\begin{proof}
By Proposition~\ref{P:prop3},  the polygons in $S$ are pairwise compatible.
Thus, it suffices to show that $\cT$ is a union of polygons of $S$.  
This is automatic if $\cT\in T_1$ because $T_1=S_1\subseteq S$.  Suppose 
that $\cT\in T_{\ell+1}$ for some $\ell \ge 1$.  
Then, we have $\cT\in T_\ell$, or $\cT=\trap(\epsilon,w_{\ell+1})$ is the 
union of two trapezes in $T_\ell$ by Corollary~\ref{P:prop2:cor2},
or we have $\ell\ge 2$ and $\cT$ is the union of an element of $S_\ell$
and two elements of $T_\ell$ by Lemma~\ref{P:lemma:C}.   As we may 
assume, by induction, that each 
element of $T_\ell$ is a union of polygons of $S$, the same is true for $\cT$.
\end{proof}

By Lemmas~\ref{P:lemma:T} and \ref{P:lemma:C}, the following result 
completes the proof of Theorem~\ref{P:thm}.

\begin{proposition}
\label{P:prop5}
Let $\cR\in S$, let $u,w\in\left[\epsilon,w_\infty\right[$ such that 
$\pi_1(\cR)=\left[q_1(u),q_1(w)\right]$, and let $\uq=(q_1,q_2)\in\cR$. 
Then, we have
\begin{equation}
\label{P:prop3:eq1}
\uP(\uq)=\Phi(q_1-|u|,q_2+|u|-|w|,|w|).
\end{equation}
\end{proposition}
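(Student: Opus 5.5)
By the definition \eqref{main:P:eq1} of $\uP$, we have $\uP(\uq)=\Phi(q_1-|u'|,q_2+|u'|-|w'|,|w'|)$. Here $u'$ is the largest prefix with $\uq\in\cA(u')$, and $w'$ is the smallest prefix with $u'<w'$ and $\uq\in\cC(w')$. The plan is therefore to show that $u'=u$ and $w'=w$ whenever $\uq$ lies in the interior of $\cR$. The formula then extends to all of $\cR$ by continuity, because both sides of \eqref{P:prop3:eq1} are continuous on $\cR$. The right side is clearly continuous. The left side is continuous on $\cA(\epsilon)$: this is the claim of Section~\ref{sec:properties}, but we can avoid using it, as follows. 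Every point of $\cR$ is a limit of interior points of $\cR$. For a point $\uq$ on the boundary, $\uq$ lies in some polygon $\cR'\in S$ (Proposition~\ref{P:prop3}), and on $\cR'$ we may apply directly the characterization of $(u',w')$ given by Proposition~\ref{P:prop1}. So the safer route is the following.

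Let $\uq\in\cR$ be arbitrary, and let $(u',w')$ be attached to $\uq$ as in Proposition~\ref{P:prop1}. That proposition gives a polygon $\cR'\in S$ containing $\uq$, either $\trap(u',w')$ or $\cell(u',v',w')$, with $\pi_1(\cR')=[q_1(u'),q_1(w')]$ by Lemma~\ref{P:lemma:unicity}.

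If $\cR'=\cR$, then $\pi_1$ determines $u'=u$ and $w'=w$ (the map $v\mapsto q_1(v)=2|v|$ is injective), and we are done. If $\cR'\neq\cR$, then $\uq\in\cR\cap\cR'$, which by Proposition~\ref{P:prop3} is a common side or vertex. In that case we must check that the expression $\Phi(q_1-|u|,q_2+|u|-|w|,|w|)$ agrees with $\Phi(q_1-|u'|,q_2+|u'|-|w'|,|w'|)$ at $\uq$.

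The common sides fall into two types. The first is a vertical segment $q_1=q_1(w)=q_1(u')$, where $\cR'$ lies to the right. On it, $q_1-|u|=2|w|-|u|$ and $|w|$ must be compared with $q_1-|u'|=|w|$ and $|w'|$. Here $q_1-|u'|=|w|$, so the multiset $\{q_1-|u'|,\,q_2+|u'|-|w'|,\,|w'|\}$ equals $\{|w|,\,q_2+|w|-|w'|,\,|w'|\}$. The other multiset is $\{2|w|-|u|,\,q_2+|u|-|w|,\,|w|\}$, and we check equality using the vertical extent of the segment given by Lemma~\ref{P:lemma:C}. The second type is a bottom side of one polygon lying on the top side of another (Lemmas~\ref{P:lemma:bottom} and~\ref{P:lemma:top}). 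There the relevant identity is $q_2=q_1-|v|+|\alpha(v)|$ (slope $1$) or $q_2=|v|+|\alpha(v)|$ (horizontal), together with $|w|-|u|=|\alpha(v)|$ from Lemma~\ref{P:lemma:C}. Substituting shows that the coordinates coincide up to permutation: for instance, on the horizontal bottom side of $\cell(u,v,w)$ the triple $(q_1-|u|,\,|v|,\,|w|)$ matches the coordinates for $\trap(u,v)$.

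This boundary bookkeeping is the main obstacle. A cleaner alternative, which I would try first, avoids it entirely: show directly that the conditions of Proposition~\ref{P:prop1} identify $(u',w')=(u,w)$ for every $\uq\in\cR$ except those on sides shared with a neighbour, and note that on shared sides both formulas are restrictions of affine-in-$\uq$ triples that coincide there up to order. Since $\Phi$ is continuous, it then suffices to verify agreement on the interior of $\cR$ and use density.

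For interior points $\uq$ of $\cR$, Proposition~\ref{P:prop1} forces $\uq$ into the interior of $\cR'$, since $\uq$ is not on the bottom sides of $\cR'$ and not in $\cA(w')$. Compatibility of distinct polygons (Proposition~\ref{P:prop3}) then gives $\cR'=\cR$, hence $(u',w')=(u,w)$. This gives \eqref{P:prop3:eq1} on a dense subset of $\cR$, and the boundary case is handled by the verification described above.
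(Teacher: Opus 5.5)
Your route differs from the paper's. You aim to show that the piecewise formulas $\Phi(q_1-|u|,q_2+|u|-|w|,|w|)$ attached to the polygons of $S$ agree on every intersection $\cR\cap\cR'$, and then read off $\uP(\uq)$ from the polygon $\cR'$ selected by Proposition~\ref{P:prop1}. This can be made to work, and your interior case and two bottom-side computations are correct. For the interior case, the right reason is simply that $\cR\cap\cR'\subseteq\partial\cR$ when $\cR'\neq\cR$; Proposition~\ref{P:prop1} alone does not put $\uq$ in the interior of $\cR'$ (e.g.\ $\uq=(18,16)$ lies on the top side of the polygon $\cR_9$ it selects). The density remark, however, cannot be used. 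Continuity of $\uP$ is only obtained in Section~\ref{sec:properties} from Theorem~\ref{P:thm}, which rests on this proposition. So everything hinges on the boundary verification, and the proposal leaves two holes there.

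\textbf{Vertical common sides.} On a vertical common side $q_1=2|w|$ with $u'=w$, the triples $(2|w|-|u|,\,q_2+|u|-|w|,\,|w|)$ and $(|w|,\,q_2+|w|-|w'|,\,|w'|)$ agree up to order for all $q_2$ only if $|w'|-|w|=|w|-|u|$. Otherwise they agree only at the single height $q_2=|w|+|w'|-|u|$. So the ``vertical extent'' from Lemma~\ref{P:lemma:C} is not the relevant input. What you need is that two polygons sharing a vertical side lie in the same $S_\ell$. This holds because a polygon of $S_{\ell'}$ meets $\layer(\ell')$, which contains every polygon of lower level, only along its non-vertical bottom sides (Lemma~\ref{P:lemma:bottom}). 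Both polygons then have width $F_\ell$ by Lemma~\ref{P:lemma:unicity}.

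\textbf{Vertex-only intersections.} You never treat the case where $\cR$ and the selected $\cR'$ meet only in a vertex, and this does occur with unequal widths. In the notation $\cR_i$ of Figure~\ref{P:fig1}, take $\uq=(22,16)$ and $\cR=\cR_9$, so $|u|=8$ and $|w|=11$. Proposition~\ref{P:prop1} gives $|u'|=11$ and $|w'|=13$, i.e.\ $\cR'=\cR_{12}$. Here $\cR_9\cap\cR_{12}=\{\uq\}$, because the right vertical side of $\cR_9$ degenerates to the point $\uq(w)$. The required identity $\Phi(14,13,11)=\Phi(11,14,13)$ holds only because $16$ happens to equal $|w|+|w'|-|u|$, and your side-by-side analysis does not produce it. Closing this needs an extra idea. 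For example, you could show that the (at most six) polygons through a point can be ordered around it so that consecutive ones share a side through it, and then chain the side identities.

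\textbf{How the paper avoids this.} It does no pairwise gluing and instead inducts on the level $\ell$. Take $\uq\in\cell(u,v,w)$ off the bottom sides. Then $\uq\notin\layer(\ell)$, which forces the pair from Proposition~\ref{P:prop1} into $\cV_{\ell+1}$. Hence that pair is $(u,w)$, or, when $\uq\in\cA(w)$, it is $(w,w'')$ with $|w''|-|w|=F_\ell$ by Proposition~\ref{Vbar:prop4}. If $\uq$ is on a bottom side, the paper applies the induction hypothesis to the lower polygon having that side as its top side, using exactly your bottom-side computation. Every boundary point of $\cR$, vertices included, falls into one of these cases.
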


\begin{proof}
We proceed by induction on the integer $\ell\ge 1$ for which $\cR\in S_\ell$.

Suppose first that $\ell=1$.  Then, $u<w$ are consecutive in 
$[\epsilon,w_\infty[$, and $\cR=\trap(u,w)$.  If $\uq\notin\cA(w)$,
then $u$ is the largest element of $[\epsilon,w_\infty[$ for which $\uq\in\cA(u)$
and $w$ is the smallest element of $]u,w_\infty[$ for which $\uq\in\cC(w)$.
Hence, formula~\eqref{main:P:eq1} for $\uP(\uq)$ applies and gives 
\eqref{P:prop3:eq1}.  If $\uq\in\cA(w)$, then $\uq$ belongs to 
$\trap(w,w')\setminus\cA(w')$ where $w'$ is the successor of $w$ in 
$[\epsilon,w_\infty[$, and the preceding yields
\[
 \uP(\uq)=\Phi(q_1-|w|,q_2+|w|-|w'|,|w'|).
\]
This yields \eqref{P:prop3:eq1} because $q_1=2|w|$ and $|w'|-|w|=1=|w|-|u|$, 
thus
\[
 (q_1-|w|,q_2+|w|-|w'|,|w'|) = (|w|,q_2+|u|-|w|,q_1-|u|).
\]

Suppose now that $\ell\ge 2$.  Then, by Lemma~\ref{P:lemma:unicity}, 
we have $|w|-|v|=F_\ell$ and $\cR=\cell(u,v,w)$ for some 
$v\in\cV_\ell\setminus\cV_{\ell+1}$ such that $u<v<w$ are consecutive 
elements of $\cV_\ell$.  Let $\cE_0$ denote the top side of $\trap(u,v)$, 
and $\cE_1$, the top side of $\trap(v,w)$.  By Lemma~\ref{P:lemma:C}, 
$\cE_0$ and $\cE_1$ are the bottom sides of $\cR$.  Moreover, $u<w$ are 
consecutive elements of $\cV_{\ell+1}$, and $\uq\in\trap(u,w)$.  
We consider several possibilities.

1. Suppose that $\uq\notin\cE_0\cup\cE_1$.  Then we have 
$\uq\notin\layer(\ell)$ by Lemma~\ref{P:lemma:bottom}.  Let $u'$ be 
the largest element of $[\epsilon,w_\infty[$ such that $\uq\in\cA(u')$ and let 
$w'$ be the smallest element of $]u',w_\infty[$ such that $\uq\in\cC(w')$.
Since $\uq\in\trap(u,w)$, we have $u\le u'<w'$.
By Proposition~\ref{P:prop1}, $u'<w'$ are consecutive elements of $\cV_{k+1}$
for some integer $k\ge 1$ (because $u'\ge u\ge w_2$), and so
$\uq\in\trap(u',w')\subseteq\layer(k+1)$.  Thus, we must have $k\ge \ell$
and therefore $\{u',w'\}\subseteq\cV_{\ell+1}$.   We divide this case into 
two sub-cases.

a) If $\uq\notin\cA(w)$, then we have $u\le u'<w'\le w$.
As $u<w$ are consecutive elements of $\cV_{\ell+1}$, we deduce that 
$u'=u$ and $w'=w$, and then \eqref{P:prop3:eq1} holds by definition of $\uP$.

b) If instead $\uq\in\cA(w)$, then $u'=w$ and, by 
Proposition~\ref{P:prop2}, $\uq$ belongs to 
the left vertical side of $\trap(w,w'')$ where $w''$ is the successor 
of $w$ in $\cV_{\ell+1}$.   Hence, we must have $w'=w''$.   
So, $w<w'$ are consecutive in $\cV_{\ell+1}$, but not consecutive 
in $\cV_\ell$ because $\uq\notin\layer(\ell)$, thus $|w'|-|w|=F_\ell=|w|-|u|$
by Proposition~\ref{Vbar:prop4}.  By definition of $\uP$, we thus have  
\[
 \uP(\uq)=\Phi(q_1-|w|,q_2+|w|-|w'|,|w'|)=\Phi(|w|,q_2+|u|-|w|,q_1-|u|)
\]
where the second equality uses $q_1=q_1(w)=2|w|$.  Thus, 
\eqref{P:prop3:eq1} also holds in this case.

2. Suppose that $\uq\in\cE_0$.  By Lemma~\ref{P:lemma:top}, $\cE_0$ is
the top side of a unique $\cR'\in S_1\cup\cdots\cup S_{\ell-1}$.  Moreover,
we have $\pi_1(\cR')=\pi_1(\cE_0)=[q_1(u),q_1(v)]$.  So, by induction, we 
may assume that
\[
 \uP(\uq)=\Phi(q_1-|u|,q_2+|u|-|v|,|v|).
\]
This yields \eqref{P:prop3:eq1} because $q_2=q_2(v)=|v|+F_\ell=|v|+|w|-|u|$, 
and so
\[
 (q_1-|u|,q_2+|u|-|v|,|v|) = (q_1-|u|,|w|,q_2+|u|-|w|). 
\]
 
3. Finally, suppose that $\uq\in\cE_1$.  Then, similarly as in the previous case,
$\cE_1$ is the top side of a unique $\cR''\in S_1\cup\cdots\cup S_{\ell-1}$
and we have $\pi_1(\cR'')=\pi_1(\cE_1)=[q_1(v),q_1(w)]$.  So, by induction, 
we may assume that
\[
 \uP(\uq)=\Phi(q_1-|v|,q_2+|v|-|w|,|w|).
\]
Then \eqref{P:prop3:eq1} follows because 
$q_1-q_2=q_1(v)-q_2(v)=|v|-F_\ell=|v|+|u|-|w|$, and so
\[
 (q_1-|v|,q_2+|v|-|w|,|w|) = (q_2+|u|-|w|, q_1-|u|,|w|).
 \qedhere 
\]
\end{proof}

%
%

\section{Additional properties of the map $\uP$}
 \label{sec:properties}
 
In this section, we study in more detail the map $\uP$ 
and we look more closely at its first component
$P_1$.  We also introduce a notion of integral $2$-parameter
$3$-system which applies to $\uP$ and extends that of integral 
$3$-system from \cite{R2003}.   
 
\begin{proposition}
 \label{properties:prop1}
Let $\uP=(P_1,P_2,P_3)\colon\cA(\epsilon)\to\bR^3$ be as in 
section~\ref{sec:P}, and let $\cR\in S$.  Then, there is a unique 
triple $(a,b,c)\in\bZ^3$ with $c=a+b$ such that 
\begin{equation}
\label{properties:prop1:eq1}
 \uP(\uq)=\Phi(q_1-a,q_2-b,c) 
 \quad
 \text{for each $\uq=(q_1,q_2)\in\cR$.}
\end{equation}
If $\cR\neq\trap(\epsilon,w_1)$, there is also a unique point 
$\ur$ of $\cR$ such that $P_1(\ur)=P_2(\ur)=P_3(\ur)$.  If moreover
$\cR\neq\trap(w_1,w_2)$, then $\ur$ is an interior point of $\cR$,
and the sets 
\[
 \cR_{i,j,k}=\{\uq=(q_1,q_2)\in\cR\,;\, P_i(\uq)=q_1-a, P_j(\uq)=q_2-b, P_k(\uq)=c\}
\]
attached to the six permutations $(i,j,k)$ of $(1,2,3)$ form a partition
of $\cR$ into admissible convex polygons with $\ur$ as a common 
vertex.   If $\cR$ is $\trap(\epsilon,w_1)$ or $\trap(w_1,w_2)$, then
the $\cR_{i,j,k}$ with non-empty interior provide a partition 
of $\cR$ into admissible convex polygons. 
\end{proposition}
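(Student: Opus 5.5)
Proposition~\ref{P:prop5} already gives $\uP(\uq)=\Phi(q_1-a,q_2-b,c)$ on $\cR$ with $a=|u|$, $b=|w|-|u|$, $c=|w|$, where $\pi_1(\cR)=[q_1(u),q_1(w)]$. These are integers and satisfy $c=a+b$, which settles existence. For uniqueness, suppose $(a',b',c')$ also works. The multiset $\{q_1-a,q_2-b,c\}$ then equals $\{q_1-a',q_2-b',c'\}$ for all $\uq\in\cR$. Since $\cR$ has non-empty interior, we can pick an open subset of $\cR$ on which the three affine functions $q_1-a$, $q_2-b$, $c$ are pairwise distinct. On a smaller open set, the permutation matching the two triples is constant. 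Comparing gradients $(1,0)$, $(0,1)$, $(0,0)$ forces that permutation to be the identity, so $(a',b',c')=(a,b,c)$.

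For the equality point, $P_1=P_2=P_3$ means $q_1-a=q_2-b=c$, that is $\ur=(a+c,b+c)=(2|w|,2|w|-|u|)$. Since this is the only point where the three components agree, uniqueness is automatic, and what remains is to check $\ur\in\cR$ and, in general, that it is interior. For $\cR=\trap(u,w)\in S_1$ we have $|w|=|u|+1$, so $\ur=(q_1(w),|w|+1)$. By Lemma~\ref{P:lemma:T}, $\cR$ contains its right vertical side up to height $\min$ of the two bounds, and we verify $|w|+1\le$ that height using $|\alpha(w)|\ge 1$ and $|\alpha(u)|\ge|u|-?$. Concretely, we check the inequalities of \eqref{P:lemma:T:eq1} at $q_1=2|w|$, which gives membership on the boundary. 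This fails only for $\trap(\epsilon,w_1)$, where $\ur=(2,2)$ while the top is at height $1$. For $\cR=\cell(u,v,w)$ with $\ell\ge 2$, we have $b=F_\ell$, and we must show $\ur$ is in the interior of the pentagon/quadrilateral of Lemma~\ref{P:lemma:C}. We have $q_1(\ur)=2|w|$, which is the right edge, so $\ur$ lies on the right vertical side, not in the interior. That contradicts the claim, so I would recheck the formula. In fact $\uP$ on $\cR$ is $\Phi(q_1-|u|,q_2-|\alpha(v)|,|w|)$ by \eqref{P:thm:eq4}, so $a=|u|$, $b=|\alpha(v)|=F_\ell$, $c=|w|=a+b$, and $\ur=(|u|+|w|,|w|+F_\ell)$. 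This point has abscissa $|u|+|w|$, strictly inside $[2|u|,2|w|]$, and the same is true for trapezes, where $\ur=(|u|+|w|,|w|+1)$. So the plan is to verify that $\ur$ lies strictly between the bottom sides and the top side. For the bottom side, this amounts to $\ur\in\cB(v)^\circ$, meaning $|w|+F_\ell>|v|+F_\ell$ and $|w|+F_\ell>|u|+|w|-|v|+F_\ell$, both true since $u<v<w$. For the top side of $\trap(u,w)$, we use Lemma~\ref{P:lemma:T} at level $\ell+1$: the bound is either $|u|+|w|-|u|+|\alpha(u)|$ or $|w|+|\alpha(w)|$, and we need it to exceed $|w|+F_\ell$. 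This holds because $|\alpha(u)|,|\alpha(w)|\ge F_{\ell+1}>F_\ell$.

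For trapezes $\trap(u,w)$ in $S_1$ with $u\ge w_2$, the analogous check is $|w|+1<$ top height. This uses $|\alpha(\cdot)|\ge2$ (Lemma~\ref{Vbar:lemma1}) and shows $\ur$ is interior. For $\trap(w_1,w_2)$, the point $\ur=(3,3)$ lies on the top side, which is the exceptional boundary case. Finally, the regions $\cR_{i,j,k}$ are cut out of $\cR$ by the three lines $q_1-a=q_2-b$, $q_1-a=c$, $q_2-b=c$, all through $\ur$, with slopes $1$, $\infty$, $0$, so they are admissible. These six lines-through-$\ur$ sectors partition the plane into six convex admissible sectors with common vertex $\ur$. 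Intersecting them with the convex admissible polygon $\cR$ (the partition condition requires an interior point; for interior $\ur$ every sector meets $\cR$ in a set with nonempty interior) gives the partition, and on each piece the ordering fixes which coordinate is $P_i$. The main obstacle is bookkeeping the interiority inequalities, and handling the two small exceptional trapezes by direct inspection.
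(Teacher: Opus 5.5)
Your final argument is correct and is essentially the paper's. You use the same triple $(a,b,c)=(|u|,|w|-|u|,|w|)$, the same point $\ur=(a+c,b+c)=(|u|+|w|,|w|+b)$, and the same six sectors cut out by the three lines through $\ur$ of slopes $0$, $1$, $\infty$. Your strict inequalities, which place $\ur$ in $\cB(v)^\circ$ and strictly below the top side of $\trap(u,w)$ (or, for trapezes, use $|\alpha(\cdot)|\ge 2$), are a more explicit version of the paper's remark that these lines cut the relevant sides at their midpoints. Delete the false start based on the slip $a+c=2|w|$, including the remarks about $\trap(\epsilon,w_1)$; there in fact $\ur=(1,2)$ lies above $\cR$, and only the corrected computation is needed.
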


\begin{proof}
We will simply treat the case where $\cR\in S_\ell$ for an integer $\ell\ge 2$.  
The reasoning is similar and simpler for the trapezes of $S_1$.  Thus,
we assume that $\cR=\cell(u,v,w)$ for consecutive elements $u<v<w$
of $\cV_\ell$ with $v\notin\cV_{\ell+1}$.  Then, Theorem~\ref{P:thm} 
yields \eqref{properties:prop1:eq1} with $(a,b,c)=(|u|,|\alpha(v)|,|w|)$.
By Lemma~\ref{P:lemma:C}, we have $|w|-|u|=F_\ell=|\alpha(v)|$, thus
$c=a+b$.  Since $\cR$ has non-empty interior, the condition
\eqref{properties:prop1:eq1} uniquely determines $(a,b,c)$.  
As shown in Figure~\ref{properties:fig1} when the top side of 
$\cR$ is horizontal, the lines of equation
$q_1-a=c$, $q_2-b=c$ and $q_1-a=q_2-b$ cut respectively in their
middle the top sides of $\trap(u,w)$, $\trap(v,w)$ and $\trap(u,v)$.
Their intersection point $\ur=(a+c,b+c)$ is therefore an interior  
point of $\cR$ and the only point of $\cR$ where $P_1$, $P_2$ and
$P_3$ coincide.  These lines induce a partition of $\cR$ into six convex 
polygons on which the differences $(q_1-a)-c$, $(q_2-b)-c$ and  
$(q_1-a)-(q_2-b)$ are everywhere $\ge 0$ or $\le 0$.  By definition of $\uP$,
this is equivalent to $(q_1-a,q_2-b,c)=(P_i(\uq),P_j(\uq),P_k(\uq))$ 
for a fixed permutation $(i,j,k)$ of $(1,2,3)$.  We get a different permutation 
for each polygon, as indicated in Figure~\ref{properties:fig1}.
\end{proof}

\begin{figure}[ht]
 \begin{tikzpicture}[scale=0.6]
   \draw[-, thick] (2,2) -- (12,2) -- (18,8) -- (18,10) -- (2,10) -- (2,2);
   \draw[-,very thick]  (7,2) -- (15,10);
   \draw[-,very thick]  (15,5) -- (2,5);
   \draw[-,very thick]  (10,2) -- (10,10);
   \draw[-,dotted,thick] (2,2)--(2,1) node[below]{$2|u|$};
   \draw[-,dotted,thick] (12,2)--(12,1) node[below]{$\ 2|v|$};
   \draw[-,dotted,thick] (18,8)--(18,1) node[below]{$2|w|$};
   \draw[-,dotted,thick] (7,2)--(7,1) node[below]{$|u|+|v|$};
   \draw[-,dotted,thick] (10,2)--(10,1) node[below]{$|u|+|w|$};
   \draw[-,dotted,thick] (2,2)--(1,2) node[left]{$|v|+|\alpha(v)|$};
   \draw[-,dotted,thick] (2,5)--(1,5) node[left]{$|w|+|\alpha(v)|$};
   \node at (8.9,2.5){$\cR_{2,1,3}$};
   \node at (11.9,3.4){$\cR_{3,1,2}$};
   \node at (13.5,6.2){$\cR_{3,2,1}$};
   \node at (11.4,8){$\cR_{2,3,1}$};
   \node at (7.4,7.2){$\cR_{1,3,2}$};
   \node at (7,3.7){$\cR_{1,2,3}$};
   \node[draw,circle,inner sep=1.4pt, fill] at (10,5){};
   \node[above left] at (10,5) {$\ur$};
 \end{tikzpicture}
\caption{Partition of $\cR$ into six polygons on which $\uP$ is affine}
\label{properties:fig1}
\end{figure}

In the notation of the proposition, we have 
\[
 P_1(\uq)
   =\begin{cases}
   q_1-a &\text{if $\uq\in \cR_{1,2,3}\cup\cR_{1,3,2}$,}\\
   q_2-b &\text{if $\uq\in \cR_{2,1,3}\cup\cR_{3,1,2}$,}\\
          c &\text{if $\uq\in \cR_{2,3,1}\cup\cR_{3,2,1}$.}
      \end{cases}
\]
Thus, we obtain a $3$-dimensional picture of the 
graph of $P_1$ over a trapeze $\trap(\epsilon, w_\ell)$ by partitioning 
this trapeze into polygons $\cR\in S$ and then by colouring the
corresponding $\cR_{i,j,k}$ in light grey when $i=1$, in medium grey when 
$j=1$ and in dark grey when $k=1$.  The result is shown
in Figure~\ref{properties:fig2} for $\ell=7$.  On the connected regions 
in light grey, $P_1(\uq)-q_1$ is constant; on those in medium grey,
$P_1(\uq)-q_2$ is constant; on those in dark grey, $P_1(\uq)$ is constant.  
This picture shows some symmetries.  One notes for example 
that the colouring is the same, up to translation, in the region 
surrounded by solid lines as in the region surrounded by dashed 
lines.  This is an illustration of Theorem~\ref{main:P:thm1} with $k=6$.

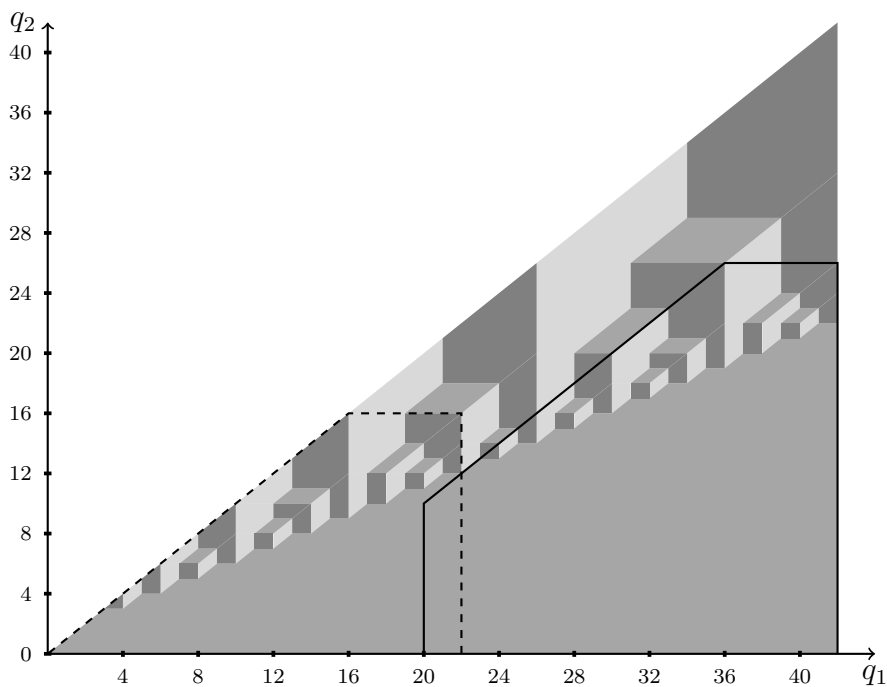
\begin{figure}[h]
\begin{tikzpicture}[xscale=0.25, yscale=0.2]
\fill[opacity=0.3, gray] (6, 6)--(6, 6)--(7, 6)--(8, 7)--(8, 8)--(6, 6); 
\fill[opacity=0.3, gray] (10, 10)--(10, 8)--(11, 8)--(12, 9)--(12, 10)--(10, 10); 
\fill[opacity=0.3, gray] (18, 12)--(18, 12)--(19, 12)--(20, 13)--(20, 14)--(18, 12); 
\fill[opacity=0.3, gray] (22, 16)--(22, 14)--(23, 14)--(24, 15)--(24, 18)--(22, 16); 
\fill[opacity=0.3, gray] (26, 20)--(26, 16)--(27, 16)--(28, 17)--(28, 20)--(26, 20); 
\fill[opacity=0.3, gray] (30, 20)--(30, 18)--(31, 18)--(32, 19)--(32, 20)--(30, 20); 
\fill[opacity=0.3, gray] (38, 22)--(38, 22)--(39, 22)--(40, 23)--(40, 24)--(38, 22); 
\fill[opacity=0.3, gray] (10, 10)--(10, 10)--(12, 10)--(13, 11)--(13, 13)--(10, 10); 
\fill[opacity=0.3, gray] (16, 16)--(16, 12)--(17, 12)--(19, 14)--(19, 16)--(16, 16); 
\fill[opacity=0.3, gray] (30, 20)--(30, 20)--(32, 20)--(33, 21)--(33, 23)--(30, 20); 
\fill[opacity=0.3, gray] (36, 26)--(36, 22)--(37, 22)--(39, 24)--(39, 29)--(36, 26); 
\fill[opacity=0.3, gray] (16, 16)--(16, 16)--(19, 16)--(21, 18)--(21, 21)--(16, 16); 
\fill[opacity=0.3, gray] (26, 26)--(26, 20)--(28, 20)--(31, 23)--(31, 26)--(26, 26); 
\fill[opacity=0.3, gray] (26, 26)--(26, 26)--(31, 26)--(34, 29)--(34, 34)--(26, 26); 
\fill[opacity=0.3, gray] (4, 4)--(4, 3)--(5, 4)--(5, 5)--(4, 4); 
\fill[opacity=0.3, gray] (6, 6)--(6, 4)--(7, 5)--(7, 6)--(6, 6); 
\fill[opacity=0.3, gray] (8, 6)--(8, 5)--(9, 6)--(9, 7)--(8, 6); 
\fill[opacity=0.3, gray] (10, 8)--(10, 6)--(11, 7)--(11, 8)--(10, 8); 
\fill[opacity=0.3, gray] (12, 8)--(12, 7)--(13, 8)--(13, 9)--(12, 8); 
\fill[opacity=0.3, gray] (14, 10)--(14, 8)--(15, 9)--(15, 11)--(14, 10); 
\fill[opacity=0.3, gray] (16, 12)--(16, 9)--(17, 10)--(17, 12)--(16, 12); 
\fill[opacity=0.3, gray] (18, 12)--(18, 10)--(19, 11)--(19, 12)--(18, 12); 
\fill[opacity=0.3, gray] (20, 12)--(20, 11)--(21, 12)--(21, 13)--(20, 12); 
\fill[opacity=0.3, gray] (22, 14)--(22, 12)--(23, 13)--(23, 14)--(22, 14); 
\fill[opacity=0.3, gray] (24, 14)--(24, 13)--(25, 14)--(25, 15)--(24, 14); 
\fill[opacity=0.3, gray] (26, 16)--(26, 14)--(27, 15)--(27, 16)--(26, 16); 
\fill[opacity=0.3, gray] (28, 16)--(28, 15)--(29, 16)--(29, 17)--(28, 16); 
\fill[opacity=0.3, gray] (30, 18)--(30, 16)--(31, 17)--(31, 18)--(30, 18); 
\fill[opacity=0.3, gray] (32, 18)--(32, 17)--(33, 18)--(33, 19)--(32, 18); 
\fill[opacity=0.3, gray] (34, 20)--(34, 18)--(35, 19)--(35, 21)--(34, 20); 
\fill[opacity=0.3, gray] (36, 22)--(36, 19)--(37, 20)--(37, 22)--(36, 22); 
\fill[opacity=0.3, gray] (38, 22)--(38, 20)--(39, 21)--(39, 22)--(38, 22); 
\fill[opacity=0.3, gray] (40, 22)--(40, 21)--(41, 22)--(41, 23)--(40, 22);
\fill[opacity=0.7, gray] (7, 6)--(8, 6)--(9, 7)--(8, 7)--(7, 6); 
\fill[opacity=0.7, gray] (11, 8)--(12, 8)--(13, 9)--(12, 9)--(11, 8); 
\fill[opacity=0.7, gray] (19, 12)--(20, 12)--(21, 13)--(20, 13)--(19, 12); 
\fill[opacity=0.7, gray] (23, 14)--(24, 14)--(25, 15)--(24, 15)--(23, 14); 
\fill[opacity=0.7, gray] (27, 16)--(28, 16)--(29, 17)--(28, 17)--(27, 16); 
\fill[opacity=0.7, gray] (31, 18)--(32, 18)--(33, 19)--(32, 19)--(31, 18); 
\fill[opacity=0.7, gray] (39, 22)--(40, 22)--(41, 23)--(40, 23)--(39, 22); 
\fill[opacity=0.7, gray] (12, 10)--(14, 10)--(15, 11)--(13, 11)--(12, 10); 
\fill[opacity=0.7, gray] (17, 12)--(18, 12)--(20, 14)--(19, 14)--(17, 12); 
\fill[opacity=0.7, gray] (32, 20)--(34, 20)--(35, 21)--(33, 21)--(32, 20); 
\fill[opacity=0.7, gray] (37, 22)--(38, 22)--(40, 24)--(39, 24)--(37, 22); 
\fill[opacity=0.7, gray] (19, 16)--(22, 16)--(24, 18)--(21, 18)--(19, 16); 
\fill[opacity=0.7, gray] (28, 20)--(30, 20)--(33, 23)--(31, 23)--(28, 20); 
\fill[opacity=0.7, gray] (31, 26)--(36, 26)--(39, 29)--(34, 29)--(31, 26); 
\fill[opacity=0.7, gray] (0,0)--(3,3)--(4, 3)--(5, 4)
   --(6, 4)--(7, 5)--(8, 5)--(9, 6)--(10, 6)--(11, 7)--(12, 7)
   --(13, 8)--(14, 8)--(15, 9)--(16, 9)--(17, 10)--(18, 10)--(19, 11)--(20, 11)
   --(21, 12)--(22, 12)--(23, 13)--(24, 13)--(25, 14)--(26, 14)
   --(27, 15)--(28, 15)--(29, 16)--(30, 16)
   --(31, 17)--(32, 17)--(33, 18)--(34, 18)--(35, 19)--(36, 19)--(37, 20)--(38, 20)
   --(39, 21)--(40, 21)--(41, 22)--(42, 22)--(42, 0)--(0, 0);
\fill[opacity=1, gray] (8, 8)--(8, 7)--(9, 7)--(10, 8)--(10, 10)--(8, 8); 
\fill[opacity=1, gray] (12, 10)--(12, 9)--(13, 9)--(14, 10)--(14, 10)--(12, 10); 
\fill[opacity=1, gray] (20, 14)--(20, 13)--(21, 13)--(22, 14)--(22, 16)--(20, 14); 
\fill[opacity=1, gray] (24, 18)--(24, 15)--(25, 15)--(26, 16)--(26, 20)--(24, 18); 
\fill[opacity=1, gray] (28, 20)--(28, 17)--(29, 17)--(30, 18)--(30, 20)--(28, 20); 
\fill[opacity=1, gray] (32, 20)--(32, 19)--(33, 19)--(34, 20)--(34, 20)--(32, 20); 
\fill[opacity=1, gray] (40, 24)--(40, 23)--(41, 23)--(42, 24)--(42, 26)--(40, 24); 
\fill[opacity=1, gray] (13, 13)--(13, 11)--(15, 11)--(16, 12)--(16, 16)--(13, 13); 
\fill[opacity=1, gray] (19, 16)--(19, 14)--(20, 14)--(22, 16)--(22, 16)--(19, 16); 
\fill[opacity=1, gray] (33, 23)--(33, 21)--(35, 21)--(36, 22)--(36, 26)--(33, 23); 
\fill[opacity=1, gray] (39, 29)--(39, 24)--(40, 24)--(42, 26)--(42, 32)--(39, 29); 
\fill[opacity=1, gray] (21, 21)--(21, 18)--(24, 18)--(26, 20)--(26, 26)--(21, 21); 
\fill[opacity=1, gray] (31, 26)--(31, 23)--(33, 23)--(36, 26)--(36, 26)--(31, 26); 
\fill[opacity=1, gray] (34, 34)--(34, 29)--(39, 29)--(42, 32)--(42, 42)--(34, 34); 
\fill[opacity=1, gray] (3, 3)--(4, 3)--(4, 4)--(3, 3); 
\fill[opacity=1, gray] (5, 5)--(5, 4)--(6, 4)--(6, 6)--(5, 5); 
\fill[opacity=1, gray] (7, 6)--(7, 5)--(8, 5)--(8, 6)--(7, 6); 
\fill[opacity=1, gray] (9, 7)--(9, 6)--(10, 6)--(10, 8)--(9, 7); 
\fill[opacity=1, gray] (11, 8)--(11, 7)--(12, 7)--(12, 8)--(11, 8); 
\fill[opacity=1, gray] (13, 9)--(13, 8)--(14, 8)--(14, 10)--(13, 9); 
\fill[opacity=1, gray] (15, 11)--(15, 9)--(16, 9)--(16, 12)--(15, 11); 
\fill[opacity=1, gray] (17, 12)--(17, 10)--(18, 10)--(18, 12)--(17, 12); 
\fill[opacity=1, gray] (19, 12)--(19, 11)--(20, 11)--(20, 12)--(19, 12); 
\fill[opacity=1, gray] (21, 13)--(21, 12)--(22, 12)--(22, 14)--(21, 13); 
\fill[opacity=1, gray] (23, 14)--(23, 13)--(24, 13)--(24, 14)--(23, 14); 
\fill[opacity=1, gray] (25, 15)--(25, 14)--(26, 14)--(26, 16)--(25, 15); 
\fill[opacity=1, gray] (27, 16)--(27, 15)--(28, 15)--(28, 16)--(27, 16); 
\fill[opacity=1, gray] (29, 17)--(29, 16)--(30, 16)--(30, 18)--(29, 17); 
\fill[opacity=1, gray] (31, 18)--(31, 17)--(32, 17)--(32, 18)--(31, 18); 
\fill[opacity=1, gray] (33, 19)--(33, 18)--(34, 18)--(34, 20)--(33, 19); 
\fill[opacity=1, gray] (35, 21)--(35, 19)--(36, 19)--(36, 22)--(35, 21); 
\fill[opacity=1, gray] (37, 22)--(37, 20)--(38, 20)--(38, 22)--(37, 22); 
\fill[opacity=1, gray] (39, 22)--(39, 21)--(40, 21)--(40, 22)--(39, 22); 
\fill[opacity=1, gray] (41, 23)--(41, 22)--(42, 22)--(42, 24)--(41, 23);
\draw[-, dashed, thick](22,0)--(22,16)--(16,16)--(0,0);
\draw[-, thick](42,0)--(42,26)--(36,26)--(20,10)--(20,0);
\draw[->, thick] (0,0)--(44,0) node[below]{$q_1$};
\draw[->, thick] (0,0)--(0,42) node[left]{$q_2$};
\foreach \x in {4,8,...,40}
       {\draw[-,line width=0.5mm] (\x,0.2)--(\x,-0.2) %
              node[below]{$\scriptstyle \x$}; }  
\foreach \y in {0,4,...,40}
         {\draw[-,line width=0.5mm] (0.2,\y)--(-0.2,\y) 
              node[left]{$\scriptstyle \y$};}
\end{tikzpicture}
\caption{The graph of $P_1(q_1,q_2)$ for $0\le q_2\le q_1\le 2F_7=42$}
\label{properties:fig2}
\end{figure}

The smallest admissible polygons are the admissible triangles with horizontal 
and vertical sides of length $1$, namely the triangles with set of vertices
 \[
  V=\{(m,n), (m+1,n),(m+1,n+1)\} 
  \quad\text{or}\quad
  V'=\{(m,n), (m,n+1),(m+1,n+1)\}
\]
for some $(m,n)\in\bZ^2$.  We call them the \emph{basic triangles}.
We say that a basic triangle $\cT$ is respectively of \emph{lower} or 
\emph{upper} type if its set of vertices is of the form $V$ or $V'$
respectively, that is if $\cT$ lies respectively below or above its 
hypotenuse.

The basic triangles are pairwise compatible, each basic triangle 
of a type sharing common sides with 3 basic triangles of 
the other type (see Figure~\ref{properties:fig3}).  They form a partition 
of $\bR^2$ as defined in section~\ref{sec:P}.  In general,  the set 
of basic triangles contained in any given admissible polygon $\cA$ 
is a partition of $\cA$.

\begin{lemma}
\label{properties:lemma1}
Let $\cT$ be a basic triangle and let $(a,b,c)\in\bZ^3$.  Then the map 
from $\cT$ to $\bR^3$ sending each $\uq=(q_1,q_2)\in\cT$ to
$\Phi(q_1-a,q_2-b,c)$ is affine.
\end{lemma}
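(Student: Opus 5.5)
The idea is to show that, on a basic triangle $\cT$, the relative order of the three affine functions $f_1(\uq)=q_1-a$, $f_2(\uq)=q_2-b$ and $f_3(\uq)=c$ never changes. Once that holds, $\Phi$ applies one fixed permutation of coordinates throughout $\cT$, so the map is affine.

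Since $\Phi$ sorts coordinates, on any set where the weak order among $f_1,f_2,f_3$ is constant, $\Phi(f_1,f_2,f_3)$ equals $(f_{\sigma(1)},f_{\sigma(2)},f_{\sigma(3)})$ for a fixed permutation $\sigma$. Each $f_i$ is affine, so the result is affine there. It therefore suffices to check that each pairwise difference keeps a constant weak sign on $\cT$:
\[
 f_1-f_3=q_1-(a+c),\qquad f_2-f_3=q_2-(b+c),\qquad f_1-f_2=(q_1-q_2)-(a-b).
\]

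By definition $\cT$ is contained in the closed unit square $[m,m+1]\times[n,n+1]$, with $(m,n)\in\bZ^2$. On $\cT$ the coordinate $q_1$ ranges over $[m,m+1]$ and $a+c$ is an integer. Hence $q_1-(a+c)$ is either $\ge 0$ on all of $\cT$ (if $a+c\le m$) or $\le 0$ on all of $\cT$ (if $a+c\ge m+1$). The same argument applies to $q_2-(b+c)$, with $q_2\in[n,n+1]$. For the third difference, look at the vertex set $V$ or $V'$. On $V$ the function $q_1-q_2$ takes the values $m-n$ and $m-n+1$, so by convexity it lies in $[m-n,m-n+1]$ on $\cT$. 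On $V'$ it lies in $[m-n-1,m-n]$. In both cases this is an interval between consecutive integers, and $a-b$ is an integer, so $(q_1-q_2)-(a-b)$ also has constant weak sign on $\cT$.

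Combining the three constant signs fixes a single weak order of $(f_1,f_2,f_3)$ on all of $\cT$. Ties are allowed; a tie holding on a subset does no harm, because equal coordinates may be listed in either order. So we choose one permutation $\sigma$ compatible with all three weak inequalities, and then $\Phi(f_1,f_2,f_3)=(f_{\sigma(1)},f_{\sigma(2)},f_{\sigma(3)})$ on $\cT$, which is affine. There is no real obstacle here. The only point needing care is the third difference, where the hypotenuse geometry of a basic triangle is exactly what keeps $q_1-q_2$ within a unit interval with integer endpoints.
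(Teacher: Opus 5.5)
Your proof is correct and follows essentially the paper's argument. Both rest on the fact that $q_1$, $q_2$ and $q_1-q_2$ stay in unit intervals with integer endpoints on $\cT$, so the pairwise order of $q_1-a$, $q_2-b$ and $c$ cannot change. The only difference is technical: the paper works on the interior, where these quantities are non-integers and all three coordinates are distinct, and extends the fixed permutation to $\cT$ by continuity, whereas you work directly on the closed triangle with weak inequalities and handle ties by choosing a compatible permutation.
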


\begin{proof}
For any interior point $\uq=(q_1,q_2)$ of $\cT$, none of the 
numbers $q_1$, $q_2$ and $q_1-q_2$ is an integer, so the coordinates of 
$(q_1-a,q_2-b,c)$ are all distinct, and their order is independent of
$\uq$.    Hence, there is a linear map $\sigma\colon\bR^3\to\bR^3$ that 
permutes the coordinates in $\bR^3$ such that 
$\Phi(q_1-a,q_2-b,c)=\sigma(q_1-a,q_2-b,c)$ for each interior point
of $\cT$, and thus, by continuity, for each point of $\cT$.
\end{proof}

By Proposition~\ref{properties:prop1}, the next two results apply
to the map $\uP\colon\cA(\epsilon)\to\bR^3$ of section~\ref{sec:P}.

\begin{lemma}
\label{properties:lemma2}
Let $\cA$ be an  admissible polygon, let $T$ denote the set of 
basic triangles contained in $\cA$, and let 
$\uP=(P_1,P_2,P_3)\colon\cA\to\bR^3$ be a function.
Suppose that, for each $\cT\in T$, there exists $(a,b,c)\in\bZ^3$ with 
$c=a+b$ such that
\begin{equation}
\label{properties:lemma2:eq1}
 \uP(q_1,q_2)=\Phi(q_1-a,q_2-b,c) \quad \text{for any $(q_1,q_2)\in\cT$.}
\end{equation}
Then $\uP$ is continuous and satisfies 
\begin{equation}
\label{properties:lemma2:eq2}
 P_1(\uq)+P_2(\uq)+P_3(\uq)=q_1+q_2 \quad
 \text{for any $\uq=(q_1,q_2)\in\cA$.}
\end{equation}
If $\cA$ is convex then $\uP$ is $1$-Lipschitz.
\end{lemma}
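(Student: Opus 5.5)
The plan has three steps: the sum identity, then continuity, then the Lipschitz bound on convex $\cA$.

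\emph{Sum identity.} Each $\uq\in\cA$ lies in some basic triangle $\cT\in T$, because the basic triangles contained in $\cA$ partition it. On $\cT$ we have \eqref{properties:lemma2:eq1}, and $\Phi$ only permutes coordinates. Hence
\[
 P_1(\uq)+P_2(\uq)+P_3(\uq)=(q_1-a)+(q_2-b)+c=q_1+q_2,
\]
using $c=a+b$. This gives \eqref{properties:lemma2:eq2}.

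\emph{Continuity.} By Lemma~\ref{properties:lemma1}, the restriction of $\uP$ to each $\cT\in T$ is affine, hence continuous. Only finitely many basic triangles meet any bounded neighbourhood of a point. Since $\uP$ is a single-valued function on $\cA$, restrictions to two triangles agree on their common vertices and sides. A function on a locally finite union of closed sets that is continuous on each closed piece is continuous on the union (pasting lemma). So $\uP$ is continuous.

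\emph{Lipschitz bound.} Assume $\cA$ is convex and take $\uq,\uq'\in\cA$. The segment $[\uq,\uq']$ lies in $\cA$. Being compact, it meets only finitely many basic triangles, and it splits into finitely many subsegments $[\uq^{(i)},\uq^{(i+1)}]$, each contained in a single $\cT_i\in T$. It suffices to show $\norm{\uP(\ux)-\uP(\uy)}\le\norm{\ux-\uy}$ for $\ux,\uy$ in a common triangle $\cT$; the triangle inequality then finishes, since the subsegment lengths in the maximum norm add up to $\norm{\uq-\uq'}$ along a straight segment. Within $\cT$, with the associated $(a,b,c)$, we use the $1$-Lipschitz property \eqref{tool:eq:growthPhi} of $\Phi$:
\[
 \norm{\Phi(x_1-a,x_2-b,c)-\Phi(y_1-a,y_2-b,c)}\le\norm{(x_1-y_1,x_2-y_2,0)}=\norm{\ux-\uy}.
\]

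The main obstacle is the bookkeeping in the decomposition of the segment into finitely many pieces, each lying in one triangle. Degenerate cases, where the segment runs along a triangle side, are harmless because the pieces are closed and any choice of triangle containing a piece works.
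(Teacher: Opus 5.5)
Your proof is correct and follows essentially the same route as the paper. The sum identity comes from $c=a+b$ on each basic triangle. Continuity comes from the local finiteness of the cover by basic triangles; the paper notes that at most six of them contain a given point and that their union is a neighbourhood of it. The Lipschitz bound comes from cutting the segment $[\uq,\uq']$ into pieces that each lie in a single triangle. The only difference is cosmetic: the paper bounds each coordinate through the slopes $0$, $u_1$, $u_2$ of the piecewise linear maps $t\mapsto P_i(\uq+t(\uq'-\uq))$, whereas you apply the $1$-Lipschitz property of $\Phi$ on each piece and add up the max-norm lengths, which is equally valid.
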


\begin{proof}
By \eqref{properties:lemma2:eq1}, the map $\uP$ is continuous on each $\cT\in T$ 
(for the relative topology of $\cT$).  Thus, it is continuous on $\cA$ because, for 
each $\uq\in\cA$, there are at most six triangles $\cT\in T$ with $\uq\in\cT$,
and their union is a neighbourhood of $\uq$.   
Formula \eqref{properties:lemma2:eq1} also implies 
\eqref{properties:lemma2:eq2} for each $\uq\in\cT$ with $\cT\in T$, thus for each
$\uq\in \cA$.  Finally, suppose that $\cA$ is convex and let $\uq,\uq'\in\cA$.  
Write $\uq'-\uq=(u_1,u_2)$.  Then, for each $i=1,2,3$, the function 
$f_i\colon[0,1]\to\cA$ given by $f_i(t)=P_i(\uq+t(\uq'-\uq))$ is continuous
and piecewise linear with slopes $0$, $u_1$ or $u_2$, thus
\[
 |P_i(\uq')-P_i(\uq)| = |f_i(1)-f_i(0)| \le \max\{|u_1|,|u_2|\} = \norm{\uq'-\uq},
\]
and so $\norm{\uP(\uq')-\uP(\uq)}\le \norm{\uq'-\uq}$.
\end{proof}

\begin{lemma}
\label{properties:lemma3}
Let the notation and hypotheses be as in Lemma~\ref{properties:lemma2}. 
Suppose that $\cT\in T$ is a lower basic triangle, that $\cT'\in T$ is an 
upper basic triangle, and that they share a common side $\cE$.    Let
$(a,b,c)\in\bZ^3$ with $c=a+b$ such that \eqref{properties:lemma2:eq1}
holds, and let $(a',b',c')\in\bZ^3$ with $c'=a'+b'$ such that
\[
 \uP(q_1,q_2)=\Phi(q_1-a',q_2-b',c') \quad \text{for each $(q_1,q_2)\in\cT'$.}
\]
Suppose further that $(a',b',c')\neq (a,b,c)$.  Then,
\begin{itemize} 
\item[(i)] $(a',b',c')=(m-c,b,m-a)$ if $\cE$ is contained in the vertical 
     line $q_1=m$; 
\item[(ii)] $(a',b',c')=(a,m-c,m-b)$ if $\cE$ is contained in the horizontal 
     line $q_2=m$;
\item[(ii)] $(a',b',c')=(b+m,a-m,c)$ if $\cE$ is contained in the line $q_1-q_2=m$. 
\end{itemize}
\end{lemma}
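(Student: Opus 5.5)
The plan is to use only that $\uP$ is well defined on the common side $\cE=\cT\cap\cT'$. At each $\uq\in\cE$ both formulas apply, so
\[
\Phi(q_1-a,q_2-b,c)=\Phi(q_1-a',q_2-b',c') \quad\text{for each }\uq=(q_1,q_2)\in\cE .
\]
Equivalently, the multisets $\{q_1-a,\,q_2-b,\,c\}$ and $\{q_1-a',\,q_2-b',\,c'\}$ coincide at every point of $\cE$. We parametrize $\cE$ by a real parameter $t$ running over an interval of length $1$. Each of the six entries then becomes an affine function of $t$, and we compare them as functions.

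The key step is to pass from equality at each point to equality of functions. For each $t$ there is a bijection between the two triples of affine functions matching their values at $t$. There are only finitely many ($6$) such bijections, and the interval is infinite. Hence a single bijection $\sigma$ works for infinitely many $t$. Two affine functions that agree at infinitely many points are equal, so the two triples coincide as multisets of affine functions of $t$. The remaining work is bookkeeping in each of the three cases.

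\emph{Case (i)}, $\cE\subseteq\{q_1=m\}$. Take $t=q_2$. The triples are $\{m-a,\ t-b,\ c\}$ and $\{m-a',\ t-b',\ c'\}$. Each contains exactly one non-constant function, of slope $1$, so $t-b=t-b'$ and $b'=b$. Then $\{m-a,c\}=\{m-a',c'\}$ as multisets of constants. The option $a'=a$, $c'=c$ would give $(a',b',c')=(a,b,c)$, which is excluded. So $m-a'=c$ and $c'=m-a$, that is $(a',b',c')=(m-c,b,m-a)$. This is consistent with $c'=a'+b'$ because $c=a+b$.

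\emph{Case (ii)}, $\cE\subseteq\{q_2=m\}$. This is symmetric with $t=q_1$. The non-constant entries force $a'=a$. Excluding the trivial match then gives $m-b'=c$ and $c'=m-b$, that is $(a',b',c')=(a,m-c,m-b)$.

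\emph{Case (iii)}, $\cE\subseteq\{q_1-q_2=m\}$. Take $t=q_2$, so $q_1=t+m$. The triples are $\{t+m-a,\ t-b,\ c\}$ and $\{t+m-a',\ t-b',\ c'\}$. Now there are two slope-one entries and one constant in each, so the constants match and $c'=c$. The slope-one entries give $\{m-a,-b\}=\{m-a',-b'\}$. Discarding the trivial match, we get $m-a'=-b$ and $-b'=m-a$, that is $(a',b',c')=(b+m,a-m,c)$.

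I expect no genuine obstacle here. The only point needing care is the finiteness argument that upgrades pointwise equality of the sorted triples to equality as multisets of affine functions. The lower/upper type of $\cT$ and $\cT'$ plays no role beyond guaranteeing that $\cE$ is a nondegenerate segment in one of the three directions.
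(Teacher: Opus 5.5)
Your proof is correct and takes the same route as the paper. The paper simply notes that each formula follows from $(a',b',c')\neq(a,b,c)$ together with $\Phi(q_1-a,q_2-b,c)=\Phi(q_1-a',q_2-b',c')$ on $\cE$; your pigeonhole step, which upgrades pointwise equality of the sorted triples to equality as multisets of affine functions, and your case-by-case matching supply exactly the details the paper leaves to the reader.
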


The three cases are illustrated in Figure~\ref{properties:fig3}.  
In each, the formula given for $(a',b',c')$ follows 
from the hypotheses that $(a',b',c')\neq (a,b,c)$ and that 
$\Phi(q_1-a,q_2-b,c)=\Phi(q_1-a',q_2-b',c')$ for all $(q_1,q_2)\in\cE$.

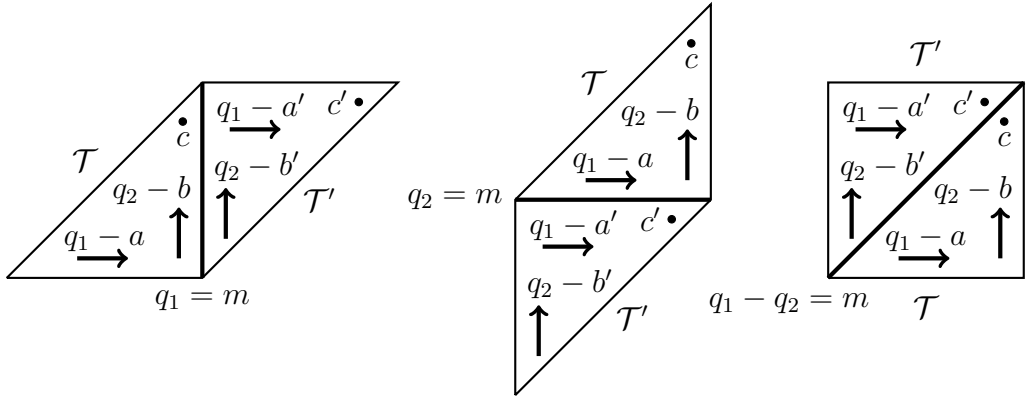
\begin{figure}[ht]
\begin{tikzpicture}[scale=0.52]
 \draw[-, thick] (-1,3)--(4,3)--(9,8)--(4,8)--(-1,3);
 \draw[-, ultra thick] (4,8)--(4,3) node[below]{$q_1=m$};
 \node at (1,6){$\cT$};
 \node at (7,5){$\cT'$};
 \node[draw,circle,inner sep=1pt, fill] at (3.5,7){};
 \node[below] at (3.5,7) {$c$};
 \draw[->, ultra thick] (0.8,3.5)--(2.1,3.5){}; 
 \node[above] at (1.5,3.4) {$q_1-a$};
 \draw[->, ultra thick] (3.4,3.5)--(3.4,4.8){};  
 \node[above] at (2.7,4.6) {$q_2-b$};  
 \node[draw,circle,inner sep=1pt, fill] at (8,7.5){};
 \node[left] at (8,7.5) {$c'$};
 \draw[->, ultra thick] (4.7,6.8)--(6,6.8){};
 \node[above] at (5.5,6.7) {$q_1-a'$};
 \draw[->, ultra thick] (4.6,4.0)--(4.6,5.3){};
 \node[above] at (5.4,5.2) {$q_2-b'$};
 \draw[-, thick] (12,0)--(17,5)--(17,10)--(12,5)--(12,0);
 \draw[-, ultra thick] (17,5)--(12,5) node[left]{$q_2=m$};
 \node at (14,8){$\cT$};
 \node at (15,2){$\cT'$};
  \node[draw,circle,inner sep=1pt, fill] at (16.5,9){};
 \node[below] at (16.5,9) {$c$};
 \draw[->, ultra thick] (13.8,5.5)--(15.1,5.5){};
 \node[above] at (14.5,5.4) {$q_1-a$};
 \draw[->, ultra thick] (16.4,5.5)--(16.4,6.8){};
 \node[above] at (15.7,6.6) {$q_2-b$};
 \node[draw,circle,inner sep=1pt, fill] at (16,4.5){};
 \node[left] at (16,4.5) {$c'$};
 \draw[->, ultra thick] (12.7,3.8)--(14,3.8){};  
 \node[above] at (13.5,3.7) {$q_1-a'$};
 \draw[->, ultra thick] (12.6,1.0)--(12.6,2.3){};
 \node[above] at (13.4,2.2) {$q_2-b'$};
 \draw[-, thick] (20,3)--(25,3)--(25,8)--(20,8)--(20,3);
 \draw[-, ultra thick] (25,8)--(20,3);
 \node[below] at (19,3) {$q_1-q_2=m$};
 \node[above] at (22.5,8.2){$\cT'$};
 \node[below] at (22.5,2.8){$\cT$};
 \node[draw,circle,inner sep=1pt, fill] at (24.5,7){};
 \node[below] at (24.5,7) {$c$};
 \draw[->, ultra thick] (21.8,3.5)--(23.1,3.5){};
 \node[above] at (22.5,3.4) {$q_1-a$};
 \draw[->, ultra thick] (24.4,3.5)--(24.4,4.8){};
 \node[above] at (23.7,4.6) {$q_2-b$};
 \node[draw,circle,inner sep=1pt, fill] at (24,7.5){};
 \node[left] at (24,7.5) {$c'$};
 \draw[->, ultra thick] (20.7,6.8)--(22,6.8){};
 \node[above] at (21.5,6.7) {$q_1-a'$};
 \draw[->, ultra thick] (20.6,4.0)--(20.6,5.3){};
 \node[above] at (21.4,5.2) {$q_2-b'$};
\end{tikzpicture}
\caption{Basic triangles meeting in a common side}
\label{properties:fig3}
\end{figure}

We propose the following notion.

\begin{definition}
\label{properties:def}
An integral  $2$-parameter $3$-system is a function 
$\uP\colon\cA\to\bR^3$ as in Lemma~\ref{properties:lemma2},
such that, under the hypotheses of Lemma~\ref{properties:lemma3},
we have $c'>c$ in case (i), $b'<b$ in case (ii), and $a'<a$ in case (iii).
\end{definition}

It is easily seen that, if $\uP\colon\bR^2\to\bR^3$ is an integral $2$-parameter 
$3$-system on $\bR^2$, then the maps $q\mapsto P(q,0)$ and $q\mapsto P(0,q)$
are both integral $3$-systems in the sense of \cite{R2003}.   We conclude
with the following result.

\begin{proposition}
\label{properties:prop2}
The map $\uP\colon\cA(\epsilon)\to\bR^3$ of section~\ref{sec:P} is an integral 
$2$-parameter $3$-system. 
\end{proposition}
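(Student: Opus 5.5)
Proposition~\ref{properties:prop1} shows that $\uP$ has the form required in Lemma~\ref{properties:lemma2}: every basic triangle in $\cA(\epsilon)$ lies in some $\cR\in S$, since $S$ partitions $\cA(\epsilon)$ into admissible polygons. So it remains to check the sign conditions of Definition~\ref{properties:def}. Take a lower basic triangle $\cT$ and an upper basic triangle $\cT'$ sharing a side $\cE$, with distinct triples $(a,b,c)\neq(a',b',c')$. Two triangles lying in the same polygon $\cR$ get the same triple, by the uniqueness in Proposition~\ref{properties:prop1}. Hence $\cE$ lies on a common side of two distinct polygons of $S$. I would therefore go through the kinds of sides that occur in the partition of Theorem~\ref{P:thm}, using the explicit triples $(|u|,1,|w|)$ for $\trap(u,w)$ with $|w|=|u|+1$, and $(|u|,|\alpha(v)|,|w|)$ for $\cell(u,v,w)$.

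\emph{Vertical sides.} These lie on lines $q_1=q_1(x)=2|x|$. By Lemma~\ref{properties:lemma3}(i) we need $c'>c$, where $\cT$ (lower) is the triangle to the left of $\cE$. The polygon on the left ends at abscissa $q_1(x)$, so its triple has $c=|x|$. The polygon on the right starts there, so its triple has $a'=|x|$ and $c'>a'$; indeed $c'=a'+b'$ with $b'\ge1$. Hence $c'>c$. This is also consistent with the formula $(a',b',c')=(m-c,b,m-a)$, since $m-c=|x|$.

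\emph{Horizontal sides.} Here Lemma~\ref{properties:lemma3}(ii) requires $b'<b$, where $\cT'$ lies below $\cE$. By Lemma~\ref{P:lemma:C} and Lemma~\ref{P:lemma:top}, such a side is the top side of $\trap(u,v)$ for $u<v$ consecutive in $\cV_\ell$ with $\alpha(u)>\alpha(v)$. It is a bottom side of the cell above it, say $\cell(u,v,w')$, whose triple has $b=|\alpha(v)|$. It is also the top side of the unique polygon $\cR'$ below it, given by Lemma~\ref{P:lemma:top}, whose $b'$ equals $1$ or $|\alpha(y)|$ for the middle word $y$ of $\cR'$. Lemma~\ref{P:lemma:unicity} shows that $\cR'\in S_k$ with $k<\ell$ and $|v|-|u|=F_k$, while $|\alpha(v)|=F_\ell$ for the relevant $\ell$. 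So $b'=F_k<F_\ell=b$. The step I expect to be the main obstacle is getting these indices right: $\cR'$ has $|\alpha(y)|=F_k$ with $k$ strictly smaller than the index of the upper cell. This should follow from Proposition~\ref{Vbar:prop4} and the layer structure in Proposition~\ref{P:prop3}. If instead the shared side is the top side of $\cell(u,v,w)$ itself, the same argument applies with $\cell(u,v,w)$ as the lower polygon.

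\emph{Slope-$1$ sides and interior edges.} For sides of slope $1$, Lemma~\ref{properties:lemma3}(iii) requires $a'<a$, where $\cT'$ is the upper polygon. By the symmetric analysis, the upper cell has $a'=|u|$ while the lower polygon, lying over $[q_1(v),q_1(w)]$, has $a=|v|>|u|$. Finally, edges interior to a polygon $\cR$ carry the same triple on both sides and need no check. Triangles meeting only at vertices impose no condition.
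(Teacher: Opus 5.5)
Your argument is correct, but it takes a different route from the paper's.

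\textbf{Your route.} For each direction of side, you identify both polygons of $S$ on either side of the common side, using Lemmas~\ref{P:lemma:T}, \ref{P:lemma:C}, \ref{P:lemma:top} and \ref{P:lemma:unicity}. You then compare their explicit triples directly:
\begin{itemize}
\item across a vertical side, $c'=c+b'$;
\item across a horizontal side, $b'=F_k<F_\ell=b$;
\item across a slope-one side, $a=|v|>|u|=a'$.
\end{itemize}

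\textbf{The paper's route.} The paper never looks at the polygon $\cR'$ across the side. It first disposes of $\trap(\epsilon,w_1)$ and $\trap(w_1,w_2)$ by direct computation. It then uses only the interior point $\ur$ of $\cR$ where $q_1-a=q_2-b=c$, from Proposition~\ref{properties:prop1}. Since $\cR$ lies on the same side of the line through $\cE$ as $\cT$, this point satisfies $q_1<m$, $q_2>m$ or $q_1-q_2>m$ in the three cases. The formulas for $(a',b',c')$ in Lemma~\ref{properties:lemma3} then give $c'>c$, $b'<b$ and $a'<a$ in one line each.

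\textbf{Comparison.} The paper's proof is shorter and needs no bookkeeping of which polygon sits where, at the price of the two exceptional trapezes. Yours treats all polygons uniformly and yields the explicit jumps. In exchange, it relies on two facts: that each polygon of $S$ lies below its top side and above its bottom sides, and, via compatibility, that the polygon below a bottom side of a cell is the one given by Lemma~\ref{P:lemma:top}.

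\textbf{On the step you flagged.} The step you call the main obstacle is immediate. Lemma~\ref{P:lemma:top}, applied to the consecutive elements $u<v$ of $\cV_\ell=\cV_{(\ell-1)+1}$, already places $\cR'$ in $S_1\cup\cdots\cup S_{\ell-1}$. Lemma~\ref{P:lemma:unicity} then gives $|v|-|u|=F_k$. Finally, $b'=F_k$ by the definition of $S_k$, or $b'=1=F_1$ if $k=1$. Propositions~\ref{Vbar:prop4} and \ref{P:prop3} are not needed.
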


\begin{proof} 
Let the notation and hypotheses be as in Lemma~\ref{properties:lemma3},
and let $\cR,\cR'\in S$ such that $\cT\subseteq\cR$ and $\cT'\subseteq\cR'$.  
Since $(a',b',c')\neq(a,b,c)$, Proposition~\ref{properties:prop1} gives 
$\cR\neq \cR'$.  So the common side $\cE$ of $\cT$ and $\cT'$ is contained 
in a common side of $\cR$ and $\cR'$.  
If $\cR=\trap(\epsilon, w_1)$, we have $\cR'=\trap(w_1,w_2)$,
$(a,b,c)=(0,1,1)$, $(a',b',c')=(1,1,2)$, and case (i) applies with $m=2$.
If $\cR=\trap(w_1, w_2)$, we have $\cR'=\trap(w_2,w_3)$,
$(a,b,c)=(1,1,2)$, $(a',b',c')=(2,1,3)$, and case (i) applies with $m=4$.
In both instances, we note that $c'>c$, as needed.    
If $\cR$ is not one of these trapezes,
then, by Proposition~\ref{properties:prop1}, the functions
$q_1-a$, $q_2-b$ and $c$ coincide in an interior point of $\cR$.  So, in case (i), 
we have $q_1-a=c$ for some $q_1<m$, thus $c'=m-a>c$.  In case (ii), 
we have $q_2-b=c$ for some $q_2>m$, thus $b'=m-c<b$. In case (iii), we have
$q_1-a=q_2-b$ for some $(q_1,q_2)\in\cR$ with $q_1-q_2>m$, thus
$a-b>m$ and so $a'=b+m<a$.
\end{proof}

It would be interesting to know if, for any matrix $A\in\GL_3(\bR)$, there exists an
integral $2$-parameter $3$-system $\uP\colon\bR^2\to\bR^3$ such that
$\uL_A(0,q_1,q_2)-\uP(q_1,q_2)$ is a bounded function of $(q_1,q_2)\in\bR^2$,
and conversely if, for any integral $2$-parameter $3$-system 
$\uP\colon\bR^2\to\bR^3$, there exists $A\in\GL_3(\bR)$ with the same 
property.

%
%

\section{Proof of Theorem \ref{main:P:thm1}}
\label{sec:proof1}

Fix an integer $k\ge 4$.  Since $q_2(w_{k-1})=q_2(w_{k-1}w_{k-3})=2F_{k-1}$, 
the conditions \eqref{main:P:thm1:eq1} on a point $\uq=(q_1,q_2)\in\bR^2$ 
amount to
\begin{equation}
\label{proof1:eq1}
  \uq\in \trap(\epsilon,w_{k-2})\cup \trap(w_{k-2},w_{k-1})
         \cup \trap(w_{k-1},w_{k-1}w_{k-3}).
\end{equation}
Fix such a point $\uq$.  We need to show that
\[
 \uP(\up+\uq)=\ur+\uP(\uq) 
 \ \ \text{where}\ \ 
 \up=(4F_{k-2},2F_{k-2})
 \ \text{and} \ 
 \ur=(2F_{k-2},2F_{k-2},2F_{k-2}).
\]

To this end, choose a trapeze $\cT$ in the right hand side of 
\eqref{proof1:eq1} for which $\uq\in\cT$.  By Proposition~\ref{P:prop4},
there exists $\cR\in S$ such that $\uq\in\cR\subseteq\cT$.  So,
$\pi_1(\cR)=[q_1(u),q_1(w)]$ for some $u<w$ in $[\epsilon, w_{k-1}w_{k-3}]$.
Moreover, since 
$\cR \subseteq \cT \subseteq\layer(k-2)$, Proposition~\ref{P:prop3} 
implies that $\cR\in S_\ell$ for some $\ell\in\{1,\dots,k-3\}$.  
Let $u'$ and $w'$ be the prefixes of $w_\infty$ with
\begin{equation}
\label{proof1:eq3}
 |u'|=|u|+2F_{k-2} \et |w'|=|w|+2F_{k-2}.
\end{equation}
We claim that there exists $\cR'\in S_\ell$ such that 
\begin{equation}
\label{proof1:eq4}
 \pi_1(\cR')=[q_1(u'),q_1(w')] \et \up+\cR\subseteq \cR'.
\end{equation}
If we admit this, then $\up+\uq\in\cR'$ and, using \eqref{proof1:eq3} 
and Proposition~\ref{P:prop5}, we find
\begin{align*}
 \uP(\up+\uq)
  &=\Phi(q_1+4F_{k-2}-|u'|,q_2+2F_{k-2}+|u'|-|w'|,|w'|) \\
  &=\Phi(q_1-|u|+2F_{k-2},q_2+|u|-|w|+2F_{k-2},|w|+2F_{k-2})  = \ur+\uP(\uq),
\end{align*}
as needed.  

To prove the claim, we first note that, by Lemma~\ref{Vbar:lemma6}, we 
have $|\alpha(u)|\le |\alpha(u')|$ and $|\alpha(w)|\le |\alpha(w')|$, thus
$\up+\uq(u)\in \cA(u')$ and $\up+\uq(w) \in \cC(w')$, so
\begin{equation}
\label{proof1:eq5}
 \up+\cA(u) \subseteq \cA(u') \et \up+\cC(w) \subseteq \cC(w').
\end{equation}
If $\ell=1$, the words $u<w$ are consecutive prefixes of $w_\infty$,
and we have $\cR=\trap(u,w)$.  Then, by \eqref{proof1:eq3},  the words
$u'<w'$ are also consecutive prefixes of $w_\infty$ and we may form 
$\cR'=\trap(u',w')\in S_1$.  It satisfies \eqref{proof1:eq4} because by
\eqref{proof1:eq5}, we have
\[
 \up+\cR = (\up+\cA(u))\cap(\up+\cC(w))
   \subseteq \cA(u')\cap\cC(w') = \cR'.
\]
Otherwise, we 
have $\ell\ge 2$ and, by Lemma~\ref{P:lemma:unicity}, there exists 
$v\in\cV_\ell\setminus\cV_{\ell+1}$ such that $u<v<w$ are consecutive 
elements of $\cV_\ell$, and $\cR=\cell(u,v,w)$.   We note that
$v\notin\cF$ because the only element of $\cF$ in 
$\cV_\ell\setminus\cV_{\ell+1}$ is $w_\ell$ and $v>u\ge w_\ell$.  Since 
we also have $v<w\le w_{k-1}w_{k-3}$, we deduce from 
Lemma~\ref{Vbar:lemma6} that the prefix $v'$ of $w_\infty$ with
$|v'|=|v|+2F_{k-2}$ satisfies $|\alpha(v)| = |\alpha(v')|$, thus 
$v'\in\cV_\ell\setminus\cV_{\ell+1}$ and $\up+\uq(v)=\uq(v')$, so
\begin{equation}
\label{proof1:eq6}
 \up+\cB(v) = \cB(v').
\end{equation}
By Proposition~\ref{Vbar:prop1}, the words $u'<v'<w'$ are consecutive 
elements of 
$\cV_\ell$, so we may form $\cR'=\cell(u',v',w')\in S_\ell$.  On the basis of
\eqref{proof1:eq5} and \eqref{proof1:eq6}, we conclude as above
that \eqref{proof1:eq4} holds.

%
%

\section{Proof of Theorem~\ref{main:P:thm2}}
\label{sec:proof2}

We begin by establishing three lemmas, the first two of which concern the 
morphism of monoids $\theta\colon E^*\to E^*$ defined in 
section~\ref{main:ssec:Fib}.

\begin{lemma}
\label{proof2:lemma1}
For each $v\in\left[\epsilon,w_\infty\right[$, we have 
$\big| |\theta(v)|-\gamma|v|\big| \le 2$.
\end{lemma}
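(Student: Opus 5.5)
The plan is to reduce to counting letters. For a prefix $v$ of $w_\infty$, let $n_a(v)$ and $n_b(v)$ be the numbers of letters $a$ and $b$ in $v$. Since $\theta(a)=ab$ and $\theta(b)=a$, we get $|\theta(v)|=2n_a(v)+n_b(v)=|v|+n_a(v)$. So it suffices to show
\[
 \big|n_a(v)-(\gamma-1)|v|\big| \le 2,
\]
where $\gamma-1=1/\gamma$ is the frequency of $a$ in $w_\infty$. Equivalently, writing $n=|v|$, I would show that $D(n):=n_a(v)-n/\gamma$ satisfies $|D(n)|\le 2$; the true bound is in fact about $1$.

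\textbf{Step 1: Fibonacci prefixes.} For $v=w_i$, induction on $w_{i+2}=w_{i+1}w_i$ gives $n_a(w_i)=F_{i-1}$ and $|w_i|=F_i$. From Binet's formula \eqref{main:Binet},
\[
 D(F_i)=F_{i-1}-F_i/\gamma=\pm\gamma^{-i-1}\cdot(\text{const}),
\]
and a short computation gives $F_{i-1}-\gamma^{-1}F_i=(-1)^{i}\gamma^{-i-1}$, up to the exact indexing. Hence $|D(F_i)|\le \gamma^{-2}<1$, and more generally $|D(F_i)|\le\gamma^{-i-1}$.

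\textbf{Step 2: Zeckendorf decomposition.} Any prefix $v$ of $w_\infty$ with $F_i\le |v|<F_{i+1}$ can be written $v=w_i v'$, where $v'$ is a prefix of $w_{i-1}$ and hence of $w_\infty$, because $w_{i+1}=w_iw_{i-1}$. Iterating, $v=w_{i_1}w_{i_2}\cdots w_{i_r}$ with $i_1>i_2>\cdots$ and $i_{j+1}\le i_j-2$. Since $n_a$ is additive, $D(|v|)=\sum_j D(F_{i_j})$. Therefore
\[
 |D|\le\sum_j\gamma^{-i_j-1}\le \sum_{m\ge 0}\gamma^{-2-2m}=\frac{\gamma^{-2}}{1-\gamma^{-2}}=\gamma^{-1}<1,
\]
with the smallest index $i_r\ge 1$ (and $w_{-1},w_0$ never needed, though $F_0$-type terms can be absorbed). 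This gives the bound with room to spare against the required $2$.

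The main obstacle is the indexing bookkeeping in Step 1, namely getting the exact exponent in $F_{i-1}-\gamma^{-1}F_i$. That exponent matters only up to a constant, and the slack between $1$ and $2$ covers any off-by-one. The case $v=\epsilon$ is trivial.
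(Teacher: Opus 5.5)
Your proof is correct and takes a different route from the paper's; it also gives the sharper bound $\big||\theta(v)|-\gamma|v|\big|\le\gamma^{-1}$.

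\textbf{Your route.} The quantity $e(v):=|\theta(v)|-\gamma|v|=n_a(v)-|v|/\gamma$ is additive under concatenation. Binet's formula gives the exact value $e(w_i)=F_{i+1}-\gamma F_i=F_{i-1}-F_i/\gamma=(-1)^{i+1}\gamma^{-i-1}$. Your sign $(-1)^i$ is off, but only the absolute value matters. The greedy decomposition $v=w_{i_1}\cdots w_{i_r}$, with $i_1>\cdots>i_r\ge 1$ and gaps at least $2$, then yields $|e(v)|\le\sum_{m\ge0}\gamma^{-2-2m}=\gamma^{-1}$.

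\textbf{The paper's route.} It uses the same decomposition, but removes one Fibonacci word at a time. For $F_k<|v|<F_{k+1}$ it writes $v=w_ku$ with $u$ a proper prefix of $w_{k-1}$, so that $\theta(v)=w_{k+1}\theta(u)$, and argues by induction on $|v|$. It does not compute the error on $w_k$ exactly. It only uses the convergent estimate $|F_{k+1}-\gamma F_k|\le 1/F_k$. To absorb these errors inside the induction, it strengthens the statement to $\big||\theta(v)|-\gamma|v|\big|+\max\{1,|v|\}^{-1}\le 2$. The inductive step then closes using $|v|-|u|=F_k$ and $|u|\,|v|<F_{k-1}F_{k+1}=F_k^2\pm 1$, which gives $|u|\,|v|\le F_k^2$.

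\textbf{Comparison.} Your argument replaces that auxiliary term in the inductive hypothesis with exact geometric decay of the errors. It is more transparent and gives a better constant. The paper's argument avoids the explicit Binet computation, at the cost of a less natural inductive statement and the weaker constant $2$. That constant is all that is needed later, in Lemma~\ref{proof2:lemma2} and the proof of Theorem~\ref{main:P:thm2}.
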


\begin{proof} 
More precisely, we will show, by induction on $|v|$, that
\begin{equation}
\label{proof2:lemma1:eq1}
\big| |\theta(v)|-\gamma|v|\big| + \max\{1,|v|\}^{-1} \le 2
\end{equation}
for each $v\in\left[\epsilon,w_\infty\right[$.  If $|v|=0$, then 
$\epsilon=v=\theta(v)$, so $|\theta(v)|=0$ and 
\eqref{proof2:lemma1:eq1} holds.  If $|v|=F_k$ for some positive 
integer $k$, then $v=w_k$ and $\theta(v)=w_{k+1}$, so $|\theta(v)|=F_{k+1}$.
Since $F_{k+1}/F_k$ is a convergent of $\gamma$ in reduced form, we have
\[
 |F_{k+1}-\gamma F_k| \le 1/F_k.
\]
Thus, the left hand side of \eqref{proof2:lemma1:eq1} is at most 
$2/F_k\le 2$, and we are done.  

We may therefore assume that $F_k<|v|<F_{k+1}$ for some  
$k\ge 3$, thus $v=w_ku$ with $u\in\left]\epsilon,w_{k-1}\right[$.  
Then, we have $\theta(v)=w_{k+1}\theta(u)$, and so
\[
\big| |\theta(v)|-\gamma|v|\big| 
  \le |F_{k+1}-\gamma F_k| + \big| |\theta(u)|-\gamma |u|\big|.
\] 
By induction, we may assume that $\big| |\theta(u)|-\gamma |u|\big|
\le 2-1/|u|$.  So, we obtain 
\[
\big| |\theta(v)|-\gamma|v|\big| + 1/|v| 
\le 1/F_k+2-1/|u|+1/|v| = 1/F_k + 2 - F_k/(|u|\,|v|)
\]
because $|v|-|u|=F_k$.  Since $|u|\,|v| < F_{k-1}F_{k+1} = F_k^2\pm 1$, 
we also have $|u|\,|v| \le F_k^2$ and the desired estimate 
\eqref{proof2:lemma1:eq1} follows.
\end{proof}

\begin{lemma}
\label{proof2:lemma2}
Let $v\in\cV_4$ and let $v'=\theta(v)$.  Then, 
$\norm{\uq(v') - \gamma \uq(v)} \le 4$.
\end{lemma}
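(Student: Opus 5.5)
We have $\uq(v)=(2|v|,|v|+|\alpha(v)|)$ and $\uq(v')=(2|\theta(v)|,|\theta(v)|+|\alpha(\theta(v))|)$. The plan is to compare these coordinatewise with $\gamma\uq(v)$, using Lemma~\ref{proof2:lemma1} for the terms involving $|\theta(v)|$ and Corollary~\ref{V:prop2:cor} to control $\alpha(\theta(v))$.

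For the first coordinate, Lemma~\ref{proof2:lemma1} gives directly
\[
 |2|\theta(v)|-2\gamma|v|| \le 4 .
\]

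For the second coordinate, note first that $v\in\cV_4$. Then Corollary~\ref{V:prop2:cor} gives $\alpha(\theta(v))=\theta(\alpha(v))$. Since $\alpha(v)\in\cF$ with $\alpha(v)\ge w_4$, we have $\alpha(v)=w_k$ for some $k\ge 4$. Hence $\alpha(\theta(v))=w_{k+1}$, and so $|\alpha(v')|=F_{k+1}$ while $|\alpha(v)|=F_k$.

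Now use that $F_{k+1}/F_k$ is a convergent of $\gamma$. This is the same fact invoked in the proof of Lemma~\ref{proof2:lemma1}, and it gives
\[
 |F_{k+1}-\gamma F_k|\le 1/F_k\le 1/5 .
\]
Combining with Lemma~\ref{proof2:lemma1} applied to $v$, the second coordinate satisfies
\[
 \big| |\theta(v)|+F_{k+1}-\gamma(|v|+F_k)\big|\le 2+1/5\le 4 .
\]

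Taking the maximum of the two coordinate bounds yields $\norm{\uq(v')-\gamma\uq(v)}\le 4$. The only step needing care is identifying $\alpha(\theta(v))$. That is exactly why the hypothesis $v\in\cV_4$ is imposed: by the remark following Proposition~\ref{V:prop2}, the identity $\alpha(\theta(v))=\theta(\alpha(v))$ fails for $\cV_3$. Once that identity is in hand, everything else is routine.
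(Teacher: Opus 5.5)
Your proof is correct and matches the paper's argument: the paper also uses Corollary~\ref{V:prop2:cor} to get $\alpha(v')=\theta(\alpha(v))$, then bounds both coordinates with Lemma~\ref{proof2:lemma1}, applied to $v$ and to $\alpha(v)$. Your bound $|F_{k+1}-\gamma F_k|\le 1/F_k$ is just the Fibonacci-word case of that lemma, so your second coordinate is at most $2+1/5$ where the paper has $2+2$, and both give the final bound $4$.
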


\begin{proof} 
Since $v\in\cV_4$, Corollary~\ref{V:prop2:cor} gives $\alpha(v')=\theta(\alpha(v))$.
Thus, by Lemma~\ref{proof2:lemma1},
\[
 \norm{\uq(v') - \gamma \uq(v)}
 \le 2\max\left\{ \big||v'|-\gamma|v|\big|, 
          \big| |\alpha(v')|-\gamma|\alpha(v)|\big| \right\} \le 4.
\qedhere
\]
\end{proof}

\begin{lemma}
\label{proof2:lemma3}
Let $\uq=(q_1,q_2)\in\cA(\epsilon)$ with $q_2\le q_1/2$.  Then,   
$\norm{\uP(\uq)-(q_2,q_1/2,q_1/2)} \le 1$.
\end{lemma}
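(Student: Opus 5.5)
The plan is to show that every such $\uq$ lies in a trapeze of $S_1$. Then Proposition~\ref{P:prop5} gives an explicit formula for $\uP(\uq)$ in which all three entries are visibly within $1$ of $(q_1/2,q_2,q_1/2)$.

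\emph{Step 1: locating $\uq$.} Let $\uq=(q_1,q_2)\in\cA(\epsilon)$ with $q_2\le q_1/2$, so $q_1\ge 0$. Let $u$ be the prefix of $w_\infty$ of length $\lfloor q_1/2\rfloor$, and let $v$ be its successor in $[\epsilon,w_\infty[$, so that $|v|=|u|+1$. Then
\[
 q_1(u)=2|u|\le q_1\le 2|v|=q_1(v).
\]
We check the two upper bounds on $q_2$ defining $\trap(u,v)=\cA(u)\cap\cC(v)$:
\begin{itemize}
\item[(a)] $q_2\le q_1/2\le q_1-|u|\le q_1-|u|+|\alpha(u)|$, which uses $|u|\le q_1/2$;
\item[(b)] $q_2\le q_1/2\le |v|\le |v|+|\alpha(v)|$.
\end{itemize}
Hence $\uq\in\trap(u,v)$. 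Since $u<v$ are consecutive in $[\epsilon,w_\infty[$, this trapeze belongs to $S_1$, and by Lemma~\ref{P:lemma:T} its projection is $\pi_1(\trap(u,v))=[q_1(u),q_1(v)]$.

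\emph{Step 2: the formula for $\uP$.} Proposition~\ref{P:prop5}, applied with $\cR=\trap(u,v)$, gives
\[
 \uP(\uq)=\Phi(q_1-|u|,\,q_2-1,\,|v|),
\]
using $|v|-|u|=1$.

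\emph{Step 3: comparison.} Since $\Phi$ is $1$-Lipschitz by \eqref{tool:eq:growthPhi}, and $\Phi(q_1/2,q_2,q_1/2)=(q_2,q_1/2,q_1/2)$ because $q_2\le q_1/2$, it suffices to bound
\[
 \norm{(q_1-|u|,\,q_2-1,\,|v|)-(q_1/2,\,q_2,\,q_1/2)}.
\]
The three coordinate differences are $q_1/2-|u|\in[0,1]$, then $-1$, then $|u|+1-q_1/2\in[0,1]$, all using $|u|\le q_1/2\le |u|+1$. So the norm is at most $1$, which proves the claim.

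No step is a real obstacle. The only point needing care is choosing $u$ correctly when $q_1$ is an even integer; there $\uq$ lies on a common vertical side of two trapezes. Proposition~\ref{P:prop5} applies to any polygon of $S$ containing $\uq$, so either choice gives the same bound.
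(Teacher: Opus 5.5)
Your proof is correct and follows essentially the same route as the paper. You place $\uq$ in the trapeze $\trap(u,v)$ of $S_1$ with $2|u|\le q_1\le 2|v|$ via the same two inequalities. You then read off $\uP(\uq)=\Phi(q_1-|u|,q_2-1,|v|)$, where the paper cites Theorem~\ref{P:thm} for this formula rather than Proposition~\ref{P:prop5}. Finally, you conclude with the $1$-Lipschitz property of $\Phi$, exactly as the paper does.
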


\begin{proof}
Choose consecutive words $u<v$ in $[\epsilon,w_\infty[$ with
$q_1(u)\le q_1 \le q_1(v)$.  Then, $\uq$ belongs to $\trap(u,v)$ because 
\[
 \min\{q_2(u)+q_1-q_1(u), q_2(v)\}
  \ge \min\{q_1-|u|, |v|\} \ge q_1/2\ge q_2.
\]
Thus, by Theorem~\ref{P:thm}, we have $\uP(\uq)=\Phi(\ur)$ 
where $\ur=(q_1-|u|, q_2-1,|v|)$.   As $\Phi$ is $1$-Lipschitz, this yields
\[
 \norm{\uP(\uq)-(q_2,q_1/2,q_1/2)} 
  = \norm{\Phi(\ur)-\Phi(q_1/2,q_2,q_1/2)} 
  \le \norm{\ur-(q_1/2,q_2,q_1/2)} \le 1.
\qedhere
\]
\end{proof}

\subsection*{Proof of Theorem~\ref{main:P:thm2}}
Let $\uq=(q_1,q_2)\in\cA(\epsilon)$.  We need to show that
\[
 \norm{\uP(\gamma\uq) - \gamma\uP(\uq)} \le 40.  
\]

If $q_2\le q_1/2$, Lemma~\ref{proof2:lemma3} applies to $\uq$ 
and to $\gamma\uq$.  Thus, setting $\ur=(q_2,q_1/2,q_1/2)$, we find
\[
 \norm{\uP(\gamma\uq)-\gamma\uP(\uq)}
  \le \norm{\uP(\gamma\uq)-\gamma\ur}
     +\gamma\norm{\ur-\uP(\uq)}\\ 
  \le 1+\gamma.
\]

Suppose now that $q_2>q_1/2$ and $\uq\in\layer(4)$.   The above 
special case gives
\begin{equation}
\label{proof2:proof:eq1}
 \norm{\uP(\gamma\uq') - \gamma\uP(\uq')} \le 1+\gamma
 \quad\text{where}\quad \uq'=(q_1,q_1/2).  
\end{equation}
Since $\uq\in\layer(4)$, we have $\uq\in\trap(u,v)$ for consecutive 
$u<v$ in $\{\epsilon\}\cup\cV_4$, and then
\[
 2|u|\le q_1\le 2|v| \et q_2\le \min\{q_1-|u|+|\alpha(u)|, |v|+|\alpha(v)|\}.
\]
If $u\neq \epsilon$, Corollary~\ref{Vbar:prop2:cor1} gives $|v|-|u|\le F_3=3$,
and Corollary~\ref{Vbar:prop2:cor2} gives $\{u,v\}\not\subseteq \cV_6$, 
thus $\min\{|\alpha(u)|,|\alpha(v)|\} \le F_5=8$, and so
\[
 q_2-q_1/2 \le \max\{q_1/2-|u|, |v|-q_1/2\} +8 \le |v|-|u|+8 \le 11.
\]
If $u=\epsilon$, we have $v=w_4$, thus $q_2-q_1/2\le q_1/2\le |v| = 5$.
Hence, in all cases, we have $0\le q_2-q_1/2 \le  11$.  As $\uP$ is $1$-Lipschitz
by Proposition~\ref{properties:prop1} and Lemma~\ref{properties:lemma2}, 
we deduce that
\[
 \norm{\uP(\uq) - \uP(\uq')}\le 11
 \et
 \norm{\uP(\gamma\uq) - \uP(\gamma\uq')} \le 11\gamma.
\]
Using \eqref{proof2:proof:eq1}, we conclude that  
$\norm{\uP(\gamma\uq) - \gamma\uP(\uq)} \le 1+23\gamma\le 40$, as claimed.

Finally, suppose that $\uq\notin\layer(4)$.  By Proposition~\ref{P:prop2},
there exist an integer $\ell\ge 4$ and a polygon $\cR\in S_\ell$ such that
$\uq\in\cR$.  Let $u<v<w$ be consecutive elements of $\cV_\ell$ with 
$v\notin\cV_{\ell+1}$ such that $\cR=\cell(u,v,w)$, and set
\[
 u'=\theta(u), \quad v'=\theta(v) \et w'=\theta(w).
\]
By Proposition~\ref{V:prop2}, the words $u'<v'<w'$ are consecutive 
elements of $\cV_{\ell+1}$ with $v'\notin\cV_{\ell+2}$.  Thus, we may 
form $\cR'=\cell(u',v',w')\in S_{\ell+1}$.  Using Lemma~\ref{proof2:lemma2}, 
we find
\begin{align*}
 \gamma\cA(u) &= \gamma\uq(u)-\uq(u')+\cA(u')
     \subseteq [-4,4]^2+\cA(u'),\\
 \gamma\cB(v) &= \gamma\uq(v)-\uq(v')+\cB(v') 
     \subseteq [-4,4]^2+\cB(v'),\\
 \gamma\cC(w) &= \gamma\uq(w)-\uq(w')+\cC(w') 
     \subseteq [-4,4]^2+\cC(w').
\end{align*}
Taking term by term intersections, we deduce that
\[
  \gamma\cR \subseteq [-8,8]^2+\cR'
\]
because the sides of $\cA(u')$, $\cB(v')$ and $\cC(w')$ are horizontal,
vertical, or have slope $1$.  Thus, there exists $\uq'=(q'_1,q'_2)\in\cR'$ 
such that 
\begin{equation}
\label{proof2:proof:eq3}
  \norm{\gamma\uq-\uq'}\le 8. 
\end{equation}
As $\uP$ is $1$-Lipschitz, this implies that
\begin{equation}
\label{proof2:proof:eq4}
 \norm{\uP(\gamma\uq)-\uP(\uq')} \le 8.
\end{equation}
Since $\uq\in\cR$ and $\uq'\in\cR'$, Theorem~\ref{P:thm} gives
$\uP(\uq)=\Phi(\ur)$ and $\uP(\uq')=\Phi(\ur')$ where
\[
\ur=(q_1-|u|,q_2+|u|-|w|,|w|)
 \et
 \ur'=(q'_1-|u'|,q'_2+|u'|-|w'|,|w'|).
\]
As $\Phi$ is $1$-Lipschitz and commute with scalar multiplication, we 
deduce that
\[
 \norm{ \uP(\uq') -\gamma \uP(\uq) }  
   \le \norm{\ur'-\gamma\ur}\le 12,
\]
where the second estimate uses \eqref{proof2:proof:eq3} 
and Lemma~\ref{proof2:lemma1}.  Combining this with 
\eqref{proof2:proof:eq4}, we conclude that  
$\norm{\uP(\gamma\uq) - \gamma\uP(\uq)} \le 20$.


%
%

\section{A specific example}
\label{sec:ex}

Using the terminology of section~\ref{main:ssec:extremal}, we show 
below that the real number $\xi$ given by \eqref{main:extremal:eq:example} 
belongs to $\cE_3^+$,  by computing explicitly an associated Fibonacci 
sequence in $\SL_2(\bZ)$.  Although this is not needed for the rest of the 
paper, it provides a concrete example of a number $\xi$ to which our results
apply, independently of \cite{R2008}.  We start by recalling some general 
facts from continued fraction theory.

Let $\sigma\colon(\bN\setminus\{0\})^*\to\GL_2(\bZ)$ be the morphism of
monoids such that
\[
 \sigma(a)=\matrice{a}{1}{1}{0} \quad\text{for each $a\in\bN\setminus\{0\}$.}
\]
By \cite[Corollary~4.2]{R2008}, its image is $\cS\cup\{I\}$ where $I$ is the 
identity of $\GL_2(\bZ)$ and
\[
 \cS=\Big\{ \matrice{a}{b}{c}{d}\in\GL_2(\bZ) \,;\, 
     a\ge \max\{b,c\}\ \text{and}\ \min\{b,c\}\ge d\ge 0\Big\}.
\]
Moreover, its restriction $\sigma\colon(\bN\setminus\{0\})^*\to\cS\cup\{I\}$ 
is an isomorphism of monoids.  It follows that, for each word 
$v \in (\bN \setminus \{0\})^*$, the matrix $\sigma(v)$ is symmetric if 
and only if $v$ is a palindrome.

Let $(a_i)_{i\ge 1}$ be a sequence of positive integers and let 
$\xi\in\left]0,1\right[$ be the real number with continued fraction expansion
\[
 \xi=[0,a_1,a_2,a_3\dots]=1/(a_1+1/(a_2+\cdots)).
\]
For each integer $k\ge 1$, we have
\[
 \sigma(a_1\cdots a_k)
   =\matrice{a_1}{1}{1}{0}\cdots\matrice{a_k}{1}{1}{0}
   =\matrice{q_k}{q_{k-1}}{p_k}{p_{k-1}}
\]
where $p_k/q_k=[0,a_1,\dots,a_k]$ is the $k$-th convergent of $\xi$ 
in reduced form (and $p_0/q_0=0$), thus
\begin{equation}
\label{ex:eq1}
 \norm{(\xi,-1)\sigma(a_1\cdots a_k)}
   = |q_{k-1}\xi-p_{k-1}|
   \asymp q_k^{-1} 
   = \norm{\sigma(a_1\cdots a_k)}^{-1}
\end{equation}
with absolute implied constants (see  \cite[Chapter~I]{Sc1980}).  

\begin{proposition}
\label{ex:prop}
Let $\xi=[0,\uun,f_{\udeux,\uun}]$ where $\uun=(1,1)$ and $\udeux=(2,2)$.  Then 
$\xi$ belongs to $\cE_3^+$ with associated Fibonacci sequence $(W_i)_{i\ge 0}$
in $\SL_2(\bZ)$ starting with
\begin{equation}
\label{ex:prop:eq1}
 W_0=\sigma(\uun)=\matrice{2}{1}{1}{1}
 \et
 W_1=\sigma(\uun)\sigma(\udeux)\sigma(\uun)^{-1}
        =\matrice{7}{-2}{4}{-1}.
\end{equation}
Moreover, let $E=\{a,b\}$ be an alphabet on two letters, let $w_\infty=f_{a,b}$,
and let $\varphi\colon E^*\to\SL_2(\bZ)$ be the morphism of monoids such 
that $\varphi(a)=W_1$ and $\varphi(b)=W_0$.  Then, for each 
$v \in \left[ \epsilon, w_\infty \right[$, we have
\begin{equation}
\label{ex:prop:eq2}
 \norm{(\xi,-1)\varphi(v)} \asymp \norm{\varphi(v)}^{-1}.
 \end{equation}
\end{proposition}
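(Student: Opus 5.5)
The plan is to conjugate everything back to continued fraction matrices. Set $C=\sigma(\uun)=W_0$, and let $\psi\colon E^*\to(\bN\setminus\{0\})^*$ be the morphism with $\psi(a)=\udeux$ and $\psi(b)=\uun$. Since $W_1=C\sigma(\udeux)C^{-1}$ and $W_0=C\sigma(\uun)C^{-1}$, we get
\[
 \varphi(v)=C\,\sigma(\psi(v))\,C^{-1}\quad\text{for each $v\in E^*$.}
\]
For $v\in[\epsilon,w_\infty[$, the word $\uun\,\psi(v)$ is a prefix of the continued fraction word $\uun f_{\udeux,\uun}$ of $\xi$. This is because $\psi(w_\infty)=f_{\udeux,\uun}$, as $\psi$ maps the Fibonacci sequence $(w_i)$ to the one starting with $\udeux$ and $\udeux\uun$. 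Hence
\[
 (\xi,-1)\varphi(v)=(\xi,-1)\sigma(\uun\psi(v))\,C^{-1}.
\]
Applying \eqref{ex:eq1} and the fact that $C$ is a fixed invertible matrix, this gives $\norm{(\xi,-1)\varphi(v)}\asymp\norm{\sigma(\uun\psi(v))}^{-1}\asymp\norm{\varphi(v)}^{-1}$, which is \eqref{ex:prop:eq2}. Applied to $v=w_i$, it also gives (E3).

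Next come the structural conditions. The sequence $(W_i)$ is Fibonacci because $W_i=\varphi(w_i)$ and $\varphi$ is a morphism. It lies in $\SL_2(\bZ)$ since $\psi(v)$ always has even length and $\det\sigma(a)=-1$. It is unbounded because the $\sigma(\psi(w_i))$ have unbounded entries. For (E2), I would use the standard continuant estimate: for nonempty words $x,y$ on positive integers, $\norm{\sigma(xy)}\asymp\norm{\sigma(x)}\,\norm{\sigma(y)}$ with absolute constants. This holds because all entries are nonnegative and the $(1,1)$ entry dominates, by the description of $\cS$. Combined with $\varphi=C\sigma(\psi(\cdot))C^{-1}$, this gives $\norm{W_{i+2}}\asymp\norm{W_{i+1}}\,\norm{W_i}$.

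The main step is (E1). For $i\ge2$, Lemma~\ref{V:lemma2} and \eqref{V:eq:f} give $w_i=w_i^{**}f_i$ with $w_i^{**}$ a palindrome. Since $\udeux$ and $\uun$ are palindromes, $\psi(w_i^{**})$ is a palindrome, so $S_i:=\sigma(\psi(w_i^{**}))$ is symmetric. Then $W_i=CS_i\sigma(\psi(f_i))C^{-1}$, and $W_iM_i^{-1}=CS_i\,{}^tC$ is symmetric as soon as
\[
 M_i={}^tC^{-1}\,\sigma(\psi(f_i))\,C^{-1}.
\]
I will compute this matrix for $f_i=ab$, that is $\psi(f_i)=\udeux\uun=(2,2,1,1)$, and expect to find $\matrice{3}{1}{-1}{0}$ or a sign or transpose variant of it. For $f_i=ba$, the word $\psi(f_i)=\uun\udeux$ is the reverse of $\udeux\uun$, so $\sigma(\uun\udeux)={}^t\sigma(\udeux\uun)$ and the corresponding matrix is automatically ${}^tM$. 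Parity then matches the definition of $M_i$, possibly after adjusting the convention of which parity gets $M$. Clearly ${}^tM\neq\pm M$.

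The small indices $i=0,1$, where $w_i^{**}$ is not defined, I will check by direct multiplication using \eqref{ex:prop:eq1}. I expect the only real obstacle to be this explicit $2\times2$ computation: getting signs and the parity convention to match $M=\matrice{m}{1}{-1}{0}$ with $m=3$ exactly. If a sign mismatch appears, I would replace $C$ by $-C$ or use ${}^tC$ in the factorization, since any symmetric-preserving choice works.
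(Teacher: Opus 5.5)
Your proposal is correct and follows essentially the paper's proof. You use the same ingredients: conjugation by $C=\sigma(\uun)$, the prefix argument with \eqref{ex:eq1} for \eqref{ex:prop:eq2} and (E3), submultiplicativity on $\cS$ for (E2), and the palindromes $w_i^{**}$ for (E1). The explicit computation you were unsure of comes out exactly with no sign or parity adjustment. Since $C$ is symmetric and $\sigma(\udeux\uun)=\sigma(\udeux)C$, you get $M_i=C^{-1}\sigma(\udeux)=\matrice{3}{1}{-1}{0}$ for even $i$ and ${}^tM$ for odd $i$, which is the paper's identity $\sigma(\udeux)=\sigma(\uun)M={}^tM\sigma(\uun)$.
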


In \eqref{ex:prop:eq2} and in the proof below, all
implied constants are explicitly computable numbers.

\begin{proof}
We first note that, for each $v\in E^*$, we have
\begin{equation}
\label{ex:prop:eq3}
 \varphi(v)=\sigma(\uun)\sigma(\tau(v))\sigma(\uun)^{-1}
\end{equation}
where $\tau\colon E^*\to(\bN\setminus\{0\})^*$ is the morphism of
monoids such that $\tau(a)=\udeux$ and $\tau(b)=\uun$.  Indeed,
both sides of this equality define morphisms from $E^*$ to 
$\SL_2(\bZ)$ which, in view of \eqref{ex:prop:eq1} agree for $v=a$ 
and $v=b$.   We also note that $W_i=\varphi(w_i)$ for each $i\ge 0$,
where $(w_i)_{i\ge 0}$ is the Fibonacci sequence in $E^*$ with
$w_0=b$ and $w_1=a$. Thus, \eqref{ex:prop:eq3} yields
\begin{equation}
\label{ex:prop:eq4}
 W_i=\sigma(\uun)\sigma(\tau(w_i))\sigma(\uun)^{-1}  
 \quad (i\ge 0).
\end{equation}

For $v \in \left[ \epsilon, w_\infty \right[$, the word
$(\uun, \tau(v))$ is a prefix of $(\uun,f_{\udeux,\uun})$ and so,
using \eqref{ex:prop:eq3}, the general estimate \eqref{ex:eq1} yields
\[
 \norm{(\xi,-1)\varphi(v)} 
  \asymp \norm{(\xi,-1)\sigma(\uun, \tau(v))} 
  \asymp \norm{\sigma(\uun, \tau(v))}^{-1}
  \asymp \norm{\varphi(v)}^{-1},
\]
which proves \eqref{ex:prop:eq2}.  For the choice of $v=w_i$, this gives
$\norm{(\xi,-1)W_i} \asymp \norm{W_i}^{-1}$ for each $i\ge 0$.  Thus, 
the sequence $(W_i)_{i\ge 0}$ fulfils condition (E3) from 
section~\ref{main:ssec:extremal}.

By \eqref{ex:prop:eq4}, we have $\|W_i\| \asymp \|\sigma(\tau(w_i))\|$ for 
each $i\ge 0$.  Since $\|A\|\,\|B\| \le \|AB\| \le 2\|A\|\,\|B\|$ for any 
$A,B\in\cS$ and since $\sigma(\tau(w_i))\in\cS$ for each $i\ge 1$, we 
deduce that 
\begin{align*}
 \norm{W_{i+2}} \asymp \norm{\sigma(\tau(w_{i+2}))} 
   &= \norm{\sigma(\tau(w_{i+1}))\sigma(\tau(w_i))} \\
   &\asymp \norm{\sigma(\tau(w_{i+1}))}\norm{\sigma(\tau(w_i))}
   \asymp \norm{W_{i+1}}\norm{W_i}
\end{align*}
for each $i\ge 0$.  Thus, $(W_i)_{i\ge 0}$ is unbounded and fulfils condition (E2).

Finally, we note that 
\begin{equation}
\label{ex:prop:eq5}
 \sigma(\udeux)=\matrice{5}{2}{2}{1}=\sigma(\uun)M={}^tM\sigma(\uun)
 \quad \text{where} \quad 
 M=\matrice{3}{1}{-1}{0}.
\end{equation}
For each $i\ge 0$, let $\ux_i=W_iM_i^{-1}$ where $M_i$ is defined as in 
condition (E1) for the above choice of $M$.  For $i\ge 2$, we claim that
\begin{equation}
\label{ex:prop:eq6}
 \ux_i=\sigma(\uun, \tau(w_i^{**}), \uun). 
\end{equation}
Indeed, if $i$ is even, $w_i$ ends in $ab$.  Then,
using \eqref{ex:prop:eq4} and \eqref{ex:prop:eq5}, we find that
\[
 \ux_i=\sigma(\uun)\sigma(\tau(w_i^{**}))\sigma(\udeux)M^{-1}
     =\sigma(\uun)\sigma(\tau(w_i^{**}))\sigma(\uun).
\]
Otherwise, $i$ is odd, $w_i$ ends in $ba$, and we find similarly that
\[
 \ux_i
  =\sigma(\uun)\sigma(\tau(w_i^{**}))\sigma(\uun)
         \sigma(\udeux)\sigma(\uun)^{-1}({}^tM)^{-1}
  =\sigma(\uun)\sigma(\tau(w_i^{**}))\sigma(\uun).
\]
Moreover, $w_i^{**}$ is a palindrome in $E^*$ by Lemma~\ref{V:lemma2}.  
Since $\tau(a)=\udeux$ and $\tau(b)=\uun$
are palindromes in $(\bN\setminus\{0\})^*$, we deduce that 
$(\uun, \tau(w_i^{**}), \uun)$ is a palindrome in
$(\bN\setminus\{0\})^*$.  So, by \eqref{ex:prop:eq6}, the matrix 
$\ux_i$ is symmetric for $i\ge 2$.  A direct computation shows that 
$\ux_0$ and $\ux_1$ 
are also symmetric.  Thus, (E1) is fulfilled as well and so $\xi\in\cE_3^+$.
\end{proof}

A Markoff triple is a solution in positive integers $\um=(m,m_1,m_2)$ of the 
Markoff equation $m^2+m_1^2+m_2^2=3mm_1m_2$, up to permutation.
Theorem~3.6 of \cite{R2011} provides an explicit bijection 
$\um \mapsto \xi_\um$ from the set of all Markoff triples $\um$ with 
$\um\neq (1,1,1)$ to the set $\cE_3^+ \cup \left] 1/2, 1 \right[$.  It can be 
shown that the number $\xi$ of Proposition~\ref{ex:prop} is $\xi_\um$
for $\um=(2,1,1)$.  Thus, by \cite[Corollary~5.10]{R2011}, its Lagrange constant
is $1/3$, the largest possible value for an irrational non-quadratic real number.

%
%

\section{Preliminary estimates}
\label{sec:prelim}

It follows from \cite[Theorem~2.1]{R2008} that any Fibonacci 
sequence $(W_i)_{i\ge 1}$ in $\GL_2(\bZ)$ associated to an extremal number
$\xi$ of $\GL_2(\bZ)$-type satisfies $\norm{W_{i+1}}\asymp \norm{W_i}^\gamma$.  
The goal of this section is to prove the following result which provides a 
sharper estimate as well as additional properties of this sequence and 
of the corresponding sequence of symmetric matrices $(\ux_i)_{i\ge 1}$.

\begin{proposition}
\label{prelim:prop}
Let $\xi$ be an extremal real number of $\GL_2(\bZ)$-type.  Fix an
unbounded Fibonacci sequence $(W_i)_{i\ge 1}$ in $\GL_2(\bZ)$, a matrix 
$M\in\GL_2(\bQ)$ and a sequence of symmetric matrices $(\ux_i)_{i\ge 1}$ 
in $\GL_2(\bQ)$ satisfying conditions {\rm (E1)--(E3)} from 
section~\ref{main:ssec:extremal}.  Set
\begin{equation}
\label{prelim:prop:eq1a}
   t_i=\trace(W_i) \et  d_i=\det(W_i)
\end{equation}
for each integer $i\ge 1$.  Set also  
\begin{equation}
\label{prelim:prop:eq1b}
  J=\matrice{0}{1}{-1}{0}, \quad
 \Xi=\matrice{1}{\xi}{\xi}{\xi^2} \et
 \theta_0=\trace(\Xi M).
\end{equation}
Then we have $\theta_0\neq 0$ and, for each $i\ge 1$, 
\begin{itemize}
  \item[(i)] $\ux_{i+2}=W_{i+1}\ux_i=W_i\ux_{i+1}$,
  \medskip
  \item[(ii)] $\ux_{i+3} = t_{i+1}\ux_{i+2}-d_{i+1}\ux_{i}$,
  \medskip
  \item[(iii)] $t_{i+3}=t_{i+1}t_{i+2}-d_{i+1}t_{i}$,
  \medskip
  \item[(iv)] $\det(\ux_i,\ux_{i+1},\ux_{i+2})=\pm\det(M)^{-2}\trace(MJ)\neq 0$. 
\end{itemize}
Moreover, there exists $\rho>0$ and $i_0\ge 1$ such that, 
for each $i\ge i_0$, we have $t_i\neq 0$,
\begin{equation}
 \label{prelim:prop:eq2}
 \log|t_i|=\rho F_i+\cO(\gamma^{-i}) \et 
 W_i = \theta_0^{-1} t_i \Xi M_i + \cO(|t_i|^{-1}).
\end{equation}
\end{proposition}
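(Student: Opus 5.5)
We prove the algebraic identities (i)--(iv) first, using only (E1), symmetry and the Cayley--Hamilton theorem. The asymptotic statements then follow from a rank-one approximation of $W_i$ supplied by (E3).

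\emph{Algebraic part.} Since $(W_i)$ is a Fibonacci sequence, $M_{i+2}=M_i$ and $M_{i+1}={}^tM_i$, we get
\[
 \ux_{i+2}=W_{i+1}W_iM_i^{-1}=W_{i+1}\ux_i .
\]
Writing $W_{i+1}=\ux_{i+1}M_{i+1}=\ux_{i+1}\,{}^tM_i$ gives $\ux_{i+2}=\ux_{i+1}\,{}^tM_i\,\ux_i$. Since $\ux_{i+2}$ is symmetric, transposing gives
\[
 \ux_{i+2}=\ux_iM_i\ux_{i+1}=W_i\ux_{i+1},
\]
which is (i).

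For (ii), apply (i) twice to get $\ux_{i+3}=W_{i+1}\ux_{i+2}=W_{i+1}^2\ux_i$. Cayley--Hamilton, $W_{i+1}^2=t_{i+1}W_{i+1}-d_{i+1}I$, then yields (ii).

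For (iii), write $W_{i+3}=W_{i+1}W_iW_{i+1}$ and take traces. This gives $t_{i+3}=\trace(W_{i+1}^2W_i)$, and Cayley--Hamilton gives $t_{i+3}=t_{i+1}t_{i+2}-d_{i+1}t_i$.

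For (iv), substitute (ii) in the third column. This gives $\det(\ux_{i+1},\ux_{i+2},\ux_{i+3})=\pm\det(\ux_i,\ux_{i+1},\ux_{i+2})$, because $d_{i+1}=\pm1$. So it suffices to evaluate the determinant once. I would use $\ux_{i+2}=\ux_{i+1}{}^tM_i\ux_i$ together with $\ux_{i+1}=\ux_iM_i$-type relations, namely $\ux_{i+1}=W_{i+1}M_{i+1}^{-1}$ with $W_{i+1}=W_iW_{i-1}$. This expresses the three points through one symmetric matrix and $M$. A direct computation with the identity for $3\times3$ determinants of symmetric $2\times2$ matrices (the one used in \cite{R2003}) should give $\pm\det(M)^{-2}\trace(MJ)$. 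This quantity is nonzero because $\trace(MJ)=m_{21}-m_{12}$, up to sign, and ${}^tM\neq M$. This explicit evaluation is the step I expect to be the most computational; I would first carry it out for $i=1$ with $\ux_3=\ux_2{}^tM_1\ux_1$ and $\ux_2=W_1\ux_0$-free expressions.

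\emph{Analytic part.} By (E3), $(\xi,-1)\ux_i=(\xi,-1)W_iM_i^{-1}$ has norm $\cO(\|W_i\|^{-1})$. A symmetric matrix $\ux$ with $(\xi,-1)\ux$ small is close to a multiple of
\[
 \Xi={}^t(1,\xi)(1,\xi),
\]
so $\ux_i=\lambda_i\Xi+\cO(\|W_i\|^{-1})$ for some real $\lambda_i$. Multiplying by $M_i$, and noting that $\trace(\Xi\,{}^tM)=\trace(\Xi M)=\theta_0$, gives
\[
 W_i=\lambda_i\Xi M_i+\cO(\|W_i\|^{-1}),\qquad t_i=\lambda_i\theta_0+\cO(\|W_i\|^{-1}).
\]
Suppose $\theta_0=0$. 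Then $t_i\to0$, so $t_i=0$ for large $i$, since the $t_i$ are integers. Relation (iii) then forces nothing contradictory directly. However, $\Xi M_{i+1}\Xi M_i=\trace(\Xi M_{i+1})\,\Xi M_i$, because $\Xi$ has rank one. Hence
\[
 W_{i+2}=W_{i+1}W_i=\lambda_{i+1}\lambda_i\theta_0\,\Xi M_i+\cO(|\lambda_{i+1}|\,\|W_i\|^{-1}+|\lambda_i|\,\|W_{i+1}\|^{-1}),
\]
so $\theta_0=0$ would make $\|W_{i+2}\|$ much smaller than $\|W_{i+1}\|\|W_i\|$, contradicting (E2). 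Thus $\theta_0\neq0$. The same computation gives
\[
 \lambda_{i+2}=\theta_0\lambda_{i+1}\lambda_i\bigl(1+\cO(\|W_i\|^{-2})\bigr),
\]
with $\|W_i\|\asymp|\lambda_i|$, and therefore $t_{i+2}=\theta_0^{-1}t_{i+1}t_i\bigl(1+\cO(|t_i|^{-2})\bigr)$. In particular, $W_i=\theta_0^{-1}t_i\Xi M_i+\cO(|t_i|^{-1})$ and $t_i\neq0$ for large $i$.

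Finally, set $s_i=\log|t_i/\theta_0|$. Then $s_{i+2}=s_{i+1}+s_i+\varepsilon_i$ with $\varepsilon_i=\cO(e^{-2s_i})$. By (E2), $s_i$ grows like a positive multiple of $F_i$, so the $\varepsilon_i$ decay doubly exponentially. The usual variation-of-constants solution of a Fibonacci recurrence then gives $s_i=\rho F_i+\cO(\gamma^{-i})$ for some $\rho>0$, since the contribution of the conjugate root $-\gamma^{-1}$ is $\cO(\gamma^{-i})$. The constant $\log|\theta_0|$ is absorbed by redefining $\rho$: indeed $\log|t_i|=s_i+\log|\theta_0|$, and $u_i=\log|t_i|-\log|\theta_0|$ already satisfies the recurrence. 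So $\log|t_i|=\rho F_i+\cO(\gamma^{-i})$ requires checking that the homogeneous part of $\log|\theta_0|$ decays, which holds because $-\log|\theta_0|$ corresponds to $\log|\theta_0|(F_i-\text{Fibonacci combination})$; the careful bookkeeping is routine.
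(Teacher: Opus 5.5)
Your proofs of (i)--(iii) are correct; deriving (iii) from $t_{i+3}=\trace(W_{i+1}^2W_i)$ is a neat variant of the paper's argument. Your proof that $\theta_0\neq 0$ is a valid alternative route. The paper uses (iv): the $\ux_i$ span the symmetric matrices, so $\theta_0=0$, which forces $t_i=\trace(\ux_iM)=0$ for large $i$, would give ${}^tM=-M$. You instead combine $\Xi A\Xi=\trace(\Xi A)\Xi$ with (E2), which shows that (E2)--(E3) alone exclude $\theta_0=0$. You should add that $\norm{W_i}\to\infty$, which follows from unboundedness and (E2) as in the paper.

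The genuine problem is the last step. From $\lambda_{i+2}=\theta_0\lambda_{i+1}\lambda_i(1+\cO(\norm{W_i}^{-2}))$ and $t_k=\theta_0\lambda_k(1+\cO(\norm{W_k}^{-2}))$ one gets
\[
 t_{i+2}=(\theta_0\lambda_{i+1})(\theta_0\lambda_i)\bigl(1+\cO(\norm{W_i}^{-2})\bigr)
 =t_{i+1}t_i\bigl(1+\cO(|t_i|^{-2})\bigr),
\]
with no factor $\theta_0^{-1}$. Your formula contradicts the exact identity (iii) that you proved yourself, which gives $t_{i+3}/(t_{i+1}t_{i+2})\to 1$ (note that $\theta_0=m$ for $M=\matrice{m}{1}{-1}{0}$). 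This slip is what creates the constant $\log|\theta_0|$, and your proposed ``absorption'' cannot work. If $s_i=\rho F_i+\cO(\gamma^{-i})$ and $\log|t_i|=s_i+\log|\theta_0|$, then $\log|t_i|-\rho F_i\to\log|\theta_0|$. That constant is not $\cO(\gamma^{-i})$ unless $|\theta_0|=1$, and no change of $\rho$ removes it. The repair is to work with $\log|t_i|$ itself, most simply through (iii) as the paper does: $\bigl|\log|t_{i+3}/(t_{i+1}t_{i+2})|\bigr|\ll |t_i|/|t_{i+1}t_{i+2}|$, which is doubly exponentially small. Your variation-of-constants argument, or the paper's Cauchy argument for $\gamma^{-i}\log|t_i|$, then yields $\log|t_i|=\rho F_i+\cO(\gamma^{-i})$.

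Second, (iv) is asserted rather than proved. Your plan to express the three points through a single symmetric matrix and $M$ is not available: there is no relation giving $\ux_{i+1}$ in terms of $\ux_i$ and $M$ alone, since one has $\ux_{i+1}=\ux_iM_i\ux_{i-1}$. The missing ingredient is the identity $\ux J\ux=\det(\ux)J$, valid for every symmetric $2\times 2$ matrix $\ux$. Substituting $\ux_{i+2}=\ux_iM_i\ux_{i+1}$ into $\det(\ux_i,\ux_{i+1},\ux_{i+2})=\trace(J\ux_iJ\ux_{i+2}J\ux_{i+1})$ gives
\[
 \trace\bigl(J(\ux_iJ\ux_i)M_i(\ux_{i+1}J\ux_{i+1})\bigr)
 =-\det(\ux_i)\det(\ux_{i+1})\trace(M_iJ).
\]
Here $\det(\ux_k)=d_k/\det(M)=\pm\det(M)^{-1}$ and $\trace({}^tMJ)=-\trace(MJ)$, which gives the stated value. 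This is the paper's computation; it works for every $i$ directly, so your reduction to $i=1$ via (ii) is unnecessary.
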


The main novelty is the first estimate in \eqref{prelim:prop:eq2}.  In
formula (iv), we identify each 
\[
 \ux_k=\matrice{x_{k,0}}{x_{k,1}}{x_{k,1}}{x_{k,2}} \in\GL_2(\bQ)
\]
with the triple $\ux_k=(x_{k,0},x_{k,1},x_{k,2})\in\bQ^3$.  Then,
$\det(\ux_i,\ux_{i+1},\ux_{i+2})$ represents the determinant of the 
$3\times 3$ matrix whose rows are the triples $\ux_i$, $\ux_{i+1}$ and 
$\ux_{i+2}$.

\begin{proof}[Proof of Proposition~\ref{prelim:prop}]
For each integer $i\ge 1$, we find
\[
 \ux_{i+2}=W_{i+2}M_{i}^{-1}=W_{i+1}\ux_i=\ux_{i+1}M_{i+1}\ux_i.
\]
Taking the transpose, we deduce that 
$\ux_{i+2}=\ux_iM_i\ux_{i+1}=W_i\ux_{i+1}$.
This proves (i) and yields 
\[
 \ux_{i+3}=W_{i+1}\ux_{i+2}=W_{i+1}^2\ux_i \quad (i\ge 1).
\]
Moreover, the Cayley-Hamilton theorem gives 
$W_{i+1}^2=t_{i+1}W_{i+1}-d_{i+1}I$, where $I$ is the identity 
of $\GL_2(\bZ)$. Substituting this into the formula for
$\ux_{i+3}$, we obtain (ii).

We also note that, for each $k\ge 1$, the matrices $\ux_k M$,
and $\ux_k \trM={}^t(M\ux_k)$ have the same trace 
$t_k$.  Thus multiplying both sides of (ii) on the right by $M$ and 
taking traces yields (iii).

By Formula (2.1) in \cite{R2004}, we have
\[
 \det(\ux_i, \ux_{i+1}, \ux_{i+2}) = \trace(J\ux_i J\ux_{i+2} J\ux_{i+1}).
\]
Using $\ux_{i+2}=\ux_i M_i\ux_{i+1}$ and $\ux_k J \ux_k=\det(\ux_k)J
=\pm\det(M)^{-1}J$ for each $k\ge 1$, we deduce that
\[
 \det(\ux_i, \ux_{i+1}, \ux_{i+2}) 
 = \trace(J\ux_i J\ux_i M_i\ux_{i+1} J\ux_{i+1})
 =\pm\det(M)^{-2}\trace(M_i J),
\]
which implies (iv) upon noting that $\trace(\trM J) = -\trace(M J) \neq 0$ 
since $\trM\neq M$.

Let $c\ge 1$ be a constant for which (E2) and (E3) hold.  We have
\begin{equation}
 \label{prelim:prop:eq4}
 c^{-1}\norm{W_{i+2}} \ge (c^{-1}\norm{W_{i+1}})(c^{-1}\norm{W_i}) 
 \quad (i\ge 1).
\end{equation}
Since $(W_i)_{i\ge 1}$ is unbounded, there exists an index $k\ge 2$ such that
$\norm{W_k}\ge ec^2$.  As we have $\norm{W_{k-1}}\ge 1$, applying  
\eqref{prelim:prop:eq4} with $i=k-1$ and $i=k$ yields  
$c^{-1}\norm{W_{k+2}}\ge c^{-1}\norm{W_{k+1}}\ge e$.
Then, by induction on $i$, we obtain 
\begin{equation}
 \label{prelim:prop:eq5}
 c^{-1}\norm{W_{k+i+1}} \ge c^{-1}\norm{W_{k+i}}\ge \exp(F_{i-1})  \quad (i\ge 1).
\end{equation}

Since $\norm{(\xi,-1)W_i}\le c \norm{W_i}^{-1}$, we have 
$\norm{(\xi,-1)\ux_i}\ll \norm{W_i}^{-1}$ and so,
\begin{equation}
 \label{prelim:prop:eq6}
 \ux_i= x_{i,0}\Xi + \cO(\norm{W_i}^{-1}).
\end{equation}
As the entries of $\ux_i$ are rational numbers with a common denominator
$d\ge 1$ independent of $i$, this implies that $x_{i,0}\neq 0$ for each large 
enough $i$ and that 
\begin{equation}
 \label{prelim:prop:eq7}
 t_i = \trace(\ux_i M) = x_{i,0}\theta_0 + \cO(\norm{W_i}^{-1}).
\end{equation}
If $\theta_0=0$, this implies that $\trace(\ux_i M)=0$ 
for each large enough $i$.  However, it follows from (iv) that, for each $i\ge 1$,
the matrices $\ux_i$, $\ux_{i+1}$ and $\ux_{i+2}$ span the vector 
space of $2\times 2$ symmetric matrices $\ux$ 
with coefficients in $\bQ$.   Thus, $\trace(\ux M)=0$ for all those 
$\ux$ and so $\trM=-M$, against the hypothesis.  
We conclude that $\theta_0\neq 0$. 

Since $\theta_0\neq 0$, we deduce from  \eqref{prelim:prop:eq5}, 
\eqref{prelim:prop:eq6} and \eqref{prelim:prop:eq7} that there exists an 
integer $i_0>k$ such that, for each $i\ge i_0$, both $t_i$ and $x_{i,0}$ 
are non-zero with 
\begin{equation}
 \label{prelim:prop:eq8}
 \norm{W_i} \asymp \norm{\ux_i} \asymp |x_{i,0}| \asymp |t_i|,
\end{equation}
and the second estimate in \eqref{prelim:prop:eq2} follows.

To prove the first estimate in \eqref{prelim:prop:eq2}, we note that, 
for $i\ge i_0$, we have
\[
 |t_{i+2}| \gg |t_{i+1}| \gg |t_i| \gg \exp(F_{i-1-k}) \gg \exp(i)
\]
by \eqref{prelim:prop:eq5} and \eqref{prelim:prop:eq8}. Then, the 
recurrence relation (iii) yields
\[
 \Big| \frac{t_{i+3}}{t_{i+1}t_{i+2}}-1\Big| 
  = \Big| \frac{t_i}{t_{i+1}t_{i+2}}\Big|
  \ll \frac{1}{|t_{i+2}|} \ll \exp(-i),
\]
from which we deduce that
\begin{equation}
\label{prelim:prop:eq9}
 \left| \log \Big|\frac{t_{i+3}}{t_{i+1}t_{i+2}}\Big| \right| 
  \ll \exp(-i) \le \gamma^{-2i}.
\end{equation}
For each $i\ge i_0$, we set
\[
 \rho_i=\gamma^{-i}\log|t_i| \et \delta_{i}=\rho_{i+2}-\rho_{i+1}.
\]
With this  notation,  we find that
\[
  \gamma^{-i-3} \log \Big|\frac{t_{i+3}}{t_{i+1}t_{i+2}}\Big|
   =\rho_{i+3} - \gamma^{-1}\rho_{i+2} - \gamma^{-2}\rho_{i+1}
   =\delta_{i+1} + \gamma^{-2}\delta_{i}.
\]
Thus, for $i\ge i_0$, \eqref{prelim:prop:eq9} translates into
$\big| \delta_{i+1} + \gamma^{-2}\delta_{i} \big|  \le c_1 \gamma^{-3i-3}$
for a constant $c_1>0$ which is independent of $i$, and so
\[
 \gamma^{2(i+1)} |\delta_{i+1}| 
  \le \gamma^{2i}|\delta_{i}| + c_1 \gamma^{-i-1} \quad (i\ge i_0).
\]
Thus, there is a constant $c_2>0$ such that $\gamma^{2i} |\delta_i| \le c_2$
for each $i\ge i_0$.  Hence, $(\rho_i)_{i\ge i_0}$ is a Cauchy sequence in $\bR$ 
with
\[
 |\rho_{i}-\rho_{i+1}| = |\delta_{i-1}| \le c_2\gamma^{-2i+2} 
 \quad (i\ge i_0+1).
\]
So, it converges to a real number $\rho_\infty$ with
$|\rho_i-\rho_\infty| \ll \gamma^{-2i}$ for $i\ge i_0+1$, and then
\[
 \log |t_i| = \rho_\infty\gamma^i + \cO(\gamma^{-i}) \quad (i\ge i_0+1).
\]
By Binet's formula \eqref{main:Binet}, we also have 
$\gamma^i=\sqrt{5}\gamma^{-1}F_i+\cO(\gamma^{-i})$ for $i\ge 0$.  
Substituting this into the previous estimate yields the first part of
\eqref{prelim:prop:eq2} with $\rho=\sqrt{5}\gamma^{-1}\rho_\infty$.
\end{proof}

%
%

\section{The first two coordinates of the approximation points}
\label{sec:norm}

In this section, we provide estimates for $|x_0(v)|$ and $|x_0(v)\xi-x_1(v)|$ 
for the points $\ux(v)$ with $v\in\prefixesp$ attached to a number $\xi\in\cE_m$ 
with $m\ge 1$, as defined in section~\ref{main:ssec:points}.  We first establish 
a general result which applies to any extremal number of $\GL_2(\bZ)$-type.

\begin{proposition}
\label{norm:prop}
Let the notation be as in Proposition~\ref{prelim:prop}, let $E=\{a,b\}$ be an 
alphabet of two letters, and let $\varphi\colon E^*\to\GL_2(\bZ)$ be the 
morphism of monoids such that $\varphi(a)=W_1$ and 
$\varphi(b)=W_0:=W_1^{-1}W_2$.  For each $v\in\prefixesp$, we have 
\begin{itemize}
 \item[(i)] $\norm{\varphi(v)} \asymp \exp(\rho|v|)$,
 \medskip
 \item[(ii)] $\norm{(\xi,-1)\varphi(v)} \asymp \norm{\varphi(v)}^{-1}$.
\end{itemize}
\end{proposition}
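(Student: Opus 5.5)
Our plan is to reduce both estimates to the case $v=w_i$ by decomposing an arbitrary prefix of $w_\infty$ as a product of Fibonacci words, and to control how the norms and the quantities $(\xi,-1)\varphi(\cdot)$ behave under such products.

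\textbf{Step 1 (the Fibonacci words).} For $v=w_i$ we have $\varphi(w_i)=W_i$. Proposition~\ref{prelim:prop} gives $\norm{W_i}\asymp|t_i|$ for $i\ge i_0$, and $\log|t_i|=\rho F_i+\cO(\gamma^{-i})$. Hence
\[
\norm{\varphi(w_i)}\asymp\exp(\rho|w_i|)
\]
for all $i\ge 1$, after adjusting constants for the finitely many $i<i_0$. Condition (E3) then gives (ii) for $v=w_i$.

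\textbf{Step 2 (decomposition of a general prefix).} Any $v\in\prefixesp$ can be written greedily (Zeckendorf style) as
\[
v=w_{k_1}w_{k_2}\cdots w_{k_s},\qquad k_1>k_2>\cdots>k_s,\quad k_{j}-k_{j+1}\ge 2 .
\]
This follows by induction from $v\in\left]w_k,w_{k+1}\right]$, which gives $v=w_ku$ with $u<w_{k-1}$. We then want to show $\norm{\varphi(v)}\asymp\exp(\rho|v|)$ with constants independent of $s$. A naive submultiplicativity bound loses a factor $c^s$, so the proof must exploit near-parallelism. By \eqref{prelim:prop:eq2}, for large $k$ we have
\[
W_k=\theta_0^{-1}t_k\,\Xi M_k+\cO(|t_k|^{-1}),
\]
so each factor is close to a rank-one matrix $\theta_0^{-1}t_k\,{}^t(1,\xi)\,(1,\xi)M_k$. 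The product of two such matrices is
\[
\theta_0^{-2}t_kt_{k'}\,{}^t(1,\xi)\,\bigl[(1,\xi)M_k\,{}^t(1,\xi)\bigr]\,(1,\xi)M_{k'} ,
\]
and the middle factor equals $\trace(\Xi M_k)=\theta_0\ne0$. So the products telescope: $\varphi(v)\approx\theta_0^{-1}\bigl(\prod_j t_{k_j}\bigr)\,\Xi M_{k_s}$. The relative errors are of size $\cO(|t_{k_j}|^{-2})$, which decay doubly exponentially in $k_j$, so their product converges uniformly. Only the last few small indices need crude treatment, with bounded loss, since there are finitely many possible short tails. Together with $\sum_j\rho F_{k_j}=\rho|v|$ and the summable errors $\cO(\gamma^{-k_j})$, this gives (i).

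\textbf{Step 3 (estimate (ii)).} The same factorization handles (ii). We write
\[
(\xi,-1)\varphi(v)=\bigl[(\xi,-1)W_{k_1}\bigr]\,W_{k_2}\cdots W_{k_s}.
\]
Since $(\xi,-1)\Xi=0$, the row vector $(\xi,-1)W_{k_1}$ has size $\asymp|t_{k_1}|^{-1}$. Alternatively, it is cleaner to peel factors from the right: we need $(\xi,-1)\varphi(v)$ to be small and not to be killed by cancellation. For this we use $\det\varphi(v)=\pm1$ together with the rank-one shape $\varphi(v)\approx\lambda\,{}^t(1,\xi)\,\uy$ with $\uy=(1,\xi)M_{k_s}$. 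The identity $\norm{(\xi,-1)A}\,\norm{A}\asymp|\det A|$ holds whenever the first column direction of $A$ is within $o(1)$ of $(1,\xi)$ and $A$ is large, and it gives the bound in both directions. For bounded-size $\varphi(v)$ (short $v$), there is nothing to prove, because $\xi$ is irrational and $\varphi(v)$ is invertible, so $(\xi,-1)\varphi(v)\neq0$ and only finitely many such $v$ occur.

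\textbf{Main obstacle.} The key difficulty is uniformity in $s$ in Step 2: ensuring the error terms multiply to a bounded factor. We expect this to require the precise error $\cO(|t_k|^{-1})$ from \eqref{prelim:prop:eq2}, and careful handling of the small-index tail, where \eqref{prelim:prop:eq2} is not yet valid.
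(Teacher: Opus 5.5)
Your treatment of (i) is sound and is essentially the paper's argument. The paper normalizes by $\exp(\rho F_i)$, writing $\exp(-\rho F_i)W_i=A_i+R_i$ with $\norm{R_i}\le c_1\gamma^{-i}$ and $A_i$ in the finite set $\{\pm I,\pm\theta_0^{-1}\Xi M,\pm\theta_0^{-1}\Xi\,{}^tM\}$. That set is closed under multiplication because $\Xi M\Xi=\Xi\,{}^tM\Xi=\theta_0\Xi$. You normalize by $t_k$ instead and move the $\cO(\gamma^{-k})$ errors into $\log|t_k|$, which is equivalent bookkeeping.

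Part (ii), however, has a genuine gap. Your claim that $\norm{(\xi,-1)A}\,\norm{A}\asymp|\det A|$ whenever $A$ is large and its first column is within $o(1)$ of the direction ${}^t(1,\xi)$ is false.

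\begin{itemize}
\item Only one inequality holds, and it holds for every $A$. The matrix with rows $(1,0)A$ and $(\xi,-1)A$ has determinant $-\det A$, so $|\det A|\le 2\norm{A}\,\norm{(\xi,-1)A}$. Hence the lower bound in (ii) comes for free from $|\det\varphi(v)|=1$, and the upper bound $\norm{(\xi,-1)\varphi(v)}\ll\norm{\varphi(v)}^{-1}$ is the entire content.
\item Under your hypothesis the upper bound fails. Take $A=\begin{pmatrix}N&0\\(\xi+\epsilon)N&N^{-1}\end{pmatrix}$: it has determinant $1$ and first column within $\epsilon$ of the right direction. Yet $(\xi,-1)A=(-\epsilon N,-N^{-1})$, which is far larger than $N^{-1}$ unless $\epsilon\ll N^{-2}$.
\item Step~2 cannot supply that precision. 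Writing $\varphi(v)=\lambda(B+\mathcal{E})$ with $(\xi,-1)B=0$ and $\norm{\mathcal{E}}\le\eta\norm{B}$ only yields $\norm{(\xi,-1)\varphi(v)}\ll\eta\norm{\varphi(v)}$. Here $\eta$ is governed by the smallest large index: of order $|t_{k_s}|^{-2}$ at best, and a fixed small constant in general. It is not of order $\norm{\varphi(v)}^{-2}$.
\item Your left-peeling variant breaks down for the same reason. The row $(\xi,-1)W_{k_1}$ has size $\asymp|t_{k_1}|^{-1}$. For its product with $W_{k_2}$ to be $\ll|t_{k_1}t_{k_2}|^{-1}$, you would need $(\xi,-1)W_{k_1}\,{}^t(1,\xi)\ll|t_{k_1}|^{-1}|t_{k_2}|^{-2}$. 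But \eqref{prelim:prop:eq2} only gives $\cO(|t_{k_1}|^{-1})$ for this quantity.
\end{itemize}

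The missing input is arithmetic, not a refinement of the rank-one approximation. The paper does not prove (ii) from Proposition~\ref{prelim:prop}. It takes it from \cite[Theorem~2.3]{R2008}, using $|\det\varphi(v)|=1$, and this ultimately reflects the continued fraction structure of $\xi$. Proposition~\ref{ex:prop} shows the mechanism in a concrete case. There, $\varphi(v)$ is conjugate by a fixed matrix to $\sigma$ of a prefix of the continued fraction of $\xi$. For such convergent matrices, $\norm{(\xi,-1)\sigma(a_1\cdots a_n)}=|q_{n-1}\xi-p_{n-1}|\asymp q_n^{-1}$, which is exactly the two-sided estimate you need.
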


\begin{proof}
Part (ii) follows from \cite[Theorem~2.3]{R2008} since $|\det\varphi(v)|=1$
for each $v$.  It can also be deduced, with additional work, from the proof
of \cite[Proposition~4.3]{R2008}.  An independent argument is provided by 
Proposition~\ref{ex:prop} for the number $\xi$ given by 
\eqref{main:extremal:eq:example}.

To prove part (i), we proceed as in the proof of \cite[Lemma~5.2]{R2008}.
Using \eqref{prelim:prop:eq2} in Proposition~\ref{prelim:prop}, we first note 
that, for each $i\ge 1$, we have
\begin{equation}
\label{norm:prop:eq1}
 \exp(-\rho|w_i|)\varphi(w_i) = \exp(-\rho F_i)W_i = A_i +R_i
\end{equation}
where $\norm{R_i} \le c_1\gamma^{-i}$ for a constant $c_1\ge 1$, and where
$A_i=\pm\theta_0^{-1}\Xi M_i$ belongs to 
\[
 \cA =\big\{ \pm I, \pm\theta_0^{-1}\Xi M, \pm\theta_0^{-1}\Xi \trM\big\}.
\]
Since $\Xi M\Xi = \Xi\trM\Xi=\theta_0\Xi$, the set $\cA$ is stable under 
multiplication.  Choose $c_2\ge 1$ such that $c_2^{-1}\le \norm{A}\le c_2$
for each $A\in\cA$, and choose an integer $\ell$ such that
$\gamma^{\ell-1}\ge 16c_1c_2^3$.

Any $v\in\left]w_\ell,w_\infty\right[$ can be written as a product
\[
 v = w_{i_1} \cdots w_{i_s} u
\]
for a decreasing sequence of integers $i_1 > \cdots > i_s$ with $i_s>\ell$, 
and some $u\in\left[\epsilon,w_\ell\right]$. Set $w=w_{i_1}\cdots w_{i_s}$.  
Then, using \eqref{norm:prop:eq1}, we obtain
\[
 \exp(-\rho|w|)\varphi(w)=(A_{i_1}+R_{i_1})\cdots(A_{i_s}+R_{i_s}) =  A+R
\]
where $A=A_{i_1}\cdots A_{i_s}\in\cA$ and where $R$ is a sum, indexed 
by the non-empty subsequences $(j_1,\dots,j_t)$ of $(i_1,\dots,i_s)$, of products 
of the form $B_1R_{j_1}\cdots B_tR_{j_t}B_{t+1}$ with $B_1,\dots,B_{t+1}\in\cA$.
As the norm of such a product is at most $c_3^t\norm{R_{j_1}}\cdots\norm{R_{j_t}}$
with $c_3=4c_2^2$, we find 
\[
 \norm{R} \le \prod_{k=1}^s (1+c_3\norm{R_{i_k}})-1
   \le \exp\Big( c_3\sum_{i=\ell+1}^{\infty} \norm{R_i}\Big) -1
   \le \exp\big(c_1c_3\gamma^{-\ell+1}\big) -1.
\]
Since $c_1c_3\gamma^{-\ell+1}\le (4c_2)^{-1} \le 1/2$, this gives $\norm{R}\le (2c_2)^{-1} \le \norm{A}/2$, thus
\[
 \norm{\exp(-\rho|v|)\varphi(v)}
 \asymp \norm{\exp(-\rho|w|)\varphi(w)} = \norm{A+R} \asymp \norm{A} \asymp 1,
\]
and so $\norm{\varphi(v)} \asymp \exp(\rho|v|)$.  This last estimate also holds if
$v\in \left]\epsilon, w_\ell\right]$.
\end{proof}

\begin{corollary}
\label{norm:cor}
Let $\xi\in\cE_m$ for some integer $m\ge 1$.  There exists $\rho>0$ such that, 
for each $v\in\left]\epsilon,w_\infty\right[$, the point $\ux(v)$ defined in section~\ref{main:ssec:points} satisfies
\[
  \max\{1,|x_0(v)|\} \asymp \exp(\rho |v|) 
  \et
  |x_0(v)\xi-x_1(v)| \asymp \exp(-\rho|v|).
\]
\end{corollary}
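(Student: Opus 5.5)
By definition, $(x_0(v),x_1(v))$ is the first column of $\varphi(v)M(v)^{-1}$, where $M(v)\in\{M,{}^tM\}$ and $M=\matrice{m}{1}{-1}{0}$. Since $\det M=1$, we have $M^{-1}=\matrice{0}{-1}{1}{m}$ and $({}^tM)^{-1}=\matrice{0}{1}{-1}{m}$. Hence the first column of $M(v)^{-1}$ is ${}^t(0,\pm1)$. Writing $\varphi(v)=\matrice{\alpha}{\beta}{\gamma'}{\delta}$, the first column of $\varphi(v)M(v)^{-1}$ is therefore $\pm(\beta,\delta)$, the second column of $\varphi(v)$ up to sign. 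So
\[
 (x_0(v),\,x_0(v)\xi-x_1(v))=\pm(\beta,\,\beta\xi-\delta).
\]
Everything thus reduces to estimating the second column of $\varphi(v)$. We take $\rho$ to be the constant of Proposition~\ref{prelim:prop}, and use the facts that $\xi\in\cE_m$ is extremal of $\GL_2(\bZ)$-type and that $\varphi$ is the morphism of Proposition~\ref{norm:prop}.

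\emph{Second coordinate.} Proposition~\ref{norm:prop}(ii) gives $\norm{(\xi,-1)\varphi(v)}=\max\{|\alpha\xi-\gamma'|,|\beta\xi-\delta|\}\asymp\exp(-\rho|v|)$, using part (i). This yields the upper bound $|\beta\xi-\delta|\ll\exp(-\rho|v|)$. For the lower bound, $\xi$ is badly approximable, so $|\beta\xi-\delta|\gg\max\{1,|\beta|\}^{-1}$ whenever $(\beta,\delta)\neq0$, which holds since $\varphi(v)$ is invertible and $\xi\notin\bQ$. Thus both estimates follow once we show $\max\{1,|\beta|\}\ll\exp(\rho|v|)$ together with the matching lower bound.

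\emph{First coordinate.} The upper bound $|\beta|\le\norm{\varphi(v)}\ll\exp(\rho|v|)$ is immediate from Proposition~\ref{norm:prop}(i). For the lower bound, we use $\det\varphi(v)=\pm1$. Combined with smallness of both entries of $(\xi,-1)\varphi(v)$, this gives
\[
 \pm1=\alpha\delta-\beta\gamma'=\alpha(\delta-\beta\xi)-\beta(\gamma'-\alpha\xi).
\]
The first term has absolute value $\ll|\alpha|\exp(-\rho|v|)\ll 1$ and the second is $\ll|\beta|\exp(-\rho|v|)$, so this alone does not force $|\beta|$ to be large. A more direct route is needed. The rows of $\varphi(v)$ are nearly proportional to $(1,\xi)$ in the sense that $\gamma'=\alpha\xi+O(e^{-\rho|v|})$ and $\delta=\beta\xi+O(e^{-\rho|v|})$. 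Therefore $\norm{\varphi(v)}\asymp\max\{|\alpha|,|\beta|\}$, and it remains to exclude $|\beta|$ being much smaller than $|\alpha|$.

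Suppose $|\beta|\le\epsilon\exp(\rho|v|)$ for small $\epsilon$. Then badly approximable gives $|\beta\xi-\delta|\gg\epsilon^{-1}\exp(-\rho|v|)$ when $\beta\ne0$, contradicting the upper bound from (ii) for $\epsilon$ small. If instead $\beta=0$, then $|\delta|=|\beta\xi-\delta|\ll\exp(-\rho|v|)$, so $\delta=0$ for $|v|$ large, contradicting invertibility. Hence $|\beta|\gg\exp(\rho|v|)$ for all $|v|$ beyond some bound.

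\emph{Small words.} The finitely many short words $v$ are absorbed into the constants. This is where the $\max\{1,\cdot\}$ in the statement is needed, since $x_0(v)$ may vanish for short $v$, but $\beta\xi-\delta\ne0$ always.

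The main obstacle is this lower bound for $|x_0(v)|$. The key point is that badly approximable forces the entry of size $e^{-\rho|v|}$ to come with a coefficient $\gg e^{\rho|v|}$, and this is the idea I would use.
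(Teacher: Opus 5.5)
Your argument is correct and is essentially the paper's proof: both upper bounds come from Proposition~\ref{norm:prop}, and both lower bounds come from $\xi$ being badly approximable. The paper bounds $\norm{\varphi(v)M(v)^{-1}}$ directly instead of identifying $(x_0(v),x_1(v))$ as $\pm$ the second column of $\varphi(v)$. It also replaces your case analysis with the single inequality $\max\{1,|x_0(v)|\}\,|x_0(v)\xi-x_1(v)|\gg 1$, which together with the two upper bounds yields both lower bounds at once, so the determinant detour, the separate case $\beta=0$ and the short-word discussion are not needed.
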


\begin{proof}
Let $v\in\left]\epsilon,w_\infty\right[$.  Since $\ux(v)$ has the same first 
column as $\varphi(v)M(v)^{-1} \in \GL_2(\bZ)$, we have
$(x_0(v),x_1(v))\neq (0,0)$ and the proposition yields
\begin{align*}
 &\max\{1,|x_0(v)|\}\le \norm{\varphi(v)M(v)^{-1}} 
                \asymp\exp(\rho|v|), \\
 &|x_0(v)\xi-x_1(v)|\le \norm{(\xi,-1)\varphi(v)M(v)^{-1}}
                \asymp\exp(-\rho|v|).
\end{align*}
As $\xi$ is badly approximable, we also have 
$\max\{1,|x_0(v)|\} |x_0(v)\xi-x_1(v)|\gg 1$,
and the conclusion follows.
\end{proof}

We take this opportunity to fill a small gap in the proof of 
\cite[Proposition~4.3]{R2008}, itself a crucial step 
towards \cite[Theorem~2.2]{R2008}.  The argument there involves an 
unspecified real number $c_i$ whose absolute value is tacitly 
assumed to be bounded away from $0$ when $i$ is large enough.
To show that this is indeed the case, one notes that, in the notation 
of the proof, we have $c_i=c^2y_{i,2}$ where $c\neq 0$ is independent
of $i$ and defined by the condition $(1/\xi,1){}^tU_k^{-1}=c(r,1)$.  
Since $y_{i,2}$ is a non-zero integer for each large enough $i$,
we conclude that $|c_i|\ge c^2$ for all those $i$.

%
%

\section{Recurrence relations}
\label{sec:rec}

Let the notation and hypotheses be as in section~\ref{main:ssec:points}.  
In particular, $\xi$ is a fixed number in $\cE_m$ for some integer $m\ge 1$,
and both Proposition~\ref{prelim:prop} and Corollary~\ref{norm:cor} apply.
In this section, we complement Corollary~\ref{norm:cor} by estimating 
$\norm{(\xi,-1)\ux(v)}$ from above for each $v\in\cV_\ell$ with $\ell\ge 4$ 
large enough.  We also prove Theorem~ \ref{main:points:thm}\,(ii). 

We say that two symmetric matrices $A$ and $B$ in $\Mat_{2\times 2}(\bZ)$
are \emph{equivalent}, and we write $A\equiv B$, if they have the same first
column.  With this notation, we recall that, for each non-empty word
$v\in E^*$, the matrix $\ux(v)$ is the unique 
symmetric matrix in $\Mat_{2\times 2}(\bZ)$ which satisfies 
\begin{equation}
\label{rec:eq1}
 \ux(v)\equiv \varphi(v)M(v)^{-1}
 \et
 |x_1(v)\xi-x_2(v)| <1/2.
\end{equation}
To estimate $\norm{(\xi,-1)\ux(v)}$, we will first construct recursively, for 
 each $v\in\cV_4$, a symmetric matrix $\tux(v)$ in $\Mat_{2\times 2}(\bZ)$ 
 with $\tux(v)\equiv\ux(v)$.  Then, we will  estimate $\norm{(\xi,-1)\tux(v)}$ 
 from above and show that, for each  $v\in\cV_\ell$ with $\ell\ge 4$ 
 large enough, this norm is less than $1/2$, thus $\tux(v)=\ux(v)$.   
 As a consequence, for each triple of consecutive elements 
 $u<v<w$ of $\cV_\ell$ with $\ell$ large enough, we will derive an 
 explicit relation of linear dependence between $\ux(u)$, $\ux(v)$ and 
 $\ux(w)$ when $v\in\cV_{\ell+1}$, while we will show that the determinant 
 of these three points is $\pm 2$ when $v\notin\cV_{\ell+1}$, thereby 
 proving Theorem~ \ref{main:points:thm}\,(ii).
 
 We start with three simple lemmas.

\begin{lemma}
\label{rec:lemma:ab}
We have $\ux(vw_i)=\ux(v\tw_i)$ for each $v \in E^*$ and each $i\ge 2$.
\end{lemma}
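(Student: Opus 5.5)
The plan is to compare first columns, since $\ux(v)$ is determined by its first column together with the normalization $|x_1(v)\xi-x_2(v)|<1/2$, which is the same condition for both words. So it suffices to show that $\varphi(vw_i)M(vw_i)^{-1}$ and $\varphi(v\tw_i)M(v\tw_i)^{-1}$ have the same first column. Since $\varphi$ is a morphism, both matrices have the left factor $\varphi(v)\varphi(w_i^{**})$ in common, and it is enough to compare $\varphi(f_i)M(w_i)^{-1}$ with $\varphi(f_{i+1})M(\tw_i)^{-1}$, or more simply $\varphi(ab)(\trM)^{-1}$ with $\varphi(ba)M^{-1}$.

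First check the endings. The word $ab$ ends in $b$, so $M(\cdot)=M$ for a word ending in $ab$. The word $ba$ ends in $a$, so $M(\cdot)=\trM$ for a word ending in $ba$. Up to swapping the roles of $w_i$ and $\tw_i$, the claim therefore reduces to
\[
 W_1W_0M^{-1}\equiv W_0W_1(\trM)^{-1}
\]
in first columns, where $W_1W_0$ stands for $\varphi(ab)$ and $W_0W_1$ for $\varphi(ba)$. Multiplying on the left by the invertible matrix $\varphi(v)\varphi(w_i^{**})$ preserves equality of first columns, because the first column of a product $AB$ is $A$ times the first column of $B$.

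Next, compute these two matrices from (E1). We have $W_1W_0=W_2$, so $W_1W_0M^{-1}=W_2M_2^{-1}=\ux_2$, which is symmetric. For the other, write $W_0=W_1^{-1}W_2$ and use $W_1=\ux_1\trM$ and $W_2=\ux_2M$. This gives
\[
 W_0W_1(\trM)^{-1}=W_1^{-1}W_2\ux_1=(\trM)^{-1}\ux_1^{-1}\ux_2M\ux_1 .
\]
Proposition~\ref{prelim:prop}(i), taken at index $0$ (or derived directly by transposing the identity $\ux_2=\ux_1 M\,{}^t\ldots$), shows that this equals $\ux_2$ up to something acting only on the second column. More directly, transpose $W_1W_0M^{-1}=\ux_2$ to get $(\trM)^{-1}\,{}^tW_0\,{}^tW_1=\ux_2$.

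Now use the specific form $M=\matrice{m}{1}{-1}{0}$. One has $M^{-1}=\matrice{0}{-1}{1}{m}$ and $(\trM)^{-1}=\matrice{0}{1}{-1}{m}$, so $M^{-1}-(\trM)^{-1}$ vanishes in its first column except for a sign. Also $\det W_i=\pm1$, so $W^{-1}$ relates to $J\,{}^tWJ^{-1}$. The main obstacle is this last identity: showing that $\varphi(ab)M^{-1}$ and $\varphi(ba)(\trM)^{-1}$ differ only in the second column. I would first try writing $\varphi(ba)=W_1^{-1}\varphi(ab)W_1$, so that $\varphi(ba)(\trM)^{-1}=W_1^{-1}\ux_2M\ux_1$, and then use the symmetric relation $\ux J\ux=\det(\ux)J$ from the proof of Proposition~\ref{prelim:prop}(iv) to reduce to $\ux_2$ plus a matrix with zero first column. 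I would then verify this with the explicit $M$ above.
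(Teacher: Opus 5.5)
Your reduction is sound. Since $\ux(\cdot)$ is determined by its first column, and $\varphi(vw_i)$ and $\varphi(v\tw_i)$ share the left factor $\varphi(v)\varphi(w_i^{**})$, it suffices to show that $W_1W_0M^{-1}$ and $W_0W_1({}^tM)^{-1}$ have the same first column. (Your first paragraph pairs $\varphi(ab)$ with $({}^tM)^{-1}$, which is backwards, but you fix this in the next one.) You also correctly reach
\[
W_1W_0M^{-1}=\ux_2 \qquad\text{and}\qquad W_0W_1({}^tM)^{-1}=W_0\ux_1=W_1^{-1}W_2\ux_1 .
\]
But you stop at exactly the step that carries the lemma. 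You assert without proof that the second expression equals $\ux_2$ ``up to something acting only on the second column''. You then call this the main obstacle and leave it to a tentative computation with $\ux J\ux=\det(\ux)J$ and the explicit $M$. As written, the proposal does not prove the statement.

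The missing step is immediate and needs neither $J$ nor the special form of $M$. Apply Proposition~\ref{prelim:prop}\,(i) with $i=1$. The index $0$ you mention is outside its stated range, even though the identity you want is its formal $i=0$ case. This gives $\ux_3=W_2\ux_1=W_1\ux_2$, hence $W_0\ux_1=W_1^{-1}W_2\ux_1=\ux_2$.

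You can also see this directly from (E1): $\ux_3=W_2W_1({}^tM)^{-1}=\ux_2M\ux_1$. Transposing this symmetric matrix gives $\ux_3=\ux_1\,{}^tM\,\ux_2=W_1\ux_2$.

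Consequently $\varphi(uab)M^{-1}=\varphi(uba)({}^tM)^{-1}=\varphi(u)\ux_2$ as full matrices, for every $u\in E^*$. Taking $u=vw_i^{**}$ finishes the proof, and this is exactly the paper's argument.

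The sign relation $M^{-1}\equiv-({}^tM)^{-1}$ that you bring in is what the paper uses later, in Lemma~\ref{rec:lemma:uv}, not here. The equality you need is exact, not merely modulo second columns, and it holds for any $M$ satisfying (E1).
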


\begin{proof}
Let $u\in E^*$, and let $U=\varphi(u)$.  By definition, we have
\[
 \ux(uab)\equiv UW_1W_0M^{-1}=UW_2M_2^{-1}=U\ux_2.
\]
Proposition~\ref{prelim:prop}\,(i) also gives $W_1\ux_2=W_2\ux_1$,
thus $\ux_2=W_0\ux_1$ and so we find that
\[
 \ux(uba)\equiv  UW_0W_1\trM^{-1}=UW_0\ux_1=U\ux_2.
\]
This shows that $\ux(uab)\equiv\ux(uba)$ and so $\ux(uab)=\ux(uba)$.
Applying this to $u=vw_i^{**}$ for an integer $i\ge 2$ and a word
$v\in E^*$, this yields $\ux(vw_i)=\ux(v\tw_i)$.
\end{proof}

\begin{lemma}
\label{rec:lemma:trace}
For each integer $i\ge 2$, the matrices $\varphi(\tw_i)$ and $\varphi(w_i)$ have 
the same characteristic polynomial.
\end{lemma}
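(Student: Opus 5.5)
The plan is to reduce to $i=2$ and then check a trace identity. Two $2\times 2$ matrices have the same characteristic polynomial exactly when they have the same trace and the same determinant. The determinants agree trivially. Indeed $\varphi$ is a morphism of monoids, and $\tw_i$ and $w_i$ contain the same letters with the same multiplicities (one is obtained from the other by swapping its last two letters). Hence $\det\varphi(\tw_i)=\det\varphi(w_i)$ is a product of the same factors $\det W_1$ and $\det W_0$.

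For the traces, write $\tw_i=w_i^{**}f_{i+1}$ and $w_i=w_i^{**}f_i$, as in \eqref{V:eq:f} and \eqref{V:eq:tw}. Here $\{f_i,f_{i+1}\}=\{ab,ba\}$. Setting $U=\varphi(w_i^{**})$, I need
\[
 \trace(UW_1W_0)=\trace(UW_0W_1).
\]
This identity does not reduce to cyclicity alone, since $U$ sits in front. So the plan is to show that $W_1W_0$ and $W_0W_1$ are related in a way compatible with left multiplication by $U$, using the symmetric structure provided by (E1).

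The main step uses two facts from the proof of Lemma~\ref{rec:lemma:ab}. First, $W_1W_0=W_2=\ux_2M$. Second, $W_0W_1=W_0\ux_1\trM=\ux_2\trM$, because $\ux_2=W_0\ux_1$. It therefore suffices to prove
\[
 \trace(U\ux_2M)=\trace(U\ux_2\trM).
\]
The right side equals $\trace({}^t(U\ux_2\trM))=\trace(M\ux_2{}^tU)=\trace(\ux_2{}^tUM)$. So the identity holds as soon as ${}^tU$ can be replaced by $U$ inside this trace, for example if $U\ux_2$ is symmetric up to the correct placement.

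I expect the obstacle to be exactly this symmetry, and I would prove it using the palindromic property of $w_i^{**}$ from Lemma~\ref{V:lemma2}. The idea is to show that $\varphi(p)\ux_k$ is symmetric whenever $p$ is a palindrome, for an appropriate $k$. This comes from the relations $W_1=\ux_1M_1$-type and $W_0=\ux_2M\ux_1^{-1}$-type, i.e.\ from Proposition~\ref{prelim:prop}(i): $\ux_{j+2}=W_{j+1}\ux_j=W_j\ux_{j+1}$.

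An alternative route avoids symmetry bookkeeping. Since $w_i^{**}$ is a palindrome, $w_i^{**}ab$ and $w_i^{**}ba$ are the reverses of $baw_i^{**}$ and $abw_i^{**}$, whose traces under $\varphi$ coincide with those of $abw_i^{**}$ and $baw_i^{**}$ by cyclicity. This would give
\[
 \trace\varphi(w_i)=\trace\varphi(\text{reverse of }\tw_i).
\]
It then suffices to know that reversal preserves $\trace\varphi$. That follows if there is a fixed matrix $N$ with $N\,{}^tW_jN^{-1}=W_j$ for $j=0,1$, since $\trace({}^tA)=\trace A$. By (E1), $W_1=\ux_1\trM$ and $W_0=\ux_2M\ux_1^{-1}$, so ${}^tW_1=M\ux_1$. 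I would look for $N$ built from $\ux_1$ and $M$, verifying the relation on the two generators by direct computation with $\ux_2=W_0\ux_1$. This generator check is the step I expect to require care.
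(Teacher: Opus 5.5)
Your reduction is correct, and the symmetry step you flag as the obstacle closes in one line, without any palindrome argument. If $i$ is even, then $W_i=\varphi(w_i^{**}ab)=U\ux_2M$, while (E1) gives $W_i=\ux_iM$. If $i$ is odd, then $W_i=\varphi(w_i^{**}ba)=U\ux_2\trM$, while $W_i=\ux_i\trM$. In both cases $U\ux_2=\ux_i$, which is symmetric. So your two traces are $\trace(\ux_iM)$ and $\trace(\ux_i\trM)$, and these are equal by transposition; the paper makes the same observation for $t_k$ in the proof of Proposition~\ref{prelim:prop}.

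Your palindrome idea also works, with $k=2$ for every palindrome. Since $W_1\ux_2=\ux_3$ and $W_0\ux_2=\ux_2\ux_1^{-1}\ux_2$ are symmetric, one gets ${}^t(\varphi(p)\ux_2)=\varphi(\bar p)\ux_2$ for any word $p$ with reverse $\bar p$. This also shows that $N=\ux_2$ is the matrix your alternative route needs. Note that your formula $W_0=\ux_2M\ux_1^{-1}$ is wrong: from $\ux_2=W_0\ux_1$, which you use yourself, one gets $W_0=\ux_2\ux_1^{-1}$. Doing the generator check with the incorrect formula would fail.

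The paper's proof takes a different and much shorter route, and it is exactly the cyclicity you set aside. For $i\ge 3$, Lemma~\ref{V:lemma2} gives $w_i=w_{i-2}\tw_{i-1}$. Swapping the last two letters yields $\tw_i=w_{i-2}w_{i-1}$, a cyclic rotation of $w_i=w_{i-1}w_{i-2}$; likewise $\tw_2=ba$ is a rotation of $w_2=ab$. Hence $\varphi(\tw_i)=W_{i-2}\varphi(w_i)W_{i-2}^{-1}$, with $W_0$ when $i=2$. The trace identity $\trace(UW_1W_0)=\trace(UW_0W_1)$ indeed does not follow from cyclicity for arbitrary $U$. That obstruction, however, only comes from the factorization $w_i^{**}f_i$; the words themselves are cyclic conjugates.

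The paper's argument is purely combinatorial. It gives conjugacy, not just equal characteristic polynomials, for any morphism from $E^*$ to a group, and it uses neither (E1) nor the $2\times 2$ size. Your route instead depends on the arithmetic structure (E1) and Proposition~\ref{prelim:prop}(i). It yields only equal traces and determinants, which suffices here only because the matrices are $2\times 2$.
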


\begin{proof}
For $i\ge 3$, Lemma~\ref{V:lemma2} gives $w_i=w_{i-2}\tw_{i-1}$ which
implies that $\tw_i=w_{i-2}w_{i-1}$.  Since we also have $w_i=w_{i-1}w_{i-2}$,
we deduce that 
\[
 \varphi(\tw_i) = W_{i-2}W_{i-1}  = W_{i-2}\varphi(w_i)W_{i-2}^{-1}.
\]
The last formulas remain true for $i=2$.  Thus, $\varphi(\tw_i)$ and 
$\varphi(w_i)$ are conjugate matrices, and so their characteristic 
polynomials are the same.
\end{proof}

The third lemma below relies on the specific form of the matrix $M$, given by 
\eqref{main:extremal:eq:M}.

\begin{lemma}
\label{rec:lemma:uv}
For any non-empty words $u,v\in E^*$ we have 
\[
\varphi(u)M(v)^{-1}
  \equiv 
  \begin{cases}
   \ux(u) &\text{if $M(u)=M(v)$,}\\
   -\ux(u) &\text{otherwise.}
  \end{cases}
\]
\end{lemma}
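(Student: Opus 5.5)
We need to show: for nonempty u,v, φ(u)M(v)^{-1} has first column equal to ±(first column of φ(u)M(u)^{-1}), with sign + if M(u)=M(v) and − otherwise. By definition, \(\ux(u)\equiv\varphi(u)M(u)^{-1}\). If \(M(u)=M(v)\) there is nothing to prove. Otherwise \(\{M(u),M(v)\}=\{M,{}^tM\}\), and we have \(\varphi(u)M(v)^{-1}=\varphi(u)M(u)^{-1}\cdot M(u)M(v)^{-1}\). Since the first column of a product \(AB\) is \(A\) times the first column of \(B\), the plan is to compute \(M(u)M(v)^{-1}\) and show its first column is \(-\ue\) where \(\ue=(1,0)\) is the first standard basis column. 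That would give that the first column of \(\varphi(u)M(v)^{-1}\) is minus the first column of \(\varphi(u)M(u)^{-1}\), hence \(\varphi(u)M(v)^{-1}\equiv-\ux(u)\), using that \(-\ux(u)\) is symmetric.

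For the computation, with \(M=\matrice{m}{1}{-1}{0}\) we have \(M^{-1}=\matrice{0}{-1}{1}{m}\) and \({}^tM=\matrice{m}{-1}{1}{0}\), \(({}^tM)^{-1}=\matrice{0}{1}{-1}{m}\). Then
\[
 M\,({}^tM)^{-1}=\matrice{-1}{m+m}{0}{1}\quad\text{(first column } (-1,0)),
\]
and
\[
 {}^tM\,M^{-1}=\matrice{-1}{-2m}{0}{-1}\cdot(\text{to be checked}),
\]
whose first column should likewise be \((-1,0)\). I will verify both products carefully entrywise; the only thing needed is that their first columns equal \(-\ue\) (the other entries are irrelevant). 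Explicitly \({}^tM M^{-1}\) has first column \({}^tM\,(0,1)^t=(-1,0)^t\) and \(M({}^tM)^{-1}\) has first column \(M\,(0,-1)^t=(-1,0)^t\), which is all that is required.

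There is no real obstacle; the only care point is the convention that \(\equiv\) compares first columns of symmetric integer matrices, so one must note that \(\varphi(u)M(v)^{-1}\) need not itself be symmetric, and the statement is read as "has the same first column as \(\pm\ux(u)\)", exactly as in \eqref{rec:eq1}. This is where the specific shape \eqref{main:extremal:eq:M} of \(M\) (its second column being \(\pm\) a basis vector swapped by transposition) is used.
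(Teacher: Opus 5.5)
Your argument is correct and is essentially the paper's: the paper notes that $M^{-1}=\matrice{0}{-1}{1}{m}$ and $({}^tM)^{-1}$ have opposite first columns, so $M(v)^{-1}\equiv -M(u)^{-1}$, and then multiplies on the left by $\varphi(u)$. This is the same fact as your observation that $M(u)M(v)^{-1}$ has first column $(-1,0)$. The only blemish is cosmetic: the lower-right entry of your displayed $M\,({}^tM)^{-1}$ should be $-1$, since that product equals $\matrice{-1}{2m}{0}{-1}$, and this does not affect the first column you actually use.
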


\begin{proof}
This follows from the definition if $M(u)=M(v)$.  Otherwise, we have
$M(v)={}^tM(u)$.  Since 
\[
 M^{-1} = \matrice{0}{-1}{1}{m} \equiv \matrice{0}{-1}{1}{-m}=-{}^tM^{-1},
\]
we deduce that $M(v)^{-1} \equiv -M(u)^{-1}$, thus $\varphi(u)M(v)^{-1}\equiv
-\varphi(u)M(u)^{-1} \equiv -\ux(u)$.
\end{proof}

The next result is the key to our analysis.

\begin{proposition}
\label{rec:prop1}
Let $\ell\ge 4$ be an integer and let $u<v<w$ be consecutive words 
in $\cV_\ell$ with $v\in\cV_{\ell+1}$.  Then, we have $|w|-|v|=|v|-|u|=F_i$
for some $i\in\{\ell-2,\ell-1\}$, and 
\begin{equation}
\label{rec:prop1:eq}
 \ux(w) \equiv
  \begin{cases}
  t_i\ux(v)-d_i\ux(u) &\text{if $M(u)=M(v)$,}\\
  t_i\ux(v)+d_i\ux(u) &\text{otherwise,}
  \end{cases}
\end{equation}
where $t_i=\trace(W_i)$ and $d_i=\det(W_i)$.
\end{proposition}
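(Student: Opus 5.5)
The plan is to reduce everything to the Cayley--Hamilton relation for the matrix $\varphi(s)$ attached to the common suffix $s$ separating $u$, $v$ and $w$.

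First I would pin down the combinatorics. Since $u<v<w$ are consecutive in $\cV_\ell$ with $\ell\ge 4$ and $v\in\cV_{\ell+1}$, Proposition~\ref{Vbar:prop3} (applied to the lengths, via \eqref{Vbar:eq:bar}) gives $|v|-|u|=|w|-|v|$. Corollary~\ref{Vbar:prop2:cor1} then shows that this common value is $F_i$ for some $i\in\{\ell-2,\ell-1\}$, which is the first assertion. By Proposition~\ref{V:prop1}, we can write $v=us$ and $w=vs'$ with $s,s'\in\cS_\ell$. Since $|s|=|s'|=F_i$, we get $s,s'\in\{w_i,\tw_i\}$. Note that $i\ge 2$ because $\ell\ge 4$.

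Next I would reduce to the case $s'=s$. The words $w=us s'$ and $w'':=uss$ differ at most by permuting their last two letters. Indeed, $w_i$ and $\tw_i$ share the prefix $w_i^{**}$, and Lemma~\ref{rec:lemma:ab} applied with the word $us$ in place of $v$ gives $\ux(w)=\ux(uss)$. So we may compute $\ux(uss)$ instead; the word $uss$ need not be a prefix of $w_\infty$, but $\ux(\cdot)$ is defined for all non-empty words. In this case $uss$ ends with the same letter as $v=us$, so $M(uss)=M(v)$.

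Now comes the algebra. By Lemma~\ref{rec:lemma:trace}, $\varphi(s)$ has the same characteristic polynomial as $W_i$, namely $X^2-t_iX+d_i$. Cayley--Hamilton therefore gives $\varphi(s)^2=t_i\varphi(s)-d_iI$, and hence
\[
 \varphi(uss)=t_i\varphi(v)-d_i\varphi(u).
\]
Multiplying on the right by $M(uss)^{-1}=M(v)^{-1}$ and taking first columns, we get $\ux(w)=\ux(uss)\equiv t_i\varphi(v)M(v)^{-1}-d_i\varphi(u)M(v)^{-1}$. The first term is equivalent to $t_i\ux(v)$ by definition. For the second term, Lemma~\ref{rec:lemma:uv} gives $\varphi(u)M(v)^{-1}\equiv\pm\ux(u)$, with the sign $+$ exactly when $M(u)=M(v)$. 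Since $\equiv$ is compatible with integer linear combinations (it only compares first columns), this yields \eqref{rec:prop1:eq}.

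The main point needing care is the reduction step. One must check that Lemma~\ref{rec:lemma:ab} legitimately replaces $s'$ by $s$: both have length $F_i$ with $i\ge 2$ and the same prefix $w_i^{**}$. One must also check that the sign bookkeeping uses $M(v)$ (the matrix attached to $uss$) rather than $M(w)$. Everything else is routine.
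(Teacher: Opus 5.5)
Your proof is correct and follows the paper's argument essentially step for step. You take the combinatorics from Proposition~\ref{Vbar:prop3}, Corollary~\ref{Vbar:prop2:cor1} and Proposition~\ref{V:prop1}, use Lemma~\ref{rec:lemma:ab} to get $\ux(w)=\ux(uss)$ with $M(uss)=M(v)$, and then finish with Cayley--Hamilton via Lemma~\ref{rec:lemma:trace} and the sign bookkeeping of Lemma~\ref{rec:lemma:uv}.
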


\begin{proof}
The fact that $|w|-|v|=|v|-|u|=F_i$ for some $i\in\{\ell-2,\ell-1\}$ follows 
from Corollary~\ref{Vbar:prop2:cor1} and Proposition~\ref{Vbar:prop3}.
According to Proposition~\ref{V:prop1}, this implies that 
$v=us$ and $w=vs'$ for some $s, s'\in\{w_i,\tw_i\}$.  
By Lemma~\ref{rec:lemma:ab}, we have $\ux(vs)=\ux(vs')$, thus
\[
 \ux(w)=\ux(vs)=\ux(us^2).
\]
As $M(us^2)=M(us)=M(v)$, we deduce that
\[
 \ux(w)\equiv \varphi(us^2)M(v)^{-1} = \varphi(u) \varphi(s)^2 M(v)^{-1}.
\]
Since $s\in\{w_i,\tw_i\}$ and $i\ge 2$, Lemma~\ref{rec:lemma:trace} shows that
$\varphi(s)$ has the same characteristic polynomial as $\varphi(w_i)=W_i$,
and so the Cayley-Hamilton theorem gives
\[
 \varphi(s)^2 =t_i\varphi(s)-d_i I
\]
where $I$ denotes the $2\times 2$ identity matrix.  Altogether, this yields
\[
 \ux(w)\equiv \varphi(u)\big( t_i \varphi(s) - d_i I \big)M(v)^{-1}
      = t_i\varphi(v) M(v)^{-1} -d_i\varphi(u)M(v)^{-1},
\]
and \eqref{rec:prop1:eq} follows using Lemma~\ref{rec:lemma:uv}.
\end{proof}

We will prove below that the congruence \eqref{rec:prop1:eq} is 
in fact an equality when $\ell$ is large enough.  To show this, we 
first construct ``algebraic'' points $\tux(w)$ for each $w\in\cV_4$.

\begin{corollary}
\label{rec:prop1:cor}
The following recurrence process constructs,  for each $w\in\cV_4$, 
a symmetric matrix $\tux(w)$ in $\Mat_{2\times 2}(\bZ)$ with 
$\tux(w)\equiv\ux(w)$.

{\rm (i)} If $w\in\cF$, then $w=w_i$ for some $i\ge 4$, and we set
$ \tux(w)=\ux_i$.

{\rm (ii)}  If $w\notin\cF$, then $\alpha(w)=w_\ell$ for some integer $\ell\ge 4$,
and $w$ is at least the third element of $\cV_\ell$.  Thus, we can find 
$u,v\in\cV_\ell$ such that $u<v<w$ are consecutive elements of $\cV_\ell$.  
Since $w\notin\cV_{\ell+1}$, Corollary~\ref{Vbar:prop2:cor2} implies 
that $v\in\cV_{\ell+1}$ and so, by Proposition~\ref{rec:prop1}, we have
$|w|-|v|=|v|-|u|=F_i$ for some $i\in\{\ell-2,\ell-1\}$.  Then, we define
\begin{equation}
\label{rec:prop1:cor:eq}
 \tux(w)
  =\begin{cases}
  t_i\tux(v)-d_i\tux(u) &\text{if $M(u)=M(v)$,}\\
  t_i\tux(v)+d_i\tux(u) &\text{otherwise.}
  \end{cases}
\end{equation}
\end{corollary}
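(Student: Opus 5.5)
The plan is to justify the recursive definition and then prove $\tux(w)\equiv\ux(w)$ by strong induction on $|w|$. Only $w\in\cV_4$ is needed.

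\emph{Well-definedness.} In case (i), $w\in\cF\cap\cV_4$ means $w=w_i$ with $i\ge 4$. Then $\ux_i=W_iM_i^{-1}$ is a symmetric integer matrix by (E1), since $M^{-1}$ has integer entries. It has the same first column as $\varphi(w_i)M(w_i)^{-1}$, because $\varphi(w_i)=W_i$ and $M(w_i)=M_i$. Hence $\tux(w_i)=\ux_i\equiv\ux(w_i)$.

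In case (ii), $\alpha(w)=w_\ell$ with $\ell\ge4$. By Lemma~\ref{V:lemma1}, the first two elements of $\cV_\ell$ are $w_\ell$ and $w_{\ell+1}$, which lie in $\cF$. Since $w\notin\cF$, it is at least the third element, so its two predecessors $u<v$ in $\cV_\ell$ exist. Because $w\notin\cV_{\ell+1}$ and $v<w$ are consecutive in $\cV_\ell$, Corollary~\ref{Vbar:prop2:cor2} gives $v\in\cV_{\ell+1}$. Proposition~\ref{rec:prop1} then applies to $u<v<w$ and supplies the index $i$, with $t_i,d_i$ integers. Also $u,v\in\cV_\ell\subseteq\cV_4$ and $|u|,|v|<|w|$, so $\tux(u)$ and $\tux(v)$ are already defined. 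Therefore the recursion terminates, and $\tux(w)$ is a symmetric integer matrix, being an integral combination of such matrices.

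\emph{The equivalence.} The relation $\equiv$ (same first column) is compatible with integer linear combinations. By the induction hypothesis, $\tux(u)\equiv\ux(u)$ and $\tux(v)\equiv\ux(v)$. Hence the right-hand side of \eqref{rec:prop1:cor:eq} is $\equiv$ to the right-hand side of \eqref{rec:prop1:eq}. By Proposition~\ref{rec:prop1}, that is $\equiv\ux(w)$, with the same sign convention according to whether $M(u)=M(v)$. So $\tux(w)\equiv\ux(w)$.

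There is no real obstacle here. The only point needing care is that the sign convention in \eqref{rec:prop1:cor:eq} matches the one in Proposition~\ref{rec:prop1}, which it does by construction.
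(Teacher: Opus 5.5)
Your proof is correct and follows the same route as the paper. Case (i) comes directly from (E1) and the definition of $\ux(w_i)$, and case (ii) is an induction on the length of $w$, using that $\equiv$ is compatible with integral linear combinations together with Proposition~\ref{rec:prop1}. Your extra well-definedness checks (the predecessors $u<v$ exist by Lemma~\ref{V:lemma1}, $v\in\cV_{\ell+1}$ by Corollary~\ref{Vbar:prop2:cor2}, and $u,v\in\cV_4$ have smaller length) only make explicit what the paper builds into the statement itself.
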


\begin{proof}
In case (i), this is because $\ux_i=W_iM_i^{-1} = \varphi(w_i)M(w_i)^{-1} 
\in \GL_2(\bZ)$ is symmetric and so $\ux_i\equiv \ux(w_i)$, for each 
$i\ge 1$.  In case (ii), we may assume, by induction on the length, that
$\tux(u)\equiv\ux(u)$ and $\tux(v)\equiv\ux(v)$ are symmetric $2\times2$ 
integral matrices.  As $\tux(w)$ is an integral linear combination of these, it is 
also a symmetric $2\times2$ integral matrix,  and we have $\tux(w)\equiv\ux(w)$
by Proposition~\ref{rec:prop1}.
\end{proof}

Thus, by construction, \eqref{rec:prop1:cor:eq} holds for each triple 
of consecutive elements $u<v<w$ of $\cV_\ell$ with $\ell\ge 4$ 
and $w\notin\cV_{\ell+1}$.   This contrasts with the congruence
\eqref{rec:prop1:eq} which holds for the larger set of triples 
of consecutive elements $u<v<w$ of $\cV_\ell$ with $\ell\ge 4$ 
and $v\in\cV_{\ell+1}$.   

By definition, we have $\tux(w_\ell)=\ux_\ell$ for  each $\ell\ge 4$.
Below, we compute $\tux(w)$ for the next simplest families 
of prefixes $w$ of $w_\infty$.
 
\begin{lemma}
\label{rec:lemma:yz}
For each integer $\ell\ge 4$, we have
\begin{align*}
 &\tux(w_{\ell+1}w_{\ell-1}) = \uy_\ell := t_{\ell-1}\ux_{\ell+1}+d_{\ell-1}\ux_\ell,\\ 
 &\tux(w_{\ell+2}w_{\ell-2}) = \uz_\ell := t_{\ell-2}\ux_{\ell+2}+d_{\ell-2}\uy_\ell.
\end{align*}
\end{lemma}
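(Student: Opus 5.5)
We apply the recursive definition of Corollary~\ref{rec:prop1:cor}(ii) directly, using Lemma~\ref{V:lemma1} to find the relevant consecutive triples in $\cV_\ell$. By that lemma, the first seven elements of $\cV_\ell$ are
\[
 w_\ell < w_{\ell+1} < w_{\ell+1}w_{\ell-1} < w_{\ell+2} < w_{\ell+2}w_{\ell-2}
  < w_{\ell+2}w_{\ell} < w_{\ell+3},
\]
and by Lemma~\ref{Vbar:lemma4} their lengths are $F_\ell$, $F_{\ell+1}$, $F_{\ell+1}+F_{\ell-1}$, $F_{\ell+2}$, and so on. From the same list we read off the value of $\alpha$ on each element. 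First, $\alpha(w_{\ell+1}w_{\ell-1})=w_\ell$: the number $F_{\ell+1}+F_{\ell-1}$ lies in $\cVbar_\ell$, and its predecessor and successor $F_{\ell+1}$ and $F_{\ell+2}$ in $\cVbar_\ell$ are at distances $F_{\ell-1}$ and $F_{\ell-2}$, which are unequal. Hence Proposition~\ref{Vbar:prop3} shows it is not in $\cVbar_{\ell+1}$. In particular $w:=w_{\ell+1}w_{\ell-1}\notin\cF$ and $\alpha(w)=w_\ell$ with $\ell\ge4$.

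For the first formula, the two predecessors of $w$ in $\cV_\ell$ are $u=w_\ell$ and $v=w_{\ell+1}$, with $|w|-|v|=|v|-|u|=F_{\ell-1}$. So $i=\ell-1$ in \eqref{rec:prop1:cor:eq}. Since $u$ and $v$ lie in $\cF$, we have $\tux(u)=\ux_\ell$ and $\tux(v)=\ux_{\ell+1}$. It remains to fix the sign, which is where the end letters enter. Consecutive Fibonacci words $w_\ell$ and $w_{\ell+1}$ end in different letters, because $w_i$ ends in $a$ for $i$ odd and in $b$ for $i$ even, by \eqref{V:eq:f}. Hence $M(u)\neq M(v)$, the ``otherwise'' branch applies, and we get $\tux(w)=t_{\ell-1}\ux_{\ell+1}+d_{\ell-1}\ux_\ell=\uy_\ell$.

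For the second formula, take $w=w_{\ell+2}w_{\ell-2}$. Its two predecessors in $\cV_\ell$ are $u=w_{\ell+1}w_{\ell-1}$ and $v=w_{\ell+2}$, with gaps $F_{\ell+2}-F_{\ell+1}-F_{\ell-1}=F_{\ell-2}$ and $F_{\ell-2}$. So $i=\ell-2$. We check $w\notin\cV_{\ell+1}$ in the same way as before: its neighbours are at distances $F_{\ell-2}$ and $F_\ell-F_{\ell-2}=F_{\ell-1}$, which are unequal, so Proposition~\ref{Vbar:prop3} gives $\alpha(w)=w_\ell$. We already know $\tux(u)=\uy_\ell$ and $\tux(v)=\ux_{\ell+2}$. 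For the sign, $u$ ends with the last letter of $w_{\ell-1}$, while $v$ ends with the last letter of $w_{\ell+2}$. Since $\ell-1$ and $\ell+2$ have opposite parity, these letters differ, so $M(u)\neq M(v)$ and we get $\tux(w)=t_{\ell-2}\ux_{\ell+2}+d_{\ell-2}\uy_\ell=\uz_\ell$.

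The only delicate point is identifying correctly the consecutive neighbours together with the $\alpha$ values, and this is entirely supplied by Lemmas~\ref{V:lemma1} and~\ref{Vbar:lemma4}. The sign bookkeeping through $M(\cdot)$ is then just a parity check on the last letters.
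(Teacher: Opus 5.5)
Your proof is correct and follows the paper's argument. Like the paper, you identify the consecutive triples $w_\ell<w_{\ell+1}<w_{\ell+1}w_{\ell-1}$ and $w_{\ell+1}w_{\ell-1}<w_{\ell+2}<w_{\ell+2}w_{\ell-2}$ in $\cV_\ell$ via Lemma~\ref{V:lemma1}, check that the top word is not in $\cV_{\ell+1}$, and apply the recursion of Corollary~\ref{rec:prop1:cor}(ii) in the ``otherwise'' branch because $M(u)\neq M(v)$. The only difference is that you justify these two facts explicitly: the first through Proposition~\ref{Vbar:prop3}, and the second through the parity of last letters, which implicitly uses ${}^tM\neq M$. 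The paper simply asserts both.
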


\begin{proof}
Fix a choice of $\ell\ge 4$.  By Lemma~\ref{V:lemma1}, the words 
$w_\ell<w_{\ell+1}<w_{\ell+1}w_{\ell-1}$ are the first three elements 
of $\cV_\ell$.  Since $w_{\ell+1}w_{\ell-1} \notin \cV_{\ell+1}$, they satisfy the hypotheses of Corollary~\ref{rec:prop1:cor}~(ii)
with $i=\ell-1$.  As $M(w_\ell)=M_\ell\neq M_{\ell+1}=M(w_{\ell+1})$, 
we deduce that
\[
\tux(w_{\ell+1}w_{\ell-1})
  =t_{\ell-1}\tux(w_{\ell+1})+d_{\ell-1}\tux(w_{\ell})
  =t_{\ell-1}\ux_{\ell+1}+d_{\ell-1}\ux_\ell
\]
which is denoted $\uy_\ell$.   Lemma~\ref{V:lemma1} also shows that 
$w_{\ell+1}w_{\ell-1}<w_{\ell+2}<w_{\ell+2}w_{\ell-2}$ are consecutive elements 
of $\cV_\ell$.  Since $w_{\ell+2}w_{\ell-2}\notin\cV_{\ell+1}$, they satisfy 
the hypothesis of Corollary~\ref{rec:prop1:cor}~(ii) with $i=\ell-2$.  
As $M(w_{\ell+1}w_{\ell-1})=M_{\ell-1}\neq M_{\ell+2}=M(w_{\ell+2})$, 
we deduce that
\[
\tux(w_{\ell+2}w_{\ell-2})
  =t_{\ell-2}\tux(w_{\ell+2})+d_{\ell-2}\tux(w_{\ell+1}w_{\ell-1})
  =t_{\ell-2}\ux_{\ell+2}+d_{\ell-2}\uy_\ell
\]
which is denoted $\uz_\ell$.
\end{proof}

We can now proceed to our main estimate.  

\begin{proposition}
\label{rec:prop2}
Let $\rho$ be as in Proposition~\ref{prelim:prop}.  Then, there is 
a constant $c>0$ such that, for any $v\in\cV_4$, we have
 \[
  \delta(v):=\norm{(\xi,-1)\tux(v)}\exp(\rho|\alpha(v)|) \le c.
\]
\end{proposition}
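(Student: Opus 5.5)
We prove the bound by strong induction on $|v|$, following the recursive construction of $\tux$ in Corollary~\ref{rec:prop1:cor}. The natural quantity to propagate is $\norm{(\xi,-1)\tux(v)}$, and the claim is that it is $\ll \exp(-\rho|\alpha(v)|)$.

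\emph{Base cases.} For $v=w_i$ with $i\ge 4$, we have $\tux(v)=\ux_i=W_iM_i^{-1}$. By (E3) and \eqref{prelim:prop:eq8}, $\norm{(\xi,-1)\ux_i}\ll\norm{W_i}^{-1}\asymp|t_i|^{-1}$. By \eqref{prelim:prop:eq2}, $|t_i|^{-1}\asymp\exp(-\rho F_i)$, and since $\alpha(w_i)=w_i$ this gives $\delta(w_i)=\cO(1)$.

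\emph{Inductive step.} Let $w\notin\cF$ with $\alpha(w)=w_\ell$, and let $u<v<w$ be consecutive in $\cV_\ell$ as in Corollary~\ref{rec:prop1:cor}(ii). There $v\in\cV_{\ell+1}$, $|w|-|v|=|v|-|u|=F_i$ with $i\in\{\ell-2,\ell-1\}$, and $\tux(w)=t_i\tux(v)\mp d_i\tux(u)$. Setting $e(x)=(\xi,-1)\tux(x)$, we get
\[
 e(w)=t_i\,e(v)\mp d_i\,e(u).
\]
A naive triangle inequality fails here, since $|t_i|\asymp\exp(\rho F_i)$ is large. We must therefore exploit cancellation. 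The key algebraic input is the second estimate of \eqref{prelim:prop:eq2}: $W_i=\theta_0^{-1}t_i\Xi M_i+\cO(|t_i|^{-1})$, and $(\xi,-1)\Xi=0$. Writing $\tux(v)\equiv\varphi(v)M(v)^{-1}$ with $v=us$ and $s\in\{w_i,\tw_i\}$ (Proposition~\ref{V:prop1}), I would express $e(v)$ and $e(w)$ through $(\xi,-1)\varphi(u)\varphi(s)^k$. Since $(\xi,-1)\varphi(u)$ has size $\asymp\exp(-\rho|u|)$ by Proposition~\ref{norm:prop}(ii), the components along the eigenvectors of $\varphi(s)$ separate. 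The expanding eigenvalue $\approx t_i$ has eigen-row $(\xi,-1)$-orthogonal direction and contributes nothing to $e$, so only the contracting eigenvalue $\approx d_i/t_i$ survives. This makes $e(w)$ behave like a second-order linear recurrence whose dominant root is killed.

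The obstacle is that $\tux$ only agrees with $\varphi(\cdot)M(\cdot)^{-1}$ in the first column, so the above applies only to the first column. The second column, i.e.\ $x_1\xi-x_2$, follows the recurrence exactly but has no a priori small size. To handle this, I would track the whole row vector $e(x)\in\bR^2$ through the recurrence. Its first coordinate is controlled by Proposition~\ref{norm:prop} as $\ll\exp(-\rho|x|)$.

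For the second coordinate, I would use the induction itself, with the three cases of Proposition~\ref{Vbar:prop5} and the order relations between $\alpha(u)$, $\alpha(v)$, $\alpha(w)$. The hypothesis gives $|e(u)|\ll\exp(-\rho|\alpha(u)|)$ and $|e(v)|\ll\exp(-\rho|\alpha(v)|)$, where $|\alpha(v)|\ge F_{\ell+1}$ and $|\alpha(u)|\ge F_\ell$. The coefficient $t_i$ has size $\exp(\rho F_i)$ with $F_i\le F_{\ell-1}$. This yields
\[
 |e(w)|\ll\exp\big(\rho F_{\ell-1}-\rho F_{\ell+1}\big)+\exp(-\rho F_\ell)\ll\exp(-\rho F_\ell).
\]

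The genuine difficulty is the constant: each step reuses $c$ times a factor, and the bound must not grow with depth. I would handle this by proving the sharper inequality
\[
 \delta(w)\le c\big(1-\exp(-\rho F_{\ell-2})\big)^{-1}\cdots
\]
Equivalently, I would show $\delta(w)\le \max\{\delta(u),\delta(v)\}\cdot\exp(-\rho(F_{\ell+1}-F_{\ell-1}-F_\ell))+\cO(\exp(-\rho F_{\ell-2}))$. The first exponent is $0$, so the precise constant loss comes from the error terms in $\log|t_i|=\rho F_i+\cO(\gamma^{-i})$. These errors are summable over the chain of ancestors, since each ancestor step moves to smaller $\ell$ at most finitely often per level. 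Verifying this summability along the recursion tree is the step I expect to be hardest.
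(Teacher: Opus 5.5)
\paragraph{Verdict: there is a genuine gap.} The substance of this proposition is that the constant stays uniform, and your proposal leaves exactly that open ("the step I expect to be hardest"). The cancellation and eigenvector discussion is not needed. The triangle inequality you eventually write already has the right order, because $v\in\cV_{\ell+1}$ gives $|t_i|\exp(-\rho|\alpha(v)|)\le(1+\cO(\gamma^{-\ell}))\exp(-\rho F_\ell)$; the paper uses no cancellation.

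\paragraph{Why your one-step inequality does not close.} Normalizing your recurrence gives
\[
 \delta(w)\le |t_i|\,e^{\rho(F_\ell-|\alpha(v)|)}\,\delta(v)+e^{\rho(F_\ell-|\alpha(u)|)}\,\delta(u).
\]
\begin{itemize}
\item If $u\in\cV_{\ell+1}$, then $i=\ell-1$ and this is $(1+\cO(\gamma^{-\ell}))\delta(v)+e^{-\rho F_{\ell-1}}\delta(u)$, which is fine.
\item If $u\notin\cV_{\ell+1}$, the coefficient of $\delta(u)$ is exactly $1$, with $u$ at the same level. Knowing only $v\in\cV_{\ell+1}$, the coefficient of $\delta(v)$ can also be of size $1$, so the bounds add and double at each step.
\end{itemize}
Your inequality $\delta(w)\le\max\{\delta(u),\delta(v)\}+\cO(e^{-\rho F_{\ell-2}})$ is therefore unjustified. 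It is also circular as written, since the error term must carry a factor $\delta(\cdot)$.

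What rescues the second case is that $v\in\cV_{\ell+2}$. The predecessor of $u$ and the successor of $w$ in $\cV_\ell$ lie in $\cV_{\ell+1}$ (Corollary~\ref{Vbar:prop2:cor2}). So by Proposition~\ref{Vbar:prop4}, $v$ has neighbours at distance $F_\ell$ on both sides in $\cV_{\ell+1}$, and Proposition~\ref{Vbar:prop3} gives $v\in\cV_{\ell+2}$. Hence the coefficient of $\delta(v)$ is $\ll e^{-\rho F_\ell}$.

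This fails at the start of $\cV_\ell$, for $(u,v,w)=(w_\ell,w_{\ell+1},w_{\ell+1}w_{\ell-1})$. There one only gets $\delta(w)\le(1+\cO(\gamma^{-\ell}))\delta(w_{\ell+1})+\delta(w_\ell)$. Such initial elements must be bounded directly, as the paper does in Step~1 using Lemma~\ref{rec:lemma:yz}.

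\paragraph{The missing combinatorial input.} Even with a correct step $\delta(w)\le(1+C\gamma^{-\ell})\max\{\delta(u),\delta(v)\}$, iterating it converges only if an ancestor chain stays at a given level boundedly many times. Note that levels along an ancestor chain are non-decreasing, not decreasing. You assert this bound without justification, yet it is the heart of the argument. In a maximal run $v_1<u_1<\cdots<v_s<u_s$ of consecutive elements of $\cV_\ell$ with $v_2,\dots,v_s\notin\cV_{\ell+1}$, the lengths $|u_1|<\cdots<|u_s|$ are consecutive in $\cVbar_{\ell+1}$ with common difference $F_\ell$ (Proposition~\ref{Vbar:prop4}). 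Hence $s\le 5$ by Corollary~\ref{Vbar:prop2:cor1}.

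The paper packages all this as a downward induction on the level. With $d_k(\ell)=\max\{\delta(v)\,;\,v\in\cV_\ell,\ v\le w_k\}$, it proves
\[
\max\{c_3,d_k(\ell)\}\le(1+c_4\gamma^{-\ell})\max\{c_3,d_k(\ell+1)\},
\]
and concludes from $\prod_\ell(1+c_4\gamma^{-\ell})<\infty$.
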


\begin{proof}
For any pair of integers $k$, $\ell$ with $4\le \ell\le k$, we set 
\[
 d_k(\ell)=\max\{ \delta(v)\,;\, v\in\cV_\ell\ \text{and}\ v\le w_k\}.
\]
We need to show that the non-decreasing sequence $\big(d_k(4)\big)_{k\ge 4}$ 
is bounded from above.   We proceed in two steps.

\textbf{Step 1.} By estimates \eqref{prelim:prop:eq2} in Proposition~\ref{prelim:prop}, there exist $c_1, c_2>0$ such that
\begin{align*}
 &|t_\ell| \le (1+c_1\gamma^{-\ell})\exp(\rho F_\ell), \\
 &\delta(w_\ell) = \norm{(\xi,-1)\ux_\ell} \exp(\rho F_\ell) \le c_2,
\end{align*}
for each integer $\ell\ge 1$, because 
$\tux(w_\ell) = W_\ell M_\ell^{-1} =\ux_\ell$ and $(\xi,-1)\Xi=0$.  
Then, for any $\ell\ge 4$, Lemma~\ref{rec:lemma:yz} yields
\begin{align*}
  \delta(w_{\ell+1}w_{\ell-1}) 
    &= \norm{(\xi,-1)\uy_\ell} \exp(\rho F_\ell) \\
    &\le (1+c_1\gamma^{-\ell+1})\delta(w_{\ell+1})+\delta(w_\ell)
       \le (2+c_1)c_2,\\
  \delta(w_{\ell+2}w_{\ell-2}) 
    &= \norm{(\xi,-1)\uz_\ell} \exp(\rho F_\ell) \\
    &\le (1+c_1\gamma^{-\ell+2})\delta(w_{\ell+2})+\delta(w_{\ell+1}w_{\ell-1}) 
       \le (3+2c_1)c_2.
\end{align*}  
By Lemma~\ref{V:lemma1}, this implies that, for any $\ell\ge 4$, we have    
\begin{equation}
\label{rec:prop2:eq1} 
  \delta(v) \le c_3 \quad 
  \text{for each}\ v\in\cV_\ell\cap\left[\epsilon,w_{\ell+3}\right]
\end{equation}
where $c_3=(3+2c_1)c_2$, and so $d_{\ell+3}(\ell)\le c_3$.

\textbf{Step 2.} We claim that there is a constant $c_4>0$ such that 
\begin{equation}
\label{rec:prop2:eq2} 
  \delta(v) \le (1+c_4\gamma^{-\ell})d_k(\ell+1)
\end{equation}  
for each $v \in \cV_\ell \cap \left]w_{\ell+3} , w_k\right]$ with $4 \le \ell\le k-4$.

If we take this for granted, then, in view of \eqref{rec:prop2:eq1}, we obtain
\[
 \max\{c_3, d_k(\ell)\} \le (1+c_4\gamma^{-\ell})\max\{c_3, d_k(\ell+1)\}
\]
for each $k\ge 8$ and each $\ell=4,\dots,k-4$.  As $d_k(k-3)\le c_3$, this gives,
as needed
\[
d_k(4) \le c:=c_3\prod_{i=4}^\infty (1+c_4\gamma^{-\ell}) < \infty.
\]

To prove the claim, we may assume that $v\notin\cV_{\ell+1}$ because 
otherwise $\delta(v)\le d_k(\ell+1)$ and \eqref{rec:prop2:eq2} is automatic.
Then, we have $v\in\cV_\ell\setminus\cV_{\ell+1}$ and, since $v>w_{\ell+3}$, 
there is a maximal sequence of consecutive elements of $\cV_\ell$
\[
 v_1<u_1<\cdots<v_s<u_s
\]
of even cardinality $2s$ with $s\ge 2$, such that
\[
 v=v_s \et \{v_2,\dots,v_s\}\subseteq \cV_\ell\setminus\cV_{\ell+1}.
\]
As $w_{\ell+2}w_\ell < w_{\ell+3}$ are consecutive elements of $\cV_\ell$
contained in $\cV_{\ell+1}$, we must have $v_1\ge w_{\ell+2}w_\ell$ and 
so $v_1\in\cV_{\ell+1}$ by maximality of the sequence.  Moreover, for each 
$i=2,\dots,s$, the words $u_{i-1}<v_i<u_i$ are consecutive in $\cV_\ell$ with
$v_i\notin\cV_{\ell+1}$.  By Proposition~\ref{Vbar:prop4}, this implies that 
$u_{i-1}<u_i$ are consecutive elements of $\cV_{\ell+1}$ with 
$|u_i|-|u_{i-1}|=F_\ell$.  Thus, $|u_1|<|u_2|<\cdots<|u_s|$ is an arithmetic 
progression made of consecutive elements of $\cVbar_{\ell+1}$.  
By Corollary~\ref{Vbar:prop2:cor1}, this implies that $s\le 5$.  If $s\ge 3$,
then, by Proposition~\ref{Vbar:prop3}, we further have $u_i\in\cV_{\ell+2}$
for $i=2,\dots,s-1$.

For each $i=1,\dots,s-1$, the words $v_i<u_i<v_{i+1}$ are 
consecutive elements of $\cV_\ell$ with $v_{i+1}\notin\cV_{\ell+1}$.
So, Corollary~\ref{rec:prop1:cor}~(ii) yields
\[
 \tux(v_{i+1}) = t_k\tux(u_i) \pm \tux(v_i)
\]
for some $k=k_i\in\{\ell-2,\ell-1\}$.   For $i=1$, we have
$\{u_1,v_1\}\subseteq\cV_{\ell+1}$.  Thus, 
\begin{equation}
\label{rec:prop2:eq3}
 \begin{aligned}
 \delta(v_2) 
   &\le \left(|t_{\ell-1}|\delta(u_1)+\delta(v_1)\right)
            \exp(\rho F_\ell - \rho F_{\ell+1}) \\
  &\le (1+c_1\gamma^{-\ell+1})\delta(u_1) + \exp(-\rho F_{\ell-1})\delta(v_1)\\
  &\le \left(1+c_1\gamma^{-\ell+1} + \exp(-\rho F_{\ell-1})\right)d_k(\ell+1).
 \end{aligned}
\end{equation}
When $2\le i\le s-1$, we have 
$\{v_i,v_{i+1}\}\subseteq\cV_\ell\setminus\cV_{\ell+1}$
and $u_i\in\cV_{\ell+2}$, thus
\[
 \delta(v_{i+1}) 
   \le |t_k| \delta(u_i) \exp(\rho F_\ell - \rho F_{\ell+2}) + \delta(v_i).
\]
Since $|t_k|\le(1+c_1)\exp(\rho F_{\ell-1})$ and $\delta(u_i)\le d_k(\ell+1)$,
this yields
\begin{equation}
\label{rec:prop2:eq4}
  \delta(v_{i+1}) 
   \le \delta(v_i) + (1+c_1)\exp(-\rho F_\ell) d_k(\ell+1) \quad (2\le i \le s-1).
\end{equation}
As $v=v_s$ with $s\le 5$, we conclude from \eqref{rec:prop2:eq3} and 
\eqref{rec:prop2:eq4}, that \eqref{rec:prop2:eq2} holds for a constant
$c_4$ depending only on $c_1$ and $\rho$.
\end{proof}

\begin{corollary}
\label{rec:prop2:cor}
There is an integer $\ell_1\ge 4$ with the following properties.
\begin{itemize}
 \item[(i)] For any integer $\ell\ge \ell_1$ and any $v\in\cV_\ell$, 
 we have 
 \begin{equation}
  \label{rec:prop2:cor:eq1}
  \ux(v)=\tux(v), \quad 
  \norm{(\xi,-1)\ux(v)} \le c\exp(-\rho |\alpha(v)|) \et
  x_0(v)\neq 0.
 \end{equation}
 \item[(ii)] For any integer $\ell\ge \ell_1$ and any triple of 
consecutive words $u<v<w$ in $\cV_\ell$ with $v\in\cV_{\ell+1}$, 
we have $|w|-|v|=|v|-|u|=F_i$ for some $i\in\{\ell-2,\ell-1\}$, and 
\begin{equation}
\label{rec:prop2:cor:eq2}
 \ux(w)
  =\begin{cases}
  t_i\ux(v)-d_i\ux(u) &\text{if $M(u)=M(v)$,}\\
  t_i\ux(v)+d_i\ux(u) &\text{otherwise.}
  \end{cases}
\end{equation}
\end{itemize}
\end{corollary}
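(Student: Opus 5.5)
The plan is to combine Proposition~\ref{rec:prop2} with the integrality of the points and the defining normalisation $|x_1(v)\xi-x_2(v)|<1/2$, and then transfer the congruence of Proposition~\ref{rec:prop1} into an equality. Fix $c$ as in Proposition~\ref{rec:prop2}. For $v\in\cV_\ell$ we have $|\alpha(v)|\ge F_\ell$, so
\[
 \norm{(\xi,-1)\tux(v)} \le c\exp(-\rho F_\ell).
\]
We choose $\ell_1\ge 4$ so large that $c\exp(-\rho F_{\ell_1})<1/2$. Then for $\ell\ge\ell_1$ and $v\in\cV_\ell$, the symmetric integral matrix $\tux(v)$ satisfies $\tux(v)\equiv\ux(v)$ by Corollary~\ref{rec:prop1:cor}. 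Its second column $(\tilde x_1,\tilde x_2)$ has first entry $\tilde x_1=x_1(v)$, because the matrices are symmetric with the same first column, and it satisfies $|x_1(v)\xi-\tilde x_2|<1/2$. Since $\xi\notin\bQ$, the integer $x_2$ with $|x_1(v)\xi-x_2|<1/2$ is unique, so $\tilde x_2=x_2(v)$ and hence $\tux(v)=\ux(v)$. This gives the first two assertions of \eqref{rec:prop2:cor:eq1}, the bound coming directly from Proposition~\ref{rec:prop2}.

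For $x_0(v)\neq 0$ we argue by contradiction. If $x_0(v)=0$, then $\norm{(\xi,-1)\ux(v)}\ge |x_0(v)\xi-x_1(v)|=|x_1(v)|$. Now $(x_0(v),x_1(v))$ is the first column of a matrix in $\GL_2(\bZ)$, so it is non-zero, which forces $|x_1(v)|\ge 1$. This contradicts the bound $<1/2$. Alternatively one can invoke Corollary~\ref{norm:cor}, but the direct argument suffices.

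For (ii), take $\ell\ge\ell_1$ and consecutive $u<v<w$ in $\cV_\ell$ with $v\in\cV_{\ell+1}$. Proposition~\ref{rec:prop1} gives the equality of lengths and the congruence \eqref{rec:prop1:eq}. By (i), we have $\ux(u)$, $\ux(v)$, $\ux(w)$ all in the form $\tux$, and all satisfy the norm bound. The right-hand side $R=t_i\ux(v)\mp d_i\ux(u)$ is a symmetric integral matrix with $R\equiv\ux(w)$. It therefore suffices to check that the $(1,2)$ entry of $(\xi,-1)R$ has absolute value $<1/2$, after which uniqueness again forces $R=\ux(w)$.

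This estimate is the only genuine step. We have
\[
 \norm{(\xi,-1)R}\le |t_i|\,c\,e^{-\rho|\alpha(v)|}+c\,e^{-\rho|\alpha(u)|}.
\]
We use $|t_i|\le (1+c_1)e^{\rho F_i}$ with $i\le\ell-1$, together with $|\alpha(v)|\ge F_{\ell+1}$ and $|\alpha(u)|\ge F_\ell$. This gives a bound
\[
 (1+c_1)c\,e^{-\rho F_{\ell}}+c\,e^{-\rho F_\ell},
\]
which is $<1/2$ after enlarging $\ell_1$. The sign bookkeeping is taken verbatim from Proposition~\ref{rec:prop1}.
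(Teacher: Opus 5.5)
Your proposal is correct and follows essentially the same route as the paper. You choose $\ell_1$ so that Proposition~\ref{rec:prop2} forces $\norm{(\xi,-1)\tux(v)}<1/2$, use the normalisation $|x_1(v)\xi-x_2(v)|<1/2$ to get $\tux(v)=\ux(v)$, and then bound $\norm{(\xi,-1)R}$ by $(2+c_1)c\exp(-\rho F_\ell)$ to upgrade the congruence of Proposition~\ref{rec:prop1} to an equality.

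Two of your details are slightly more careful than the paper. Your explicit argument for $x_0(v)\neq 0$, via $(x_0(v),x_1(v))\neq(0,0)$ and $|x_0(v)\xi-x_1(v)|<1/2$, fills in a step the paper leaves implicit. Your use of $|t_i|\le(1+c_1)e^{\rho F_i}$ for all $i\ge 1$ avoids a small slip in the paper, which assumes $|t_i|\le 2e^{\rho F_i}$ only for $i\ge\ell_1$ yet applies it with $i=\ell-2$ or $\ell-1$, which can lie below $\ell_1$.
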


Thus, the congruence \eqref{rec:prop1:eq} is 
an equality when $\ell\ge \ell_1$, as claimed earlier.

\begin{proof}[Proof of Corollary~\ref{rec:prop2:cor}]
By estimates \eqref{prelim:prop:eq2} of Proposition~\ref{prelim:prop}, 
there is an integer $\ell_1\ge 4$ such that $|t_i| \le 2\exp(\rho F_i)$ for 
each $i\ge \ell_1$.   We choose this $\ell_1$ so that  
$3c\exp(-\rho F_{\ell_1})<1/2$, where $c\ge 1$ is the constant of  
the preceding proposition.  

(i) For $v\in\cV_\ell$ with $\ell\ge \ell_1$, we have $|\alpha(v)|\ge F_\ell
\ge F_{\ell_1}$ and the preceding proposition yields 
\[
 \norm{(\xi,-1)\tux(v)} \le  c\exp(-\rho|\alpha(v)|) < 1/2. 
\] 
As $\tux(v) \in \Mat_{2\times 2}(\bZ)$ is a symmetric integral matrix, 
\eqref{rec:prop2:cor:eq1} follows.

(ii) Let $\ell\ge \ell_1$ and let $u<v<w$ be consecutive elements of $\cV_\ell$ 
with $v\in\cV_{\ell+1}$.  By Proposition~\ref{rec:prop1}, we have 
$|w|-|v|=|v|-|u|=F_i$ for some $i\in\{\ell-2,\ell-1\}$ and $\ux(w)\equiv \uy$ 
where $\uy$ denotes the right hand side of \eqref{rec:prop2:cor:eq2}.  By part (i)
proved above, \eqref{rec:prop2:cor:eq1} applies to both $u$ and $v$, thus
\begin{align*}
 \norm{(\xi,-1)\uy} 
  &\le c|t_i|\exp(-\rho|\alpha(v)|) + c\exp(-\rho|\alpha(u)|)\\
  &\le  2c\exp(\rho F_{\ell-1}-\rho F_{\ell+1}) + c\exp(-\rho F_\ell)
     = 3c\exp(-\rho F_\ell) < 1/2.
\end{align*} 
Since $\uy$ is symmetric with integer coefficients, we conclude that
$\ux(w)=\uy$. 
\end{proof}

We conclude with the following complement.

\begin{proposition}
\label{rec:prop3}
Let $\ell_1$ be as in Corollary~\ref{rec:prop2:cor}, and let 
$u<v<w$ be consecutive words in $\cV_\ell$ with 
$v\notin\cV_{\ell+1}$, for an integer $\ell\ge \ell_1$.  Then, we 
have 
\[
 \det(\ux(u),\ux(v),\ux(w))=\pm 2.
\]
\end{proposition}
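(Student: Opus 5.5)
The plan is to reduce the determinant to an initial configuration by unimodular changes of basis, and then compute that configuration explicitly from Proposition~\ref{prelim:prop}\,(iv).

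First, the base case. By Lemma~\ref{V:lemma1}, the words $w_{\ell+1}<w_{\ell+1}w_{\ell-1}<w_{\ell+2}$ are consecutive in $\cV_\ell$, and $w_{\ell+1}w_{\ell-1}\notin\cV_{\ell+1}$. Corollary~\ref{rec:prop2:cor}(i) and Lemma~\ref{rec:lemma:yz} give $\ux(w_{\ell+1}w_{\ell-1})=\uy_\ell=t_{\ell-1}\ux_{\ell+1}+d_{\ell-1}\ux_\ell$, with $d_{\ell-1}=\pm1$. Hence
\[
\det(\ux_{\ell+1},\uy_\ell,\ux_{\ell+2})=\pm\det(\ux_\ell,\ux_{\ell+1},\ux_{\ell+2})=\pm\det(M)^{-2}\trace(MJ).
\]
For $M$ as in \eqref{main:extremal:eq:M} we have $\det M=1$ and $MJ=\matrice{-1}{m}{0}{-1}$, so $\trace(MJ)=-2$ and the determinant is $\pm2$. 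The same computation with $\uz_\ell$ handles the triple $w_{\ell+2}<w_{\ell+2}w_{\ell-2}<w_{\ell+2}w_\ell$, once $\ux(w_{\ell+2}w_\ell)$ is expressed through the recurrence \eqref{rec:prop2:cor:eq2}.

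For the general case, take consecutive $u<v<w$ in $\cV_\ell$ with $v\notin\cV_{\ell+1}$ and $v>w_{\ell+3}$. I will reuse the maximal chain $v_1<u_1<\cdots<v_s<u_s$ from the proof of Proposition~\ref{rec:prop2}, with $v=v_s$, $u=u_{s-1}$, $w=u_s$. Two relations are available. The recurrence \eqref{rec:prop2:cor:eq2} applied to $v_i<u_i<v_{i+1}$ gives $\ux(v_{i+1})=t_k\ux(u_i)\pm\ux(v_i)$. The same recurrence in $\cV_{\ell+1}$ relates $u_{i-1}<u_i<u_{i+1}$, whose middle term lies in $\cV_{\ell+2}$. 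Each relation expresses one point as an integer combination of two others, with coefficient $\pm1=\pm d_k$ on the outer point. Consequently
\[
\det(\ux(u_{i-1}),\ux(v_i),\ux(u_i))=\pm\det(\ux(u_{i-2}),\ux(v_{i-1}),\ux(u_{i-1})),
\]
up to permutation of rows. Iterating this brings the triple down to $s=1$, a triple whose middle has a predecessor in $\cV_{\ell+1}$. From there I would pull back via the bijection $\pi$ of Proposition~\ref{V:prop1} (i.e.\ $\iota$, together with the similarity of the point sets on $[w_k,w_{k+1}]$ and $[w_{k-3},w_{k-1}w_{k-3}]$) and induct on $|v|$. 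The target of this induction is one of the two base triples above.

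The main obstacle is this pullback step. The recurrences of Corollary~\ref{rec:prop2:cor} relate points inside a single $\cV_\ell$, but they do not directly relate $\ux(v)$ to $\ux(\iota(v))$. To get around this, I would try to show that $\ux(w_k s)=\pm\varphi(w_k)\,\ux(w_{k-3}s)$-type identities hold up to the equivalence $\equiv$. Multiplying by $\varphi(w_k)\varphi(w_{k-3})^{-1}$ has determinant $\pm1$ and acts on symmetric matrices $X\mapsto AX{}^tA$, which preserves $\det(\cdot,\cdot,\cdot)$ up to sign. If that fails, I would instead follow the chain structure upward through $\cV_{\ell+1},\cV_{\ell+2},\dots$ until the triple lands among the first seven elements of Lemma~\ref{V:lemma1}.
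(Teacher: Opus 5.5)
Your base case is correct, including $\det M=1$ and $\trace(MJ)=-2$. The second base triple via $\uz_\ell$ also checks out. Your chain identities are valid too: for $3\le i\le s$ they give $\det(\ux(u_{i-1}),\ux(v_i),\ux(u_i))=\pm\det(\ux(u_{i-2}),\ux(v_{i-1}),\ux(u_{i-1}))$, and at $i=2$ you arrive at
\[
\det(\ux(u),\ux(v),\ux(w))=\pm\det(\ux(v_1),\ux(u_1),\ux(u_2)).
\]
The gap is in how you close the induction. You leave the pullback along $\pi$ unproved and call it the main obstacle, and the mechanism you suggest does not fit together. The relation $\equiv$ only compares first columns and is compatible with left multiplication $X\mapsto AX$, which transports nothing about the third coordinates $x_2$. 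The operation that preserves $\det(\cdot,\cdot,\cdot)$ on symmetric matrices is $X\mapsto AX\,{}^tA$, which does not act on first columns by $A$. So no relation between $\det(\ux(v_1),\ux(u_1),\ux(u_2))$ and a determinant of shorter words at level $\ell$ follows, and as written the argument never reaches your base triples.

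The missing observation is that no pullback is needed: $v_1<u_1<u_2$ is itself an instance of the proposition at level $\ell+1$.
\begin{itemize}
\item These words lie in $\cV_{\ell+1}$ and are consecutive there, since the only element of $\cV_\ell$ strictly between $u_1$ and $u_2$ is $v_2\notin\cV_{\ell+1}$.
\item Since $v_1<u_1$ are consecutive both in $\cV_\ell$ and in $\cV_{\ell+1}$, Corollary~\ref{Vbar:prop2:cor1} gives $|u_1|-|v_1|=F_{\ell-1}$.
\item Proposition~\ref{Vbar:prop4} gives $|u_2|-|u_1|=F_\ell$.
\item Hence $u_1\notin\cV_{\ell+2}$ by Proposition~\ref{Vbar:prop3}.
\end{itemize}
As $|u_1|<|v|$, an induction on the length of the middle word, run simultaneously over all levels $\ell\ge\ell_1$, finishes the proof.

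This is exactly the paper's argument, done one step at a time. It writes $u'<v'<u<v<w$ and reduces to $(u',v',u)$ at level $\ell$ if $v'\notin\cV_{\ell+1}$, and to $(v',u,w)$ at level $\ell+1$ otherwise. With this organization the single base case $v=w_{\ell+1}w_{\ell-1}$, at every level, suffices. Your fallback of ``following the chain upward'' points in this direction. It only becomes a proof once you note that the reduced triple satisfies the hypotheses at level $\ell+1$ and set up the induction across all levels.
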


\begin{proof}
We proceed by induction on $|v|$.  Since $v\notin\cV_{\ell+1}$,  
Lemma~\ref{V:lemma1} implies that $v\ge w_{\ell+1}w_{\ell-1}$. 
  
To start, suppose that $v=w_{\ell+1}w_{\ell-1}$.  Then we have 
$u=w_{\ell+1}$ and $w=w_{\ell+2}$.  As $\ell\ge \ell_1$, 
Corollary~\ref{rec:prop2:cor}\,(i) applies to $u$, $v$ and $w$.
Thus, we find 
\[
 \det(\ux(u),\ux(v),\ux(w))
  =\det(\ux_{\ell+1},\uy_\ell,\ux_{\ell+2})
  =\pm \det(\ux_\ell,\ux_{\ell+1},\ux_{\ell+2})
  =\pm 2,
\]
using Lemma~\ref{rec:lemma:yz} and then Proposition~\ref{prelim:prop}(iv).

Assume from now on that $v>w_{\ell+1}w_{\ell-1}$.  
By Lemma~\ref{V:lemma1}, we have $v\ge w_{\ell+2}w_{\ell-2}$ 
and we can extend $u<v<w$ to a sequence of consecutive words 
\[
 u'<v'<u<v<w
\]
in $\cV_\ell$.  Since $v\notin\cV_{\ell+1}$, Proposition~\ref{Vbar:prop4} 
shows that $u<w$ are consecutive elements of $\cV_{\ell+1}$ with 
$|w|-|u|=F_\ell$.   Moreover, since $v'<u<v$ are consecutive elements 
of $\cV_\ell$ with $u\in\cV_{\ell+1}$, Corollary~\ref{rec:prop2:cor}\,(ii) 
gives $\ux(v)=t_i\ux(u)\pm\ux(v')$ for some $i\in\{\ell-2,\ell-1\}$, and 
therefore
\begin{equation}
 \label{rec:prop3:eq1}
 \det(\ux(u),\ux(v),\ux(w)) = \pm \det(\ux(v'),\ux(u),\ux(w)).
\end{equation}

Suppose first that $v'\notin\cV_{\ell+1}$.   Then, arguing as above, we find 
that $u'<u$ are consecutive elements of $\cV_{\ell+1}$ with $|u|-|u'|=F_\ell$.
Thus, $u'<u<w$ are consecutive elements of $\cV_{\ell+1}$ with
$|w|-|u|=|u|-|u'|=F_\ell$.  By Proposition~\ref{Vbar:prop3}, this implies 
that $u\in\cV_{\ell+2}$, and so Corollary~\ref{rec:prop2:cor} gives 
$\ux(w)=t_\ell\ux(u)\pm\ux(u')$.  Substituting this into the right hand side 
of \eqref{rec:prop3:eq1}, we deduce that 
\[
 \det(\ux(u),\ux(v),\ux(w)) = \pm \det(\ux(u'), \ux(v'),\ux(u)).
\]
As $u'<v'<u$ are consecutive elements of $\cV_\ell$ with 
$v'\notin\cV_{\ell+1}$ and $|v'|<|v|$, we may assume by induction 
that the determinant in the right hand side of this equality is $\pm 2$, 
and we are done.  

Finally, suppose that $v'\in\cV_{\ell+1}$.  Then $v'<u<w$ are consecutive 
elements of $\cV_{\ell+1}$.  Since $v'<u$ are also consecutive in $\cV_\ell$, 
Corollary~\ref{Vbar:prop2:cor1} gives $|u|-|v'|=F_{\ell-1}$, thus 
$|u|-|v'|<F_\ell=|w|-|u|$, and so $u\notin\cV_{\ell+2}$ by 
Proposition~\ref{Vbar:prop3}.  As $|u|<|v|$, we may assume by induction 
that the determinant in the right hand side of \eqref{rec:prop3:eq1} 
is $\pm 2$, and we are done once again.  
\end{proof}

Combining this proposition with Corollary~\ref{rec:prop2:cor}\,(ii)
yields the following qualitative statement, and thus proves
Theorem~\ref{main:points:thm}\,(ii).

\begin{corollary}
\label{rec:prop3:cor}
For consecutive words $u<v<w$ in $\cV_\ell$ with $\ell\ge \ell_1$,
the points $\ux(u)$, $\ux(v)$ and $\ux(w)$ are linearly independent
if and only if $v\notin\cV_{\ell+1}$.
\end{corollary}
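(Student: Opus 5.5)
The corollary follows by splitting into the two cases $v\in\cV_{\ell+1}$ and $v\notin\cV_{\ell+1}$, and quoting the two results just established. Throughout, fix $\ell\ge\ell_1$ and consecutive words $u<v<w$ in $\cV_\ell$.

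\emph{Case $v\in\cV_{\ell+1}$.} Here the points should be linearly dependent. Corollary~\ref{rec:prop2:cor}\,(ii) applies directly, because its hypotheses are exactly that $\ell\ge\ell_1$ and that $u<v<w$ are consecutive in $\cV_\ell$ with $v\in\cV_{\ell+1}$. It gives the exact equality
\[
 \ux(w)=t_i\ux(v)\mp d_i\ux(u)
\]
for some $i\in\{\ell-2,\ell-1\}$, where the sign depends on whether $M(u)=M(v)$. This is a nontrivial linear relation with coefficient $1$ on $\ux(w)$. Hence $\ux(u),\ux(v),\ux(w)$ are linearly dependent.

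\emph{Case $v\notin\cV_{\ell+1}$.} Here the points should be linearly independent. Proposition~\ref{rec:prop3} applies, since its hypotheses are consecutive $u<v<w$ in $\cV_\ell$ with $v\notin\cV_{\ell+1}$ and $\ell\ge\ell_1$. It gives $\det(\ux(u),\ux(v),\ux(w))=\pm2\neq0$, so the three points are linearly independent.

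Combining the two cases gives the stated equivalence. This also yields Theorem~\ref{main:points:thm}\,(ii) for every $\ell\ge\ell_0$ once $\ell_0\ge\ell_1$ is chosen.

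There is no genuine obstacle. The real work is the equality, rather than mere congruence, in Corollary~\ref{rec:prop2:cor}, and the determinant induction in Proposition~\ref{rec:prop3}. The only point to check is that both results are stated for the same threshold $\ell_1$, which they are.
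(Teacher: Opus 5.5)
Your proof is correct and is essentially the paper's own argument: for $v\in\cV_{\ell+1}$, linear dependence follows from the exact recurrence in Corollary~\ref{rec:prop2:cor}\,(ii), and for $v\notin\cV_{\ell+1}$, linear independence follows from the determinant $\pm 2$ in Proposition~\ref{rec:prop3}, both valid for the same threshold $\ell_1$.
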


%
%

\section{Proof of Theorem \ref{main:points:thm}}
\label{sec:final}

Let the notation be as in section~\ref{main:ssec:points}, let $\rho$ 
be as in Proposition~\ref{prelim:prop} for the given $\xi\in\cE_m$,
and let $\ell_1$ be as in Corollary~\ref{rec:prop2:cor}.  
In view of Corollary~\ref{rec:prop3:cor}, 
it remains to prove that there exists an integer $\ell$ with 
$\ell\ge\ell_1$ such that, for each $v\in\cV_{\ell}$ and each
$\uq\in\cA(\epsilon)$, we have 
$L_{\ux(v)}(\uq) = \rho P_v(\rho^{-1}\uq) + \cO_\xi(1)$, where
$\cO_\xi(1)$ stands for a function of $v$ and $\uq$ whose absolute 
value is bounded above by a constant that depends only on $\xi$.

For each $v\in]\epsilon,w_\infty[$, we set
\[
 \Delta(v)
     = (\Delta_0(v),\Delta_1(v),\Delta_2(v))
     = (x_0(v), x_0(v)\xi-x_1(v), x_0(v)\xi^2-x_2(v)),
\]
so that, for any $\uq=(q_1,q_2)\in\bR^2$, we have
\[
 L_{\ux(v)}(\uq) 
   = \max\{ \log|\Delta_0(v)|, q_1+\log|\Delta_1(v)|, q_2+\log|\Delta_2(v)|\}.
\]
By Corollaries~\ref{norm:cor} and \ref{rec:prop2:cor}\,(i), for each 
$v\in\cV_\ell$ with $\ell\ge\ell_1$,  we have
\begin{equation}
\label{final:proof:eq1}
\begin{aligned}
 \log|\Delta_0(v)| &=\rho|v|+\cO_\xi(1),\\ 
 \log|\Delta_1(v)| &=-\rho|v|+\cO_\xi(1),\\
 \log|\Delta_2(v)| 
    &\le \log\norm{(\xi,-1)\ux(v)}+\cO_\xi(1) 
       \le -\rho|\alpha(v)|+\cO_\xi(1).
\end{aligned}
\end{equation}
We claim that we may further choose $\ell$ so that  
\begin{equation}
\label{final:proof:eq3}
 \log|\Delta_2(v)| \ge -\rho|\alpha(v)|+\cO_\xi(1) 
 \quad\text{for all $v\in\cV_\ell\setminus\cF$.}
\end{equation}

If we take this for granted, then, for $v\in\cV_\ell\setminus\cF$
and $\uq=(q_1,q_2)\in\bR^2$, we obtain
\[
  L_{\ux(v)}(\uq) 
   = \max\{ \rho|v|, q_1-\rho|v|, q_2-\rho|\alpha(v)|\} +\cO_\xi(1)
   = \rho P_v(\rho^{-1}\uq) +\cO_\xi(1).
\]
This still holds for $v\in\cF$ and $\uq\in\cA(\epsilon)$, as we then
have $\alpha(v)=v$ and $q_2\le q_1$, thus, 
\[
 q_2+\log|\Delta_2(v)|  
  \le q_1-\rho|v|+\cO_\xi(1)
  = q_1+\log|\Delta_1(v)|+\cO_\xi(1).
\] 

To prove \eqref{final:proof:eq3}, choose $v \in \cV_{\ell_1}\setminus\cF$
and let $\ell\ge \ell_1$ such that $\alpha(v)=w_\ell$.  Then,  $v\neq w_\ell$, 
and so $v$ is the middle term of a triple of consecutive elements 
$u<v<w$ of $\cV_\ell$.  As $v\notin\cV_{\ell+1}$, 
Proposition~\ref{rec:prop3} gives 
\[
 2 = |\det(\ux(u),\ux(v),\ux(w))| = |\det(\Delta(u),\Delta(v),\Delta(w))|.
\]
The determinant on the right is a sum of six products
$\pm\Delta_i(u)\Delta_j(v)\Delta_k(w)$ where $(i,j,k)$ runs through the 
permutations of $(0,1,2)$.  Since the estimates \eqref{final:proof:eq1}
also apply to $u$ and $w$ in place of $v$, 
we find that, for $(i,j,k)\neq (1,2,0)$, these products 
tend to $0$ as $\ell$ go to infinity, uniformly in $v$.  For example, 
we have
\[
 \log |\Delta_2(u)\Delta_1(v)\Delta_0(w)|
  \le \rho(-|\alpha(u)|-|v|+|w|)+\cO_\xi(1)
  \le -\rho F_\ell+\cO_\xi(1)
\]
because $|v|\ge |u|+F_{\ell-2}$ by Corollary~\ref{Vbar:prop2:cor1},
$|\alpha(u)|\ge F_{\ell+1}$ by Corollary~\ref{Vbar:prop2:cor2}, and finally
$|w|-|u|=|\alpha(v)|=F_\ell$ by Proposition~\ref{Vbar:prop3}.  Thus, 
if $\ell$ is large enough, we obtain 
\[
\begin{aligned}
 0\le \log |\Delta_1(u)\Delta_2(v)\Delta_0(w)| 
      &= \rho(|w|-|u|)+\log|\Delta_2(v)| +\cO_\xi(1)\\
      &= \rho|\alpha(v)|+\log|\Delta_2(v)| +\cO_\xi(1)
\end{aligned}
\]
which gives \eqref{final:proof:eq3}.

%
%

\section{Weighted exponents of approximation}
\label{sec:exp}

For each $\uxi=(1,\xi_1,\xi_2)\in\bR^3$ and each $\sigma\in \left[0,\infty\right[$, 
we define $\lambda_\sigma(\uxi)$ (resp.\ $\lambdahat_\sigma(\uxi)$) 
as the supremum of all real numbers $\lambda > 0$ such that the inequalities
\begin{equation}
\label{exp:eq1}
 |x_0|\le Q, \quad 
 |x_0\xi_1-x_1|\le Q^{-\lambda+1} \et  
 |x_0\xi_2-x_2|\le Q^{-\sigma\lambda+1}
\end{equation}
admit a non-zero solution $\ux=(x_0,x_1,x_2)\in\bZ^3$ for arbitrarily large values of 
$Q\ge 1$ (resp.\ for all sufficiently large values of $Q\ge 1$).  These are essentially the usual 
weighted exponents of approximation to $\uxi$, as in \cite{G2022} for example, except 
for the additive constant $1$ in the exponents of $Q$.   This allows us 
to define equivalently $\lambda_\sigma(\uxi)$ as the supremum of all 
$\lambda > 0$ such that 
\[
  |\xi_1-x_1/x_0|\le x_0^{-\lambda} \et  
 |\xi_2-x_2/x_0|\le x_0^{-\sigma\lambda}
\]
for infinitely many $\ux=(x_0,x_1,x_2)\in\bZ^3$ with $x_0>0$.  This modification
also makes the exponents easier to handle via the following result.

\begin{lemma}
\label{exp:lemma}
For  $\uxi$ and $\sigma$ as above we have
\[
 \lambda_\sigma(\uxi)^{-1}=\liminf_{q\to\infty} \frac{L_{\uxi,1}(q,\sigma q)}{q}
 \et
 \lambdahat_\sigma(\uxi)^{-1}=\limsup_{q\to\infty} \frac{L_{\uxi,1}(q,\sigma q)}{q}.
\] 
\end{lemma}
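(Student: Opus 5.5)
The plan is to translate both definitions directly into statements about the first minimum $L_{\uxi,1}$ along the ray $\uq=(q,\sigma q)$, using the substitution $Q=e^t$. Given $\lambda>0$ and $Q>1$, put $t=\log Q$ and $q=\lambda t$. Then the inequalities \eqref{exp:eq1} read
\[
 |x_0|\le e^t,\quad |x_0\xi_1-x_1|\le e^{t-q},\quad |x_0\xi_2-x_2|\le e^{t-\sigma q},
\]
which are exactly \eqref{intro:eq:t} at the point $(q,\sigma q)$ with parameter $t$. So \eqref{exp:eq1} has a non-zero integer solution if and only if $L_{\uxi,1}(q,\sigma q)\le t=q/\lambda$. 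This step uses the minimum over non-zero $\ux$, which is attained, so $\le$ is correct.

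For $\lambda_\sigma$, first suppose $\lambda<\lambda_\sigma(\uxi)$. Then there are arbitrarily large $Q$ with a solution, so $L_{\uxi,1}(q,\sigma q)/q\le 1/\lambda$ for arbitrarily large $q$. Hence the $\liminf$ is at most $1/\lambda$, and letting $\lambda\to\lambda_\sigma$ gives $\liminf\le\lambda_\sigma^{-1}$.

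Conversely, suppose $\liminf<1/\lambda$. Then there are arbitrarily large $q$ with $L_{\uxi,1}(q,\sigma q)\le q/\lambda$. Setting $Q=e^{q/\lambda}$ produces solutions of \eqref{exp:eq1} for arbitrarily large $Q$, so $\lambda\le\lambda_\sigma$. Hence $\lambda_\sigma^{-1}\le\liminf$.

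The argument for $\lambdahat_\sigma$ is identical, with ``for all large $Q$'' in place of ``arbitrarily large'' and $\limsup$ in place of $\liminf$. This works because $q\mapsto q/\lambda$ is a bijection of large values for fixed $\lambda$.

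Several conventions need care:
\begin{itemize}
\item If the $\liminf$ is $0$, take $\lambda_\sigma=\infty$ and $1/\infty=0$.
\item For $\lambda\le 0$ nothing is required.
\item $L_{\uxi,1}\ge 0$ is bounded below by Lemma~\ref{tool:lemma3}(i) only in the negative quadrant. However, a non-zero solution has $x_0\ne0$ once $q$ is large, so $L_{\uxi,1}(q,\sigma q)\ge 0$, and both sides are nonnegative.
\item The strict versus non-strict inequalities at the sup/inf boundary are absorbed by taking limits in $\lambda$.
\end{itemize}

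The main point of care is this bookkeeping of limits and edge cases rather than any real obstacle. In particular, the ``for all large $Q$'' direction requires $\limsup<1/\lambda$ to give $L\le q/\lambda$ for all large $q$, which follows directly from the definition of $\limsup$.
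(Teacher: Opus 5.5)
Your proof is correct and is the paper's argument. The substitution $Q=e^{t}$, $q=\lambda t$ identifies \eqref{exp:eq1} with \eqref{intro:eq:t} at $(q,\sigma q)$, so solvability means $L_{\uxi,1}(q,\sigma q)\le q/\lambda$. You then spell out the $\liminf$/$\limsup$ bookkeeping, and the monotonicity in $\lambda$, that the paper leaves implicit.

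The one flaw is your justification of $L_{\uxi,1}(q,\sigma q)\ge 0$. It is false that nonzero solutions have $x_0\neq 0$ for large $q$: the point $(0,0,1)$ has $L_{(0,0,1)}(q,\sigma q)=\sigma q$, and by Lemma~\ref{tool:lemma3}(ii) it gives the first minimum up to $\cO(1)$ when $\sigma<1/2$ and $\xi_1$ is badly approximable. The inequality itself still holds. For $t<0$, the conditions \eqref{intro:eq:t} force $x_0=0$, and then $|x_1|<1$ and $|x_2|<1$ because $q\ge 0$ and $\sigma q\ge 0$, so $\ux=0$.
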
 

\begin{proof}
For $Q=e^q$ with $q\ge 0$, the condition that \eqref{exp:eq1} admits a 
non-zero solution in $\bZ^3$ is equivalent to asking that 
$L_{\uxi,1}(\lambda q,\sigma\lambda q)\le q$.  Thus, $\lambda_\sigma(\uxi)$ 
(resp.\ $\lambdahat_\sigma(\uxi)$) is also the supremum of all $\lambda > 0$ 
such that $L_{\uxi,1}(q,\sigma q)/q < 1/\lambda$ for arbitrarily large values of 
$q>0$ (resp.\ for all sufficiently large $q>0$), and the formulas follow. 
\end{proof}

We conclude this paper with the following computation.

\begin{theorem}
\label{exp:thm}
Let $\uxi=(1,\xi,\xi^2)$ where $\xi\in\cE_m$ for some positive integer $m$.
Then, we have $\lambda_\sigma(\uxi)=2$ for each $\sigma\in[0,1]$, and
\begin{equation}
\label{exp:thm:eq}
 \lambdahat_\sigma(\uxi) =
  \begin{cases}
    \gamma 
        &\text{if\/ $1-\gamma^{-4}\le \sigma\le 1$,} \\
    (1+\gamma^{-2})/\sigma 
        &\text{if\/ $5/(2\gamma^2+1)\le \sigma\le 1-\gamma^{-4}$,}\\
    (2\gamma^2+1)/(\gamma^2+1) 
       &\text{if\/ $1-\gamma^{-3}\le \sigma\le 5/(2\gamma^2+1)$.}
 \end{cases}
\end{equation}
\end{theorem}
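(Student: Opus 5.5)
By Lemma~\ref{exp:lemma} and Theorem~\ref{main:extremal:thm}, for $\sigma\in[0,1]$ the ray $q\mapsto(q,\sigma q)$ with $q\ge0$ lies in $\cA(\epsilon)$. Since $L_{\uxi,1}$ differs from $\rho P_1(\rho^{-1}\cdot)$ by a bounded amount, the factor $\rho$ cancels in the ratios and
\[
 \lambda_\sigma(\uxi)^{-1}=\liminf_{q\to\infty}\frac{P_1(q,\sigma q)}{q},\qquad
 \lambdahat_\sigma(\uxi)^{-1}=\limsup_{q\to\infty}\frac{P_1(q,\sigma q)}{q}.
\]
Everything therefore reduces to the combinatorial map $\uP$.

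By Theorem~\ref{main:P:thm2}, the function $f_\sigma(q)=P_1(q,\sigma q)/q$ satisfies $f_\sigma(\gamma q)=f_\sigma(q)+\cO(1/q)$. Hence the $\liminf$ and $\limsup$ are the minimum and maximum of $f_\sigma$ over a single window $q\in[\gamma^kq^*,\gamma^{k+1}q^*]$ with $k$ large. I would take the window to be $q_1\in[2F_k,2F_{k+1}]$, up to $\cO(1)$ errors.

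\textbf{Step 1: $\lambda_\sigma(\uxi)=2$.} The upper bound $P_1\le P_3$ together with \eqref{main:P:eq3} gives $P_1(\uq)\ge$ something like $q_1/2$. Indeed, on $\trap(u,v)$ and on $\cell(u,v,w)$, Theorem~\ref{P:thm} shows that the coordinate $q_1-|u|$ is at least $q_1/2$ and that $|w|\ge q_1/2$, so $P_1$ is the minimum of $q_2-|\alpha(v)|$ (or $q_2-1$) and numbers of size at least $q_1/2$. The bound $P_1\le q_1/2+\cO(1)$ is achieved at $q_1=2F_k$. There $\uq$ lies in $\trap(w_{k-1},w_k)$ or its neighbour with $P=\Phi(q_1-F_{k-1},\dots,F_k)$, and both $q_1-|u|$ and $|w|$ equal $F_k=q_1/2$ up to constants. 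The second coordinate $q_2-|\alpha|$ is also at most about $q_1/2$, so $P_1\le F_k$. This gives $\liminf f_\sigma=1/2$, hence $\lambda_\sigma=2$. For the lower bound $\liminf\ge1/2$, I would use that $P_1\ge\min\{q_1/2,\ q_2-|\alpha(v)|\}$ and check that $q_2-|\alpha(v)|\ge q_1/2$ fails only in regions where still $P_1\ge q_1/2-\cO(1)$. This needs care: for $\sigma$ small, $P_1=q_2$ roughly, which would contradict the claim. So for $\sigma<1/2$ I would instead rely on Lemma~\ref{proof2:lemma3} and Lemma~\ref{tool:lemma3}(ii), which give $P_1=\sigma q$ and hence $\lambda_\sigma=1/\sigma$. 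I must recheck the exponent normalization, since the $+1$ shift in \eqref{exp:eq1} means $\lambda$ is defined via the ray $(\lambda q,\sigma\lambda q)$. I expect this bookkeeping to reconcile the claim $\lambda_\sigma=2$ for all $\sigma$, and I would verify it first.

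\textbf{Step 2: $\lambdahat_\sigma$.} This requires the maximum of $P_1(q,\sigma q)/q$ over one period. For $\sigma$ near $1$, the ray crosses the cells $\cR$ of $S_\ell$ for large $\ell$, namely $\cell(w_{k}w_{k-2},\dots)$-type cells near the diagonal. By Proposition~\ref{properties:prop1}, on each cell $P_1$ equals $q_1-a$, $q_2-b$ or $c$, according to the six-sector picture around the point $\ur=(a+c,b+c)$. The maximum of $P_1$ on the ray within a cell is therefore attained where the ray crosses the boundary of the region where $P_1=c$, or at $\ur$ when the ray passes near it.

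I would parametrize the relevant cells using Lemma~\ref{V:lemma1}: for the first seven elements of $\cV_\ell$ the vertices $\uq(v)$ are explicit Fibonacci combinations. Letting $\ell\to\infty$ and using $F_{\ell+j}/F_\ell\to\gamma^j$, I would compute, as a function of $\sigma$, the maximal ratio $P_1/q_1$ along the ray. The three regimes in \eqref{exp:thm:eq} should correspond to which cell, and which of its faces, the ray hits at the maximizing point. The value $\gamma$ comes from passing through the points $\ur$ of cells with $\alpha(v)=w_\ell$ near the diagonal, the middle formula $(1+\gamma^{-2})/\sigma$ from a horizontal face $q_2-b=c$, and the constant $(2\gamma^2+1)/(\gamma^2+1)$ from a vertical face. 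The thresholds $1-\gamma^{-4}$, $5/(2\gamma^2+1)$ and $1-\gamma^{-3}$ are the slopes of the lines through the origin and the corresponding limiting vertices.

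\textbf{Main obstacle.} The hard part is showing that no other cell, deeper in the self-similar structure, gives a larger ratio. For that I would use Theorem~\ref{main:P:thm1} (translation invariance by $(4F_{k-2},2F_{k-2})$) and $\theta$-scaling (Proposition~\ref{V:prop2}) to reduce any cell to one among finitely many shapes up to similarity. I would then check the finitely many candidates, with the constraint $\sigma\ge1-\gamma^{-3}$ limiting which shapes the ray can meet.
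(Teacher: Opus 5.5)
\textbf{Step 2 is only a plan.} Your reduction of $\lambdahat_\sigma(\uxi)^{-1}$ to $\limsup_{q\to\infty}P_1(q,\sigma q)/q$ is the paper's first step, but your Step~2 stops exactly where the actual proof begins. The ``main obstacle'' is not handled by reducing arbitrary cells to finitely many shapes via Theorem~\ref{main:P:thm1} and $\theta$; you give no mechanism for that, and none is needed. The missing idea is a localization.

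Let $\mu=\max\{2F_{k-2}/F_k,\,2F_{k-1}/F_{k+1}\}$, so $\mu\to 2\gamma^{-2}=1-\gamma^{-3}$. By Lemmas~\ref{V:lemma1} and~\ref{P:lemma:C}, the two cells $\cR=\cell(w_k,w_kw_{k-4},w_kw_{k-2})\in S_{k-2}$ and $\cR'=\cell(w_k,w_kw_{k-2},w_{k+1})\in S_{k-1}$ exactly fill a region of the strip $2F_k\le q_1\le 2F_{k+1}$. This region lies below the diagonal and above the convex broken line through $(2F_k,4F_{k-2})$, $(2F_k+2F_{k-4},4F_{k-2})$ and $(2F_{k+1},4F_{k-1})$. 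Hence, for $\sigma\ge\mu$, the whole segment $\{(q,\sigma q)\,;\,2F_k\le q\le 2F_{k+1}\}$ lies in $\cR\cup\cR'$, and no other cell can occur. This is precisely why the theorem stops at $1-\gamma^{-3}$.

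On these two cells, Proposition~\ref{properties:prop1} applies with $(a,b,c)=(F_k,F_{k-2},F_k+F_{k-2})$ and $(F_k,F_{k-1},F_{k+1})$ respectively. Along the segment, $P_1(q,\sigma q)/q$ increases where $P_1$ equals $q_1-a$ or $q_2-b$, and decreases where $P_1=c$. So its maximum is attained where the segment enters $\{P_1=c\}$:
\begin{itemize}
\item through the left side $q_1=2F_k+F_{k-2}$ of that set in $\cR$, giving the ratio $(F_k+F_{k-2})/(2F_k+F_{k-2})$;
\item through the left side $q_1=F_{k+2}$ of that set in $\cR'$, giving $F_{k+1}/F_{k+2}$;
\item through the bottom side $q_2=F_{k+1}+F_{k-1}$ of that set in $\cR'$, giving $\sigma F_{k+1}/(F_{k+1}+F_{k-1})$.
\end{itemize}
Taking the maximum of the relevant values and letting $k\to\infty$ yields \eqref{exp:thm:eq}.

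This also tests your heuristics. Your horizontal-face and vertical-face guesses for the last two regimes are right. However, $\lambdahat_\sigma(\uxi)=\gamma$ comes from the vertical face $q_1=F_{k+2}$ in $\cR'$, not from passing through a point $\ur$. The threshold $5/(2\gamma^2+1)$ is not the slope of a limiting vertex. It is the value of $\sigma$ at which the competing maxima $(\gamma^2+1)/(2\gamma^2+1)$ and $\gamma^2\sigma/(\gamma^2+1)$ coincide. The actual vertex slope $2\gamma^2/(2\gamma^2+1)$, from the midpoint of the top side of $\cR$, is not a breakpoint at all. Finally, since one computes this window maximum for every large $k$ anyway, Theorem~\ref{main:P:thm2} is superfluous.

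\textbf{Step 1: your suspicion is correct.} No bookkeeping removes the problem you noticed. With $\lambda_\sigma$ defined through \eqref{exp:eq1}, which is what Lemma~\ref{exp:lemma} encodes, the point $\ux=(0,0,1)$ satisfies \eqref{exp:eq1} for every $Q\ge 1$ as soon as $\sigma\lambda\le 1$. Hence $\lambda_\sigma(\uxi)\ge 1/\sigma>2$ for $0<\sigma<1/2$, and $\lambda_0(\uxi)=\infty$. This agrees with your computation and with the paper's closing remark that $\lambdahat_\sigma(\uxi)=\sigma^{-1}$ on $]0,1/2]$. The paper's argument uses bad approximability of $\xi$ to exclude $\lambda>2$ and the points $\ux_i$ to reach every $\lambda<2$, but it only controls solutions with $x_0\neq 0$. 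So $\lambda_\sigma(\uxi)=2$ holds for $\sigma\in[1/2,1]$, or for all $\sigma\in[0,1]$ only in the variant formulation requiring $x_0>0$.

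For $\sigma\ge 1/2$ your route through $\uP$ can be completed, though your opening sentence is backwards: $P_1\le P_3$ cannot give a lower bound for $P_1$. The correct lower bound comes from the coordinates themselves.
\begin{itemize}
\item On a trapeze the coordinates are $q_1-|u|$, $q_2-1$ and $|v|$, and the first and last are at least $q_1/2$.
\item On $\cell(u,v,w)$, all three coordinates $q_1-|u|$, $q_2-|\alpha(v)|$ and $|w|$ are at least $q_1/2$, because $\uq\in\cA(u)\cap\cB(v)\cap\cC(w)$.
\end{itemize}
Hence $P_1\ge\min\{q_1/2,q_2-1\}$ on $\cA(\epsilon)$. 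Conversely, $(2F_k,2\sigma F_k)\in\cA(w_k)$ forces $u=w_k$ in \eqref{main:P:eq1}, so $P_1(2F_k,2\sigma F_k)\le F_k$. That said, the paper's direct argument, which never uses $\uP$, is shorter.
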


\begin{proof}
Let $\sigma\in [0,1]$.  Since $\xi$ is badly approximable, \eqref{exp:eq1} has no 
non-zero solution $\ux\in\bZ^3$ for $\xi_1=\xi$ and $\lambda>2$ when $Q$ is
large enough.  On the hand, since $\xi\in\cE_m$, conditions (E1)--(E3) of 
section~\ref{main:ssec:extremal} apply and yield $\norm{(\xi,-1)\ux_i}\asymp
\norm{\ux_i}^{-1}$ for each $i\ge 1$.  Thus,  for the current point $\uxi$
and for any given $\lambda$ with $0< \lambda<2$, the point $\ux=\ux_i$ 
satisfies \eqref{exp:eq1} with $Q=\norm{\ux_i}$ for each large enough $i$.  
This shows that $\lambda_\sigma(\uxi)=2$.

Choosing $\rho$ as in Theorem~\ref{main:extremal:thm}, we find by 
Lemma~\ref{exp:lemma}
\[
 \lambdahat_\sigma(\uxi)^{-1}
  =\limsup_{q\to\infty} \frac{\rho P_1(\rho^{-1}q,\sigma\rho^{-1}q)}{q}
  =\limsup_{q\to\infty} \frac{P_1(q,\sigma q)}{q},
\]
thus
\[
 \lambdahat_\sigma(\uxi)^{-1}
  =\limsup_{k\to\infty} \phitop_k(\sigma)
  \quad\text{where}\quad
  \phitop_k(\sigma)=\max\{P_1(q,\sigma q)/q\,;\, 2F_k\le q\le 2F_{k+1}\}.
\]
Let $k\ge 5$ be an arbitrarily large integer.  To estimate $\phitop_k(\sigma)$, we set
\[
 \cR=\cell(w_k, w_kw_{k-4}, w_kw_{k-2})\in S_{k-2}
 \et
 \cR'=\cell(w_k, w_kw_{k-2}, w_{k+1})\in S_{k-1}.
\]
We denote by $\cR_1$, $\cR_2$ and $\cR_3$ the subsets of $\cR$ made of 
the points $\uq=(q_1,q_2)$ where $P_1(\uq)$ is given respectively by
$q_1-F_k$, $q_2-F_{k-2}$, and $F_k+F_{k-2}$.  As explained right after 
Proposition~\ref{properties:prop1}, these are admissible polygons with a 
common vertex $\ur$, and they form a partition of $\cR$, as illustrated 
in Figure~\ref{exp:fig}. 
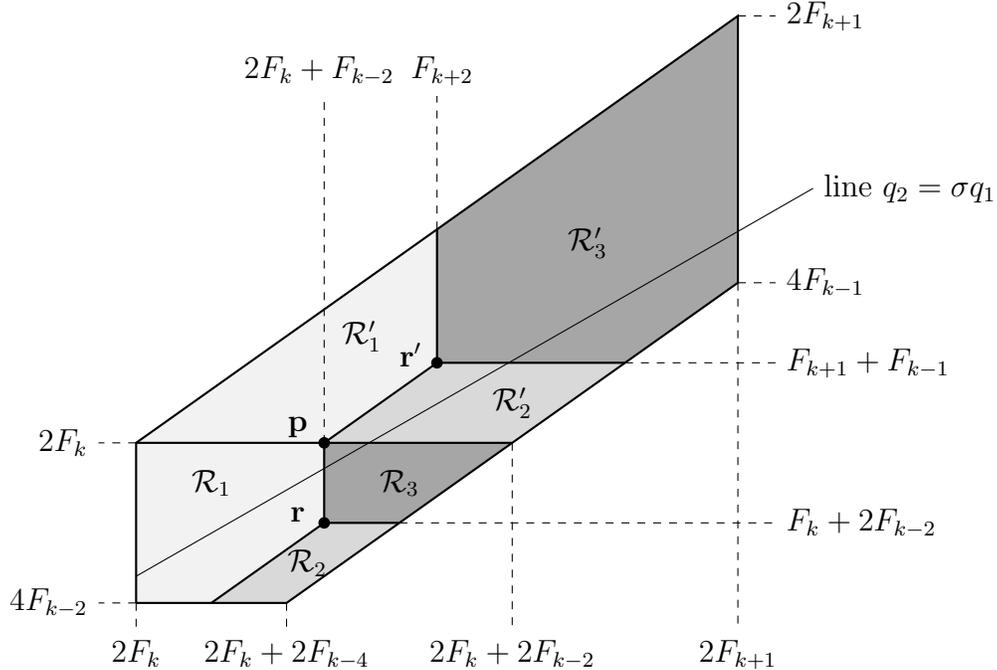
\begin{figure}[ht]
\begin{tikzpicture}[xscale=0.5, yscale=0.355]
\fill[opacity=0.1,gray] (26,20)--(28,20)--(31,23)--(31,26)--(34,29)--(34,34)--(26,26)--(26,20);
\fill[opacity=0.7,gray] (34,29)--(34,34)--(42,42)--(42,32)--(39,29)--(34,29);
\fill[opacity=0.7,gray] (31,23)--(31,26)--(36,26)--(33,23)--(31,23);
\fill[opacity=0.3,gray] (31,23)--(33,23)--(30,20)--(28,20)--(31,23);
\fill[opacity=0.3,gray] (34,29)--(39,29)--(36,26)--(31,26)--(34,29);
\draw[-, thick] (26,20)--(30,20)--(42,32)--(42,42)--(26,26)--(26,20);
\draw[-, thick] (26,26)--(36,26);
\draw[-, thick] (28,20)--(31,23)--(31,26)--(34,29)--(34,34);
\draw[-, thick] (31,23)--(33,23);
\draw[-, thick] (34,29)--(39,29);
\node[draw,circle,inner sep=1.4pt, fill] at (31,23){};
\node at (30.3,23.3){$\ur$};
\node[draw,circle,inner sep=1.4pt, fill] at (34,29){};
\node at (33.3,29.3){$\ur'$};
\node[draw,circle,inner sep=1.4pt, fill] at (31,26){};
\node at (30.3,26.6){$\up$};
\draw[dashed] (26,20)--(26,19) node[below]{$2F_k$};
\draw[dashed] (30,20)--(30,19) node[below]{$2F_k+2F_{k-4}$};
\draw[dashed] (36,26)--(36,19) node[below]{$2F_k+2F_{k-2}$};
\draw[dashed] (42,32)--(42,19) node[below]{$2F_{k+1}$};
\draw[dashed] (31,26)--(31,39) node[above]{$2F_k+F_{k-2}\ $};
\draw[dashed] (26,20)--(25,20) node[left]{$4F_{k-2}$};
\draw[dashed] (26,26)--(25,26) node[left]{$2F_k$};
\draw[dashed] (34,34)--(34,39) node[above]{$\ F_{k+2}$};
\draw[dashed] (42,42)--(43,42) node[right]{$2F_{k+1}$};
\draw[dashed] (42,32)--(43,32) node[right]{$4F_{k-1}$};
\draw[dashed] (39,29)--(43,29) node[right]{$F_{k+1}+F_{k-1}$};
\draw[dashed] (33,23)--(43,23) node[right]{$F_{k}+2F_{k-2}$};
\draw[-] (26,21)--(44,44*21/26) node[right]{line $q_2=\sigma q_1$};
\node at (28,24.5) {$\cR_1$};
\node at (33,24.5) {$\cR_3$};
\node at (30.55,21.5) {$\cR_2$};
\node at (32,30) {$\cR'_1$};
\node at (36,27.5) {$\cR'_2$};
\node at (38,33.5) {$\cR'_3$};
\end{tikzpicture}
\caption{$P_1$ on $\cell(w_k,w_kw_{k-4},w_kw_{k-2}) \cup \cell(w_k,w_kw_{k-2},w_{k+1})$}
\label{exp:fig}
\end{figure}
Similarly, we denote respectively by $\cR'_1$, $\cR'_2$ and $\cR'_3$ the subsets 
of $\cR'$ where $P_1(\uq)$ is given respectively by $q_1-F_k$, $q_2-F_{k-1}$
and $F_{k+1}$, and we denote by $\ur'$ the common vertex of these 
polygons.  We also define $\up$ as the mid-point of the top side of
$\cR$.  Upon setting $\mu(\uq)=q_2/q_1$ for each 
$\uq=(q_1,q_2)\in\cA(\epsilon)$, we find
\[
 \mu(\ur')=\frac{F_{k+1}+F_{k-1}}{F_{k+2}} >
 \mu(\up)=\frac{2F_k}{2F_k+F_{k-2}} > 
 \mu:=\max\Big\{\frac{2F_{k-2}}{F_k},\frac{2F_{k-1}}{F_{k+1}}\Big\} >
 \mu(\ur)
\]
for each large enough $k$.  When $\sigma\ge \mu$, the line $q_2=\sigma q_1$
meets the left vertical side of $\cR$ and the right vertical side of $\cR'$.
In between, it remains in $\cR\cup\cR'$.
As this line crosses the regions $\cR_1$, $\cR'_1$ or $\cR'_2$, the ratio 
$P_1(q,\sigma q)/q$ increases.  As it crosses $\cR_3$ or $\cR'_3$,  the same 
ratio decreases.  So, this ratio is maximal at the point where the line meets
the left vertical side of $\cR_3$ or the left vertical side of $\cR'_3$ or the 
horizontal bottom side of $\cR'_3$.  As $P_1$ is constant equal to $F_k+F_{k-2}$
on $\cR_3$ and constant equal to $F_{k+1}$ on $\cR'_3$, we deduce that
\[
 \phitop_k(\sigma) 
  =\begin{cases}
   \disp\frac{F_{k+1}}{F_{k+2}} 
        &\text{if\/ $\mu(\ur')\le \sigma\le 1$,}\\[10pt]
   \disp\frac{F_{k+1}\sigma}{F_{k+1}+F_{k-1}} 
        &\text{if\/ $\mu(\up)\le \sigma\le \mu(\ur')$,}\\[10pt]
   \disp\max\Big\{ \frac{F_k+F_{k-2}}{2F_k+F_{k-2}}, 
                              \frac{F_{k+1}\sigma}{F_{k+1}+F_{k-1}} \Big\}
        &\text{if\/ $\mu\le \sigma \le \mu(\up)$.}
   \end{cases}
\]
Letting $k$ go to infinity, we deduce that 
\[
 \lambdahat_\sigma(\uxi)^{-1} 
  =\begin{cases}
   \disp \frac{1}{\gamma} 
        &\disp \text{if \ $\frac{\gamma^2+1}{\gamma^3}\le \sigma\le 1$,}\\[10 pt]
   \disp \frac{\gamma^2\sigma}{\gamma^2+1} 
        &\disp \text{if \ $\frac{2\gamma^2}{2\gamma^2+1}\le \sigma
                          \le \frac{\gamma^2+1}{\gamma^3}$,}\\[10pt]
   \disp \max\Big\{ \frac{\gamma^2+1}{2\gamma^2+1}, 
                     \frac{\gamma^2\sigma}{\gamma^2+1} \Big\}
        &\disp \text{if \ $\frac{2}{\gamma^2} \le \sigma 
                           \le \frac{2\gamma^2}{2\gamma^2+1}$,}
   \end{cases}
\]
which rewrites as \eqref{exp:thm:eq}.
\end{proof}

With additional work, the same method enables one to calculate 
$\lambdahat_\sigma(\uxi)$ for any given $\sigma\in\left]1/2,1\right]$.  
For $\sigma\in\left]0,1/2\right]$, Lemma~\ref{tool:lemma3} yields trivially
$\lambdahat_\sigma(\uxi)=\sigma^{-1}$ (take $\ux=(0,0,1)$ in \eqref{exp:eq1}).

\begin{acknowlegments}
The content of this paper was presented at the conference ``Diophantine approximation and related fields'' at the University of York in June 2025.  The author thanks Victor Beresnevich and all organizers for their invitation.  He also thanks Anthony Po\"els and Nicolas de Saxc\'e for helpful comments.
\end{acknowlegments}

\end{document}